\theoremstyle{plain}
\theoremstyle{definition}
\newtheorem{remark}{Remark}
\DeclareMathOperator*{\argmax}{arg\,max}
\title{Stability of Data-Dependent Ridge-Regularization\\ for Inverse Problems}
\author{Sebastian Neumayer \and Fabian Altekrüger}
\date{\today}
\begin{document}

\maketitle
\begin{abstract}
    Theoretical guarantees for the robust solution of inverse problems have important implications for applications.
    To achieve both guarantees and high reconstruction quality, we propose learning a pixel-based ridge regularizer with a data-dependent and spatially varying regularization strength.
    For this architecture, we establish the existence of solutions to the associated variational problem and the stability of its solution operator.
    Further, we prove that the reconstruction forms a maximum-a-posteriori approach.
    Simulations for biomedical imaging and material sciences demonstrate that the approach yields high-quality reconstructions even if only a small instance-specific training set is available.
    
    \vspace{.1cm}\noindent \textbf{Keywords}: biomedical imaging, data-dependent regularization, data-driven priors, set-valued maps, spatial adaptivity, superresolution
\end{abstract}
\section{Introduction}
Inverse problems are ubiquitous in biomedical imaging \cite{MM2019}, with examples such as magnetic resonance imaging and computed tomography.
There, we aim to reconstruct an unknown image $\V x \in \R^n$ from noisy measurements $\V y = \mathrm{cor}(\M H \V x)\in \R^m$, where $\M H \in \R^{m, n}$ encodes the data-acquisition process, and the (possibly non-additive) noise model $\mathrm{cor}\colon \R^m \to \R^m $ accounts for imperfections in this description.
One of the most common reconstruction methods is to solve a variational problem of the form
\begin{equation}\label{eq:var_prob}
    \hat{\V x} \in \argmin_{\V x \in \mathcal X} \bigl( 
   %\frac12 \Vert\M H \V x - \V y \Vert^2 
    \mathcal{D} (\M H \V x, \V y)
    + \lambda \mathcal{R} (\V x) \bigr),
\end{equation}
with a data fidelity $\mathcal{D} \colon \R^m \times \R^m \to \R_{\geq 0}$, a regularizer $\mathcal{R}\colon \R^n \to \R_{\geq 0}$ and $\mathcal X \subset \R^n$.
In \eqref{eq:var_prob}, the data fidelity ensures the (approximate) consistency of the reconstruction $\hat{\V x}$ with the observed data $\V y$, while the regularizer $\mathcal R$, whose strength is controlled by $\lambda \in \R_{>0}$, promotes favorable properties of $\hat{\V x}$.

For many acquisition operators $\M H$ and noise models $\mathrm{cor}$, well-suited loss functions $\mathcal{D}$ exist \cite{ribes2008linear,Scherzer2009}.
While $\mathcal{D}$ is instance-specific, $\mathcal{R}$ is preferably agnostic to $\M H$ and $\mathrm{cor}$.
Then, $\mathcal{R}$ depends exclusively on the properties of the underlying images, and we can deploy it for any inverse problem.
In contrast, we can adapt $\mathcal R$ to $\M H$ by using adversarial regularization \cite{lunz2018adversarial,MukDit2021,SBMS2024} or the network Tikhonov \cite{LiSch2020}.
Operator agnostic attempts date back to the Tikhonov regularization \cite{tikhonov1963}, where images are modeled as smooth signals.
Over the years, better regularizers such as total variation \cite{rudin1992nonlinear}, compressed sensing \cite{donoho2006compressed}, and the field-of-experts \cite{RotBla2009} have been developed.
More recently, learned regularizers such as patch priors \cite{ADHHMS2023,prost2021learning}, plug-and-play priors \cite{HNS2021,hurault2022gradient,HurChaLec2023}, or total deep variation \cite{KobEff2020,KEKP2021}
led to huge improvements in reconstruction quality.
To maintain interpretability, the works \cite{GouNeuBoh2022,GouNeuUns2023} learn so-called ridge regularizers
\begin{equation}\label{eq:ridge_reg}
    \mathcal{R} \colon\V{x}\mapsto \sum_{c=1}^{ N_\mathrm{C}} \bigl \langle \boldsymbol 1_n, \boldsymbol \psi_c (\M W_{c} \V x)\bigr\rangle,
\end{equation}
with convolution matrices \smash{$\M W_{c} \in \R^{n,n}$} and potentials \smash{$\boldsymbol \psi_c (\V x) = (\psi_c(x_k))_{k=1}^n$ with $\psi_c \in \mathcal{C}_{\geq0}^{1,1}(\R)$}, where $\mathcal{C}_{\geq0}^{1,1}(\R)$ is the space of nonnegative differentiable functions with Lipschitz-continuous derivatives.
As shorthand, we introduce the notation \smash{$\M W  = (\M W_{c})_{c=1}^{N_C}$} and \smash{$\boldsymbol \psi  = (\boldsymbol \psi_{c})_{c=1}^{N_C}$}.
In \cite{GouNeuBoh2022}, they deploy convex $\psi_c$ ($\psi_c^{\prime\prime} \geq 0$) in \eqref{eq:ridge_reg}, which leads to a convex $\mathcal{R}$.
Later, it was found that the relaxation to $1$-weakly convex $\psi_c$ ($\psi_c^{\prime\prime} > -1$) significantly boosts the performance \cite{GouNeuUns2023}.
If $\lambda \leq 1$ and $\psi_c^{\prime\prime} > -1$, the objective \eqref{eq:var_prob} for denoising ($\M H = \M I$) remains convex.
In particular, there are no local minima.
Due to its clear interpretability as a filter-based model, we choose \eqref{eq:ridge_reg} as the starting point for our investigation.

So far, the regularization strength $\boldsymbol 1_n$ in \eqref{eq:ridge_reg} is the same for each entry in $\M W_{c} \V x$.
However, starting with total-variation regularization \cite{DonSch2020,HiPaRa2017,ChReSc2017}, it has been observed in several works (see, for example, \cite{KofAltBa2023,lefkimmiatis2023learning,ZhLiDu2022} in the data-driven setting) that the use of a spatially-adaptive regularization strength (as opposed to $\boldsymbol 1_n$) can significantly boost the performance of the reconstruction model \eqref{eq:var_prob}.
Following this idea, a data-dependent generalization of \eqref{eq:ridge_reg} was recently proposed in \cite{NeuPouGou2023} as
\begin{equation}\label{eq:rridge_reg_mask}
    \mathcal{R}_{\V y} \colon\V{x}\mapsto \sum_{c=1}^{ N_\mathrm{C}} \bigl \langle \boldsymbol \Lambda_c(\V y), \boldsymbol \psi_c (\M W_{c} \V x)\bigr\rangle,
\end{equation}
where the mask $\boldsymbol \Lambda = (\M \Lambda_c)_{c=1}^{N_C}$ with $\boldsymbol \Lambda_c\colon \R^n \to [\epsilon,1]^n$ and $\epsilon>0$ makes \eqref{eq:rridge_reg_mask} data-dependent without increasing the weak-convexity modulus compared to $\mathcal R$.
So far, only weakly convex $\psi_c$ have been studied, likely because they lead to a better baseline architecture \eqref{eq:ridge_reg}.
For \eqref{eq:rridge_reg_mask}, the spatial adaptivity can be motivated as follows.
First, the noise might be non-uniform across the data, and locally adapting the regularization strength is beneficial.
Further, the pixel-wise cost $\mathcal R_{\V y}$ often over-penalizes structure, which leads to smoothing.
Since structure might only respond to certain $\M W_c$, using different masks across the channels $c$ is reasonable.
For an overview, we refer to \cite{PraCalLan2023}.

Today, classic signal processing approaches such as \cite{tikhonov1963,rudin1992nonlinear,donoho2006compressed} often serve as a basic benchmark.
With this choice, the reconstruction model \eqref{eq:var_prob} is stable and converges for vanishing noise \cite{Scherzer2009,PlaNeuUns2023}.
Guarantees of this type play a crucial role in applications such as medical imaging, where diagnostic decisions are based on the provided reconstructions $\hat{\V x}$.
While the classic approaches are well-analyzed, many questions remain open for learned ones.
In particular, learned approaches might hallucinate or remove meaningful structures \cite{zbontar2018fastMRI}.
If the learned $\mathcal R$ is independent of the data $\V y$, some (weaker) theoretical guarantees exist \cite{LiSch2020,MukHauOek2023,ObmHal2023,SBMS2024}.
For \emph{data-dependent} regularizers $\mathcal R_{\V y}$, such as the one in \eqref{eq:rridge_reg_mask}, an analysis is to the best of our knowledge unavailable.
While some ideas from previous work are transferable, other aspects need a completely new treatment.

\paragraph{Contribution and Outline}
In Section~\ref{sec:TheoProp}, we prove nonemptiness, stability and convergence for vanishing noise for the data-to-reconstruction map $S \colon \R^m  \rightrightarrows \R^n$ associated with \eqref{eq:var_prob}, where we deploy the $\mathcal R_{\V y}$ from \eqref{eq:rridge_reg_mask}. 
To deal with the non-uniqueness of critical points for \eqref{eq:var_prob}, we use the theory of implicit multifunctions  \cite{DonRoc2009,Ioffe2017}, which enables a generic stability analysis \cite{GfrOut2016,CuoKru2021}.
Due to the specific structure of $\mathcal R_{\V y}$, we can derive even stronger results with tools from variational analysis.
These are also stronger compared to previous work on learned regularization.
The relevant stability concepts for set-valued maps and the necessary tools from variational analysis are summarized in Section~\ref{sec:Prelim}.
All stability concepts generalize the Lipschitz continuity of single-valued functions $S \colon \R^m  \to \R^n$.

Let us briefly discuss the stability results for $S$ ordered according to their strength.
The strongest one (see Section~\ref{sec:StabInv}) is obtained if we can characterize $S$ by the implicit function theorem.
Then, its values are isolated points, and each one is Lipschitz continuous in the data $\V y$.
Unfortunately, the theoretical requirements for this are hard to guarantee in practice.
Therefore, we also investigate two other settings.
In Section~\ref{sec:StabPoly}, we establish a weaker notion of continuity for the case $\boldsymbol \Lambda_c(\V y) = \boldsymbol 1_n$ and piecewise quadratic profiles $\psi_c$.
Then, $S$ is \emph{piecewise linear}, which ensures many favorable properties that general set-valued maps do not have.
As a second setting, we analyze (data-dependent) regularizers $\mathcal R_{\V y}$ with convex $\psi_c$ in Section~\ref{sec:StabConv}.
First, we sharpen the results from Section~\ref{sec:StabPoly} to Lipschitz continuity using the convexity.
Further, for a data-dependent mask  $\boldsymbol \Lambda(\V y)$, we establish a localized notion of Lipschitz continuity.
To complement the stability analysis, we consider the vanishing noise setting in Section~\ref{sec:VanishNoise}.
After this well-posedness discussion, a Bayesian interpretation of the model \eqref{eq:var_prob} is established in Section~\ref{sec:Bayesian}.
Here, we show that the regularizer \eqref{eq:rridge_reg_mask} induces a probability density and that \eqref{eq:var_prob} is a maximum-a-posteriori approach.

Our theoretical results for convex $\psi_c$ in \eqref{eq:rridge_reg_mask} are stronger than for weakly convex ones ($\psi_c^{\prime\prime} > -1$).
Hence, one might wonder how this restriction affects practical performance.
For the baseline model \eqref{eq:ridge_reg} with $\boldsymbol \Lambda_c(\V y) = \boldsymbol 1_n$, a possible explanation for the performance improvement is that weakly convex $\psi_c$ reduce the over-penalization of structure.
The data-dependent mask $\boldsymbol \Lambda$ is an alternative approach to resolve this issue.
To understand the interactions of both strategies, we learn a $\boldsymbol \Lambda$ for both convex and weakly convex $\mathcal{R}_{\V y}$ of the form \eqref{eq:rridge_reg_mask}, and compare their denoising performance in Section~\ref{sec:Denoising}.
Interestingly, the results are very similar and far superior to the weakly convex baseline \eqref{eq:ridge_reg}.

Encouraged by these findings, we extend the experimental validation for magnetic resonance imaging from \cite{NeuPouGou2023}, which is revisited in Section \ref{sec:mri}, to two new tasks, namely computed tomography in Section~\ref{sec:ct} and superresolution of material microstructures in Section~\ref{sec:SiC}.
For these tasks, we usually lack a large training database because of privacy preservation in medical imaging or destructive/costly processes in material sciences.
Hence, we train the data-dependent $\mathcal R_{\V y}$ on a generic image database such as BSDS500 to get a universal baseline.
Then, if specific data is available, we fine-tune the regularization strength $\boldsymbol \Lambda$ in the learned $\mathcal R_{\V y}$ for the task at hand.
In our simulations, this enables high-quality reconstructions $\hat{\V x}$ with only a small instance-specific training set.
% Additional Refs:
% Variational inequalities:     https://arxiv.org/pdf/2109.13569.pdf
% Conservative Jacobian: https://arxiv.org/pdf/2106.04350.pdf, https://epubs.siam.org/doi/epdf/10.1137/22M1541630
% Lasso and Prox: https://arxiv.org/pdf/2205.06872.pdf, https://arxiv.org/pdf/2102.06809.pdf0'o'0
% https://link.springer.com/article/10.1007/s10107-020-01515-z$
\section{A Brief on Set-Valued Maps}\label{sec:Prelim}
We start with some general notation.
The closed unit ball in $\R^n$ is denoted by $\mathbb{B}$.
The collection of all subsets $N$ of $\N$ such that $\N \backslash N$ is finite is denoted by $\mathscr{N}$, whereas the collection of all infinite $N \subset \mathbb{N}$ is denoted by $\mathscr{N}^{\sharp}$.
With this, we can conveniently handle subsequences of a given sequence.
For instance, if we have $\{\V x_{k}\}_{k\in \N} \subset \mathbb{R}^{n}$, then $\{\V x_{k}\}_{k \in N}$ for either $N \in \mathscr{N}$ or $N \in \mathscr{N}^{\sharp}$ designates a subsequence.
Limits as $k \rightarrow \infty$ with $k \in N$ are either indicated by $\lim_{k \in N}$ or by \smash{$\xrightarrow{N}$}.
The function $\mathbf{diag} \colon \R^n \to \R^{n,n}$ returns a diagonal matrix whose diagonal entries are the input vector.

\paragraph{Set-Valued Maps}
Set-valued maps $S \colon \R^m \rightrightarrows \R^n$ appear as solution operators to \eqref{eq:var_prob}.
For an in-depth introduction, we refer to \cite[Cha.~3]{DonRoc2009}.
The set $\mathrm{dom}(S) = \{\V y \in \R^m : S(\V y) \neq \emptyset\}$ is the \emph{domain} of $S$, and the \emph{graph} of $S$ is the set $\operatorname{Graph}(S)=\{(\V y, \V x) \in \R^m \times \R^n: \V x \in S(\V y)\}$.
The inverse $S^{-1}\colon \R^n \rightrightarrows \R^m$ of $S$ is defined via $\operatorname{Graph}(S^{-1})=\{(\V x, \V y) \in \R^n \times \R^m: (\V y, \V x) \in \operatorname{Graph}(S)\}$.
Recall that
\begin{equation}
\limsup_{\V y \rightarrow \bar{\V y}} S(\V y) =\left\{\V x : \exists \V y_{k} \rightarrow \bar{\V y}, \exists \V x_{k} \rightarrow \V x \text{ with } \V x_{k} \in S\left(\V y_{k}\right)\right\}
\end{equation}
and
\begin{equation}
\liminf_{\V y \rightarrow \bar{\V y}} S(\V y)  =\bigl\{\V x : \forall \V y_{k} \rightarrow \bar{\V y}, \exists N \in \mathscr{N}, \V x_{k} \xrightarrow{N} \V x \text{ with } \V x_{k} \in S\left(\V y_{k}\right)\bigr\}.
\end{equation}
Now, we discuss several notions of continuity.
A map $S$ is \emph{outer semicontinuous (osc)} relative to $D \subset \mathrm{dom}(S)$ at $\bar{\V y} \in \R^{m}$ if $\limsup_{\V y \rightarrow \bar{\V y}} S(\V y) \subset S(\bar{\V y})$, where the sequences are taken in $D$.
This is equivalent to $\operatorname{Graph}(S)$ being closed in $D \times \R^n$.
Further, $S$ is \emph{inner semicontinuous (isc)} relative to $D$ at $\bar{\V y}$ if $\liminf_{\V y \rightarrow \bar{\V y}} S(\V y) \supset S(\bar{\V y})$.
This is equivalent to $S^{-1}(O)$ being open in D for every open $O \subset \R^n$.
Finally, $S$ is continuous relative to $D$ at $\bar{\V y}$ iff it is both osc and isc relative to $D$.
For single-valued $S \colon \R^m  \to \R^n$, this coincides with the usual notion of continuity.
Conversely, any closed-convex-valued and isc $S$ admits a continuous single-valued selection $\tilde S \colon \mathrm{dom}(S) \to \R^n$, see \cite[Thm.~5J.5]{DonRoc2009}.

The map $S$ is \emph{Lipschitz continuous} relative to $D \subset \mathrm{dom}(S)$  if $S$ is closed-valued, and there exists a Lipschitz constant  $\kappa > 0$ such that
\begin{equation}\label{eq:LipSetVal}
S(\V y^{\prime}) \subset S(\V y)+\kappa\Vert \V y^{\prime} - \V y \Vert \mathbb{B} \quad \text { for all } \V y^{\prime}, \V y \in D.
\end{equation}
Lipschitz continuity is closely related to \emph{strict continuity} of $S$ at $\bar{\V y}$, which means that for each $\rho>0$, there exist $\kappa>0$ and a neighbourhood $V$ of $\bar{\V y}$ such that
\begin{equation}\label{eq:StrictCon}
    S(\V y) \cap \rho \mathbb{B} \subset S(\V y^{\prime})+\kappa\Vert \V y-\V y^{\prime}\Vert  \mathbb{B} \quad \text{ for all } \V y, \V y^{\prime} \in V.
\end{equation}
The difference to \eqref{eq:LipSetVal} is the intersection with $\rho \mathbb{B}$, which resolves issues that can arise if $S$ takes unbounded sets as value.
As a local counterpart to \eqref{eq:StrictCon}, we say that $S$ has the \emph{Aubin (or Lipschitz-like) property} at $\bar{\V y}$ for $\bar{\V x} \in S(\bar{\V y})$ if $\operatorname{Graph}(S)$ is locally closed at $(\bar{\V y}, \bar{\V x})$, and there is $\kappa>0$ along with neighborhoods $V$ of $\bar{\V y}$ and $U$ of $\bar{\V x}$  such that 
\begin{equation}\label{eq:Aubin}
    S(\V y^{\prime}) \cap U \subset S(\V y)+\kappa\Vert \V y^{\prime} - \V y\Vert  \mathbb{B} \quad \text { for all } \V y^{\prime}, \V y \in V.
\end{equation}
For single-valued $S \colon \R^m  \to \R^n$, the concepts \eqref{eq:LipSetVal}-\eqref{eq:Aubin} coincide with the usual Lipschitz continuity.

Next, we introduce weaker continuity notations, where the reference point $\V y$ in the right-hand sides of \eqref{eq:LipSetVal} and \eqref{eq:Aubin} is fixed.
The map $S$ is \emph{outer Lipschitz continuous} at $\bar{\V y} \in D$ relative to $D \subset \mathrm{dom}(S)$ if $S(\bar{\V y})$ is closed, and there is $\kappa > 0$ along with a neighborhood $V$ of $\bar{\V y}$ such that
\begin{equation}\label{eq:OutLip}
S(\V y) \subset S(\bar{\V y})+\kappa\Vert \V y-\bar{\V y}\Vert  \mathbb{B} \quad \text { for all } \V y \in V \cap D.
\end{equation}
If $S$ is outer Lipschitz continuous at every $\V y \in D$ with the same $\kappa$, then $S$ is outer Lipschitz continuous relative to $D$.
%A map $S$ with closed values is Lipschitz continuous relative to $D$ with constant $\kappa$ iff it is both isc and outer Lipschitz continuous relative to $D$ with constant $\kappa$ \cite[Thm.~3D.3]{DonRoc2009}.
A yet weaker notion originates from \eqref{eq:Aubin}.
The map $S$ is \emph{calm} at $(\bar{\V y}, \bar{\V x}) \in \operatorname{Graph}(S)$ if there is $\kappa>0$ along with neighborhoods $V$ of $\bar{\V y}$ and $U$ of $\bar{\V x}$  such that
\begin{equation}\label{eq:Calm}
    S(\V y) \cap U \subset S(\bar{\V y}) + \kappa \Vert \V y - \bar{\V y}\Vert \mathbb B \quad \text{ for all } \V y \in V.
\end{equation}
Compared with \eqref{eq:OutLip}, the inclusion only holds locally, similar to the relation between the Lipschitz continuity \eqref{eq:LipSetVal} and the Aubin property \eqref{eq:Aubin}.

An important class of set-valued maps $S$ are \emph{polyhedral} ones, for which $\operatorname{Graph}(S)$ is a finite union of polyhedra.
Recall that a \emph{polyhedron} in $\R^{m+n}$ is of the form $\{\V x \in \R^{m+n}: \M A \V x \leq \V b\}$ with $\M A \in \R^{p, m+n}$ and $\V b \in \R^p$. 
Polyhedral maps are strictly continuous \eqref{eq:StrictCon} outside of a lower-dimensional set $D$ ($\dim(D) < m$) and the Lipschitz modulus $\kappa$ can be chosen independent of $\bar{\V y}$ \cite[Prop.~35]{DanPan2011}.
Moreover, they are outer Lipschitz continuous \eqref{eq:OutLip} with a uniform modulus \cite[Thm.~3D.1]{DonRoc2009}.

\paragraph{Hoffman bound}
The Hoffman bound \cite{Hof1952} as in \cite[Props.\ 5 and 6]{PenVerZul2021} states that for $\M E \in \R^{n,n}$, $\M F \in \R^{l,n}$, $\V b\in \R^n$ and $\V q\in \R^l$ with $\{\V x \in \R^n : \M E \V x = \V b, \M F \V x \leq \V q\} \neq \emptyset$ and any $\V x^\prime \in \R^n$ it holds that 
\begin{equation}\label{eq:HoffmannEst}
    \min_{\M E\V x = \V b, \M F\V x \leq \V q}\Vert \V x - \V x^\prime \Vert  \leq K(\M E, \M F) \biggl \Vert
    \begin{pmatrix}
        \M E \V x^\prime - \V b\\
        (\M F \V x^\prime - \V q)_+
    \end{pmatrix}\biggr\Vert,
\end{equation}
where
\begin{align}\label{eq:hoffmanBound}
    K(\M E, \M F) &= \max_{J \in U(\M E, \M F)} \biggl(\min \Bigl \{
    \bigl\Vert \M E^T \V u + (\M F_J)^T \V v \bigr\Vert:  \V u \in \ran(\M E),\, \V v \geq 0,\, \Vert (\V u, \V v) \Vert = 1
    \Bigr\}\biggr)^{-1} < \infty.
\end{align}
In \eqref{eq:hoffmanBound}, $\M F_J$ denotes the matrix that consists of the rows indexed by $J$, and
\begin{equation}\label{eq:SetS}
    U(\M E, \M F) \coloneqq  \bigl\{J \subset [l]: \{(\M F \V x + \V s, \M E \V x) : \V x \in \R^{n}, \V s \in \R^l, s_j \geq 0\, \forall j \in J\}\text{ is a subspace}\bigr\},
\end{equation}
where $[l]$ is the powerset of $\{1,\ldots,l\}$.
For our setting, the set \eqref{eq:SetS} simplifies as
\begin{align}
    U(\M E, \M F)=& \bigl\{J \subset [l]: \{(\M F_J \V x + \V s, \M E \V x) : \V x \in \R^{n}, \V s \in \R_{\geq 0}^{\vert J \vert}\}= \R^{\vert J \vert} \times \ran(\M E)\bigr\}\notag\\
    =& \bigl\{J \subset [l]: \{(\M F_J \V x, \M E \V x) : \V x \in \R^{n}\}= \R^{\vert J \vert} \times \ran(\M E)\bigr\}\notag\\
    =& \bigl\{J \subset [l]: \{\M F_J \V x : \V x \in \ker(\M E)\}= \R^{\vert J \vert}\bigr\}\notag\\
    =& \bigl\{J \subset [l]: \textrm{rank}(\M F_J) = \vert J \vert, \ker(\M E) + \ker(\M F_J) = \R^n\bigr\}\notag\\
    = &\bigl\{J \subset [l] : \textrm{rank}(\M F_J) = \vert J \vert, \ran(\M E^T) \cap \ran(\M F_J^T) = \{\V 0\}\bigr\}.
\end{align}
The estimate \eqref{eq:HoffmannEst} bounds the distance to a polyhedron $\{\V x \in \R^n : \M E \V x = \V b, \M F \V x \leq \V q\}$.
This will be useful when estimating the Lipschitz constant in \eqref{eq:LipSetVal} for polyhedral maps throughout Section~\ref{sec:TheoProp}. 

\paragraph{Implicit Set-Valued Maps}
Given a Lipschitz continuous $f \colon \R^{m+n} \to \R^n$, which in our case will be the derivative of the objective in \eqref{eq:var_prob}, we can implicitly define a set-valued map $S \colon \R^m \rightrightarrows \R^n$ via
\begin{equation}\label{eq:ImplicitMap}
    S(\V y) = \bigl\{\V x \in \R^n : f(\V y,\V x) = 0\bigr\}.
\end{equation}
Strong regularity statements for $S$ exist if the Jacobian $\mathbf J_f (\V y, \V x) \coloneqq \begin{pmatrix} \M A & \M B \end{pmatrix} \in \R^{n,m+n}$ with $\M A = (\nabla_{y_j} f_i(\V y, \V x))_{i,j=1}^{n,m}$ and $\M B = (\nabla_{x_j} f_i(\V y, \V x))_{i,j=1}^{n,n}$ exists and if $\V B$ is (locally) invertible.
Then, the classic implicit function theorem guarantees that $S$ is nonempty and admits single-valued localizations.
Moreover, each localization is differentiable and has a formula for its derivative.
If $f$ is merely Lipschitz continuous, Clarke's implicit function theorem \cite{Clarke1990} guarantees the existence and Lipschitz continuity of single-valued localizations.
This requires to replace $\mathbf J_f (\V y, \V x)$ by the \emph{Clarke Jacobian}
\begin{equation}\label{eq:JacClarke}
    \M J_f^c(\V y, \V x) = \mathrm{conv}\bigl\{\M C \in \R^{n,m+n}: \exists (\V y_k, \V x_k) \in R \text{ with } (\V y_k, \V x_k) \to (\V y, \V x) \text{ s.t. } \M J_f(\V y_k, \V x_k) \to \M C \bigr\},
\end{equation}
where $R \subset \R^{m+n}$ is the set of full measure on which $f$ is differentiable (Rademacher theorem).
Unfortunately, no formula for the Clarke Jacobian $\M J_S^c$ of $S$ exists.
Recently, an improved implicit function theorem based on conservative Jacobians was established \cite{BolPauSil2024}.
We say that $J_f \colon \R^{m+n} \rightrightarrows \R^{n,m+n}$ is a \emph{conservative Jacobian} for $f$ if $\operatorname{Graph}(J_f)$ is closed, locally bounded, and  nonempty with
\begin{equation}\label{eq:ConsJac}
    \frac{\dx}{\dx t}f \circ \gamma(t) = \begin{pmatrix} \M A & \M B \end{pmatrix} \gamma^\prime (t) \quad \text{ for all } \begin{pmatrix} \M A & \M B \end{pmatrix} \in J_f (\gamma(t)) \text{ and a.e.\ } t,
\end{equation}
where $\gamma$ is an absolutely continuous curve in $\R^{m+n}$.
The convex hull of any $J_f(\V y, \V x)$ is a superset of $\M J_f^c(\V y, \V x)$ and coincides with $\M J_f(\V y, \V x)$ on $R$.
Compared to $\M J_f^c(\V y, \V x)$ as in \eqref{eq:JacClarke}, $J_f(\V y, \V x)$ admits a proper calculus \cite{BolPau2021}.
This makes its computation much easier as we can decompose complicated functions into simpler components.
Note that a conservative $J_f$ is usually not unique.
\begin{theorem}[Implicit function theorem \cite{BolPauSil2024}]\label{thm:ImplicitFunction}   
Let $f \colon \R^{m+n} \to \R^n$ with a conservative Jacobian $J_f$ and $\bar{\V x} \in S(\bar{\V y})$ with $\bar{\V y} \in \R^m$.
Assume that $J_f(\bar{\V y}, \bar{\V x})\subset \R^{n,m+n}$ is convex and that for each $\begin{pmatrix} \M A & \M B \end{pmatrix} \in J_f(\bar{\V y}, \bar{\V x})$ the matrix $\M B$ is invertible.
Then, there exists an open neighborhood $U \times V \subset \R^{m+n}$ of $(\bar{\V y}, \bar{\V x})$ and $s\colon U \to V$ such that $s(\V y) \in  S(\V y)$ for all $\V y \in U$.
If for each $\V y \in U$ and $\begin{pmatrix} \M A & \M B \end{pmatrix} \in J_f(\V y, s(\V y))$ the matrix $\M B$ is invertible, then a possible conservative Jacobian $J_{s}$ of $s$ is given by
\begin{equation}\label{eq:JacS}
    J_s \colon \V y \rightrightarrows \bigl\{-\M B^{-1}\M A :
    \begin{pmatrix}
        \M A & \M B
    \end{pmatrix}
    \in J_f(\V y, s(\V y))\bigr\}.
\end{equation}
\end{theorem}
\begin{remark}\label{eq:RemLip}
    Using \eqref{eq:ConsJac}, we get for the $s\colon U \to V$ from Theorem \ref{thm:ImplicitFunction} and any $\V y_1, \V y_2 \in U$ that
    \begin{equation}
        \Vert s(\V y_1) - s(\V y_2) \Vert \leq \sup_{\V y \in U}\sup_{\V A \in J_S(\V y)} \Vert \V A \Vert \Vert \V y_1- \V y_2\Vert,
    \end{equation}
    where $J_s$ is the conservative Jacobian from \eqref{eq:JacS}.
    Hence, if $\sup_{\V y \in U}\sup_{\V A \in J_S(\V y_)} \Vert \V A \Vert<\infty$, then $s\colon U \to V$ is Lipschitz continuous in the usual sense.
\end{remark}
In principle, the minimal $J_f$ for Theorem~\ref{thm:ImplicitFunction} is $\M J^c_f$ \cite[Cor.~1]{BolPau2021}, which might be impossible to compute.
With the (possibly larger) conservative Jacobians, we gain flexibility at the cost of having to check the invertibility of more matrices $\M B$.
If a $\M B$ is non-invertible, we can instead apply the more general implicit function theorems in \cite{DonRoc2009,Ioffe2017}, which leads to the Aubin property.
\begin{theorem}[Implicit functions {\cite[Thm.\ 2.84]{Ioffe2017}}]\label{thm:GeneralImplicitFunction}
Let $f \colon \R^{m+n} \to \R^n$ and $\bar{\V x} \in S(\bar{\V y})$.
Suppose
that
\begin{itemize}
    \item for all $\V y$ in a neighborhood of $\bar{\V y}$, the partial inverse $f^{-1}(\V y,\cdot)\colon \R^{n} \rightrightarrows \R^{n}$ is calm \eqref{eq:Calm} at $\V0$ for $\bar{\V x}$ with a constant $K$ independent of $\V y$;
    \item $f(\cdot,\V x)\colon \R^m \to \R^n$ is $\alpha$-Lipschitz for all $\V x$ in some neighborhood of $\bar{\V x}$.
\end{itemize}
Then, $S$ as in \eqref{eq:ImplicitMap} has the Aubin property \eqref{eq:Aubin} near $(\bar{\V y}, \bar{\V x})$ and $\kappa  \leq K \alpha$.
\end{theorem}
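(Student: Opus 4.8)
The plan is to verify the Aubin inequality \eqref{eq:Aubin} for the solution map $S(\V y) = \{\V x \in \R^n : f(\V y, \V x) = 0\}$ directly, by coupling the two hypotheses through the residual $f(\V y, \V x)$. The first hypothesis controls how the zero set of the single-parameter map $f(\V y, \cdot)$ reacts to a perturbation of its right-hand side, while the second controls how much that residual moves when only the parameter $\V y$ is shifted. Chaining these two estimates should deliver the Aubin property with the stated modulus $\kappa \le K\alpha$, so the proof is essentially a two-estimate argument once the quantifiers are set up correctly.

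Concretely, I would first fix neighborhoods $V$ of $\bar{\V y}$ and $U$ of $\bar{\V x}$ on which both hypotheses hold with the uniform constants $K$ and $\alpha$, then take arbitrary $\V y, \V y' \in V$ and $\V x \in S(\V y') \cap U$. Since $f(\V y', \V x) = 0$, the residual at the shifted parameter is $f(\V y, \V x) = f(\V y, \V x) - f(\V y', \V x)$, so the $\alpha$-Lipschitz continuity of $f(\cdot, \V x)$ (valid because $\V x \in U$) yields $\Vert f(\V y, \V x) \Vert \le \alpha \Vert \V y - \V y' \Vert$. Writing $\V w := f(\V y, \V x)$, this places $\V x$ in $f^{-1}(\V y, \V w) \cap U$ with $\Vert \V w \Vert$ of order $\Vert \V y - \V y' \Vert$.

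The second step applies the $K$-calmness of the partial inverse $f^{-1}(\V y, \cdot)$ at $0$ for $\bar{\V x}$, which in the form of \eqref{eq:Calm} reads $f^{-1}(\V y, \V w) \cap U \subset f^{-1}(\V y, 0) + K\Vert \V w\Vert \mathbb{B}$ for $\V w$ near $0$; crucially $f^{-1}(\V y, 0) = S(\V y)$ by definition. Substituting $\V x$ and the residual bound gives $\V x \in S(\V y) + K\alpha\Vert \V y - \V y'\Vert \mathbb{B}$, i.e.\ $\operatorname{dist}(\V x, S(\V y)) \le K\alpha\Vert \V y - \V y'\Vert$. As $\V x \in S(\V y') \cap U$ was arbitrary, this is exactly $S(\V y') \cap U \subset S(\V y) + K\alpha \Vert \V y' - \V y\Vert \mathbb{B}$, the Aubin property with $\kappa \le K\alpha$. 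The local closedness of $\operatorname{Graph}(S)$, the remaining ingredient of \eqref{eq:Aubin}, follows from continuity of $f$, since then $\operatorname{Graph}(S)$ is the preimage of $\{0\}$ restricted to the relevant neighborhood.

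The delicate point I expect is the uniformity of the neighborhoods across the parameter: the calmness estimate must hold with one common neighborhood $U$ of $\bar{\V x}$, one common range threshold on $\Vert \V w\Vert$, and one common constant $K$ for every $\V y \in V$, rather than with $\V y$-dependent data, and this is what the hypothesis ``$K$-calm at $0$ for $\bar{\V x}$ for all $\V y$ near $\bar{\V y}$'' must be read to supply. Shrinking $V$ if necessary, one also has to guarantee that $\V w = f(\V y, \V x)$ actually lands in the range neighborhood on which calmness is asserted, which the residual bound $\Vert \V w\Vert \le \alpha\Vert \V y - \V y'\Vert$ secures once $V$ is small. A secondary subtlety is that for $\V y \ne \bar{\V y}$ the reference value $0$ need not be attained at $\bar{\V x}$ (that is, $f(\V y, \bar{\V x})$ may be nonzero), so the calmness is genuinely an estimate toward the moving target set $S(\V y)$ rather than one anchored at a fixed graph point; keeping track of this is the only place where more than the direct chaining of the two inequalities is needed.
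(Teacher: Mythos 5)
The paper does not prove this statement at all---it imports it from \cite[Thm.\ 2.84]{Ioffe2017}, with metric subregularity rephrased as calmness of $f^{-1}$---so there is no internal proof to compare against. Your argument is a correct, self-contained derivation, and it is essentially the standard proof of this implicit-multifunction result: from $f(\V y',\V x)=\V 0$ and the $\alpha$-Lipschitz hypothesis you get $\Vert f(\V y,\V x)\Vert \le \alpha \Vert \V y-\V y'\Vert$, and feeding this residual into the calmness of $f^{-1}(\V y,\cdot)$ at $\V 0$ (whose value at $\V 0$ is exactly $S(\V y)$) gives $\V x \in S(\V y)+K\alpha\Vert \V y-\V y'\Vert\mathbb{B}$, i.e.\ \eqref{eq:Aubin} with $\kappa \le K\alpha$. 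The only hypothesis you use beyond the two bullets is continuity of $f$ for the local closedness of $\operatorname{Graph}(S)$; this is not literally stated in the theorem but is automatic in the paper's setting, where $f$ is Lipschitz.

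Your flagged ``delicate point'' is in fact the entire content of the hypothesis, and your reading of it is not merely convenient but necessary: if the calmness data (neighborhood of $\bar{\V x}$ and residual threshold) are allowed to depend on $\V y$, with only the constant $K$ uniform, the statement becomes false. For instance, with $a_k=c_k=4^{-k}$, the function $f(y,x)=\min\bigl(\vert x\vert,\,\inf_k \max(\vert x-c_k\vert,\vert y-a_k\vert)\bigr)$ on $\R\times\R$ is $1$-Lipschitz in $y$, satisfies $f(y,0)=0$ for all $y$, and $f^{-1}(y,\cdot)$ is calm at $0$ for $\bar{x}=0$ with a uniform constant for every $y$ near $0$---but with neighborhoods that must shrink as $y$ approaches the points $a_j$---while $S$ fails the Aubin property at $(0,0)$, because the isolated solutions $(a_k,c_k)$ disappear under arbitrarily small perturbations of $y=a_k$. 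So the uniform reading, which matches Ioffe's actual hypothesis of uniform metric subregularity, is exactly what legitimizes your two-estimate chaining; with that reading in place, your proof is complete.
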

Compared to the version in \cite[Thm.~2.84]{Ioffe2017}, we replaced the metric subregularity of $f$ by the equivalent calmness of $f^{-1}$ for convenience of notation, see \cite[Thm.~3H.3]{DonRoc2009}.
In theory, a necessary and sufficient condition for the Aubin property \eqref{eq:Aubin} is given by the Mordukhovich criterion \cite[Thm.~9.40]{RocWet1998}.
However, the required coderivatives of $S$ are difficult to compute, see \cite{GfrOut2016} for a recent discussion, and we prefer to rely on Theorem~\ref{thm:GeneralImplicitFunction} instead.

\section{Theoretical Properties of Ridge Regularizers}\label{sec:TheoProp}
For the theoretical analysis of the problem \eqref{eq:var_prob}, we restrict ourselves to $\mathcal{D}_P \colon \R^m \times \R^m \to \R_{\geq 0}$ of the form 
\begin{equation}\label{eq:data_term}
    \mathcal{D}_P(\M H \V x, \V y) = \sum_{j=1}^m \phi_j\bigl((\M H \V x)_j, y_j\bigr) = \bigl \langle \boldsymbol 1_m, \boldsymbol \phi (\M H \V x, \V y)\bigr\rangle,
\end{equation}
where $\boldsymbol \phi (\V x, \V y) = (\phi_j(x_j, y_j))_{j=1}^m$ and the \smash{$\phi_j \in \mathcal{C}_{\geq0}^{1,1}(\R^2)$} are piecewise-polynomial with finitely many pieces.
A very popular instance of this form is $\mathcal{D}_P(\M H \V x, \V y) = \frac{1}{2} \Vert \M H \V x - \V y \Vert^2$. 
If we further choose $\mathcal R_{\V y}$ as in \eqref{eq:rridge_reg_mask} with piecewise-polynomial \smash{$\psi_c \in \mathcal{C}_{\geq0}^{1,1}(\R)$}, then the problem \eqref{eq:var_prob} turns into
\begin{equation}
    \label{eq:inv_spc}
    \argmin_{\V x \in \X} \Big(g_{\V y}(\V x) \coloneqq  \mathcal D_P(\M H \V x,\V y) + \lambda \sum_{c=1}^{ N_\mathrm{C}} \bigl \langle \boldsymbol \Lambda_c(\V y), \boldsymbol \psi_c(\M W_{c} \V x )\bigr\rangle \Big).
\end{equation}
From now on, we assume that the \smash{$\boldsymbol \Lambda_c\colon \R^m \to [\epsilon,1]^n$} are Lipschitz continuous and bounded from below by $\epsilon>0$ .
Due to the upper-bound $1$, the $\boldsymbol \Lambda = (\M \Lambda_c)_{c=1}^{N_C}$ does not increase the weak-convexity modulus compared to the baseline $\mathcal R$ in \eqref{eq:ridge_reg}.
First, we provide a sufficient condition for the coercivity of $g_{\V y}$.
\begin{lemma}
    The function $g_{\V y}$ in \eqref{eq:inv_spc} is coercive if $\ker(\M W) \cap \ker(\M H)= \{0\}$ and all $\phi_j$ and $\psi_c$, $j=1,...,m$, $c=1,...,N_c$ are coercive.
\end{lemma}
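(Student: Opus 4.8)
The plan is to prove coercivity by a contradiction argument that decouples the individual summands. Since every $\phi_j$ and $\psi_c$ is nonnegative and the entries of $\boldsymbol\Lambda_c(\V y)$ lie in $[\epsilon,1]$ with $\epsilon>0$, I would first record the pointwise lower bound
\begin{equation*}
    g_{\V y}(\V x) \geq \sum_{j=1}^m \phi_j\bigl((\M H\V x)_j, y_j\bigr) + \lambda\epsilon\sum_{c=1}^{N_C}\sum_{k=1}^n \psi_c\bigl((\M W_c\V x)_k\bigr),
\end{equation*}
which strips away the data-dependent mask at the cost of the harmless positive factor $\lambda\epsilon$, while keeping every term on the right-hand side nonnegative.

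The central device is the stacked linear operator $T\colon\R^n\to\R^{m}\times(\R^n)^{N_C}$ defined by $T\V x = (\M H\V x, \M W_1\V x,\dots,\M W_{N_C}\V x)$. By definition of $\M W=(\M W_c)_{c=1}^{N_C}$ one has $\ker(T)=\ker(\M H)\cap\ker(\M W)=\{0\}$, so $T$ is injective. Injectivity of a linear map on $\R^n$ furnishes a constant $\gamma>0$ with $\Vert T\V x\Vert\geq \gamma\Vert \V x\Vert$ for all $\V x$; equivalently, boundedness of $\M H\V x$ together with all $\M W_c\V x$ forces boundedness of $\V x$.

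With these two ingredients the argument runs as follows. Suppose $g_{\V y}$ were not coercive, so that there is a sequence $\{\V x_k\}$ with $\Vert \V x_k\Vert\to\infty$ and $g_{\V y}(\V x_k)\leq M$ for some $M$. Nonnegativity of the summands in the lower bound forces each of them to be bounded, in particular $\phi_j((\M H\V x_k)_j,y_j)\leq M$ for every $j$ and $\psi_c((\M W_c\V x_k)_k)\leq M/(\lambda\epsilon)$ for every $c$ and every coordinate. Because $\phi_j$ is coercive on $\R^2$, the slice $t\mapsto\phi_j(t,y_j)$ obtained by fixing the data $y_j$ has bounded sublevel sets, so each coordinate $(\M H\V x_k)_j$ stays bounded and hence $\M H\V x_k$ is bounded. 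The same reasoning applied to the coercive $\psi_c$ shows that every $\M W_c\V x_k$ is bounded. Consequently $\{T\V x_k\}$ is bounded, and the estimate $\Vert T\V x_k\Vert\geq\gamma\Vert \V x_k\Vert$ contradicts $\Vert \V x_k\Vert\to\infty$.

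The argument is essentially mechanical once the reduction to $T$ is in place, so I do not anticipate a genuine obstacle. The only step requiring a moment of care is that the joint coercivity of $\phi_j$ on $\R^2$ transfers to the slice with fixed second argument $y_j$, which is immediate since $\vert t\vert\to\infty$ forces $\Vert(t,y_j)\Vert\to\infty$. The hypothesis $\ker(\M W)\cap\ker(\M H)=\{0\}$ enters precisely to guarantee the injectivity of $T$ that converts boundedness of $\M H\V x$ and $\M W\V x$ into boundedness of $\V x$.
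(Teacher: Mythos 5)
Your proof is correct and is essentially the paper's argument in contrapositive form: the paper takes $\Vert \V x_k\Vert \to \infty$ and deduces from $\ker(\M W)\cap\ker(\M H)=\{0\}$ that $\M H\V x_k$ or some $\M W_c\V x_k$ must blow up, whence coercivity of $\phi_j$, $\psi_c$ forces $g_{\V y}(\V x_k)\to\infty$, while you run the same two ingredients (injectivity of the stacked map and componentwise coercivity) starting from a bounded-energy sequence. Your write-up is in fact somewhat more careful than the paper's terse proof, since you make explicit the stripping of the mask via $\boldsymbol\Lambda_c \geq \epsilon\boldsymbol 1_n$, the quantitative bound $\Vert T\V x\Vert \geq \gamma\Vert\V x\Vert$, and the transfer of coercivity of $\phi_j$ to its slices.
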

\begin{proof}
    Let $\{\V x_k\}_{k \in \N}$ with $\Vert \V x_k \Vert \to \infty$.
    Then, either $\Vert\M H \V x_k\Vert \to \infty$ or there exists $c$ with $\Vert\M W_c \V x_k \Vert \to \infty$.
    Due to the coercivity of $\phi_j$ and $\psi_c$, the claim readily follows.
\end{proof}
Further, the set \eqref{eq:inv_spc} is nonempty if the profiles $\psi_c$ are piecewise-polynomial.
\begin{theorem}[Existence]\label{thm:existence}
    Let $\psi_c \in \mathcal{C}_{\geq0}^{1,1}(\R)$, $c=1,\ldots,N_C$, be piecewise-polynomial with finitely many pieces, and $\X \subset \R^n$ be a polyhedron.
    Then, \eqref{eq:inv_spc} is nonempty. 
\end{theorem}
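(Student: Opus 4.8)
The plan is to exploit two features of $g_{\V y}$: it is continuous and piecewise-polynomial, and it depends on $\V x$ in a very structured way. Since every $\phi_j,\psi_c\in\mathcal{C}^{1,1}_{\geq0}$, the objective is continuous and nonnegative, so its infimum $m^*\geq 0$ over the closed polyhedron $\X$ is finite. Crucially, each summand depends on $\V x$ only through a single linear functional: the data term through $\V x\mapsto(\M H\V x)_j$ and the regularizer through $\V x\mapsto(\M W_c\V x)_k$. Fixing the data $\V y$ and the weights $\boldsymbol\Lambda_c(\V y)$, we may therefore write $g_{\V y}(\V x)=\sum_i u_i(\ell_i(\V x))$, where each $\ell_i$ is linear and each $u_i\colon\R\to\R_{\geq0}$ is univariate, nonnegative and piecewise-polynomial with finitely many pieces. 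This ``separable-after-linear-map'' structure, and in particular the nonnegativity of each term (which rules out cancellation), is exactly what excludes the usual counterexamples of polynomials that are bounded below yet fail to attain their infimum.

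The elementary fact I would isolate first is that a nonnegative univariate piecewise-polynomial $u$ with finitely many pieces attains its infimum on any closed interval, including rays: on each of the finitely many bounded pieces this is continuity on a compact set, while on each of the at most two unbounded pieces $u$ equals a polynomial that, being nonnegative there, is eventually monotone, hence either constant or divergent to $+\infty$. Consequently $u$ is bounded along a ray only if it is eventually constant in that direction. Applied along lines $t\mapsto g_{\V y}(\V x+t\V d)$, this shows that $g_{\V y}$ restricted to any ray is itself univariate piecewise-polynomial, and is either eventually constant or tends to $+\infty$.

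With these preliminaries I would take a minimizing sequence $\{\V x_k\}\subset\X$ with $g_{\V y}(\V x_k)\to m^*$. If it is bounded, a subsequence converges to some $\V x^*\in\X$, and continuity yields $g_{\V y}(\V x^*)=m^*$. Otherwise I pass to a subsequence with $\Vert\V x_k\Vert\to\infty$ and $\V x_k/\Vert\V x_k\Vert\to\V d$, where $\V d$ lies in the (polyhedral) recession cone of $\X$. For every $i$ with $\ell_i(\V d)\neq0$ the argument $\ell_i(\V x_k)$ diverges, so boundedness of the nonnegative terms forces $u_i$ to be eventually constant in that direction; hence moving the $\V x_k$ forward along $\V d$ leaves $g_{\V y}$ unchanged for the tail of the sequence. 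Thus $\V d$ is effectively a \emph{flat} recession direction, along which $g_{\V y}$ is constant. I then remove such a direction by sliding each $\V x_k$ inside $\X$ along $\pm\V d$ until it meets a proper face of $\X$ (or, if $\V d$ lies in the lineality space, by passing to the quotient $\R^n/\mathrm{span}\{\V d\}$, on which $g_{\V y}$ descends because all relevant $\ell_i$ annihilate $\V d$). Since $\X$ has finitely many faces, a subsequence lands on a fixed lower-dimensional polyhedron $F$, on which $g_{\V y}$ again has the sum-of-univariate-of-linear-forms form. An induction on $\dim\X$ then delivers a minimizer.

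The main obstacle is precisely the flat recession directions. Because the $u_i$ need not be convex, retracting the minimizing sequence toward a face of $\X$ must be justified so that the objective does not increase along the way; this is where eventual monotonicity and eventual constancy of nonnegative univariate piecewise-polynomials do the real work. Genuinely growing directions are automatically excluded from a minimizing sequence, and along a flat direction the value is unchanged, so the retraction is cost-free; a useful technical observation here is that within a single polynomial cell a flat recession direction is in fact a direction of global constancy of that polynomial piece, which makes the slide well-defined. The remaining work is the bookkeeping of feasibility and of which constraints become active, which the polyhedral structure of $\X$ and the finiteness of both the cells and the faces keep under control, and which is what reduces the dimension and closes the induction. (An alternative, more abstract route would phrase the same asymptotic behavior through recession/asymptotic functions and invoke an asymptotic-level-stability criterion for attainment, but the direct induction seems cleaner given the explicit structure.)
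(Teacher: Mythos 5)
Your proposal is correct in substance but follows a genuinely different route from the paper's proof. Both arguments pivot on the same elementary dichotomy: a nonnegative polynomial piece on an unbounded interval is either constant or tends to $+\infty$, so any linear form that diverges along a minimizing sequence must see a constant piece of its potential. From there the two proofs part ways. The paper passes to a single polyhedral cell $P$ of the decomposition on which $g_{\V y}$ is a polynomial, splits the rows $\M W_{c,i}$, $\M H_j$ into those bounded and those unbounded along the sequence, lets the bounded ones converge to a limit $\V u$, and then invokes the fact that two polyhedra at distance zero must intersect to produce a point of $P\cap\{\V x:\M M\V x=\V u\}$, which is immediately a minimizer; no induction is needed. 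You instead extract a recession direction $\V d$ from an unbounded minimizing sequence, argue that $g_{\V y}$ is invariant along $\V d$ on the cell (the terms with $\ell_i(\V d)\neq 0$ reduce to constant pieces there, the remaining terms do not feel $\V d$), slide the sequence to a lower-dimensional face at zero cost, and induct on dimension --- a Frank--Wolfe-style argument. Your route is more self-contained, since it avoids the cited intersection theorem for polyhedra, at the price of the induction bookkeeping; the paper's is shorter but leans on that external result.

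One imprecision needs repair: the slide cannot be ``along $\pm\V d$ until it meets a proper face of $\X$''. Moving along $+\V d$ never meets a face (it is a recession direction), and the invariance of $g_{\V y}$ along $\V d$ is only guaranteed on the polynomial cell containing the tail of the sequence, not on all of $\X$; crossing a cell boundary can increase the objective. So the direction $\V d$ should be taken in the recession cone of the cell (pass to a subsequence lying in one cell first), the slide must go along $-\V d$, and it must stop at the relative boundary of that cell --- or, if $-\V d$ is also a recession direction of the cell, one quotients by $\V d$ as you suggest --- and the finiteness count behind ``a subsequence lands on a fixed $F$'' must run over faces of the finitely many cells rather than faces of $\X$. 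Your own observation that a flat recession direction is a direction of global constancy of the polynomial piece (a polynomial constant on a half-line is constant on the whole line, and the relevant pieces are constant functions) is exactly what makes this corrected slide cost-free, so the fix stays within the tools you set up.
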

\begin{proof}
    Since the $\psi_c$ and $\phi_j(\cdot,y_j)$ are piecewise polynomial, they partition $\R$ into finitely many closed intervals $(I_l)_{l=1}^L$ on which they are polynomials.
    Hence, if we denote the $i$-th row of $\M W_c$ and $\M H$ with $\M W_{c,i}$ and $\M H_i$, respectively, then $\psi_c( \M W_{c,i} \cdot )$ and $ \phi_j( \M H_{j} \cdot , y_j)$ partition $\X$ into $L$ polyhedra $A_{c,i}^{l}=\{\V x \in \X :  \M W_{c,i} \V x \in I_l\}$ and $B_{j}^{l}=\{\V x \in \X : \M H_{j} \V x \in I_l\}$, respectively.
    By intersecting over all possible combinations of $c$, $i$, and $j$, we get a finite decomposition of $\X$ into polyhedra on which $g_{\V y}$ is a polynomial.
    The infimum of $g_{\V y}$ is attained on (at least) one polyhedron, which we denote with $P$.
    
    Now, we pick a minimizing sequence $\{\V x_k\}_{k\in \N}\subset P$.
    Let $\M M$ be the vertical concatenation of all rows $\M H_{j}$ and $\M W_{c,i}$ such that $\{\M W_{c,i} \V x_k\}_{k \in \N}$ and $\{\M H_{j} \V x_k\}_{k \in \N}$ remain bounded.
    Since $\{\M M \V x_k\}_{k\inN}$ is bounded by construction, we can extract a convergent subsequence indexed by $N \in \mathscr{N}^{\sharp}$ with limit $\V u \in \ran(\M M)$.
    The associated set
    \begin{equation}
    Q=\{\V x\in\R^n \colon \M M\V x = \V u\} = \{\M M^\dagger \V u\} + \ker(\M M)
    \end{equation}
    is a polyhedron.
    It holds that
    \begin{equation}
    \mathrm{dist}(\V x_k, Q) = \mathrm{dist}\bigl(\M M^\dagger \M M \V x_k + \mathrm P_{\ker(\M M)}(\V x_k), Q\bigr) \leq \Vert \M M^\dagger \M M \V x_k - \M M^\dagger \V u \Vert \xrightarrow{N} 0
    \end{equation}
    and, thus, that $\mathrm{dist}(P, Q)=0$, which is only possible if $P\cap Q \neq \emptyset$ \cite[Thm.~1]{W1968}.
    
    The rows $\M W_{c,i}$ and $\M H_j$ that were not added to $\M M$ satisfy $\vert \M W_{c,i} \V x_k \vert \to \infty$ and $\vert \M H_ j \V x_k \vert \to \infty$ as $k \to \infty$.
    Hence, they are contained in unbounded 
    intervals \smash{$I_{l_{c,i}}$} and \smash{$I_{l_j}$}, respectively.
    Since $\psi_c$ and $\phi_j(\cdot, y_j)$ are nonnegative polynomials on them, $\psi_c(\M W_{c,i} \cdot)$ and $\phi_j (\M H_j \cdot, y_j)$ are constant\footnote{A non-constant polynomial cannot have a finite limit for $x \to \pm\infty$.} on $P$.
    Hence, $\lim_{k \to \infty} \psi_c(\M W_{c,i} \V x_k) = \psi_c(\M W_{c,i} \V x)$ and $\lim_{k \to \infty} \phi_j(\M H_j \V x_k, y_j) = \phi_j(\M H_j \V x, y_j)$ for every $\V x\in P\cap Q$, $c$, $i$ and $j$.
    Consequently, every $\V x\in P\cap Q$ is a minimizer of \eqref{eq:inv_spc}.
\end{proof}
\begin{remark}
    Our proof does not rely on the direct method of calculus.
    Hence, Theorem~\ref{thm:existence} does not involve the common assumption that $g_{\V y}$ is coercive.
\end{remark}
Without further assumptions (such as strict convexity), several critical points may exist for the objective $g_{\V y}$ in \eqref{eq:inv_spc}.
Hence, the critical point map $S_\lambda \colon \R^m \rightrightarrows \R^n$ with
\begin{equation}\label{eq:CritPointMap}
    S_\lambda(\V y) = \{\V x \in \R^n: \boldsymbol \nabla g_{\V y}(\V x) = \V 0 \}
\end{equation}
is set-valued.
Thus, the usual Lipschitz continuity has to be replaced with the stability concepts from Section~\ref{sec:Prelim}.
In the following, we restrict $g_{\V y}$ in three different ways, ordered according to the strength of the associated stability results.
For convex objectives $g_{\V y}$, it holds that $S_\lambda(\V y) = \argmin_{\V x \in \X} g_{\V y}(\V x)$, namely that it coincides with \eqref{eq:inv_spc}.
This is not necessarily true for nonconvex $g_{\V y}$.
\subsection{Stability: Objective with Invertible Conservative Hessian}\label{sec:StabInv}
Here, we want to analyze  $S_\lambda$ based on Theorem \ref{thm:ImplicitFunction}.
To simplify the derivations, we consider $\boldsymbol \Lambda_c \in (0,\infty)^n$ as additional input of $S_1$ ($\lambda=1$) instead of tracking the dependence on $\V y$.
In particular, we also absorb the regularization strength $\lambda$.
Despite these simplifications, we can capture the Lipschitz dependence of the original $S_\lambda$ on $\V y$ by choosing $\boldsymbol \Lambda_1 = \lambda \boldsymbol \Lambda(\V y_1)$ and $\boldsymbol \Lambda_2 = \lambda \boldsymbol \Lambda(\V y_2)$ for $\V y_1, \V y_2 \in \R^m$.

The function $f(\boldsymbol \Lambda, \V y, \V x) \coloneqq \boldsymbol \nabla_{\V x} g_{\V y}(\V x)$ induced by \eqref{eq:inv_spc} is piecewise polynomial.
Moreover, $\mathrm{graph}(f)$ is a semi-algebraic set, namely a finite union of sets of the form $\{\V z \in \R^{n}: P_1(\V z) = 0, P_2(\V z) > 0\}$ with $P_1$, $P_2$ being collections of polynomials.
Hence, $f$ is semi-algebraic  and a conservative Jacobian $J_f$ of $f$ exists \cite[Prop.\ 2(iv)]{BolPau2021}.
Due to the sum rule \cite[Cor.\ 4]{BolPau2021}, a possible one is given by
\begin{equation}\label{eq:JacFSum}
    J_{f}(\boldsymbol \Lambda, \V y, \V x) = J_{\nabla_{\V x} \mathcal D_P(\M H \V x, \V y)}(\boldsymbol \Lambda, \V y, \V x) + J_{\nabla_{\V x} \mathcal R_{\V y}}(\boldsymbol \Lambda, \V y, \V x).
\end{equation}
For the first summand in \eqref{eq:JacFSum}, we use the sum rule, the chain rule \cite[Lems.\ 5 and 6]{BolPau2021}, and the aggregation property \cite[Lem.\ 3]{BolPau2021} to obtain the conservative Jacobian
\begin{equation}\label{eq:ConJacX}
    J_{\nabla_{\V x} \mathcal D_P(\M H \V x, \V y)}(\boldsymbol \Lambda, \V y, \V x) = \begin{pmatrix}\boldsymbol{0} & \M H^T \M J^c_{\boldsymbol \phi_{\V x}(\M H \V x, \cdot)}(\V y) & \M H^T \M J^c_{\boldsymbol \phi_{\V x}(\cdot, \V y)}(\M H \V x) \M H \end{pmatrix},
\end{equation}
where $\boldsymbol \phi_{\V x}(\V x, \V y) = (\partial_{x_j} \phi_j(x_j, y_j))_{j=1}^m$.
Note that both $\M J^c_{\boldsymbol \phi_{\V x}(\M H \V x, \cdot)}(\V y)$ and $\M J^c_{\boldsymbol \phi_{\V x}(\cdot, \V y)}(\M H \V x)$ are diagonal matrices.
Using the same arguments for the second summand in \eqref{eq:JacFSum}, we get the possible choice
\begin{equation}\label{eq:ConsJacReg}
    J_{\nabla_{\V x} \mathcal R_{\V y}}(\boldsymbol \Lambda, \V y, \V x) =
    \begin{pmatrix}
        \M A(\V x) & \boldsymbol{0} &\sum_{c=1}^{N_C} \M W_c^T \mathbf{diag}\bigl(\boldsymbol \Lambda_c \odot 
        \M J^c_{\boldsymbol \psi_c^\prime}(\M W_c \V x)\bigr)  \M W_c  
    \end{pmatrix},
\end{equation}
where $\odot$ denotes the element-wise product and
\begin{equation}
    \M A(\V x) = \begin{pmatrix}
        \M W_1^T \boldsymbol \psi_1^{\prime}(\M W_1 \V x) & \dots & \M W_{N_c}^T \boldsymbol \psi_{N_c}^{\prime}(\M W_{N_c} \V x)
    \end{pmatrix} \in \R^{n,N_c \cdot n}.
\end{equation}
Now we apply Theorem \ref{thm:ImplicitFunction} to get the following result.
\begin{theorem}\label{thm:InvertJacobLip}
    Assume that $\bar{\V x} \in S_1(\bar{\boldsymbol \Lambda}, \bar{\V y})$ and that for any $\begin{pmatrix} \M A & \M B & \M C \end{pmatrix} \in
    J_{f}(\bar{\boldsymbol \Lambda},\bar{\V y}, \bar{\V x})$ as in \eqref{eq:JacFSum} the matrix $\M C$ is invertible.
    Then, there exists a neighborhood $U$ of $(\bar{\boldsymbol \Lambda}, \bar{\V y})$ such that $S_1\colon U \rightrightarrows \R^n$ has a single-valued localization $s_1\colon U \to \R^n$ which is locally Lipschitz continuous.
    If for each $(\boldsymbol \Lambda, \V y) \in U$ and $\begin{pmatrix} \M A & \M B & \M C \end{pmatrix} \in J_f(\boldsymbol \Lambda, \V y, s_1(\boldsymbol \Lambda, \V y))$ the matrix $\M C$ is invertible, a possible $J_{s_1}$ is given by
\begin{equation}\label{eq:ConJac2}
    J_{s_1} \colon  (\boldsymbol \Lambda, \V y) \rightrightarrows \bigl\{-
    \begin{pmatrix}
        \M C^{-1} \M A & \M C^{-1} \M B
    \end{pmatrix} :
    \begin{pmatrix}
        \M A & \M B & \M C
    \end{pmatrix} \in J_f(\boldsymbol \Lambda, \V y, s_1(\boldsymbol \Lambda, \V y))\bigr\}.
\end{equation}
\end{theorem}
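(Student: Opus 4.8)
The plan is to apply the implicit function theorem of Theorem~\ref{thm:ImplicitFunction} to the stationarity map $f(\boldsymbol \Lambda, \V y, \V x) = \boldsymbol \nabla_{\V x} g_{\V y}(\V x)$, with the pair $(\boldsymbol \Lambda, \V y)$ playing the role of the parameter and $\V x$ that of the implicit unknown. Under this identification, the combined block $(\M A\ \M B)$ of any element of $J_f$ in \eqref{eq:JacFSum} collects the derivatives in $(\boldsymbol \Lambda, \V y)$, whereas $\M C$ is the derivative in $\V x$. Consequently, the invertibility requirement that Theorem~\ref{thm:ImplicitFunction} imposes on the \emph{implicit-variable} block is exactly our hypothesis that $\M C$ is invertible for every element of $J_f(\bar{\boldsymbol \Lambda}, \bar{\V y}, \bar{\V x})$; one only has to keep track of the fact that the symbols $\M A, \M B$ of Theorem~\ref{thm:ImplicitFunction} correspond to our pair $(\M A\ \M B)$ and its $\M B$ to our $\M C$.

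First I would check the three hypotheses of Theorem~\ref{thm:ImplicitFunction}. The equality $f(\bar{\boldsymbol \Lambda}, \bar{\V y}, \bar{\V x}) = \V 0$ is immediate from $\bar{\V x} \in S_1(\bar{\boldsymbol \Lambda}, \bar{\V y})$ and the definition \eqref{eq:CritPointMap} of the critical-point map. Existence of the conservative Jacobian $J_f$ and its explicit form have already been recorded in \eqref{eq:JacFSum}--\eqref{eq:ConsJacReg}. The remaining point is convexity of $J_f(\bar{\boldsymbol \Lambda}, \bar{\V y}, \bar{\V x})$: every set-valued ingredient is a Clarke Jacobian $J^c$ of a separable map (of $\boldsymbol \phi$ or of $\boldsymbol \psi_c^\prime$), hence convex, and the blocks $\M B$ and $\M C$ arise from these through the fixed linear maps $D \mapsto \M H^T D$, $D \mapsto \M H^T D \M H$ and $d_c \mapsto \M W_c^T \mathbf{diag}(\boldsymbol \Lambda_c \odot d_c)\M W_c$, followed by Minkowski summation over $c$. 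Since affine images and Minkowski sums preserve convexity, $J_f(\bar{\boldsymbol \Lambda}, \bar{\V y}, \bar{\V x})$ is convex.

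With these in hand, Theorem~\ref{thm:ImplicitFunction} supplies an open neighborhood $U$ of $(\bar{\boldsymbol \Lambda}, \bar{\V y})$ and a single-valued localization $S_1 \colon U \to \R^n$ carrying a conservative Jacobian; as any map admitting a locally bounded conservative Jacobian is locally Lipschitz continuous, this already yields the asserted local Lipschitz continuity. For the second claim I would specialize the derivative formula of Theorem~\ref{thm:ImplicitFunction} with implicit-variable block $\M C$ and parameter block $(\M A\ \M B)$, which gives $-\M C^{-1}(\M A\ \M B)$ and is precisely \eqref{eq:ConJac2}; its local boundedness on $U$, and hence the effective Lipschitz bound, follows from local boundedness of $J_f$ together with the continuity of matrix inversion wherever $\M C$ stays invertible.

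The only delicate step is the convexity of $J_f$ at the base point, because conservative Jacobians assembled from the calculus rules are generically nonconvex; here it is rescued by the special structure, namely that all nonsmoothness enters solely through the convex Clarke Jacobians of the separable maps $\boldsymbol \phi$ and $\boldsymbol \psi_c^\prime$ acted on only by fixed linear operators. Should one prefer to sidestep this verification, passing to the convex hull of $J_f$ is harmless: it remains a conservative Jacobian, and because the $\M C$-block is already convex it introduces no new values of $\M C$, so that the invertibility hypothesis transfers verbatim.
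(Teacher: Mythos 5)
Your proposal is correct and follows essentially the same route as the paper, which likewise obtains the result by directly applying Theorem~\ref{thm:ImplicitFunction} to $f(\boldsymbol \Lambda, \V y, \V x) = \boldsymbol \nabla_{\V x} g_{\V y}(\V x)$ with $(\boldsymbol \Lambda, \V y)$ as parameter and the conservative Jacobian \eqref{eq:JacFSum}--\eqref{eq:ConsJacReg}. Your explicit verification of the convexity hypothesis (via affine images and Minkowski sums of Clarke Jacobians, with the convex-hull fallback) is a detail the paper leaves implicit, and you handle it correctly.
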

The Lipschitz constant of $s_1$ in Theorem \ref{thm:InvertJacobLip} can be readily estimated if $f(\boldsymbol \Lambda, \V y, \V x) \coloneqq \boldsymbol \nabla_{\V x} g_{\V y}(\V x)$ is induced by $\mathcal{D}_P(\M H \V x, \V y) = \frac{1}{2} \Vert \M H \V x - \V y \Vert^2$ with an invertible operator $\M H$ and if the $\psi_c$ are convex.
Then, \eqref{eq:ConJacX} simplifies to
\begin{equation}
J_{\nabla_{\V x} \mathcal D_P(\M H \V x, \V y)}(\boldsymbol \Lambda, \V y, \V x) = \begin{pmatrix}\boldsymbol{0} & - \M H^T  & \M H^T \M H \end{pmatrix}.
\end{equation}
Moreover, the convexity of $\psi_c$ implies $\sum_{c=1}^{N_C} \M W_c^T \mathbf{diag}(\boldsymbol \Lambda_c \odot \smash{\M J^c_{\boldsymbol \psi_c^\prime}}(\M W_c \V x))  \M W_c \succeq 0$.
Thus, we get that any \smash{$\begin{pmatrix}\M A & \M B & \M C \end{pmatrix} \in J_f(\boldsymbol \Lambda, \V y, s_1(\boldsymbol \Lambda, \V y))$} satisfies $\M C \succeq \M H^T \M H$.
Since $\M H$ is invertible, this further implies $\M C^{-1} \preceq (\M H^T \M H)^{-1}$.
Using Remark \ref{eq:RemLip} and the formula \eqref{eq:ConJac2}, we get the following estimates.
\begin{remark}[Convex $\mathcal R$]\label{rem:fixedMask}
    Let $\bar{\boldsymbol \Lambda}$ be fixed and $U$ be the neighborhood from Theorem \ref{thm:InvertJacobLip}.
    Then, a possible Lipschitz constant of $s_1(\bar{\boldsymbol \Lambda}, \cdot)$ on $\{\V y: (\bar{\boldsymbol \Lambda}, \V y ) \in U\}$ reads
    \begin{align}\label{eq:EstConv}
        L_{\V y}(\bar{\boldsymbol \Lambda}) & = \sup_{\V y: (\bar{\boldsymbol \Lambda}, \V y ) \in U} \sup_{(\V V_{\boldsymbol \Lambda},\V V_{\V y}) \in J_{s_1}(\bar{\boldsymbol \Lambda}, \V y)} \Vert \V V_{\V y} \Vert = \sup_{\V y: (\bar{\boldsymbol \Lambda}, \V y ) \in U} \sup_{\M C \in J_{f}(\bar{\boldsymbol \Lambda}, \V y, s_1(\bar{\boldsymbol \Lambda}, \V y))} \Vert \M C^{-1} \M H^T \Vert \notag\\
        & \leq \Vert (\M H^T \M H)^{-1} \M H^T \Vert = \Vert \M H^{-1}\Vert,
    \end{align}
    which coincides with the result in \cite[Prop.\ III.2.]{GouNeuBoh2022}.
    To obtain the estimate \eqref{eq:EstConv}, we dropped the summand $\sum_{c=1}^{N_C} \M W_c^T \mathbf{diag}(\boldsymbol \Lambda_c \odot \smash{\M J^c_{\boldsymbol \psi_c^\prime}}(\M W_c  s_1(\bar{\boldsymbol \Lambda}, \V y)))  \M W_c$ of $\M C$, which can locally improve the estimate.
\end{remark}
\begin{remark}[Mask sensitivity for convex $\mathcal R_{\V y}$]\label{rem:fixedData}
     For fixed $\bar{\V y}$, we bound the Lipschitz constant of $s_1(\cdot, \bar{\V y})$ on $\{\boldsymbol \Lambda: (\boldsymbol \Lambda, \bar{\V y} ) \in U\}$ as
    \begin{align}
        L_{\boldsymbol \Lambda}(\bar{\V y}) &= \sup_{\boldsymbol \Lambda: (\boldsymbol \Lambda, \bar{\V y} ) \in U} \sup_{(\V V_{\boldsymbol \Lambda},\V V_{\V y}) \in J_{s_1}(\boldsymbol \Lambda, \bar{\V y})} \Vert \V V_{\boldsymbol \Lambda} \Vert = \sup_{\boldsymbol \Lambda: (\boldsymbol \Lambda, \bar{\V y} ) \in U} \sup_{\M C \in J_{f}(\boldsymbol \Lambda, \bar{\V y}s_1(\boldsymbol \Lambda, \bar{\V y}))} \Vert \M C^{-1} \M A(s_1(\boldsymbol \Lambda, \bar{\V y})) \Vert \notag\\
        & \leq  \Vert \M H^{-1}\Vert^2  \sup_{\boldsymbol \Lambda: (\boldsymbol \Lambda, \bar{\V y} ) \in U} \Vert \M A(s_1(\boldsymbol \Lambda, \bar{\V y}))\Vert\leq  \Vert \M H^{-1}\Vert^2 \biggl(\sum_{c=1}^{N_c} \Vert \psi_c^\prime \Vert_{C^0}^2 \Vert \M W_c \Vert^2\biggr)^{1/2}.
    \end{align}
    This estimate depends inherently on $\Vert \M H^{-1}\Vert$ and is similar to the one in \cite[Prop.\ 4.3(i)]{KofAltBa2023}.
\end{remark}
\begin{remark}
    Combining the estimates in Remark \ref{rem:fixedMask} and \ref{rem:fixedData}, we get a (loose) Lipschitz bound with respect to both $\boldsymbol \Lambda$ and $\V y$.
    To get a better local estimate, we can directly compute a bound based on $J_f(\boldsymbol \Lambda, \V y, s_1(\boldsymbol \Lambda, \V y))$ around $(\bar{\boldsymbol \Lambda}, \bar{\V y}, s_1(\bar{\boldsymbol \Lambda}, \bar{\V y}))$.
    In either case, $s_1$ is Lipschitz continuous in $\V y$ and $\boldsymbol \Lambda$.
    Hence, if we decrease the noise level and $\lambda$, the reconstructions will converge to the noise-free solution (the so-called convergence for vanishing noise, see Section \ref{sec:VanishNoise}).
\end{remark}
\subsection{Stability: Polyhedral Reconstruction Map}\label{sec:StabPoly}
Now, we assume that $\partial_x \phi_j$ and $\psi_c^\prime$ are piecewise affine (instead of only piecewise polynomial) and set $\boldsymbol{\Lambda_c} = \boldsymbol{1}_n$ in \eqref{eq:inv_spc}.
Recall that any $\V x \in S_\lambda(\V y)$ (see \eqref{eq:CritPointMap}) with $\V y \in \R^d$ satisfies
\begin{equation}\label{eq:optimalityeq}
    f(\V y,\V x) \coloneqq  \nabla_{\V x} g_{\V y} (\V x) = \M H^T \boldsymbol \phi_{\V x}(\M H \V x, \V y) + \lambda \sum_{c=1}^{N_C} \M W_c^T \boldsymbol \psi_c^\prime(\M W_c \V x) = \V 0.
\end{equation}
Since $\partial_x \phi_j$ and $\psi_c^\prime$ are piecewise affine, we can decompose $\R^n \times \R^m$ into polyhedra on which \eqref{eq:optimalityeq} turns into a linear equation.
Hence, $\operatorname{Graph}(S_\lambda)$ is the union of finitely many polyhedra and therefore $S_\lambda$ a polyhedral map.
A dependence of $\boldsymbol{\Lambda_c}$ on $\V y$ would destroy this structure due to the multiplicative relation with $\V x$ in \eqref{eq:inv_spc}.
The observations in Section \ref{sec:Prelim} lead to the following result.
\begin{theorem}[Stability of Minimizers]\label{thm:stab}
Let $\psi_c^\prime$ and $\partial_{\V x}\phi_j$ be piecewise affine, $\lambda>0$ and $\V y_k \to \V y \in \R^m$.
%For each $k \in \mathbb{N}$, fix some $\V x_k \in S_\lambda(\V y_k)$.
Then, $S_\lambda$ is outer Lipschitz continuous, namely there exists $\kappa>0$ such that every $\V y$ has a neighborhood $V$ such that $ S_\lambda(\V y_k) \subset S_\lambda(\V y) + \kappa \Vert \V y_k -\V y \Vert \mathbb B$ for all $\V y_k \in V$.
Moreover, there exists a set $D \subset \R^m$ with $\dim(D) \leq m-1$ such that for any $\V x \in S_\lambda(\V y)$ with $\V y \in \R^m \setminus D$, there exists a neighborhood $V$ of $\V y$ such that $\V x \in S_\lambda(\V y_k)+\kappa \Vert \V y_k - \V y \Vert \mathbb{B}$ for all $\V y_k \in V$.
\end{theorem}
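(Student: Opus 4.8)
The plan is to exploit the polyhedrality of $S_\lambda$ that was sketched just before the statement and then to invoke the two structural results for polyhedral maps recorded in Section~\ref{sec:Prelim}.

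First I would make the polyhedral decomposition precise. Since each $\psi_c^\prime$ and each $\partial_x\phi_j$ is piecewise affine with finitely many pieces, the domains of these pieces are finitely many intervals, respectively polyhedra in the $((\M H\V x)_j, y_j)$-variables. Pulling these back through the affine maps $\V x \mapsto \M W_{c,i}\V x$ and $(\V x,\V y)\mapsto ((\M H\V x)_j, y_j)$ yields finitely many polyhedra in $\R^m\times\R^n$, and intersecting over all channels $c$, all rows $i,j$, and all choices of pieces produces a finite polyhedral partition $\{P_r\}_r$ of $\R^m\times\R^n$. On each $P_r$ the map $f$ from \eqref{eq:optimalityeq} is affine, so $\operatorname{Graph}(S_\lambda)\cap P_r = \{(\V y,\V x)\in P_r : f(\V y,\V x)=\V 0\}$ is the intersection of a polyhedron with an affine subspace and hence again a polyhedron. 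Taking the union over the finitely many $P_r$ shows that $\operatorname{Graph}(S_\lambda)$ is a finite union of polyhedra, i.e.\ $S_\lambda$ is polyhedral.

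With polyhedrality established, the first assertion is immediate: by \cite[Thm.~3D.1]{DonRoc2009} every polyhedral map is outer Lipschitz continuous with a modulus $\kappa$ that may be chosen independently of the base point. This is precisely the stated inclusion $S_\lambda(\V y_k)\subset S_\lambda(\V y)+\kappa\Vert\V y_k-\V y\Vert\mathbb{B}$ for all $\V y_k$ in a neighborhood $V$ of $\V y$, with $\V y$ playing the role of the reference point (recall $\mathrm{dom}(S_\lambda)=\R^m$ by Theorem~\ref{thm:existence}). For the second assertion I would use the stronger strict continuity of polyhedral maps away from a thin set, namely \cite[Prop.~35]{DanPan2011}: there is a set $D$ with $\dim(D)\le m-1$ such that $S_\lambda$ is strictly continuous at every $\V y\in\R^m\setminus D$ with a uniform modulus, which, after replacing $\kappa$ by the maximum of the two moduli, we may take to equal the $\kappa$ from the first part. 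Fixing $\V y\notin D$ and $\V x\in S_\lambda(\V y)$, I would choose $\rho>\Vert\V x\Vert$ and apply the strict-continuity inclusion \eqref{eq:StrictCon} with the two reference points set to $\V y$ and $\V y_k$, giving $S_\lambda(\V y)\cap\rho\mathbb{B}\subset S_\lambda(\V y_k)+\kappa\Vert\V y-\V y_k\Vert\mathbb{B}$ for all $\V y_k$ in a neighborhood $V$ of $\V y$; since $\V x\in S_\lambda(\V y)\cap\rho\mathbb{B}$, this yields $\V x\in S_\lambda(\V y_k)+\kappa\Vert\V y_k-\V y\Vert\mathbb{B}$, as claimed.

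The genuinely load-bearing step is the first one, establishing that $\operatorname{Graph}(S_\lambda)$ is a finite union of polyhedra; everything else is a direct citation. The main subtlety I would watch for is that the two assertions are not symmetric: outer Lipschitz continuity is one-sided and holds everywhere, whereas the reverse inclusion of the second assertion is an inner/strict-continuity property that genuinely fails on the exceptional lower-dimensional set $D$. I would therefore be careful to invoke strict continuity, and not merely outer Lipschitz continuity, for the second part, and to track both the direction of the inclusion and the role of $\rho$ when passing from the set-valued bound to the statement for the fixed point $\V x$.
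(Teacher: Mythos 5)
Your proposal is correct and follows essentially the same route as the paper's proof: polyhedrality of $\operatorname{Graph}(S_\lambda)$ from the piecewise-affine structure, then \cite[Thm.~3D.1]{DonRoc2009} for the outer Lipschitz claim and \cite[Prop.~35]{DanPan2011} with a suitable $\rho > \Vert \V x \Vert$ for the strict-continuity-based second claim. Your write-up is in fact slightly more detailed than the paper's (the explicit polyhedral partition and the remark on taking the maximum of the two moduli), but the substance is identical.
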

\begin{proof}
    The first claim follows directly from the fact that $S_\lambda$ is a polyhedral map \cite[Thm.~3D.1]{DonRoc2009}.
    For the second part, we apply \cite[Prop.\ 35]{DanPan2011}, which yields strict continuity \eqref{eq:StrictCon} of $S_\lambda$ outside a set $D$ with $\mathrm{dim}(d) \le m-1$.
    Given $\V x \in S_\lambda(\V y)$ with $\V y \in \R^m \setminus D$, we choose $\rho = \Vert x \Vert + 1 >0$.
    Then, there exists a neighborhood $V$ of $\V y$ such that $\V x \in S_\lambda(\V y_k)+\kappa \Vert \V y_k - \V y \Vert \mathbb{B}$ for all $\V y_k \in V$.
\end{proof}
\begin{remark}
    Let $\mathcal{D}_P(\M H \V x, \V y) = \frac{1}{2} \Vert \M H \V x - \V y \Vert^2$ and $\M D_{\boldsymbol \psi_c,j} \in \R^{n,n}$ be the diagonal matrices with the slopes of $\psi_c^\prime$ on the polyhedra where $f$ is affine (these are independent of $\V y$).
    Then, the $\kappa$ in Theorem \ref{thm:stab} is proportional to the inverse of the smallest singular value of the matrices $\M H^T \M H + \lambda \sum_{c=1}^{N_c} \M W_c^T \M D_{\boldsymbol \psi_c,j} \M W_c$.
    If $S_\lambda$ is single-valued, Theorem \ref{thm:stab} tells us that $S_\lambda$ is locally Lipschitz continuous.
\end{remark}

\subsection{Stability: Convex Objective}\label{sec:StabConv}
Let $\psi_c^\prime$ and $\partial_{x}\phi_j$ be piecewise affine with $\psi_c^{\prime \prime}\geq 0$ and $\partial^2_{x}\phi_j> 0$ a.e.
Then, $\mathcal R_{\V y}$ is convex, and we derive two different results.
First, we show the Lipschitz continuity of the $S_\lambda$ from \eqref{eq:CritPointMap} for $\boldsymbol{\Lambda}_c = \V 1_n$.
For a Lipschitz continuous mask $\boldsymbol{\Lambda}(\V y)$, we show the weaker Aubin property \eqref{eq:Aubin}.

\paragraph{Constant Mask}
If the problem underlying the critical-point condition \eqref{eq:optimalityeq} is convex $\mathcal R_{\V y}$, then $S_\lambda$ is Lipschitz continuous.
\begin{theorem}[Stability for Constant Mask]\label{thm:LipConv}
    Let $\lambda>0$,  $\psi_c^\prime$ and $\partial_{x}\phi_j$ be piecewise affine with $\psi_c^{\prime \prime}\geq 0$ and $\partial^2_{x}\phi_j> 0$ a.e., and $\boldsymbol{\Lambda}_c = \V 1_n$, $c=1,...,N_c$.
    Then, $S_\lambda$ is Lipschitz continuous, i.e., there exists $\kappa>0$ such that
    \begin{equation}
    S_\lambda(\V y^{\prime}) \subset S_\lambda(\V y)+\kappa\Vert \V y^{\prime} - \V y\Vert  \mathbb{B} \quad \text { for all } \V y^{\prime}, \V y \in \R^m.
    \end{equation}
\end{theorem}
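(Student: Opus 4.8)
The plan is to combine the outer Lipschitz continuity already available from Theorem~\ref{thm:stab} with an inner-semicontinuity (isc) argument, and then invoke the characterization \cite[Thm.~3D.3]{DonRoc2009}: a closed-valued map is Lipschitz continuous relative to $D$ with constant $\kappa$ if and only if it is both isc and outer Lipschitz continuous relative to $D$ with that same $\kappa$. Since $\boldsymbol{\Lambda}_c = \V 1_n$ keeps the optimality condition \eqref{eq:optimalityeq} piecewise linear in $(\V y, \V x)$, the map $S_\lambda$ is polyhedral and Theorem~\ref{thm:stab} already supplies the outer Lipschitz bound with a uniform modulus. The whole burden therefore reduces to establishing isc, which is exactly where the convexity hypotheses $\psi_c^{\prime\prime} \geq 0$ and $\partial_x^2 \phi_j > 0$ enter.

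To prove isc I would fix $\V y_0$, a point $\V x_0 \in S_\lambda(\V y_0)$, and a sequence $\V y_k \to \V y_0$, and construct $\V x_k \in S_\lambda(\V y_k)$ with $\V x_k \to \V x_0$. It suffices to show that for every small $\epsilon > 0$ the local critical-point set $\{\V h \in B_\epsilon(\V 0) : f(\V y_k, \V x_0 + \V h) = \V 0\}$ is nonempty once $k$ is large. Because $\psi_c^\prime$ and $\partial_x \phi_j$ are piecewise affine, on a sufficiently small ball $B_\epsilon(\V x_0)$ both $\boldsymbol \psi_c^\prime(\M W_c\,\cdot)$ and $\partial_{\V x}\boldsymbol \phi(\M H\,\cdot,\cdot)$ coincide with their directional linearizations at $\V x_0$, with an accompanying $\V y$-neighborhood whose radius can be chosen independently of $\epsilon$. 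Substituting these linearizations into \eqref{eq:optimalityeq} and using that $\V x_0$ solves the unperturbed equation turns the local problem into an affine system in $\V h$ driven by the data perturbation $\V h_{\V y} = \V y_k - \V y_0$.

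The crux is then to solve this affine system with a size estimate linear in $\Vert \V h_{\V y}\Vert$. Under $\psi_c^{\prime\prime} \geq 0$ and $\partial_x^2 \phi_j > 0$, the linearized critical set coincides with the set of minimizers of a lower-bounded convex quadratic, so Theorem~\ref{thm:existence} guarantees a solution. I would partition $\R^n$ into finitely many polyhedra on which the constraint is a single affine equation $\M E_i \V x = -\M H^T \M D_{\boldsymbol \phi,\V y,i} \V h_{\V y}$, refine the partition so that no piece contains a line, and select an extreme point $\hat{\V x}_k$ of a nonempty candidate polyhedron. Such an extreme point satisfies a square invertible subsystem indexed by some $J$ whose right-hand side is proportional to $\V h_{\V y}$; since only finitely many index sets $J$ can occur, the corresponding inverses are uniformly bounded and yield $\Vert \hat{\V x}_k \Vert \leq C \Vert \V h_{\V y,k}\Vert$. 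Letting $\V h_{\V y,k} \to \V 0$ forces $\hat{\V x}_k \to \V 0$, so the local solution set is eventually nonempty and isc follows; the uniform constant $C$ is what permits the same $\kappa$ to serve in both halves of \cite[Thm.~3D.3]{DonRoc2009}. As an alternative to the explicit extreme-point construction, the Hoffman bound \eqref{eq:HoffmannEst}--\eqref{eq:hoffmanBound} applied to these polyhedra delivers the same uniform linear estimate directly.

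The main obstacle I anticipate is precisely this quantitative step: it is not enough to know that \emph{some} critical point lies near $\V x_0$; one must produce one whose distance to $\V x_0$ is controlled linearly by $\Vert \V y_k - \V y_0\Vert$ with a constant independent of $k$ and compatible with the outer Lipschitz modulus. The delicate points are guaranteeing that the linearization radius $\epsilon$ can be fixed uniformly along the sequence, ruling out candidate polyhedra containing lines so that extreme points exist, and verifying the uniform bound over the finitely many subsystems. Once these are secured, convexity ensures solvability of the linearized problem and the polyhedral geometry supplies the Lipschitz modulus.
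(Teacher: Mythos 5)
Your proposal is correct and its skeleton coincides with the paper's proof: both reduce the claim to inner semicontinuity plus outer Lipschitz continuity via \cite[Thm.~3D.3]{DonRoc2009}, obtain the outer Lipschitz part from Theorem~\ref{thm:stab}, and prove isc by linearizing the optimality condition around $(\V y_0,\V x_0)$ on a small ball, using the convexity hypotheses $\psi_c^{\prime\prime}\geq 0$ and $\partial^2_{x}\phi_j>0$ together with Theorem~\ref{thm:existence} to guarantee that the linearized system is solvable. The only divergence is the final quantitative step. The paper applies the Hoffman bound \eqref{eq:HoffmannEst} with $\V x^\prime=\V 0$ directly to a nonempty polyhedron $P_i(\V h_{\V y_k})$, which immediately yields a solution $\hat{\V x}_k$ with $\Vert \hat{\V x}_k\Vert \leq K(\M E_i,\M F_i)\,\bigl\Vert \M H^T \M D_{\boldsymbol\phi,\V y_k,i}\V h_{\V y_k}\bigr\Vert$ and a constant uniform over the finitely many pieces. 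Your primary route instead refines the polyhedral partition so that no piece contains a line, takes an extreme point of a nonempty $P_i(\V h_{\V y_k})$, and bounds it through the uniformly bounded inverses of the finitely many invertible square subsystems selected from the active constraints; this is also valid (extreme points exist precisely because the pieces contain no lines, and their active-constraint characterization produces the square system whose right-hand side is linear in $\V h_{\V y_k}$), but it carries the extra refinement and extreme-point bookkeeping that the Hoffman bound packages for free. Since you explicitly identify the Hoffman-bound shortcut as an equivalent alternative, the two arguments are materially the same, and either one completes the isc half with the uniform linear estimate needed to combine with the outer Lipschitz modulus.
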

\begin{proof}
    The map $S_\lambda$ is Lipschitz continuous with constant $\kappa$ if it is both isc and outer Lipschitz continuous with constant $\kappa$ \cite[Thm.~3D.3]{DonRoc2009}.
    By Theorem~\ref{thm:stab}, $S_\lambda$ is outer Lipschitz continuous.
    Thus, it suffices to show that $S_\lambda$ is isc, i.e., that for every $\V y_0 \in \mathrm{dom}(S_{\lambda})$, $\V x_0 \in S_{\lambda}(\V y_0)$ and every sequence $\V y_k \to \V y_0$ there exist  $\V x_k \to \V x_0$ such that $\V x_k \in S_{\lambda}(\V y_k)$ for all $k \in \N$.
    Equivalently, we can show that for any ball $B_\epsilon(\V x_0) = \{ \V x \in \R^n: \Vert \V x - \V x_0 \Vert < \epsilon \}$ there exists $k_\epsilon$ such that for $k \geq k_\epsilon$ it holds
    \begin{equation}\label{eq:LocalProblem}
        S_{\lambda,\epsilon}(\V y_k) \coloneqq \bigl \{\V h_{\V x} \in B_\epsilon(\V 0): \ f(\V y_k,\V x_0 + \V h_{\V x}) = \V 0 \bigr\} \neq \emptyset.
    \end{equation}
    Since $\psi_c^\prime$ is piecewise affine, we can take $B_\epsilon(\V x_0)$ small enough such that all $\boldsymbol \psi^\prime_c(\M W_c \cdot) \colon B_\epsilon(\V x_0) \to \R^n$ coincide with their directional linearizations at $\V x_0$, namely
    \begin{equation} \label{eq:psi_prime_linearization}
        \boldsymbol \psi_c^\prime \bigl(\M W_c (\V x_0 + \V h_{\V x})\bigr) = \boldsymbol \psi_c^\prime(\M W_c \V x_0) + \boldsymbol \psi^{\prime\prime}_c (\M W_c \V x_0; \M W_c \V h_{\V x}) \M W_c \V h_{\V x} \quad \text{ for all } \V h_{\V x} \in B_\epsilon(\V 0),
    \end{equation}
    where $\boldsymbol \psi^{\prime\prime}_c (\V x; \V h) = \mathbf{diag}((\psi_c^{\prime\prime}(x_k; h_k))_{k=1}^n)$ with $\psi_c^{\prime\prime}(x_k; h_k) = \lim_{t \searrow 0} \frac{\psi_c^{\prime}(x_k + t h_k) - \psi_c^{\prime}(x_k)}{t h_k}$.
    Similarly, for $B_\epsilon(\V x_0)$ small enough, we can find $B_\delta(\V y_0)$ independent of $\epsilon$ such that $\boldsymbol \phi_{\V x} (\M H \cdot, \cdot) \colon B_\epsilon(\V x_0) \times B_\delta(\V y_0) \to \R^n$
    coincides with its directional linearization at $(\V x_0, \V y_0)$, namely
    \begin{equation} \label{eq:phi_prime_linearization}
    \boldsymbol \phi_{\V x}\bigl(\M H (\V x_0 + \V h_{\V x}),\V y_0 + \V h_{\V y}\bigr) = \boldsymbol \phi_{\V x}(\M H \V x_0,\V y_0) + \boldsymbol \phi_{\V x \V x} (\M H \V x_0, \V y_0; \M H \V h_{\V x}) \M H \V h_{\V x} + \boldsymbol \phi_{\V x \V y}(\M H \V x_0, \V y_0; \V h_{\V y}) \V h_{\V y}
    \end{equation}
    for all $\V h_{\V x} \in B_\epsilon(\V 0)$ and $\V h_{\V y} \in B_\delta(\V 0)$, where the directional derivatives $\boldsymbol \phi_{\V x \V x}$ and $\boldsymbol \phi_{\V x \V y}$ of $\boldsymbol \phi_{\V x}$ are defined like $\boldsymbol \psi^{\prime\prime}_c$.
    Using that \eqref{eq:optimalityeq} holds for $\V x_0 \in S_{\lambda}(\V y_0)$, we get with the shorthands $\M D_{\boldsymbol \phi,\V x}(\V h_{\V x}) = \boldsymbol \phi_{\V x \V x}(\M H \V x_0, \V y_0; \M H \V h_{\V x} )$, $\M D_{\boldsymbol \phi,\V y}(\V h_{\V y}) =  \boldsymbol \phi_{\V x \V y}(\M H \V x_0, \V y_0; \V h_{\V y} )$ and $\M D_{\boldsymbol \psi_c}(\V h_{\V x}) = \boldsymbol \psi_c^{\prime\prime} (\M W_c \V x_0; \M W_c \V h_{\V x})$ that for any $\V h_{\V y} \in B_\delta(\V 0)$ it holds 
    \begin{align}
        S_{\lambda,\epsilon}(\V y_0 + \V h_{\V y}) = & \biggl \{\V h_{\V x} \in B_\epsilon(\V 0): \M H^T \bigl(\M D_{\boldsymbol \phi,\V x}(\V h_{\V x}) \M H \V h_{\V x} + \M D_{\boldsymbol \phi,\V y}(\V h_{\V y}) \V h_{\V y} \bigr) + \lambda \sum_{c=1}^{N_C} \M W_c^T \M D_{\boldsymbol \psi_c}(\V h_{\V x}) \M W_c \V h_{\V x} = \V 0 \biggr\}\notag\\
        \subset & \biggl \{\V x \in \R^n : \M H^T \bigl(\M D_{\boldsymbol \phi,\V x}(\V x) \M H \V x + \M D_{\boldsymbol \phi,\V y}(\V h_{\V y}) \V h_{\V y}\bigr) + \lambda \sum_{c=1}^{N_C} \M W_c^T \M D_{\boldsymbol \psi_c}(\V x) \M W_c \V x = \V 0 \biggr\}.\label{eq:AuxProb}
    \end{align}
    Since $\psi_c^{\prime\prime} \geq 0$ and $\partial^2_{x}\phi_j>0$ a.e., \eqref{eq:AuxProb} coincides with the minimizers of the convex problem
    \begin{equation}
        \argmin_{\V x \in \R^n} \bigl\langle \M D_{\boldsymbol \phi,\V y}(\V h_{\V y})\V h_{\V y}, \M H \V x \bigr\rangle + \frac12 \Vert \M H \V x \Vert^2_{\M D_{\boldsymbol \phi,\V x}(\V x)} + \frac{\lambda}{2} \sum_{c=1}^{N_C} \Vert \M W_c \V x \Vert^2_{\M D_{\boldsymbol \psi_c}(\V x)},\label{eq:AuxProb2}
    \end{equation}
    where $\Vert \V x \Vert_{\M D}^2 = \langle \V x, \M D \V x \rangle$ (any symmetric positive semi-definite matrix $\M D$ induces a semi-norm). 
    Since the objective in \eqref{eq:AuxProb2} is lower bounded, Theorem~\ref{thm:existence} implies that \eqref{eq:AuxProb2} is nonempty.
    
    Next, we show that if $\V h_{\V y,k} \to \V 0$, then there exist $\hat{\V x}_k \in S_{\lambda,\epsilon}(\V y_0 + \V h_{\V y_k})$ with $\hat{\V x}_k \to \V 0$. 
    For this, we partition $\R^n$ into polyhedra $\Omega_i = \{\V x \in \R^n : \M F_i \V x \leq \V 0\}$ on which the equation in \eqref{eq:AuxProb} becomes affine in $\V x$ with matrices $\M E_i = \M H^T \M D_{\boldsymbol \phi,\V x, i} \M H + \lambda \sum_{c=1}^{N_C} \M W_c^T \M D_{\boldsymbol \psi_c,i} \M W_c$.
    %By refining this partition, we can assume without loss of generality that no $\Omega_i$ contains a linear subspace of $\R^n$.
    Since \eqref{eq:AuxProb} is nonempty, at least one of the polyhedra
    \begin{equation}
        P_i(\V h_{\V y_k}) = \bigl \{\V x \in \R^n:\M E_i \V x = - \M H^T \M D_{\boldsymbol \phi,\V y_k,i} \V h_{\V y_k},\, \M F_{i} \V x \leq \V 0 \bigr\}
    \end{equation}
    is nonempty.
    Now, we make use of the Hoffman bound \eqref{eq:HoffmannEst}, which gives us for $\V x^\prime = \V 0$ that
    \begin{align}
    \min_{\V x \in P_i(\V h_{\V y_k})} \Vert \V x \Vert  \leq K(\M E_i, \M F_i) \bigl\Vert \M H^T \M D_{\boldsymbol \phi,\V y_k,i} \V h_{\V y_k}\bigr\Vert,
    \end{align}
    from which we infer the existence of $\hat{\V x}_k \in S_{\lambda,\epsilon}(\V y_0 + \V h_{\V y_k})$ with $\hat{\V x}_k \to \V 0$.
    Thus, also \eqref{eq:LocalProblem} is satisfied.
\end{proof}

\begin{remark}
    For the weakly convex setting from \cite{GouNeuUns2023}, this proof does not apply as \eqref{eq:AuxProb2} might be empty.
    Instead, we could check for the openness of $S_\lambda$ directly to obtain Lipschitz continuity.
\end{remark}

\paragraph{Data-Dependent Mask}

Now, we provide a stability result for a Lipschitz continuous mask $\boldsymbol{\Lambda}(\V y)$. 
Recall that the critical point condition for \eqref{eq:inv_spc} given some $\V y \in \R^m$ reads
\begin{equation}\label{eq:optimalityeq_mask}
    f(\V y,\V x) = \M H^T \boldsymbol \phi_{\V x}(\M H \V x, \V y) + \lambda \sum_{c=1}^{N_C} \M W_c^T \boldsymbol \Lambda_c(\V{y}) \odot \boldsymbol \psi_c^\prime(\M W_c \V x) = \V 0,
\end{equation}
where the $\boldsymbol \Lambda_c(\V{y})$ makes $\operatorname{Graph}(S_\lambda)$ non-polyhedral.
Hence, the proof of Theorem \ref{thm:LipConv} needs to be modified, and we can only establish the weaker Aubin property \eqref{eq:Aubin}.
First, we decompose $\R^n \times \R^m$ into polyhedra $\Omega_{i} = \{(\V x, \V y) \in \R^n \times \R^m: \M F_i \V x + \M G_i \V y \leq \V q_i \}$ with $\M F_i \in \R^{l,n}$, $\M G_i \in \R^{l,m}$ and $\V q_i \in \R^{l}$ such that that on each $\Omega_i$ we have $\boldsymbol \psi_c^\prime(\M W_c \V x) = \M D_{\boldsymbol \psi_c,i} \M W_c \V x + \V d_{\boldsymbol \psi_c,i}$ and $\boldsymbol \phi_{\V x}(\M H \V x, \V y) = \M D_{\boldsymbol \phi, \V x,i} \M H \V x + \M D_{\boldsymbol \phi, \V y,i} \V y + \V d_{\boldsymbol \phi,i}$ with nonnegative diagonal matrices $\M D_{\boldsymbol \psi_c,i} \in \R^{n,n}$ and $\M D_{\boldsymbol \phi, \V x, i} , \M D_{\boldsymbol \phi, \V y, i} \in \R^{m,m}$, see also \eqref{eq:psi_prime_linearization} and \eqref{eq:phi_prime_linearization}.
Then, $f_{\V y} \coloneq f(\V y,\cdot)$ has the affine components $f_{\V y, i} (\V x) = \M E_{\V y,i} \V x + \V b_{\V y,i}$ with
\begin{align}
    \M E_{\V y,i} &= \M H^T \M D_{\boldsymbol \phi, \V x,i} \M H + \lambda \sum_{c=1}^{N_C} \M W_c^T \mathbf{diag}\bigl(\boldsymbol \Lambda_c(\V{y})\bigr) \M D_{\boldsymbol \psi_c,i} \M W_c \in \R^{n,n}, \\
    \V b_{\V y,i} &=  \M H^T ( \M D_{\boldsymbol \phi, \V y,i} \V y  + \V d_{\phi,i} ) + \lambda \sum_{c=1}^{N_C} \M W_c^T \boldsymbol \Lambda_c(\V{y}) \odot \V d_{\boldsymbol{\psi}_c,i} \in \R^{n},
\end{align}
and $\mathrm{graph} (f_{\V y}) = \bigcup_i \mathrm{graph} (f_{\V y,i})$.
With this, we can formulate the main result of this section.
\begin{theorem}[Stability for Lipschitz Mask]
    Let $\boldsymbol{\Lambda}$ be Lipschitz continuous and  $\psi_c^\prime$, $\partial_{ x}\phi_j$ be piecewise affine with $\psi_c^{\prime \prime}\geq 0$ and $\partial^2_{x}\phi_j> 0$ a.e. 
    Then, $S_\lambda$ has the Aubin property at every $(\bar{\V y}, \bar{\V x}) \in \mathrm{Graph}(S_\lambda)$, namely there exists $\kappa>0$ and neighborhoods $V$ and $U$ of $\bar{\V y}$ and $\bar{\V x}$, respectively, such that    \begin{equation}\label{eq:AubinconvMask}
    S_\lambda(\V y^{\prime})\cap U \subset S_\lambda(\V y)+\kappa\Vert \V y^{\prime} - \V y\Vert  \mathbb{B} \quad \text { for all } \V y^{\prime}, \V y \in V.
    \end{equation}
    Moreover, if all $\boldsymbol \Lambda_c$ are semi-algebraic, then there exists a set $D \subset \R^m$ with $\dim(D) \leq m-1$ such that $S_\lambda$ is even strictly continuous \eqref{eq:StrictCon} at any $\bar{\V y} \in D$.
\end{theorem}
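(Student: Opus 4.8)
The plan is to deduce \eqref{eq:AubinconvMask} from the general implicit function theorem (Theorem~\ref{thm:GeneralImplicitFunction}) applied to the optimality map $f$ from \eqref{eq:optimalityeq_mask} at the fixed point $(\bar{\V y}, \bar{\V x}) \in \mathrm{Graph}(S_\lambda)$, for which $f(\bar{\V y}, \bar{\V x}) = \V 0$. Concretely, I would verify the theorem's two hypotheses: that $f(\cdot, \V x)$ is $\alpha$-Lipschitz for all $\V x$ in a neighborhood of $\bar{\V x}$, and that the partial inverse $f^{-1}(\V y, \cdot)$ is $K$-calm at $\V 0$ for $\bar{\V x}$, uniformly for $\V y$ in a neighborhood of $\bar{\V y}$. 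The Aubin property with $\kappa \leq K\alpha$ then follows at once, since $S_\lambda(\V y) = f^{-1}(\V y, \V 0)$.

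The Lipschitz condition is the routine part. For fixed $\V x$, the term $\M H^T \boldsymbol \phi_{\V x}(\M H \V x, \V y)$ is Lipschitz in $\V y$ because $\partial_x \phi_j$ is piecewise affine and therefore globally Lipschitz; the mask $\boldsymbol \Lambda_c$ is Lipschitz by assumption; and $\boldsymbol \psi_c^\prime(\M W_c \V x)$ is a fixed, locally bounded vector. Hence $\V y \mapsto f(\V y, \V x)$ is Lipschitz with a constant $\alpha$ that can be chosen uniformly for $\V x$ near $\bar{\V x}$, where $\boldsymbol \psi_c^\prime(\M W_c \V x)$ stays bounded.

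The calmness condition is the heart of the matter and the main obstacle. For fixed $\V y$, the slice $f_{\V y} = f(\V y, \cdot)$ is piecewise affine: on the ($\V y$-sliced) polyhedra $\Omega_i$, where the constraints on $\V x$ read $\M F_i \V x \le \V q_i - \M G_i \V y$ with $\M F_i$ independent of $\V y$, it equals $\V x \mapsto \M E_{\V y, i} \V x + \V b_{\V y, i}$. Applying the Hoffman bound \eqref{eq:HoffmannEst} to $\V x^\prime$ near $\bar{\V x}$ lying in piece $i$ gives $\mathrm{dist}(\V x^\prime, f_{\V y}^{-1}(\V 0) \cap \Omega_i) \le K(\M E_{\V y, i}, \M F_i)\,\Vert f_{\V y}(\V x^\prime)\Vert$, so the calmness modulus of $f^{-1}(\V y, \cdot)$ at $\V 0$ for $\bar{\V x}$ is bounded by $\max_i K(\M E_{\V y, i}, \M F_i)$ over the finitely many pieces meeting $(\bar{\V x}, \bar{\V y})$. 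What remains, and is the delicate point, is the \emph{uniformity} of this bound as $\V y$ ranges over a neighborhood of $\bar{\V y}$: since $\M F_i$ is fixed and $\M E_{\V y, i}$ depends continuously on $\boldsymbol \Lambda(\V y) \in [\epsilon,1]^{n}$, these matrices remain in a compact set, and I would argue that for $\V y$ close enough to $\bar{\V y}$ the index families $U(\M E_{\V y, i}, \M F_i)$ entering \eqref{eq:hoffmanBound} stay contained in those at $\bar{\V y}$ while the minima in the denominators stay bounded away from $0$, so that $K(\M E_{\V y, i}, \M F_i)$ cannot blow up. Here the lower bound $\epsilon > 0$ is what prevents the regularizing blocks of $\M E_{\V y, i}$ from degenerating. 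This yields a uniform $K$, and Theorem~\ref{thm:GeneralImplicitFunction} delivers \eqref{eq:AubinconvMask}.

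For the final claim I would exploit semi-algebraicity. If every $\boldsymbol \Lambda_c$ is semi-algebraic, then $f$ and hence $\mathrm{Graph}(S_\lambda)$ are semi-algebraic, and $\boldsymbol \Lambda$ is differentiable outside a semi-algebraic set of dimension at most $m-1$. By the cell decomposition of semi-algebraic sets, $\R^m$ splits into finitely many cells, and on the top-dimensional ones, whose complement $D$ satisfies $\dim(D) \le m-1$, the active piece structure is locally constant, the mask is smooth, and the Hoffman modulus in the previous step can be chosen independently of the base point in $S_\lambda(\V y)$. Mimicking the polyhedral argument behind Theorem~\ref{thm:stab} (cf.\ \cite[Prop.~35]{DanPan2011}) in this semi-algebraic setting then upgrades the localized Aubin estimate to the non-localized strict continuity \eqref{eq:StrictCon} at every $\bar{\V y} \in \R^m \setminus D$. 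The main difficulty here is to confirm that the exceptional set $D$, collecting the points where $\boldsymbol \Lambda$ is non-smooth or the piece structure degenerates, is genuinely lower-dimensional, which is exactly what the semi-algebraic stratification provides.
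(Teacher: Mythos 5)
Your route is the same as the paper's: apply Theorem~\ref{thm:GeneralImplicitFunction} to $f$ from \eqref{eq:optimalityeq_mask}, check Lipschitz continuity of $f(\cdot,\V x)$ (routine, as you say), establish calmness of $f_{\V y}^{-1}$ at $\V 0$ through the Hoffman bound \eqref{eq:HoffmannEst} on the polyhedral pieces, and obtain the strict-continuity statement from semi-algebraicity of $\operatorname{Graph}(S_\lambda)$ via \cite{DanPan2011}. The per-piece Hoffman estimate and the Lipschitz verification are correct. However, the central step --- uniform boundedness of the Hoffman constants for $\V y$ near $\bar{\V y}$ --- is exactly where your argument has a genuine gap. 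Compactness of $\boldsymbol\Lambda(\V y)\in[\epsilon,1]^n$ is not sufficient: the Hoffman constant $K(\M E,\M F)$ in \eqref{eq:hoffmanBound} is not upper semicontinuous in the matrix entries (it can jump to $+\infty$ in a limit where $\ran(\M E)$ or the admissible index family $U(\M E,\M F)$ changes), so a supremum over a compact parameter set may well be infinite. Likewise, your claim that $U(\M E_{\V y,i},\M F_i)$ merely ``stays contained'' in the family at $\bar{\V y}$ is both unproven and not what is needed. The paper instead uses a structural fact: since $\M E_{\V y,i}$ is symmetric and the mask entries are bounded below by $\epsilon>0$, one has $\ker(\M E_{\V y,i})=\bigcap_c\ker(\M D_{\boldsymbol \psi_c,i}\M W_c)\cap\ker(\M D_{\boldsymbol \phi,\V x,i}\M H)$, which is \emph{independent} of $\V y$; hence $\ran(\M E_{\V y,i})$ and $U(\M E_{\V y,i},\M F_i)$ are exactly the same for all $\V y$. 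With this, a blow-up sequence $\V y_k\to\V y^*$ with $K(\M E_{\V y_k,i},\M F_i)\to\infty$ produces unit vectors $(\V u_k,\V v_k)$ with $\M E_{\V y_k,i}\V u_k+(\M F_i)_J^T\V v_k\to\V 0$; in the limit, $\ran(\M E_{\V y^*,i})\cap\ran((\M F_i)_J^T)=\{\V 0\}$, the full rank of $(\M F_i)_J$ and $\V u\in\ker(\M E_{\V y^*,i})^\perp$ force $(\V u,\V v)=\V 0$, contradicting $\Vert(\V u,\V v)\Vert=1$. Some argument of this kind (kernel invariance plus compactness-contradiction) is indispensable; without it your proof is incomplete at its core.

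A second, smaller omission: the Hoffman estimate bounds the distance to $f_{\V y,i}^{-1}(\V 0)$ only when that set is \emph{nonempty}, and a point $\V x'$ near $\bar{\V x}$ with small residual may a priori lie in a piece whose zero-set is empty. The paper handles this by noting that for $i$ with $f_{\V y,i}^{-1}(\V 0)=\emptyset$ the graph of $f_{\V y,i}^{-1}$ avoids a neighborhood $V_{\V y}\times\R^n$ of $\{\V 0\}\times\R^n$, so for sufficiently small residuals only ``active'' pieces can occur; your sketch silently assumes this. For the second claim of the theorem, your plan is in the right spirit but more elaborate than necessary: once $\operatorname{Graph}(S_\lambda)$ is semi-algebraic (immediate from semi-algebraicity of $\boldsymbol\Lambda_c$ and the piecewise-polynomial structure), strict continuity \eqref{eq:StrictCon} outside a set $D$ with $\dim(D)\leq m-1$ follows directly from \cite[Thm.~28]{DanPan2011}; no cell decomposition or smoothness discussion of $\boldsymbol\Lambda$ is required.
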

\begin{proof}
We establish \eqref{eq:AubinconvMask} based on Theorem \ref{thm:GeneralImplicitFunction}.
To this end, we note that $f(\cdot,\V x)$ is Lipschitz continuous since $\boldsymbol \Lambda$ and $\boldsymbol \phi_{\V x}(\M H \cdot, \cdot)$ are Lipschitz continuous.
Moreover, the dependence of the constant on $\V x$ is continuous.
Hence, the first requirement of Theorem \ref{thm:GeneralImplicitFunction} is satisfied.
Second, we need to show that $f_{\V y}^{-1}$ is calm \eqref{eq:Calm} at $\V 0$ with a uniform constant on some neighborhood of $\bar{\V y}$.
To this end, we show a stronger property, namely that $f_{\V y}^{-1}$ is outer Lipschitz continuous \eqref{eq:OutLip} at $\V 0$ with a uniform constant.
Our proof is inspired by \cite[Thm.\ 3C.3]{DonRoc2009}, with an explicit treatment of the dependence on $\V y$.

First, note that $\mathrm{graph} (f_{\V y}^{-1}) = \bigcup_i \mathrm{graph} (f_{\V y,i}^{-1})$, where any $\V x \in f^{-1}_{\V y,i}(\V z)$ satisfies $\M E_{\V y,i} \V x = \V z - \V b_{\V y,i}$ and $\M F_i \V x \leq \V q_i - \M G_i \V y$.
For every $(\V z, \V x) \in \mathrm{graph} (f_{\V y,i}^{-1})$ and $\V z^\prime \in \mathrm{dom}(f_{\V y,i}^{-1})$, we apply the Hoffman bound \eqref{eq:HoffmannEst} to get
\begin{align}\label{eq:HoffEstConv}
    \min_{\V x^{\prime} \in f_{\V y,i}^{-1}(\V z^\prime)}\Vert \V x - \V x^{\prime}\Vert  \leq& K (\M E_{\V y,i},\M F_i)  \biggl \Vert
    \begin{pmatrix}
        \M E_{\V y,i} \V x - \V b_{\V y,i} - \V z^\prime\\
        (\M F \V x + \M G_i \V y - \V q_{i})_+
    \end{pmatrix}\biggr\Vert
    = K (\M E_{\V y,i},\M F_i)  \Vert \V z - \V z^{\prime} \Vert.
\end{align}
Since \eqref{eq:HoffEstConv} holds for any $\V x \in f_{\V y,i}^{-1}(\V z)$, we obtain that $f_{\V y,i}^{-1}$ is Lipschitz continuous on $\operatorname{dom}(f_{\V y,i}^{-1})$, namely
\begin{equation}\label{eq:LipContMask}
    f_{\V y,i}^{-1}(\V z) \subset  f_{\V y,i}^{-1}(\V z^{\prime}) + K (\M E_{\V y,i},\M F_i)  \Vert \V z - \V z^{\prime} \Vert \mathbb B.
\end{equation}

Next, we show by contradiction that $\sup_{i,\V y \in B_1(\bar {\V y})} K(\M E_{\V y,i},\M F_i) < \infty$.
Since $\M E_{\V y,i}$ is symmetric and $\boldsymbol \Lambda_c$ is positive, it holds that $\ker(\M E_{\V y,i}) = \bigcap_c\ker(\M D_{\boldsymbol \psi_c,i} \M W_c) \cap \ker (\M D_{\boldsymbol \phi, \V x,i} \M H)$.
Hence, the spaces $\ran(\M E_{\V y,i}) = \ran(\M E_{\V y,i}^T) = \mathrm{ker}(\M E_{\V y,i})^\perp$ are independent of $\V y$.
Thus, the same holds true for $U(\M E_{\V y,i}, \M F_i)$.
Now, assume that there exists $\V y_k \to \V y^* \in B_1(\bar {\V y})$ such that $K(\M E_{\V y_k,i},\M F_i) \to \infty$ for some $i$.
Then, there exists $J \in U(\M E_{\V y_k,i}, \M F_i) = U(\M E_{\V y,i}, \M F_i)$ such that for some subsequence indexed by $N \in \mathscr{N}^{\sharp}$ it holds
\begin{equation}
    \min \Bigl \{
    \bigl\Vert \M E_{\V y_k,i} \V u + (\M F_i)_J^T \V v \bigr\Vert:  \V u \in \ker(\M E_{\V y^*,i})^\perp,\, \V v \geq \V 0,\, \Vert (\V u, \V v) \Vert = 1
    \Bigr\} \xrightarrow{N} 0.
\end{equation}
In particular, there exists a convergent sequence $\{(\V u_k, \V v_k)\}_{k \in \N} \subset \ker(\M E_{\V y^*,i})^\perp \times \R_{\geq 0}^{\vert J \vert}$ with $\Vert (\V u_k, \V v_k) \Vert = 1$ and $\Vert \M E_{\V y_k,i} \V u_k + (\M F_i)_J^T \V v_k \Vert \to 0$.
Due to the Lipschitz continuity of $\boldsymbol \Lambda_c$, the limit $(\V u, \V v)$ satisfies
\begin{equation}\label{eq:EstLipData}
    \bigl \Vert \M E_{\V y^*,i} \V u + (\M F_i)_J^T \V v \bigr\Vert = 0 \quad \text{and} \quad \Vert (\V u, \V v) \Vert = 1.
\end{equation}
Since $\ran((\M F_i)^T_J) \cap \ran(\M E_{\V y^*,i}) = \{\V 0\}$, \eqref{eq:EstLipData} implies $\M E_{\V y^*,i} \V u = \V 0$ and $(\M F_i)_J^T \V v = \V 0$.
As $(\M F_i)_J$ has full rank and $\V u \in \mathrm{ker}(\M E_{\V y^*,i})^\perp$, we get the contradiction $(\V u, \V v) = \V 0$.

Let \smash{$I_{\V y}=\{i : f_{\V y,i}^{-1}(\V 0) \neq \emptyset\}$} such that
\smash{$f_{\V y}^{-1}(\V 0)=\bigcup_{i \in I_{\V y}} f_{\V y,i}^{-1}(\V 0)$}.
For $i \notin I_{\V y}$,  the polyhedra $\{\V 0\} \times \mathbb{R}^{n}$ and \smash{$\mathrm{graph} (f_{\V y,i}^{-1})$} are disjoint.
Hence, there are neighborhoods $V_{\V y, i}$ of $\V 0$ with $(V_{\V y, i} \times \mathbb{R}^{n}) \cap \mathrm{graph} (f_{\V y,i}^{-1})=\emptyset$.
For the neighborhood $V_{\V y}=\bigcap_{i \notin I_{\V y}} V_{\V y, i}$ of $\V 0$, we have
\begin{equation}\label{eq:GraphSelection}
\left(V_{\V y} \times \mathbb{R}^{n}\right) \cap \operatorname{graph} (f_{\V y}^{-1}) \subset \bigcup_{i}\mathrm{graph} (f_{\V y,i}^{-1}) \backslash \bigcup_{i \notin I_{\V y}} \mathrm{graph} (f_{\V y,i}^{-1}) \subset \bigcup_{i \in I_{\V y}} \mathrm{graph} (f_{\V y,i}^{-1}).
\end{equation}
For $\V z \in V_{\V y}$ and $\V x \in f_{\V y}^{-1}(\V z)$, we get from \eqref{eq:GraphSelection} that
\smash{$(\V z, \V x) \in \bigcup_{i \in I_{\V y}} \mathrm{graph} (f_{\V y,i}^{-1})$}.
Hence, we have $\V x \in f_{\V y,i}^{-1}(\V z)$ for some $i \in I_{\V y}$.
Since $\V 0 \in \mathrm{dom}(f_{\V y,i}^{-1})$, we can use \eqref{eq:LipContMask} to obtain
\begin{equation}
    \V x \in  \Bigl (f_{\V y,i}^{-1}(\V 0) + \sup_{i,\V y \in B_1(\bar {\V y})} K (\M E_{\V y,i},\M F_i) \Vert \V z - \V 0 \Vert \mathbb B\Bigr) \subset f_{\V y}^{-1}(\V 0) + \sup_{i,\V y \in B_1(\bar {\V y})} K (\M E_{\V y,i},\M F_i) \Vert \V z \Vert \mathbb B.
\end{equation}
Since $\V x \in f_{\V y}^{-1}(\V z)$ was arbitrary, we conclude that $f_{\V y}^{-1}$ is outer Lipschitz continuous at $\V 0$ with constant independent of $\V y \in B_1(\bar {\V y})$.
Now, the Aubin property follows from Theorem \ref{thm:GeneralImplicitFunction}.

For the second part, we note that $\mathrm{graph}({S_\lambda})$ is semi-algebraic if all the $\boldsymbol \Lambda_c$ are semi-algebraic.
Hence, we can apply \cite[Thm.\ 28]{DanPan2011}, which yields strict continuity of $S_\lambda$ outside a set $D$ with $\mathrm{dim}(d) \le m-1$.
\end{proof}
\begin{remark}
    By slightly modifying the proof, we can also treat the mask $\boldsymbol{\Lambda}$ as an input parameter of  $S_\lambda$.
    Then, the Aubin property and the strict continuity hold for both inputs.
\end{remark}

\subsection{Convergence for Vanishing Noise}\label{sec:VanishNoise}
Now, we look into the setting of vanishing noise.
To emphasize the dependence on the regularization parameter $\lambda$, we use the notation $f_{\V y, \lambda}$ instead of $f_{\V y}$.
Our first result is derived along the lines of \cite[Thm.\ 3.26]{Scherzer2009} with several modifications due to the data dependence of $\mathcal R_{\V y}$.
Throughout, $\mathcal R$ denotes the baseline regularizer with $\boldsymbol \Lambda_c = \boldsymbol 1_n$.
\begin{theorem}[Convergence of Reconstructions for Vanishing Noise]\label{thm:ConvNoise}
Let $\V y \in \R^m$, $\lambda\colon\mathbb{R}_{>0}\to \mathbb{R}_{>0}$ satisfy $\lambda(\delta)\to 0$ and $\delta/\lambda(\delta) \to 0$ as $\delta \to 0$, and $\boldsymbol{\Lambda}$ be Lipschitz continuous.
Moreover, assume that $\mathcal D_P$ is a divergence, that $\mathcal R$ is coercive and that $S_0(\V y) \coloneqq \argmin_{\{\V x: \M H \V x = \V y\}} \mathcal R_{\V y}(\V x) \neq \emptyset$.
Then, for any $\{\delta_k\}_{k \in \N} \subset \mathbb{R}_{>0}$ converging to zero and any $\{\V y_k\}_{k \in \N} \subset \R^m$ with $\mathcal D_P(\V y,\V y_k) \leq \delta_k$, it holds with $\lambda_k = \lambda(\delta_k)$ that any sequence $\V x_k \in S_{\lambda_k}(\V y_k)$ has a convergent subsequence with limit $\V x^* \in S_0(\V y)$.
\end{theorem}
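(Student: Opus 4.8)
The plan is to run the classical three-step convergence argument for Tikhonov-type regularization (in the spirit of \cite[Thm.~3.26]{Scherzer2009}), carefully tracking the extra $\V y$-dependence of the regularizer. First I would fix a reference $\V x^\dagger \in S_0(\V y)$, which exists by assumption and satisfies $\M H \V x^\dagger = \V y$, so that $\mathcal D_P(\M H \V x^\dagger, \V y_k) = \mathcal D_P(\V y, \V y_k) \le \delta_k$. The backbone is the comparison inequality coming from the (global) minimality of $\V x_k$ for $g_{\V y_k}$ with $\lambda = \lambda_k$, namely
\[
\mathcal D_P(\M H \V x_k, \V y_k) + \lambda_k \mathcal R_{\V y_k}(\V x_k) \le \delta_k + \lambda_k \mathcal R_{\V y_k}(\V x^\dagger).
\]
Dividing by $\lambda_k$ and using $\delta_k/\lambda_k \to 0$ controls the regularizer, while keeping the inequality in its original form controls the residual.

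Second, I would prove boundedness of $\{\V x_k\}$. The point is to transfer the coercivity of the baseline $\mathcal R$ to the data-dependent $\mathcal R_{\V y_k}$: since $\boldsymbol \Lambda_c(\V y) \ge \epsilon \boldsymbol 1_n$ componentwise and every $\psi_c \ge 0$, one has the uniform bound $\mathcal R_{\V y}(\V x) \ge \epsilon\, \mathcal R(\V x)$, and likewise $\mathcal R_{\V y_k}(\V x^\dagger) \le \mathcal R(\V x^\dagger) < \infty$ because the mask is bounded by $1$. The rearranged inequality thus bounds $\mathcal R(\V x_k) \le \mathcal R_{\V y_k}(\V x_k)/\epsilon$, and coercivity of $\mathcal R$ forces $\{\V x_k\}$ to stay in a bounded set, so some subsequence $\V x_{k_l} \to \V x^*$.

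Third, I would identify the limit. From the comparison inequality, $\mathcal D_P(\M H \V x_k, \V y_k) \le \delta_k + \lambda_k \mathcal R_{\V y_k}(\V x^\dagger) \to 0$ since $\lambda_k \to 0$. Using that $\mathcal D_P$ is a divergence, $\mathcal D_P(\V y, \V y_k) \le \delta_k$ yields $\V y_k \to \V y$; then the continuity of $\mathcal D_P$ (inherited from $\phi_j \in \mathcal C_{\geq0}^{1,1}(\R^2)$) together with $\M H \V x_{k_l} \to \M H \V x^*$ gives $\mathcal D_P(\M H \V x^*, \V y) = 0$, hence $\M H \V x^* = \V y$, i.e.\ $\V x^*$ is feasible. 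To place $\V x^*$ in $S_0(\V y)$, I would pass to the limit in $\mathcal R_{\V y_k}(\V x_k) \le \delta_k/\lambda_k + \mathcal R_{\V y_k}(\V x^\dagger)$: by Lipschitz continuity of $\boldsymbol \Lambda$ and continuity of the $\boldsymbol \psi_c$, both sides converge to $\mathcal R_{\V y}(\V x^*)$ and $\mathcal R_{\V y}(\V x^\dagger)$, so $\mathcal R_{\V y}(\V x^*) \le \mathcal R_{\V y}(\V x^\dagger) = \min_{\M H \V x = \V y} \mathcal R_{\V y}(\V x)$, making $\V x^*$ itself $\mathcal R_{\V y}$-minimizing.

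The genuinely new difficulty, relative to the data-independent case, is exactly this $\V y$-dependence: the compared objective changes with the noise realization, so one cannot simply invoke coercivity and lower semicontinuity of a \emph{fixed} regularizer. I expect the main work to be twofold, namely the uniform coercivity transfer via $\mathcal R_{\V y} \ge \epsilon \mathcal R$ and the joint passage to the limit $\mathcal R_{\V y_{k_l}}(\V x_{k_l}) \to \mathcal R_{\V y}(\V x^*)$, where the Lipschitz regularity of $\boldsymbol \Lambda$ and the finite-dimensional strong convergence of $\V x_{k_l}$ upgrade a mere lower-semicontinuity estimate to an honest limit. A subtler point worth flagging is that the argument uses the global minimality of $\V x_k$; for weakly convex profiles one should read $\V x_k \in S_{\lambda_k}(\V y_k)$ as minimizers (or restrict to convex $g_{\V y}$, where by the remark following \eqref{eq:CritPointMap} critical points coincide with minimizers), since otherwise the comparison inequality is unavailable.
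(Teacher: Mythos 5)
Your proposal is correct and follows essentially the same route as the paper's proof: the same comparison inequality against a fixed $\hat{\V x} \in S_0(\V y)$, the same coercivity transfer via $\mathcal R_{\V y} \geq \epsilon \mathcal R$ to obtain boundedness and a convergent subsequence, and the same use of the Lipschitz continuity of $\boldsymbol \Lambda$ (continuity of $\mathcal R_{\boldsymbol{\cdot}}(\V x)$) to pass to the limit in the regularizer values and conclude $\V x^* \in S_0(\V y)$. Your closing caveat that the argument requires global minimality rather than mere criticality of $\V x_k$ is also consistent with the paper, whose proof likewise invokes $f_{\V y_k,\lambda_k}(\V x_k) \leq f_{\V y_k,\lambda_k}(\hat{\V x})$ and which notes after \eqref{eq:CritPointMap} that $S_\lambda$ coincides with the minimizer map in the convex case.
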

\begin{proof}
Fix some $\hat{\V x} \in S_0(\V y)$.
Then, it holds that
\begin{equation}\label{eq:EstEnergy}
    \mathcal D_P(\M H \V x_k, \V y_k ) + \lambda_k \mathcal R_{\V y_k}(\V x_k) = f_{\V y_k, \lambda_k}(\V x_k) \leq f_{\V y_k,\lambda_k}(\hat{\V x}) 
    = \mathcal D_P(\V y,\V y_k)  +  \lambda_{k} \mathcal R_{\V y_k} (\hat{\V x}) .
\end{equation}
Since $\lim_{k \to \infty} \mathcal R_{\V y_k} (\hat{\V x}) = \mathcal R_{\V y} (\hat{\V x})$, the right-hand side in \eqref{eq:EstEnergy} converges to zero.
As $\mathcal D_P$ is a divergence and continuous due to \eqref{eq:data_term}, this implies $\lim_{k \to \infty} \M H \V x_k = \V y$.
Further, \eqref{eq:EstEnergy} implies that $\mathcal R_{\V y_k}(\V x_k) \leq  \delta_k/\lambda_{k} +  \mathcal R_{\V y_k}(\hat{\V x})$, from which we conclude that $\limsup_{k \to \infty} \mathcal R_{\V y_k}(\V x_k) \leq \mathcal R_{\V y}(\hat{\V x})$.
Let us define $\bar \lambda = \max_k \lambda_k$ and 
\begin{equation}\label{eq:Boundcoerv}
    M \coloneqq \bar \lambda \mathcal R_{\V y}(\hat{\V x}) \geq \limsup_{k \to \infty} \mathcal D_P(\M H \V x_k, \V y_k ) + \bar \lambda \mathcal R_{\V y_k}(\V x_k) \geq \limsup_{k \to \infty} \mathcal D_P(\M H \V x_k, \V y_k ) + \bar \lambda \epsilon \mathcal R(\V x_k),
\end{equation}
which holds true since $\boldsymbol \Lambda_c \ge \epsilon \boldsymbol 1_n$ for all $c=1,\ldots,N_c$.
As $\mathcal R$ is coercive, the sequence $\{\V x_k\}_{k \in \N}$ is bounded and admits a convergent subsequence with limit point $\V x^*$.
By continuity of $\M H$, it holds $\M H \V x^* = \V y$.
Moreover, the continuity of $\mathcal R_{\V y_k}$ implies 
\begin{equation}
    \mathcal R_{\V y} (\V x^*) \leq \limsup_{k \to \infty} \mathcal R_{\V y} (\V x_k) \leq \limsup_{k \to \infty} \mathcal R_{\V y_k} (\V x_k) + \vert \mathcal R_{\V y} (\V x_k) - \mathcal R_{\V y_k} (\V x_k) \vert \leq \mathcal R_{\V y} (\hat{\V x}), 
\end{equation}
where we use the uniform Lipschitz continuity of $\V y \mapsto \mathcal R_{\boldsymbol{\V y}}(\V x)$ on compact sets for the last inequality.
Thus, it holds that $\V x^* \in S_0(\V y)$ as desired.
\end{proof}
Next, we establish convergence rates based on a source condition.
Here, we restrict ourselves to $\boldsymbol \Lambda_c = \boldsymbol 1_n$ and  $\mathcal{D}_P(\M H \V x, \V y) = \frac{1}{2} \Vert \M H \V x - \V y \Vert^2$.
\begin{theorem}[Convergence Rates for Vanishing Noise]
Let $\V y \in \R^m$, $\mathcal D_P (\M H \V x, \V y) = \frac12 \Vert \M H \V x - \V y \Vert^2$, $\psi_c^\prime$ be piecewise affine with $\psi_c^{\prime \prime}\geq 0$ a.e.\ and $\boldsymbol \Lambda_c = \boldsymbol 1_n$.
Assume that there are $\hat{\V x}, \V u  \in \R^n$ with $\M H \hat{\V x} = \V y$ and $\nabla \mathcal R (\hat{\V x}) = \M H^T \M H \V u$ such that the $\boldsymbol \psi^\prime_c(\M W_c \V x)$ are differentiable in direction $\V u$ at $\hat{\V x}$.
Further, assume that $\lambda\colon\mathbb{R}_{>0}\to \mathbb{R}_{>0}$ satisfies $\lambda(\delta)\to 0$ and $\delta/\lambda(\delta) \to 0$ for $\delta \to 0$.
If $\{\delta_k\}_{k \in \N} \subset \mathbb{R}_{>0}$ converges to zero, and $\{\V y_k\}_{k \in \N} \subset \R^m$ satisfies $\Vert \V y - \V y_k\Vert \leq \delta_k$, then it holds for $k\in \mathbb{N}$ and $\lambda_k = \lambda(\delta_k)$ that $\Vert S_{\lambda_k}(\V y_k) - \hat{\V x}\Vert \leq C \min (\lambda_k, \delta_k/\lambda_k)$.
\end{theorem}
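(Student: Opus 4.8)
The plan is to produce a point of $S_{\lambda_k}(\V y_k)$ close to $\hat{\V x}$ by perturbing $\hat{\V x}$ along the source element $\V u$ and then converting the resulting residual into a distance via the calmness estimate for $f_{\V y_k}^{-1}$ from the proof of Theorem~\ref{thm:LipConv}. I would set $\tilde{\V x}_k = \hat{\V x} - \lambda_k \V u$ and evaluate $f_{\V y_k,\lambda_k}$ there. Since $\M H\hat{\V x} = \V y$, the data gradient equals $\M H^T(\V y - \V y_k) - \lambda_k \M H^T\M H\V u$. Because each $\boldsymbol\psi_c^\prime(\M W_c\cdot)$ is piecewise affine and, by assumption, differentiable in direction $\V u$ at $\hat{\V x}$, for small $\lambda_k$ the regularizer gradient linearizes exactly as $\nabla\mathcal R(\tilde{\V x}_k) = \M H^T\M H\V u - \lambda_k\M A\V u$, where $\M A = \sum_{c} \M W_c^T\M D_c\M W_c \succeq 0$ collects the directional slopes of $\boldsymbol\psi_c^\prime$ at $\M W_c\hat{\V x}$. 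The two copies of $\lambda_k\M H^T\M H\V u$ cancel, leaving the exact residual $f_{\V y_k,\lambda_k}(\tilde{\V x}_k) = \M H^T(\V y - \V y_k) - \lambda_k^2\M A\V u$, which splits into a data part of size $\le \|\M H\|\delta_k$ and a model part of size $\le \lambda_k^2\|\M A\V u\|$.

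Next I would transfer this residual to a distance. On the polyhedral piece containing $\tilde{\V x}_k$ the optimality map is affine, so \eqref{eq:HoffmannEst} yields $\mathrm{dist}(\tilde{\V x}_k, S_{\lambda_k}(\V y_k)) \le K_{\lambda_k}\bigl(\|\M H\|\delta_k + \lambda_k^2\|\M A\V u\|\bigr)$ with $K_{\lambda_k} = \max_i K(\M E_{\V y_k,i},\M F_i)$ exactly as in Theorem~\ref{thm:LipConv}. The decisive quantitative step is the scaling $K_{\lambda_k} = O(1/\lambda_k)$: since $\M E_{\V y_k,i} = \M H^T\M H + \lambda_k\M A_i$ is symmetric with $\ker(\M E_{\V y_k,i}) = \ker\M H \cap \ker\M A_i$, its smallest nonzero eigenvalue is bounded below by a multiple of $\lambda_k$ along the directions in $\ker\M H$, and \eqref{eq:hoffmanBound} turns this into $K_{\lambda_k} \le C/\lambda_k$, uniformly in $i$ and in $\V y_k$ near $\V y$. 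Substituting, the model part contributes $K_{\lambda_k}\lambda_k^2\|\M A\V u\| = O(\lambda_k)$ and the data part contributes $K_{\lambda_k}\|\M H\|\delta_k = O(\delta_k/\lambda_k)$, so there is $\V x_k^* \in S_{\lambda_k}(\V y_k)$ with $\|\V x_k^* - \hat{\V x}\| \le \lambda_k\|\V u\| + O(\lambda_k) + O(\delta_k/\lambda_k)$.

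These two contributions are precisely the quantities the statement records, and I would isolate each as a genuine estimate. The approximation (model) error is controlled by $C\lambda_k$: running the same construction with $\delta_k=0$ against the noise-free regularized solution $\V x_k^0\in S_{\lambda_k}(\V y)$ gives $\|\V x_k^0-\hat{\V x}\|\le K_{\lambda_k}\lambda_k^2\|\M A\V u\| + \lambda_k\|\V u\| \le C\lambda_k$ outright. The data (noise) error is controlled by $C\delta_k/\lambda_k$ and comes purely from the Hoffman transfer of the $\delta_k$-sized residual, equivalently from the $O(1/\lambda_k)$-stability $\|\V x_k-\V x_k^0\|\le K_{\lambda_k}\delta_k$. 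Since $g_{\V y_k}$ is convex, $S_{\lambda_k}(\V y_k)=\argmin g_{\V y_k}$ is convex, and Theorem~\ref{thm:ConvNoise} guarantees $\V x_k\to\hat{\V x}$; hence for large $k$ every minimizer lies in the single polyhedral region where the linearization is valid, so the bound applies to all of $S_{\lambda_k}(\V y_k)$ and not only to the constructed $\V x_k^*$.

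The main obstacle is twofold. First, one must make the exact linearization rigorous: directional differentiability of $\boldsymbol\psi_c^\prime(\M W_c\cdot)$ at $\hat{\V x}$ supplies a two-sided slope $\M A$, but one still needs $\lambda_k$ small enough that $\tilde{\V x}_k$ and the relevant minimizers remain in one affine piece of every $\psi_c^\prime$ and $\partial_x\phi_j$; this is precisely where the convergence $\V x_k\to\hat{\V x}$ from Theorem~\ref{thm:ConvNoise} must stabilize the active polyhedron. Second, and harder, is the uniform estimate $K_{\lambda_k}\le C/\lambda_k$: I would need to verify that the index sets $U(\M E_{\V y_k,i},\M F_i)$ and the range $\ran(\M E_{\V y_k,i})$ are independent of $\lambda_k$ and that the degenerate directions are exactly those annihilated by both $\M H$ and $\M A_i$, so that the smallest relevant singular value is genuinely of order $\lambda_k$ and not smaller. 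Controlling the coupling between $\ker\M H$ and $\ran(\M H^T)$ through $\M A_i$ inside \eqref{eq:hoffmanBound} is the crux, and it is also what determines, in a given parameter regime, whether the model term $C\lambda_k$ or the data term $C\delta_k/\lambda_k$ is the smaller (hence reported) one.
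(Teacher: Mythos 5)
Your overall route coincides with the paper's: perturb along the source element ($\tilde{\V x}_k=\hat{\V x}-\lambda_k\V u$ is exactly the reference point $\V x^\prime=-\lambda_k\V u$ in the paper's Hoffman estimate), use the source condition to cancel the $O(\lambda_k)$ term so that the residual is $\M H^T(\V y-\V y_k)-\lambda_k^2\M A\V u$, and convert the residual into a distance via a Hoffman constant of order $1/\lambda_k$. However, the proposal has genuine gaps at precisely the two decisive steps. First, the scaling $K(\M E_{i,k},\M F_i)\leq C/\lambda_k$ does not follow from your eigenvalue heuristic: the Hoffman constant \eqref{eq:hoffmanBound} is an infimum of $\bigl\Vert \M E_{i,k}\V u+(\M F_i)_J^T\V v\bigr\Vert$ over normalized pairs $(\V u,\V v)$ with $\V v\geq 0$, and the coupling between the $\ker(\M H)$ directions in $\V u$ and the active rows $(\M F_i)_J$ can make this infimum far smaller than the smallest nonzero eigenvalue of $\M E_{i,k}$ would suggest. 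The paper needs a full compactness/contradiction argument here: normalize, pass to subsequences, use that $\ker(\M E_{i,k})$ --- hence $\ran(\M E_{i,k})$ and $U(\M E_{i,k},\M F_i)$ --- are $k$-independent, decompose $\V u_k$ into its $\ker(\M H)$ and $\ker(\M H)^\perp$ components, and exploit $\ran(\M E_{i,1})\cap\ran((\M F_i)_J^T)=\{\V 0\}$ to force the contradiction $\hat{\V u}\in\ker(\M E_{i,1})\cap\ker(\M E_{i,1})^\perp$ with $\Vert\hat{\V u}\Vert=1$. You correctly flag this as the crux but do not supply it, and the eigenvalue bound alone cannot.

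Second, your localization step fails. You want $\tilde{\V x}_k$ and the relevant minimizers to share one affine piece and invoke Theorem~\ref{thm:ConvNoise} to stabilize the active polyhedron; but that theorem assumes coercivity of $\mathcal R$ (not assumed in the present statement) and yields only subsequential convergence of minimizers to \emph{some} element of $S_0(\V y)$ --- not to $\hat{\V x}$, and not for all minimizers --- so it can neither pin down the active piece nor justify your closing claim that the bound extends to all of $S_{\lambda_k}(\V y_k)$. The paper avoids localization altogether: it applies \eqref{eq:HoffmannEst} on a \emph{nonempty} solution piece $P_{i,k}$ (nonempty by Theorem~\ref{thm:existence} applied to the auxiliary convex problem \eqref{eq:LinSource}) with reference point $-\lambda_k\V u$, which produces the inequality-violation term $(-\lambda_k\M F_i\V u)_+$ that your residual computation omits; after multiplication by $K=O(1/\lambda_k)$ this term is $O(1)$ and would destroy the rate, and it is eliminated by deducing $\M F_i\V u=\V 0$ from the directional-differentiability hypothesis (the line $t\mapsto t\V u$ lies in a single piece). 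Note finally that both your argument and the paper's actually produce the sum $C(\lambda_k+\delta_k/\lambda_k)$, i.e.\ a maximum up to constants; your attempt to recover the stated ``$\min$'' by separating model and data errors still recombines them through a triangle inequality, so the literal $\min$ is obtained by neither route.
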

\begin{proof}
Using the same notation as in the proof of Theorem \ref{thm:LipConv}, we linearize $f_{\V y_k,\lambda_k}(\hat {\V x} + \V x) = \V 0$  around $\hat {\V x}$.
By incorporating $\M H \hat{\V x} = \V y$ and setting $\V h_{\V y_k} = \V y_k - \V y$, we get the linearized equation
\begin{equation}\label{eq:LinarizedConv}
    \M H^T \bigl(\M H \V x - \V h_{\V y_k} + \lambda_k \M H \V u) + \lambda_k \sum_{c=1}^{N_C} \M W_c^T \M D_{\boldsymbol \psi_c}(\V x) \M W_c \V x = \V 0.
\end{equation}
From Theorem~\ref{thm:existence}, we infer that the associated variational formulation 
 \begin{equation}
    \argmin_{\V x \in \R^n} \frac12 \Vert \M H \V x - \V h_{\V y,k} + \lambda_k \M H \V u  \Vert^2 + \frac{\lambda_k}{2} \sum_{c=1}^{N_C} \Vert \M W_c \V x \Vert^2_{\M D_{\boldsymbol \psi_c}(\V x)} \label{eq:LinSource}
\end{equation}
admits solutions.
Now, we show that there exist solutions to \eqref{eq:LinSource} that converge to zero.

For this, we partition $\R^n$ into polyhedra $\Omega_i = \{\V x \in \R^n : \M F_i \V x \leq \V 0\}$ on which \eqref{eq:LinarizedConv} is affine with matrices $\M E_{i,k} = \M H^T \M H + \lambda_k \sum_{c=1}^{N_C} \M W_c^T \M D_{\boldsymbol \psi_c,i} \M W_c$.
Since \eqref{eq:LinSource} is nonempty, at least one of the polyhedra
\begin{equation}
        P_{i,k}(\V h_{\V y_k}) = \bigl \{\V x \in \R^n:\M E_{i,k} \V x = \M H^T \bigl(\V h_{\V y_k} - \lambda_k \M H \V u\bigr),\, \M F_{i} \V x \leq \V 0 \bigr\}
\end{equation}
is nonempty.
We estimate $\min_{\V x\in P_{i,k}(\V h_{\V y_k})} \Vert \V x + \lambda_k \V u \Vert$ using the Hoffman bound \eqref{eq:HoffmannEst}, namely
\begin{align}\label{eq:DistPoly}
    \min_{\V x \in P_{i,k}(\V h_{\V y_k})} \Vert \V x + \lambda_k \V u \Vert  \leq K(\M E_{i,k}, \M F_i) \biggl \Vert
    \begin{pmatrix}
        \M H^T \V h_{\V y_k} + \lambda_k^2 \sum_{c=1}^{N_C} \M W_c^T \M D_{\boldsymbol \psi_c,i} \M W_c \V u\\
        (- \lambda_k \M F_i \V u)^+
    \end{pmatrix}\biggr\Vert.
\end{align}
Here, the Hoffman constant, see \eqref{eq:hoffmanBound}, is given by
\begin{align}
    K(\M E_{i,k}, \M F_i) &= \max_{J \in U(\M E_{i,k}, \M F_i)} \Bigl(\min \bigl \{
    \bigl\Vert \M E_{i,k} \V u + (\M F_i)_J^T \V v \bigr\Vert:  \V u \in \ker(\M E_{i,k})^\perp,\, \V v \geq 0,\, \Vert (\V u, \V v) \Vert = 1
    \bigr\}\Bigr)^{-1}.
\end{align}
Since $(A \cap B)^\perp = A^\perp + B^\perp$ for subspaces $A,B \subset \R^n$, $\ker (\M E_{i,1}) = \bigcap_c \ker(\M D_{\boldsymbol \psi_{c,i}} \M W_c) \cap \ker(\M H)$ implies
\begin{equation}\label{eq:SumRange}
    \ran (\M E_{i,k}) = \ran (\M E_{i,1}) = \Bigl(\bigcap_c \ker(\M D_{\boldsymbol \psi_{c,i}} \M W_c) \cap \ker(\M H)\Bigr)^\perp = \ran(\M H^T \M H) + \ran\Bigl(\sum_{c=1}^{N_C} \M W_c^T \M D_{\boldsymbol \psi_c,i} \M W_c\Bigr).
\end{equation}
For a contradiction, assume that $\limsup_{k \to \infty} \lambda_k K(\M E_{i,k}, \M F_i) = \infty$ for some $i \in \N$.
Then, there exists $J \in U(\M E_{i,k}, \M F_i) = U(\M E_{i,1}, \M F_i)$ such that for some subsequence indexed by $N_1$ it holds
\begin{equation}\label{eq:LimConst}
    \lim_{k \in N_1} \lambda_k^{-1} \min \bigl \{
    \bigl\Vert \M E_{i,k} \V u + (\M F_i)_J^T \V v \bigr\Vert:  \V u \in \ker(\M E_{i,1})^\perp,\, \V v \geq \V 0,\, \Vert (\V u, \V v) \Vert = 1
    \bigr\} = 0.
\end{equation}
Thus, there exists $\{(\V u_k, \V v_k)\}_{k \in N_1} \subset \ker(\M E_{i,1})^\perp \times \R_{\geq 0}^{\vert J \vert}$ with
\begin{equation}\label{eq:LimConst2}
    \lim_{k \in N_1} \lambda_k^{-1} \bigl(\M H^T \M H \V u_k + (\M F_i)_J^T \V v_k \bigr) + \sum_{c=1}^{N_C} \M W_c^T \M D_{\boldsymbol \psi_c,i} \M W_c \V u_k = \V 0.
\end{equation}
Now, we decompose $\V u_k = \V u_{k,\perp} + \V u_{k,\ker}$ with $\V u_{k,\perp} \in \ker(\M H)^\perp \cap \ran(\M E_{i,1})$ and $\V u_{k,\ker} \in \ker(\M H) \cap \ran(\M E_{i,1})$, which satisfy $\Vert \V u_{k,\perp} \Vert^2 + \Vert \V v_k \Vert^2 = 1 - \Vert \V u_{k,\ker} \Vert^2$.
Plugging this into \eqref{eq:LimConst2} leads to
\begin{equation}
    \lim_{k \to \infty} \frac{\Vert (\V u_{k,\perp}, \V v_{k}) \Vert}{\lambda_k}\frac{ \M H^T \M H \V u_{k,\perp} + (\M F_i)_J^T \V v_{k}}{\Vert (\V u_{k,\perp}, \V v_{k}) \Vert} + \sum_{c=1}^{N_C} \M W_c^T \M D_{\boldsymbol \psi_c,i} \M W_c \V u_k = \V 0.
\end{equation}
Due to compactness, there are indices $N_2$ with $\lim_{k \in N_2} (\V u_{k,\perp}, \V v_{k}) / \Vert (\V u_{k,\perp}, \V v_{k}) \Vert = (\V u_{\perp}, \V v) \neq \V 0 \in \smash{\ker(\M H)^\perp \times \R_{\geq 0}^{\vert J \vert}}$ and $\lim_{k \in N_2} \V u_k = \hat{\V u} \in \ker(\M E_{i,1})^\perp$.
For this subsequence, it holds that
\begin{equation}
    \lim_{k \in N_2} \frac{\Vert (\V u_{k,\perp}, \V v_{k}) \Vert}{\lambda_k} \bigl(\M H^T \M H \V u_{\perp} + (\M F_i)_J^T \V v \bigr) + \sum_{c=1}^{N_C} \M W_c^T \M D_{\boldsymbol \psi_c,i} \M W_c \hat{\V u} = \V 0.
\end{equation}
As $\ran(\M H^T \M H) \cap \ran((\M F_i)_J^T) = \{\V 0\}$, we get from $(\V u_{\perp}, \V v) \neq \V 0$ that $\M H^T \M H \V u_{\perp} + (\M F_i)_J^T \V v \neq \V 0$.
This implies that $\bar{\lambda} = \lim_{k \in N_2} \lambda_k /\Vert (\V u_{k,\perp}, \V v_{k}) \Vert > 0$ and $\lim_{k \in N_2} \Vert (\V u_{k,\perp}, \V v_{k}) \Vert = 0$.
Thus, we have that $\hat{\V u} = \lim_{k \in N_2} \V u_{k,\perp} + \V u_{k,\ker} \in \ker(\M H)$ with $\Vert \hat{\V u} \Vert = 1$ and by \eqref{eq:SumRange} that
\begin{equation}\label{eq:EstRange}
    - (\M F_i)_J^T \V v = \M H^T \M H \V u_{\perp} + \bar{\lambda} \sum_{c=1}^{N_C} \M W_c^T \M D_{\boldsymbol \psi_c,i} \M W_c \hat{\V u} \in \ran(\M E_{i,1}).
\end{equation}
Since $\ran(\M E_{i,1}) \cap \ran((\M F_i)_J^T) = \{\V 0\}$, \eqref{eq:EstRange} implies $\M H^T \M H \V u_{\perp} + \bar{\lambda} \sum_{c=1}^{N_C} \M W_c^T \M D_{\boldsymbol \psi_c,i} \M W_c \hat{\V u} = \V 0$.
By multiplying this from the left with $\hat{\V u}^T \in \ker(\M H)$, we get $\hat{\V u} \in \bigcap_c \ker(\M D_{\boldsymbol \psi_{c,i}} \M W_c)$.
Thus, $\hat{\V u} \in \ker(\M E_{i,1})$ with  $\Vert \hat{\V u} \Vert = 1$, which contradicts $\hat{\V u} \in \ker(\M E_{i,1})^\perp$.
Hence, it holds $\limsup_{k \to \infty} \lambda_k K(\M E_{i,k}, \M F_i) < \infty$ and there exists $C>0$ with $K(\M E_{i,k}, \M F_i) \leq C \lambda_k^{-1}$ for all $i$ and $k$.

Now, we are able to finally estimate
\begin{align}
    \min_{i, \V x \in P_{i,k}(\V h_{\V y_k})} \Vert \V x \Vert \leq & \lambda_k \Vert \V u \Vert + \min_{i, \V x^\prime \in P_{i,k}(\V h_{\V y_k})} \Vert \V x^\prime + \lambda_k\V u \Vert \notag\\
    \leq & \lambda_k \Vert \V u \Vert + C \lambda_k^{-1} \biggl \Vert
    \begin{pmatrix}
        \M H^T \V h_{\V y_k} + \lambda_k^2 \sum_{c=1}^{N_C} \M W_c^T \M D_{\boldsymbol \psi_c,i} \M W_c \V u\\
        (- \lambda_k \M F_i \V u)^+
    \end{pmatrix}\biggr\Vert\notag\\
    \leq & \lambda_k \Vert \V u \Vert + C \lambda_k^{-1} \biggl( \Vert \M H \Vert \delta_k + \lambda_k^2 \Bigl\Vert\sum_{c=1}^{N_C} \M W_c^T \M D_{\boldsymbol \psi_c,i} \M W_c \V u \Bigr\Vert + \lambda_k \Vert (\M F_i \V u)^+\Vert \biggr).
\end{align}
Since the $\psi^\prime_c(\M W_c \V x)$ are differentiable in direction $\V u$ at $\hat{\V x}$, the line $t \mapsto t \V u$ lies in a single $\Omega_i$. 
This implies $\M F_i \V u = \V 0$ and the convergence rate is $\min(\lambda_k, \delta_k/\lambda_k)$ as desired.
\end{proof}
\begin{remark}
    If we choose $\lambda_k \sim \sqrt{\delta_k}$, we get the optimal convergence order $\sqrt{\delta_k}$, which coincides with the result in \cite{Scherzer2009}.
    However, in contrast to that work, our estimates are expressed using the (often stronger) set-wise distance instead of the Bregman distance \begin{equation}
        D_{\mathcal R}(\V x, \hat{\V x}) = \mathcal R(\V x) - \mathcal R (\hat{\V x}) - \langle \nabla \mathcal R (\hat{\V x}), \V x - \hat{\V x} \rangle.
    \end{equation}
\end{remark}
\subsection{The Bayesian Perspective} \label{sec:Bayesian}

Let $\V X \in \R^n$ be an absolutely continuous random variable with law $P_{\V X}$ and density $p_{\V X} \colon \R^n \to [0,\infty)$.
We can interpret the inverse problem $\V y = \mathrm{cor}(\M H \V x)$ as a Bayesian one
\begin{equation}
\V Y = \mathrm{cor} (\M H \V X),
\end{equation}
where we want to sample from the conditional posterior $P_{\V X|\V Y=\V y}$.
The most likely sample is the maximum a-posteriori (MAP) estimator, which is given by
\begin{equation}
\V x_{\mathrm{MAP}}(\V y) \in \argmax_{\V x \in \R^n} p_{\V X|\V Y= \V y} (\V x).
\end{equation}
By Bayes' theorem, finding the MAP estimator is equivalent to
\begin{equation}\label{eq:MAP}
\V x_{\mathrm{MAP}}(\V y) \in   \argmax_{\V x \in \R^n} \log p_{\V X|\V Y= \V y} (\V x) 
= \argmin_{\V x \in \R^n} \bigl\{- \log p_{\V Y|\V X = \V x} (\V y) - \log p_{\V X} (\V x) \bigr\}.
\end{equation}
For the data likelihood $p_{\V Y|\V X=\V x} (\V y) = C \exp ( -\mathcal{D}_P(\M H \V x, \V y))$ and a Gibbs prior $p_{\V X}(\V x) = C_\lambda \exp (- \lambda \mathcal{R}(\V x) )$ with $\mathcal R \colon \R^n \to \R$, computing \eqref{eq:MAP} is equivalent to minimizing the variational problem \eqref{eq:var_prob}, namely
\begin{align*}
\V x_{\mathrm{MAP}}(\V y) \in 
\argmin_{\V x \in \R^n} \{\mathcal{D}_P(\M H \V x, \V y) + \lambda \mathcal{R}(\V x) \}.
\end{align*}
To make this rigorous, $C_\lambda \exp (- \lambda \mathcal{R}(\V x) )$ has to be integrable so that it defines a density.
In Theorem~\ref{thm:reg_integrable} we provide mild assumptions under which $\mathcal R$ of the form \eqref{eq:ridge_reg} defines a density.
\begin{theorem} \label{thm:reg_integrable}
If $\bigcap_c\mathrm{ker}(\M W_c) = \{ \V 0 \}$ and there are $a>0$, $b \in \R$ with $\psi_c(x) \geq a \vert x \vert + b$ for all $c$, then $\mathcal{R}(\V x) = \sum_{c=1}^{N_c} \langle \boldsymbol{1}_n, \boldsymbol \psi_c (\M W_c \V x) \rangle$ induces a Gibbs prior, i.e., $\exp (-\lambda \mathcal{R}(\V x))$ is integrable for any $\lambda > 0$.
\end{theorem}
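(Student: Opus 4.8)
The plan is to reduce the claim to the elementary fact that the exponential of a genuine norm is integrable over $\R^n$. The point is that the hypotheses on $\psi_c$ provide a linear growth lower bound, and the kernel condition on $\M W$ ensures that the resulting growth is coercive in every direction. First I would substitute the affine lower bound $\psi_c(x)\geq a\vert x\vert + b$ into the definition of $\mathcal R$ to obtain
\begin{equation}
    \mathcal R(\V x) = \sum_{c=1}^{N_c}\sum_{k=1}^n \psi_c\bigl((\M W_c\V x)_k\bigr) \geq a\sum_{c=1}^{N_c}\Vert \M W_c\V x\Vert_1 + N_c\,n\,b,
\end{equation}
so that $\exp(-\lambda\mathcal R(\V x)) \leq e^{-\lambda N_c n b}\exp\bigl(-\lambda a\, N(\V x)\bigr)$ with $N(\V x)\coloneqq \sum_{c=1}^{N_c}\Vert \M W_c\V x\Vert_1$. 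Since the prefactor $e^{-\lambda N_c n b}$ is a finite constant, it suffices to establish integrability of $\exp(-\lambda a N(\V x))$.

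Next I would verify that $N$ is a norm on $\R^n$. Positive homogeneity and the triangle inequality are inherited directly from $\Vert\cdot\Vert_1$ together with the linearity of the $\M W_c$, so these are immediate. The only substantive point is definiteness: if $N(\V x)=0$, then $\M W_c\V x=\V 0$ for every $c$, hence $\V x\in\bigcap_c\ker(\M W_c)=\{\V 0\}$. This is precisely where the assumption $\bigcap_c\ker(\M W_c)=\{\V 0\}$ enters, and I regard it as the conceptual heart of the argument; the rest is routine.

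Finally, since all norms on the finite-dimensional space $\R^n$ are equivalent, there is $\beta>0$ with $N(\V x)\geq \beta\Vert\V x\Vert$ for all $\V x$, whence
\begin{equation}
    \int_{\R^n}\exp(-\lambda\mathcal R(\V x))\,\dx\V x \leq e^{-\lambda N_c n b}\int_{\R^n}\exp\bigl(-\lambda a\beta\Vert\V x\Vert\bigr)\,\dx\V x.
\end{equation}
The remaining integral is finite by a radial estimate: passing to polar coordinates it equals a dimensional constant times $\int_0^\infty r^{n-1}e^{-\lambda a\beta r}\,\dx r<\infty$. Thus $\exp(-\lambda\mathcal R)$ is integrable for every $\lambda>0$, so the normalization constant $C_\lambda$ is well-defined and $\mathcal R$ induces a Gibbs prior, as claimed. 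The approach is essentially self-contained; I do not anticipate any genuine obstacle beyond correctly isolating the definiteness step above.
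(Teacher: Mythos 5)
Your proof is correct and takes essentially the same route as the paper: both lower-bound $\mathcal R$ by an affine function of a norm built from the filters (you use $\sum_{c}\Vert \M W_c \V x\Vert_1$, the paper uses $\max_c \Vert \M W_c \V x\Vert_1$), both use the kernel condition precisely to obtain definiteness, and both finish by integrating a decaying exponential of a norm. The only cosmetic difference is that you cite equivalence of norms on $\R^n$ where the paper re-derives the needed comparison $\max_c \Vert \M W_c \V x \Vert_1 \geq \gamma \Vert \V x \Vert_1$ via continuity, positive homogeneity, and compactness of the unit sphere, which is the same argument unwound.
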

\begin{proof}
%We will show that it holds $R(x) = \sum_{j=1}^{N_c} \Vert \psi_j (W_j x) \Vert_1 \ge C \Vert x \Vert_1$ for some constant 
First, we show that there exists some $\gamma>0$ with $\max_c \Vert \M W_c \V x \Vert_1 \ge \gamma \Vert \V x \Vert_1 $.
To this end, let $m(\V x) = \max_c \Vert \M W_c \V x \Vert_1$.
Note that $m$ is continuous and positively homogenous, namely 
$m(a \V x) = \vert a \vert m( \V x)$ for all $a>0$. 
Let $\gamma = \min_{\V x \in \R^n: \Vert \V x \Vert_1=1} m(\V x)$.
Since, $m$ is continuous, it attains its minimum for some $\hat{\V x}$ in the compact set $\{\V x \in \R^n: \Vert \V x \Vert_1=1\}$.
Since $\hat{\V x} \notin \bigcap_c \mathrm{ker}(\M W_c)$, it holds $\gamma = m(\hat{\V x})>0$.
Now, $\max_c \Vert \M W_c \V x \Vert_1 \ge \gamma \Vert \V x \Vert_1 $ follows from the positive homogeneity.
Thus, it holds
\begin{equation}
\mathcal{R}(\V x) = \sum_{c=1}^{N_c} \langle \boldsymbol{1}_n, \boldsymbol \psi_c (\M W_c \V x) \rangle \geq \max_c \langle \boldsymbol{1}_n, \boldsymbol \psi_c (\M W_c \V x) \ge a \max_c \Vert \M W_c \V x \Vert_1 + b \geq a \gamma \Vert \V x \Vert_1 + b.
\end{equation}
For $\lambda>0$, this yields
\begin{align} \label{eq:int_max}
\int_{\R^n} \exp( - \lambda \mathcal{R}(\V x)) \dx \V x & \le \int_{\R^n} \exp( - \lambda a \gamma \Vert \V x \Vert_1 - \lambda b) \dx \V x  = \exp( - \lambda b) \int_{\R^n} \exp( - \lambda a \gamma \Vert \V x \Vert_1) \dx \V x.
\end{align}
Since \eqref{eq:int_max} is finite, $\exp( - \lambda \mathcal{R}(\V x))$ is integrable.
\end{proof}
\begin{remark}
    The assumptions of Theorem~\ref{thm:reg_integrable} are satisfied by our learned convex baseline $\mathcal R$.
    Moreover, Theorem~\ref{thm:reg_integrable} also holds for the adaptive regularizer $\mathcal{R}_{\V y}$ \eqref{eq:rridge_reg_mask} with fixed $\V y$ since $\mathcal{R}_{\V y}(\V x) \ge \epsilon \mathcal{R}(\V x)$.
\end{remark}

\section{Parameterization and Training}
\begin{figure}
\centering
\begin{subfigure}[t]{\textwidth}
\includegraphics[width=\linewidth]{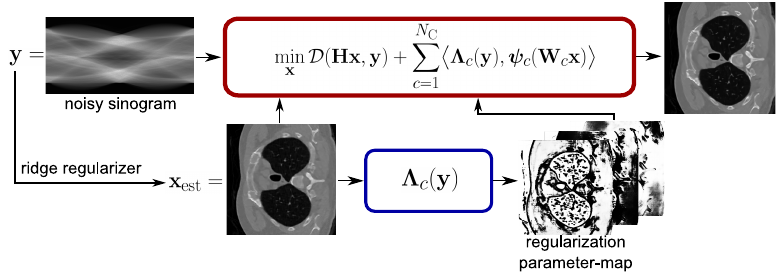}
\end{subfigure}
\caption{Visualization of our scheme.
First, we use a ridge regularizer $\mathcal R$ to compute an initial reconstruction $\V x_{\mathrm{est}}$ from the data $\V y$ based on \eqref{eq:var_prob}.
Then, we generate the masks $\bm \Lambda_c (\V y)$ based on $\V x_{\mathrm{est}}$, and use $\V x_{\mathrm{est}}$ as initialization for the second minimization.} \label{fig:visualization_scheme}
\end{figure}

In \cite{GouNeuUns2023}\footnote{Code: \url{https://github.com/axgoujon/weakly_convex_ridge_regularizer}} the unadaptive regularizer \eqref{eq:ridge_reg} was learned on 400 images of the BSDS500 \cite{bsd_data} training set such that
\begin{equation}\label{eq:Denoiser}
D_{\sigma}(\V y) = \mathrm{prox}_{\mathcal{R}_{\sigma}} (\V y) = \argmin_{\V x \in \R^n} \frac{1}{2} \Vert \V x - \V y \Vert_2^2 + \mathcal{R}_{\sigma} (\V x)
\end{equation}
is a good Gaussian denoiser for any noise level $\sigma \in [0,30/255]$.
For this, we compute \eqref{eq:Denoiser} based on accelerated gradient descent (AGD) \cite[Alg.\ 1]{GouNeuUns2023}.
After the training, the same algorithm is used to solve arbitrary inverse problems  with the variational reconstruction model
\begin{equation}\label{eq:InvProbLearn}
    \argmin_{\V x \in \R^n} \frac{1}{2}\Vert \M H \V x - \V y \Vert_2^2 + \lambda R_{\sigma}(\V x),
\end{equation}
where $\sigma$  and $\lambda$ are  hyperparameters.
For image denoising, namely $\M H = \text{Id}$, we use the actual standard deviation $\sigma$ of the noise model and set $\lambda=1$.
Otherwise, we tune them on a small validation set using a grid search.
The learned $N_C = 60$ convolution matrices $\M W_c$ from \cite{GouNeuUns2023} satisfy $\Vert \M W_c \Vert_2 = 1$.
To cope with the varying noise, the potentials $\psi_c (x) = \alpha_c(\sigma)^{-2} \psi ( \alpha_c(\sigma) x)$ depend on $\sigma$ through $\alpha_c (\sigma) = \exp(s_{\alpha_c} (\sigma)) / (\sigma + 10^{-5})$, where the $s_{\alpha_c}$ are linear splines with 11 equally distant knots in $[0,30/255]$.
The $\psi$ is parameterized as $\psi = \mu \psi_{+} - c_{\mathrm{cvx}} \psi_{-}$ with $\mu>0$ and two quadratic splines $\psi_{+}$, $\psi_{-}$ with 101 equally spaced knots, spacing $\Delta = 0.002$ and $\psi^{\prime \prime}_+ (x), \psi^{\prime \prime}_- (x) \in [0,1]$ for a.e.\ $x \in \R$.
Setting $c_{\mathrm{cvx}}=0$ leads to a convex $\mathcal R$ (CRR) \cite{GouNeuBoh2022}, while $c_{\mathrm{cvx}}=1$ leads to a $1$-weakly convex one (WCRR) \cite{GouNeuUns2023}.

Recall that adaptivity should improve the baseline $\mathcal R_{\sigma}$.
Therefore, we reuse the parametrization from \cite{GouNeuUns2023} as well as their learned parameters for the matrices $\M W$ and the common profile $\psi$ in the adaptive regularizer \eqref{eq:rridge_reg_mask}.
Thus, we only adapt the profiles $\psi_c$ in terms of $\alpha_c (\sigma)$ and $\mu$.
To build the additional $\boldsymbol \Lambda_c\colon \R^m \to [0,1]^n$, we first compute an initial reconstruction $\V x_{\mathrm{est}} \in \R^n$ from $\V y \in \R^m$ based on the unadaptive $\mathcal R_{\sigma}$ with the optimal hyperparameters $\lambda$, $\sigma>0$.
The obtained $\V x_{\mathrm{est}}$ is plugged into a network $G \colon \R^n \to ([0,1]^n)^{N_c}$, which we choose as a variant of SwinIR \cite{LCSZGT2021}.
This architecture first deploys a $3\times3$ convolution to extract 60 features.
Then, it applies 4 residual Swin transformer blocks with window size 8 followed again by a $3\times3$ convolution.
Each Swin block has 6 attention heads and contains 6 Swin transformer layers (where the MLPs have hidden dimension 120), which are followed by a $3\times3$ convolution layer.
The final one-step upsamling layer increases the numbers of channels to the output dimension 80, namely the number of filters in \eqref{eq:rridge_reg_mask}, and applies the algebraic function $x \mapsto x / \sqrt{x^2 + 1}$ to enforce the output range $[0,1]$.
All other parameters are chosen as the default ones.
Since $\V x_{\mathrm{est}}$ is always an image, we can learn the parameters of $G$ for denoising as before.
We perform AGD with starting point $\V x_{\mathrm{est}}$ to minimize \eqref{eq:InvProbLearn} for the adaptive $\mathcal R_{\V y,\sigma}$. 
The whole pipeline for a CT setup is visualized in Figure~\ref{fig:visualization_scheme}. 

In summary, we have to learn the $G$, the scalings $\alpha_c (\sigma)$ and the $\mu$.
Let $\theta$ be the aggregation of these parameters and let us denote $\mathcal{R}_{\V y, \sigma, \theta}$ for an explicit reference.
Similar as for $\mathcal R_{\sigma}$, we aim to learn $\theta$ such that $\mathcal{R}_{\V y, \sigma, \theta}$ induces a good multi-noise-level Gaussian denoiser \smash{$D_{\sigma, \theta}(\V y) = \mathrm{prox}_{\mathcal{R}_{\V y, \sigma, \theta}} (\V y)$}.
For this, we extract $M=110400$ patches $\{ \V x^k \}_{k=1}^M$ of size $80 \times 80$ from the 400 training images.
Compared to \cite{GouNeuUns2023}, we doubled the patch size to account for the larger field of view in $G$.
The corresponding data is simulated as $\V y^k = \V x^k + \sigma^k \V n^k$ with Gaussian noise $\V n^k \sim \mathcal{N}(\V 0 , \V I)$ and standard deviations $\sigma^k$ uniformly drawn from $[0,30/255]$.
This leads to the supervised training problem 
\begin{align} \label{eq:training_mask}
\argmin_{\theta} \sum_{k=1}^M \mathbb{E}_{\V n^k \sim  \mathcal{N}(\V 0 , \V I)} \mathbb{E}_{\sigma^k \sim \mathcal{U}([0,30/255])} \bigl(\bigl\Vert D_{\sigma^k, \theta} (\V y^k ) - \V x^k \bigr\Vert_1\bigr).
\end{align}
Here, $\nabla_{\theta} D_{\sigma^k, \theta} (\V y^k)$ is computed using the implicit differentiation techniques detailed in \cite{GouNeuUns2023}.
We initialize $\alpha_c(\sigma)$ and $\mu$ from the baseline $\mathcal R_{\sigma}$, and the parameters of $G$ randomly.
Before training with all parameters, the $G$ is pre-trained for 2 epochs with $\sigma^k =25/255$ using the same procedure as for the full training described below.
Then, we minimize \eqref{eq:training_mask} using Adam \cite{KinJim2015} with batch size 32 and learning rate $\mathrm{lr}=\num{1e-4}$ for $G$, $\mathrm{lr}=\num{5e-2}$ for $\mu$, and $\mathrm{lr}=\num{5e-3}$ for $s_{\alpha_c}$.
These correspond to the initial learning rates from \cite{LCSZGT2021} and \cite{GouNeuUns2023}, respectively.
Adapting these did not lead to significant improvements in our experiments.
Moreover, we use a scheduler that reduces the learning rates linearly for the first 5 epochs such that the final ones are half the initial ones.
In total, we train for 35 epochs, after which the model performance stagnates.
In terms of time, this can be achieved in about 2 days on a consumer GPU.
Then, we extract the model parameters $\theta$ for which the MSE on the validation Set12 \cite{ZZCMZ2017} is lowest.
When deploying $\mathcal R_{\V y,\sigma}$ to inverse problems, we need to tune $\lambda$ and $\sigma$ for \eqref{eq:InvProbLearn}.

\section{Experiments} \label{sec:experiments}

Now, we evaluate the two proposed regularizers $\mathcal R_{\V y, \sigma}$, which refer to as Mask-CRR and Mask-WCRR, respectively, on various inverse problems.
First, we investigate image denoising.
Then, we move to magnetic resonance imaging (MRI) in Section~\ref{sec:mri}.
In Section~\ref{sec:ct}, we deploy $\mathcal R_{\V y,\sigma}$ for computed tomography (CT).
Finally, we show results for the superresolution of material microstructures in Section~\ref{sec:SiC}. 
We quantitatively compare with established methods that can deal with limited training data (a common setting for medical imaging or material sciences), namely with
\begin{itemize}
    \item the total variation (TV) regularizer \cite{rudin1992nonlinear} for the variational model \eqref{eq:var_prob} as a well-known convex baseline.
    This method has a single hyperparameter $\lambda$;
    \item the patchNR \cite{ADHHMS2023,PAHHWS2023} as regularizer for the variational model \eqref{eq:var_prob}.
    It performs particularly well in the regime of limited training data.
    This regularizer is non-convex and, apart from the existence of minimizers and an empirical convergence analysis, no other theoretical guarantees are known.
    This regularizer needs to be learned on the validation set;
    \item the Prox-DRUNet \cite{HurLec2022,hurault2022gradient} as one of the best-performing methods for convergent plug-and-play image-reconstruction \cite{HNS2021,venkatakrishnan2013plug,Drunet2022}.
    It is inspired by the approach \eqref{eq:var_prob}, but models the regularizer $\mathcal R$ implicitly.
    For MRI and image superresolution, we deploy the DRS-PnP version described in \cite{HurLec2022}, and the Prox-PnP-$\alpha$PGD described in \cite{HurChaLec2023} for CT. 
    Here, one should keep in mind that the theoretical requirements are only imposed via regularization.
\end{itemize}
To optimize the hyperparameters in \eqref{eq:var_prob}, we deploy the coarse-to-fine grid search from \cite{GouNeuBoh2022} for a given validation set.
The reported quality metrics are the peak-signal-to-noise ratio (PSNR) and the structural similarity index measure (SSIM).
Additionally, we provide the runtime of the methods for a single reconstruction. 
All experiments are implemented in PyTorch and the code is available online\footnote{\url{https://github.com/FabianAltekrueger/DataAdaptiveRR}}.
The evaluations are performed on a NVIDIA GeForce RTX 2060 with 6GB GPU memory.

\begin{table}[t]
\begin{center}
\begin{tabular}[t]{c|ccc|cc|ccc} 
Method    &    BM3D   & CRR & WCRR & Mask-CRR & Mask-WCRR & Prox-DRUNet \\
\hline
$\sigma=5/255$   &   37.54     & 36.96  & 37.63 & 37.70 & \underline{37.80} &  \textbf{37.98} \\
$\sigma=15/255$  &   31.11     & 30.53  & 31.18 & 31.56 & \underline{31.61} &  \textbf{31.70} \\
$\sigma=25/255$   &   28.60     &  28.08 & 28.68 & \underline{29.13} & \underline{29.13} &  \textbf{29.18} \\
\hline 
\end{tabular}
\caption{Denoising. Average PSNR on the BSDS68 set.
Best is bold and second best is underlined.} 
\label{tab:psnr_denoising}
\end{center}
\end{table}
\begin{figure}[t]
\centering
\begin{subfigure}[t]{.185\textwidth}
\centering
\includegraphics[width=0.8\linewidth]{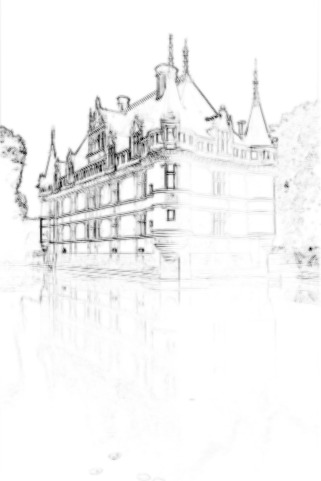}
\caption*{$\mathcal{R}(\V x_{\mathrm{gt}})$ (clean data)}
\end{subfigure}
\hfill
\begin{subfigure}[t]{.185\textwidth}
\centering
\includegraphics[width=0.8\linewidth]{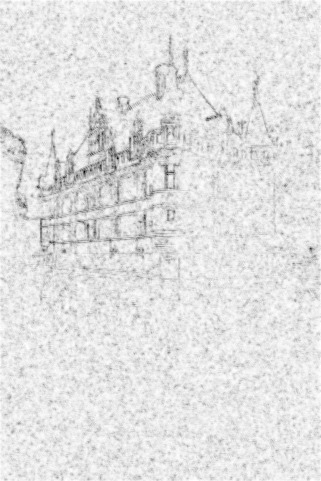}
\caption*{$\mathcal{R}(\V y)$ (noisy data)}
\end{subfigure}
\hfill
\begin{subfigure}[t]{.185\textwidth}
\centering
\includegraphics[width=0.8\linewidth]{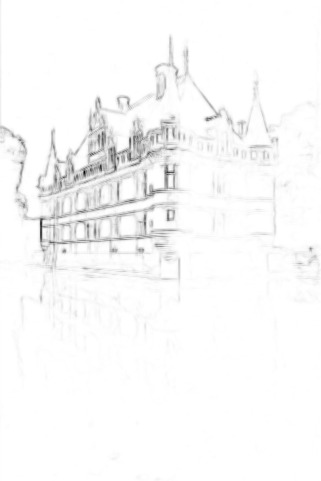}
\caption*{$\mathcal{R}(\V x_{\mathrm{est}})$ (result)}
\end{subfigure}
\hfill
\begin{subfigure}[t]{.185\textwidth}
\centering
\includegraphics[width=0.8\linewidth]{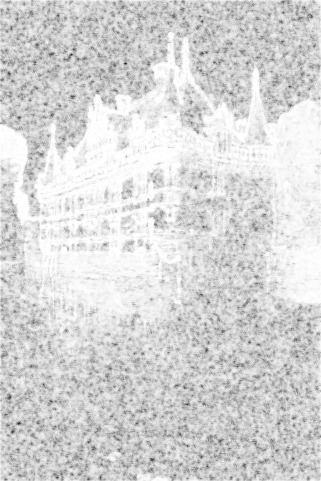}
\caption*{$\mathcal{R}_{\V y}(\V y)$ (noisy data)}
\end{subfigure}
\hfill
\begin{subfigure}[t]{.185\textwidth}
\centering
\includegraphics[width=0.8\linewidth]{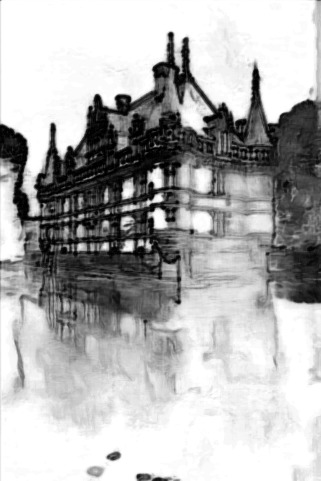}
\caption*{$\boldsymbol \Lambda(\V y)$ (average mask)}
\end{subfigure}

%\begin{subfigure}[t]{.17\textwidth}
%\includegraphics[width=\linewidth]{imgs/responses/wcrr/reg_gt}
%\end{subfigure}
%\hfill
%\begin{subfigure}[t]{.17\textwidth}
%\includegraphics[width=\linewidth]{imgs/responses/wcrr/reg_noisy}
%\end{subfigure}
%\hfill
%\begin{subfigure}[t]{.17\textwidth}
%\includegraphics[width=\linewidth]{imgs/responses/wcrr/reg_init}
%\end{subfigure}
%\hfill
%\begin{subfigure}[t]{.17\textwidth}
%\includegraphics[width=\linewidth]{imgs/responses/wcrr/reg_mask}
%\end{subfigure}
%\hfill
%\begin{subfigure}[t]{.17\textwidth}
%\includegraphics[width=\linewidth]{imgs/responses/wcrr/mask_mean}
%\end{subfigure}
\caption{Pixel-wise cost $\mathcal{R}(\V x) = \sum_c \boldsymbol \psi_c (\M W_c \V x)$ for $\V x \in \{ \V x_{\mathrm{gt}}, \V y, \V x_{\mathrm{est}} \}$ and data-dependent cost $\mathcal{R}_{\V y}(\V y) = \sum_c \langle \boldsymbol \Lambda_c (\V y ) , \boldsymbol \psi_c (\M W_c \V y) \rangle$.
Here, black corresponds to higher values.
For the last image $\boldsymbol \Lambda (\V y ) = \sum_c \boldsymbol \Lambda_c (\V y )$ black corresponds to smaller values.}
\label{fig:respones}
\vspace{.5cm}

\centering
\begin{subfigure}[t]{.123\textwidth}  
\begin{tikzpicture}[spy using outlines=
{rectangle,white,magnification=3.5,size=2.115cm, connect spies}]
\node[anchor=south west,inner sep=0]  at (0,0) {\includegraphics[width=\linewidth]{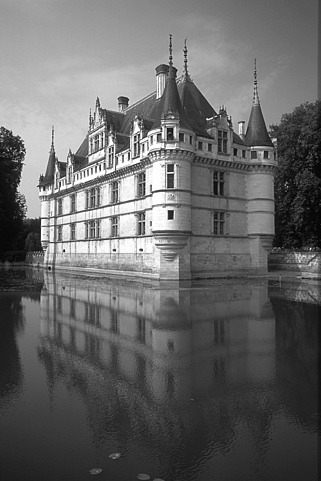}};
\spy on (1.65,2.5) in node [right] at (-0.005,-.87);
\end{tikzpicture}
\caption*{\footnotesize Original}
\end{subfigure}%
\hfill
\begin{subfigure}[t]{.123\textwidth}
\begin{tikzpicture}[spy using outlines=
{rectangle,white,magnification=3.5,size=2.115cm, connect spies}]
\node[anchor=south west,inner sep=0]  at (0,0) {\includegraphics[width=\linewidth]{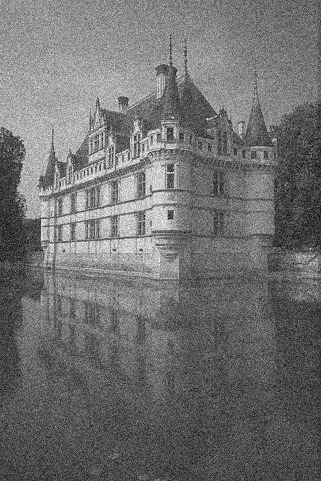}};
\spy on (1.65,2.5) in node [right] at (-0.005,-.87);
\end{tikzpicture}
  \caption*{\footnotesize Noisy}
\end{subfigure}%
\hfill
\begin{subfigure}[t]{.123\textwidth}
\begin{tikzpicture}[spy using outlines=
{rectangle,white,magnification=3.5,size=2.115cm, connect spies}]
\node[anchor=south west,inner sep=0]  at (0,0) {\includegraphics[width=\linewidth]{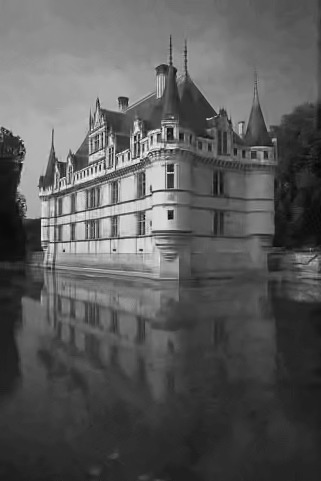}};
\spy on (1.65,2.5) in node [right] at (-0.005,-.87);
\end{tikzpicture}
  \caption*{\footnotesize BM3D}
\end{subfigure}%
\hfill
\begin{subfigure}[t]{.123\textwidth}
\begin{tikzpicture}[spy using outlines=
{rectangle,white,magnification=3.5,size=2.115cm, connect spies}]
\node[anchor=south west,inner sep=0]  at (0,0) {\includegraphics[width=\linewidth]{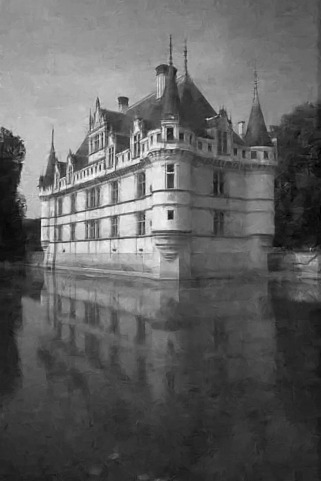}};
\spy on (1.65,2.5) in node [right] at (-0.005,-.87);
\end{tikzpicture}
  \caption*{\footnotesize CRR}
\end{subfigure}%
\hfill
\begin{subfigure}[t]{.123\textwidth}
\begin{tikzpicture}[spy using outlines=
{rectangle,white,magnification=3.5,size=2.115cm, connect spies}]
\node[anchor=south west,inner sep=0]  at (0,0) {\includegraphics[width=\linewidth]{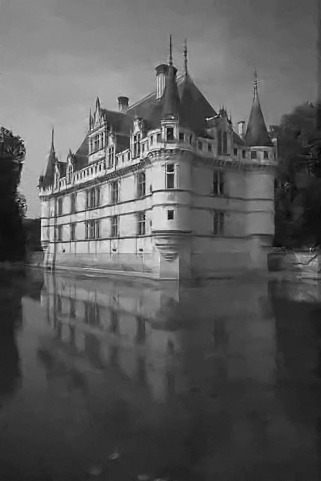}};
\spy on (1.65,2.5) in node [right] at (-0.005,-.87);
\end{tikzpicture}
  \caption*{\footnotesize WCRR}
\end{subfigure}%
\hfill
\begin{subfigure}[t]{.123\textwidth}
\begin{tikzpicture}[spy using outlines=
{rectangle,white,magnification=3.5,size=2.115cm, connect spies}]
\node[anchor=south west,inner sep=0]  at (0,0) {\includegraphics[width=\linewidth]{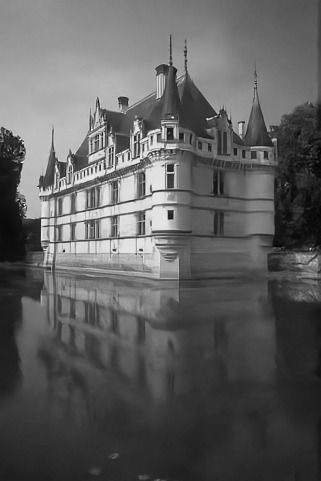}};
\spy on (1.65,2.5) in node [right] at (-0.005,-.87);
\end{tikzpicture}
  \caption*{\footnotesize Mask-CRR}
\end{subfigure}%
\hfill
\begin{subfigure}[t]{.123\textwidth}
\begin{tikzpicture}[spy using outlines=
{rectangle,white,magnification=3.5,size=2.115cm, connect spies}]
\node[anchor=south west,inner sep=0]  at (0,0) {\includegraphics[width=\linewidth]{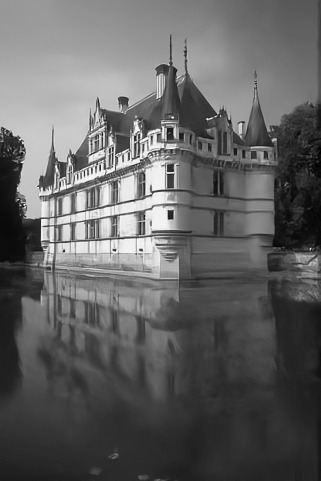}};
\spy on (1.65,2.5) in node [right] at (-0.005,-.87);
\end{tikzpicture}
  \caption*{\footnotesize Mask-WCRR}
\end{subfigure}%
\hfill
\begin{subfigure}[t]{.123\textwidth}
\begin{tikzpicture}[spy using outlines=
{rectangle,white,magnification=3.5,size=2.115cm, connect spies}]
\node[anchor=south west,inner sep=0]  at (0,0) {\includegraphics[width=\linewidth]{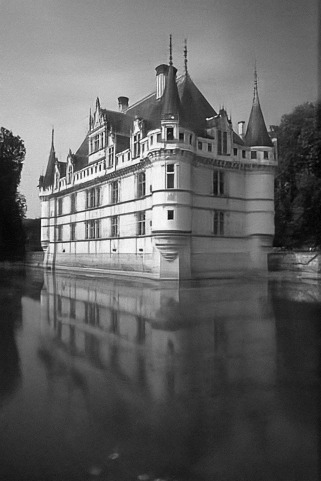}};
\spy on (1.65,2.5) in node [right] at (-0.005,-.87);
\end{tikzpicture}
  \caption*{\footnotesize Prox-DRUNet}
\end{subfigure}%
\caption{Denoising of the \emph{castle} image for $\sigma=25/255$. 
The white box marks the zoomed area.} \label{fig:denoising_comparison}
\end{figure}

\subsection{Denoising}\label{sec:Denoising}
First, we evaluate the methods for non-blind denoising on the BSDS68 dataset \cite{bsd_data}.
This means that the standard deviation $\sigma$ of the noise is used as input.
The quantitative results for different $\sigma$ are given in Table~\ref{tab:psnr_denoising}, where we also include the popular BM3D denoiser \cite{DabFoiKat2007}.
Most importantly, the mask $\boldsymbol \Lambda$ improves both the CRR and the WCRR.
To analyze this behavior, we visualize the pixel-wise regularization cost for CRR and Mask-CRR in Figure~\ref{fig:respones}. The images would look similar for WCRR.
First, note that each $\boldsymbol \Lambda_c \colon \R^n \to [0,1]^{n}$ dampens the penalized filter response $\boldsymbol \psi_c (\M W_c \V x)$ at the structure.
In particular, the values of $\boldsymbol \Lambda$ are smaller (black) in structured areas and higher (white) in constant ones, which prevents oversmoothing.
Secondly, after adding the $\boldsymbol \Lambda$, the two regularizers CRR and WCRR perform similarly.
This contrasts with the baseline setting, where the WCRR outperforms the CRR.
A qualitative result for the \emph{castle} image and $\sigma=25/255$ is given in Figure~\ref{fig:denoising_comparison}.
While the results with CRR and WCRR look very different, the addition of $\boldsymbol \Lambda$ leads to visually similar results with only minor differences:
Since the top of the spire is still contained in the CRR result, the structure-aware mask $\boldsymbol \Lambda$ enables us to recover most of it.
In contrast, it is already lost for the WCRR result, and $\boldsymbol \Lambda$ does not help anymore.
Hence, we conclude that the mask $\boldsymbol \Lambda$ enhances the structure contained in $\V x_\text{est}$.

\subsection{Magnetic Resonance Imaging} \label{sec:mri}
\begin{table}[t]
\begin{center}
\begin{tabular}[t]{c|ccccc} 
    & \multicolumn{5}{c}{4-fold single coil}\\    
     & PD  & PDFS & PD & PDFS & time \\    
\midrule
Zero-Filled &  27.40 $\pm$ 2.41  &  29.68 $\pm$ 1.58     &  0.714 $\pm$ 0.053  &  0.719 $\pm$ 0.049 &   0.02s  \\
TV &  32.30 $\pm$ 2.62 &  32.56 $\pm$ 1.78  & 0.827 $\pm$ 0.052 &  0.775 $\pm$ 0.061 &   4s  \\
CRR   &   33.39 $\pm$ 2.65 &  33.44 $\pm$ 1.82 & 0.870 $\pm$ 0.042 & 0.824 $\pm$ 0.050 &  24s \\
WCRR   &   35.74 $\pm$ 2.51 &  34.60 $\pm$ 1.96 & 0.897 $\pm$ 0.039 &  0.837 $\pm$ 0.053 & 20s \\
\hline
Mask-CRR   &   35.86 $\pm$ 2.49 &  34.60  $\pm$ 1.97 & \underline{0.903} $\pm$ 0.036 &  0.838 $\pm$ 0.051 &  38s \\
Mask-WCRR   &   \textbf{36.19} $\pm$ 2.53  &  \underline{34.68} $\pm$ 1.98 & \textbf{0.904} $\pm$ 0.038 &  \underline{0.840} $\pm$ 0.051 & 37s\\
Prox-DRUNet DRS    & \underline{36.14} $\pm$ 2.52 & \textbf{35.05} $\pm$ 1.97 & 0.900 $\pm$ 0.039 & \textbf{0.847} $\pm$ 0.048 &  625s         \\
\hline
patchNR     &   35.80  $\pm$ 2.84 &  34.26 $\pm$ 2.06 & 0.895  $\pm$ 0.043 &  0.830 $\pm$ 0.057  &  116s      \\

\toprule
\toprule
    &  \multicolumn{5}{c}{8-fold multi coil}\\    
     &  PD  & PDFS & PD & PDFS & time\\    
\midrule
Zero-Filled &     23.80  $\pm$ 2.34     & 27.19 $\pm$ 1.62    &  0.628 $\pm$ 0.061   &  0.649 $\pm$ 0.065 &  0.02s   \\
TV &    31.14 $\pm$ 3.14 & 32.12 $\pm$ 2.15 & 0.807 $\pm$ 0.060 & 0.769 $\pm$ 0.070 &  16s  \\
CRR   &   33.90 $\pm$ 2.81 &  34.28 $\pm$ 2.08 & 0.875 $\pm$ 0.038 &  0.847 $\pm$ 0.049 &  27s \\
WCRR   &   35.50 $\pm$ 2.40 & 35.12 $\pm$ 2.13 & 0.894 $\pm$ 0.032 & 0.855 $\pm$ 0.052 &  27s\\
\hline
Mask-CRR   &  35.73 $\pm$ 2.49 &  \underline{35.20} $\pm$ 2.11 & \underline{0.900} $\pm$ 0.031 &  \textbf{0.858} $\pm$ 0.049 & 38s \\
Mask-WCRR   &   \textbf{36.00} $\pm$ 2.36 & \textbf{35.23} $\pm$ 2.12 & \textbf{0.902} $\pm$ 0.030 &  \textbf{0.858} $\pm$ 0.050 & 38s \\
Prox-DRUNet DRS    &  \underline{35.81} $\pm$ 2.36 & 35.13 $\pm$ 2.14 & 0.894 $\pm$ 0.031 & \underline{0.857} $\pm$ 0.052 &   625s      \\
\hline
patchNR     &    35.35  $\pm$ 3.09 & 34.84 $\pm$ 2.15 & 0.888 $\pm$ 0.042 & 0.849 $\pm$ 0.054 & 119s   \\
\end{tabular}
\caption{MRI. Average metrics and standard deviations for 50 images of the fastMRI dataset.
Best is bold and second best is underlined.} 
\label{tab:error_MRI}
\end{center}
\end{table}
Next, we investigate the single- and 15-coil MRI setups detailed in \cite{GouNeuBoh2022}, where $\M H$ is (essentially) a subsampled Fourier transform.
We want to reconstruct proton-density-weighted knee images from the fastMRI dataset \cite{zbontar2018fastMRI}, which consists of the two subsets PDFS and PD (with and without fat suppression).
To generate ground truth images, we use the fully sampled k-space measurements. 
For the single-coil setup, we generate the measurements by a $4$-fold ($M_{\text{acc}} = 4$) subsampling.
The acceleration factor $M_{\text{acc}}$ determines the ratio, i.e., we keep $1/M_{\text{acc}}$ of the columns in the k-space.
In the 15-coil setup, we apply $8$-fold ($M_{\text{acc}} = 8$) subsampling to the Fourier transforms of the ground-truth images multiplied by the respective sensitivity maps.
For this, we use the BART \cite{uecker2013software} implementation of the ESPIRiT algorithm \cite{Uecker2014-uv}.
The data is corrupted with additive Gaussian noise of standard deviation $\sigma = \num{2e-3}$ to the real and imaginary part.
From the Bayesian viewpoint, the negative log-likelihood reads
\begin{equation}\label{eq:DataMRI}
    - \log(p_{\M Y|\M X= \V x}(\V y)) \propto \frac{1}{2\sigma^2} \Vert \M H \V x - \V y \Vert^2.
\end{equation}

For each task and data set, we use ten validation images to determine the hyperparameters.
We also use these images to finetune the $\boldsymbol \Lambda$ in \eqref{eq:rridge_reg_mask} for 30 epochs and to train the patchNR.
The quantitative results on 50 test images are reported in Table~\ref{tab:error_MRI}.
Moreover, qualitative results are given in Figures~\ref{fig:MRI_comparison_4} and \ref{fig:MRI_comparison_8}.
Here, the error refers to the absolute difference with the ground truth.
Again, the performance of CRR and WCRR improves with the addition of $\boldsymbol \Lambda \colon \R^m \to [0,1]^{nN_c}$.
In particular, the Mask-CRR improves significantly for all cases.
Overall, the Mask-WCRR performs best.
Similar reconstruction quality is achieved with the Prox-DRUNet.
Its drawback is that it takes 15 times longer than the Mask-WCRR.
The patchNR struggles to reconstruct the finer structures and performs worse than the other methods.
More reconstructions are given in Appendix~\ref{app:more_examples}.
\begin{figure}[hp]
\centering
\begin{subfigure}[t]{.123\textwidth}  
\begin{tikzpicture}[spy using outlines=
{rectangle,white,magnification=4.5,size=2.115cm, connect spies}]
\node[anchor=south west,inner sep=0]  at (0,0) {\includegraphics[width=\linewidth]{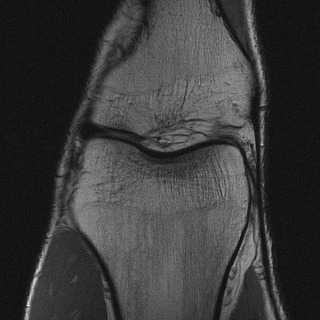}};
\spy on (1.55,1.08) in node [right] at (-0.005,-1.1);
\end{tikzpicture}
%\caption*{\footnotesize GT}
\end{subfigure}%
\hfill
\begin{subfigure}[t]{.123\textwidth}
\begin{tikzpicture}[spy using outlines=
{rectangle,white,magnification=4.5,size=2.115cm, connect spies}]
\node[anchor=south west,inner sep=0]  at (0,0) {\includegraphics[width=\linewidth]{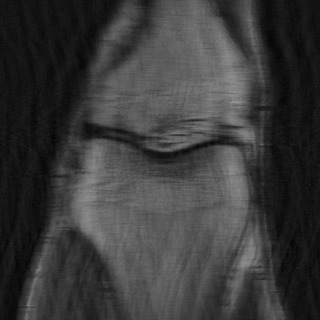}};
\spy on (1.55,1.08) in node [right] at (-0.005,-1.1);
\end{tikzpicture}
%  \caption*{\footnotesize FBP}
\end{subfigure}%
\hfill
\begin{subfigure}[t]{.123\textwidth}
\begin{tikzpicture}[spy using outlines=
{rectangle,white,magnification=4.5,size=2.115cm, connect spies}]
\node[anchor=south west,inner sep=0]  at (0,0) {\includegraphics[width=\linewidth]{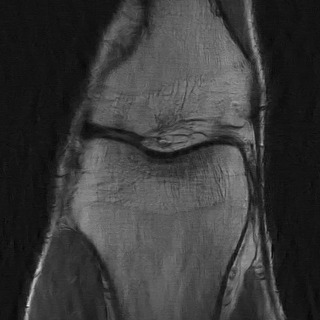}};
\spy on (1.55,1.08) in node [right] at (-0.005,-1.1);
\end{tikzpicture}
%  \caption*{\footnotesize CRR}
\end{subfigure}%
\hfill
\begin{subfigure}[t]{.123\textwidth}
\begin{tikzpicture}[spy using outlines=
{rectangle,white,magnification=4.5,size=2.115cm, connect spies}]
\node[anchor=south west,inner sep=0]  at (0,0) {\includegraphics[width=\linewidth]{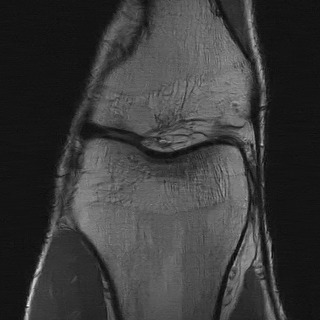}};
\spy on (1.55,1.08) in node [right] at (-0.005,-1.1);
\end{tikzpicture}
%  \caption*{\footnotesize WCRR}
\end{subfigure}%
\hfill
\begin{subfigure}[t]{.123\textwidth}
\begin{tikzpicture}[spy using outlines=
{rectangle,white,magnification=4.5,size=2.115cm, connect spies}]
\node[anchor=south west,inner sep=0]  at (0,0) {\includegraphics[width=\linewidth]{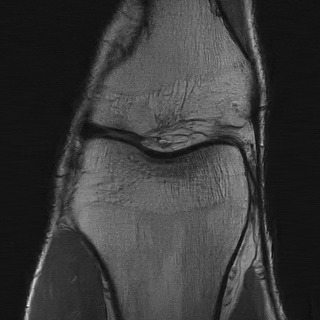}};
\spy on (1.55,1.08) in node [right] at (-0.005,-1.1);
\end{tikzpicture}
%  \caption*{\footnotesize Mask-CRR}
\end{subfigure}%
\hfill
\begin{subfigure}[t]{.123\textwidth}
\begin{tikzpicture}[spy using outlines=
{rectangle,white,magnification=4.5,size=2.115cm, connect spies}]
\node[anchor=south west,inner sep=0]  at (0,0) {\includegraphics[width=\linewidth]{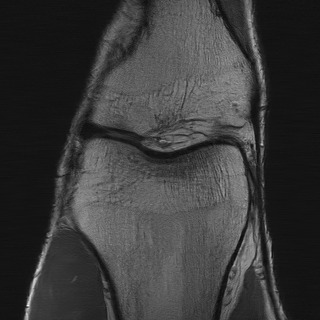}};
\spy on (1.55,1.08) in node [right] at (-0.005,-1.1);
\end{tikzpicture}
%  \caption*{\footnotesize Mask-WCRR}
\end{subfigure}%
\hfill
\begin{subfigure}[t]{.123\textwidth}
\begin{tikzpicture}[spy using outlines=
{rectangle,white,magnification=4.5,size=2.115cm, connect spies}]
\node[anchor=south west,inner sep=0]  at (0,0) {\includegraphics[width=\linewidth]{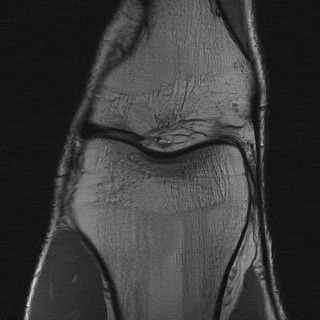}};
\spy on (1.55,1.08) in node [right] at (-0.005,-1.1);
\end{tikzpicture}
%  \caption*{\footnotesize Prox-DRUNet}
\end{subfigure}%
\hfill
\begin{subfigure}[t]{.123\textwidth}
\begin{tikzpicture}[spy using outlines=
{rectangle,white,magnification=4.5,size=2.115cm, connect spies}]
\node[anchor=south west,inner sep=0]  at (0,0) {\includegraphics[width=\linewidth]{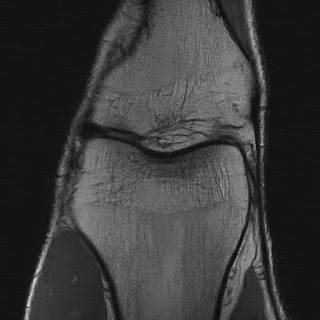}};
\spy on (1.55,1.08) in node [right] at (-0.005,-1.1);
\end{tikzpicture}
%  \caption*{\footnotesize patchNR}
\end{subfigure}%

\begin{subfigure}[t]{.123\textwidth}
\includegraphics[width=\linewidth]{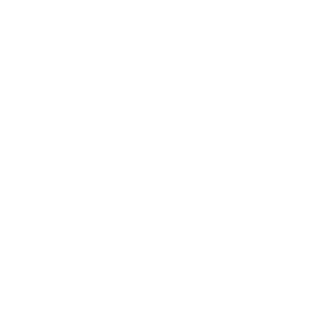}
  \caption*{\footnotesize GT}
\end{subfigure}%
\hfill
\begin{subfigure}[t]{.123\textwidth}
\includegraphics[width=\linewidth]{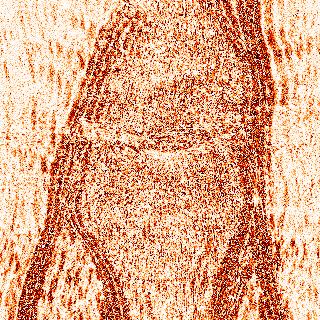}
  \caption*{\footnotesize Zero-filled}
\end{subfigure}%
\hfill
\begin{subfigure}[t]{.123\textwidth}
\includegraphics[width=\linewidth]{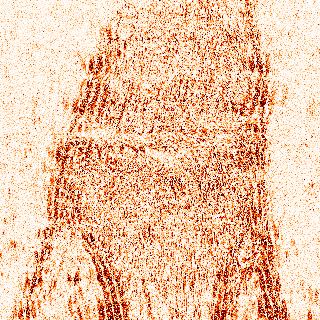}
  \caption*{\footnotesize CRR}
\end{subfigure}%
\hfill
\begin{subfigure}[t]{.123\textwidth}
\includegraphics[width=\linewidth]{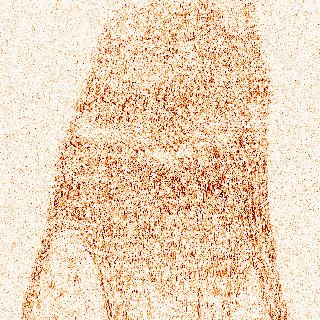}
  \caption*{\footnotesize WCRR}
\end{subfigure}%
\hfill
\begin{subfigure}[t]{.123\textwidth}
\includegraphics[width=\linewidth]{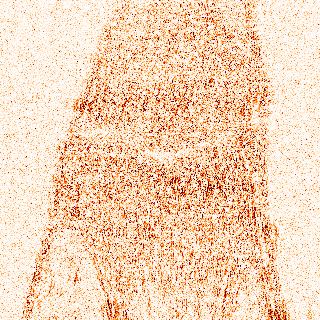}
  \caption*{\footnotesize CRR-Mask}
\end{subfigure}%
\hfill
\begin{subfigure}[t]{.123\textwidth}
\includegraphics[width=\linewidth]{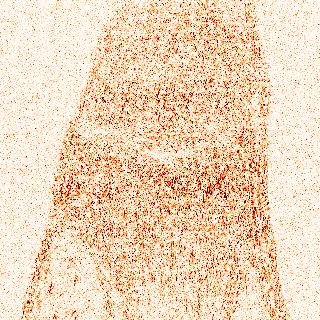}
  \caption*{\footnotesize WCRR-Mask}
\end{subfigure}%
\hfill
\begin{subfigure}[t]{.123\textwidth}
\includegraphics[width=\linewidth]{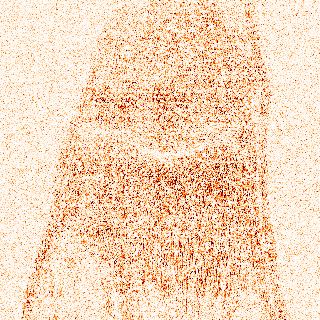}
  \caption*{\footnotesize Prox-DRUNet}
\end{subfigure}%
\hfill
\begin{subfigure}[t]{.123\textwidth}
\includegraphics[width=\linewidth]{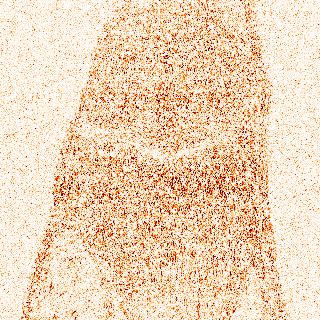}
  \caption*{\footnotesize patchNR}
\end{subfigure}%

\caption{4-fold single-coil MRI on PD data set.
The white box marks the zoomed area.
\textit{Top}: full image; \textit{middle}: zoomed-in part; \textit{bottom}: error.
} \label{fig:MRI_comparison_4}
\vspace{.5cm}

\centering
\begin{subfigure}[t]{.123\textwidth}  
\begin{tikzpicture}[spy using outlines=
{rectangle,white,magnification=4.5,size=2.115cm, connect spies}]
\node[anchor=south west,inner sep=0]  at (0,0) {\includegraphics[width=\linewidth]{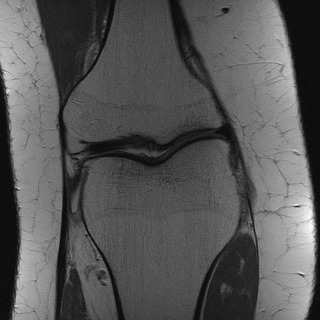}};
\spy on (.5,.3) in node [right] at (-0.005,-1.1);
\end{tikzpicture}
%\caption*{\footnotesize GT}
\end{subfigure}%
\hfill
\begin{subfigure}[t]{.123\textwidth}
\begin{tikzpicture}[spy using outlines=
{rectangle,white,magnification=4.5,size=2.115cm, connect spies}]
\node[anchor=south west,inner sep=0]  at (0,0) {\includegraphics[width=\linewidth]{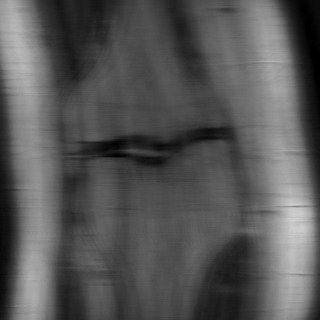}};
\spy on (.5,.3) in node [right] at (-0.005,-1.1);
\end{tikzpicture}
  %\caption*{\footnotesize FBP}
\end{subfigure}%
\hfill
\begin{subfigure}[t]{.123\textwidth}
\begin{tikzpicture}[spy using outlines=
{rectangle,white,magnification=4.5,size=2.115cm, connect spies}]
\node[anchor=south west,inner sep=0]  at (0,0) {\includegraphics[width=\linewidth]{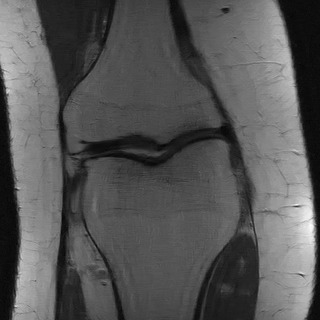}};
\spy on (.5,.3) in node [right] at (-0.005,-1.1);
\end{tikzpicture}
%  \caption*{\footnotesize CRR}
\end{subfigure}%
\hfill
\begin{subfigure}[t]{.123\textwidth}
\begin{tikzpicture}[spy using outlines=
{rectangle,white,magnification=4.5,size=2.115cm, connect spies}]
\node[anchor=south west,inner sep=0]  at (0,0) {\includegraphics[width=\linewidth]{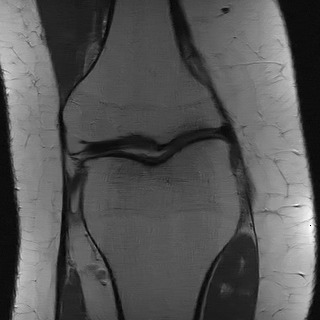}};
\spy on (.5,.3) in node [right] at (-0.005,-1.1);
\end{tikzpicture}
%  \caption*{\footnotesize WCRR}
\end{subfigure}%
\hfill
\begin{subfigure}[t]{.123\textwidth}
\begin{tikzpicture}[spy using outlines=
{rectangle,white,magnification=4.5,size=2.115cm, connect spies}]
\node[anchor=south west,inner sep=0]  at (0,0) {\includegraphics[width=\linewidth]{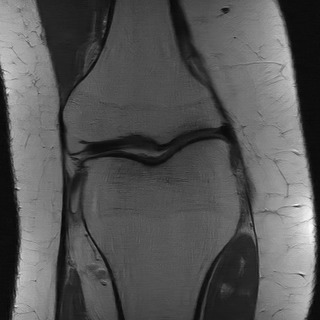}};
\spy on (.5,.3) in node [right] at (-0.005,-1.1);
\end{tikzpicture}
%  \caption*{\footnotesize Mask-CRR}
\end{subfigure}%
\hfill
\begin{subfigure}[t]{.123\textwidth}
\begin{tikzpicture}[spy using outlines=
{rectangle,white,magnification=4.5,size=2.115cm, connect spies}]
\node[anchor=south west,inner sep=0]  at (0,0) {\includegraphics[width=\linewidth]{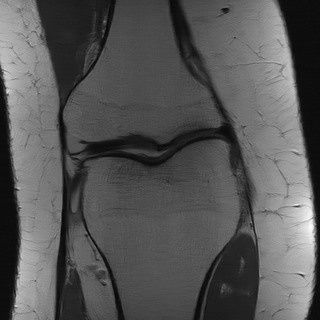}};
\spy on (.5,.3) in node [right] at (-0.005,-1.1);
\end{tikzpicture}
%  \caption*{\footnotesize Mask-WCRR}
\end{subfigure}%
\hfill
\begin{subfigure}[t]{.123\textwidth}
\begin{tikzpicture}[spy using outlines=
{rectangle,white,magnification=4.5,size=2.115cm, connect spies}]
\node[anchor=south west,inner sep=0]  at (0,0) {\includegraphics[width=\linewidth]{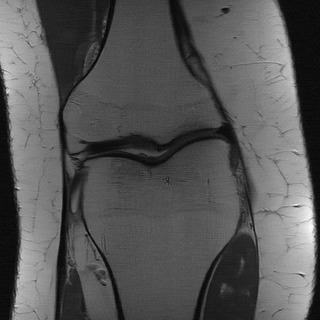}};
\spy on (.5,.3) in node [right] at (-0.005,-1.1);
\end{tikzpicture}
%  \caption*{\footnotesize Prox-DRUNet}
\end{subfigure}%
\hfill
\begin{subfigure}[t]{.123\textwidth}
\begin{tikzpicture}[spy using outlines=
{rectangle,white,magnification=4.5,size=2.115cm, connect spies}]
\node[anchor=south west,inner sep=0]  at (0,0) {\includegraphics[width=\linewidth]{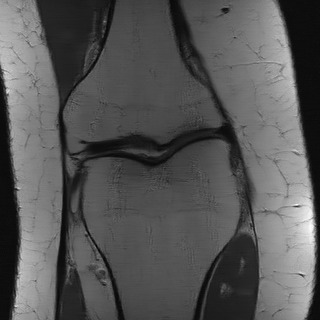}};
\spy on (.5,.3) in node [right] at (-0.005,-1.1);
\end{tikzpicture}
%  \caption*{\footnotesize patchNR}
\end{subfigure}%

\begin{subfigure}[t]{.123\textwidth}
\includegraphics[width=\linewidth]{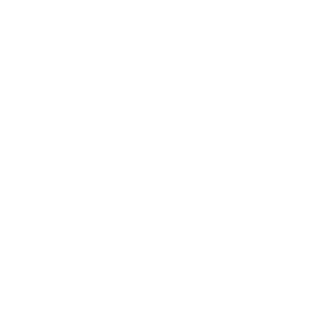}
  \caption*{\footnotesize GT}
\end{subfigure}%
\hfill
\begin{subfigure}[t]{.123\textwidth}
\includegraphics[width=\linewidth]{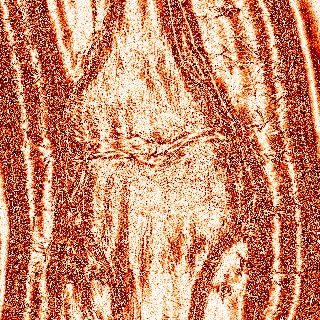}
  \caption*{\footnotesize Zero-filled}
\end{subfigure}%
\hfill
\begin{subfigure}[t]{.123\textwidth}
\includegraphics[width=\linewidth]{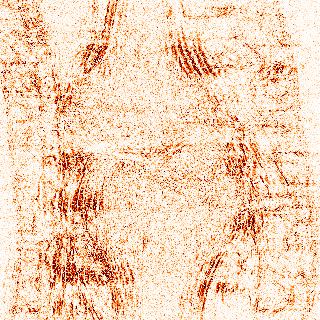}
  \caption*{\footnotesize CRR}
\end{subfigure}%
\hfill
\begin{subfigure}[t]{.123\textwidth}
\includegraphics[width=\linewidth]{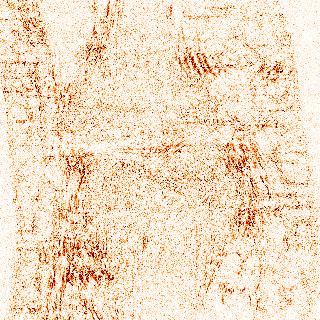}
  \caption*{\footnotesize WCRR}
\end{subfigure}%
\hfill
\begin{subfigure}[t]{.123\textwidth}
\includegraphics[width=\linewidth]{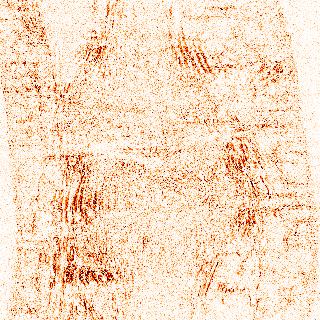}
  \caption*{\footnotesize CRR-Mask}
\end{subfigure}%
\hfill
\begin{subfigure}[t]{.123\textwidth}
\includegraphics[width=\linewidth]{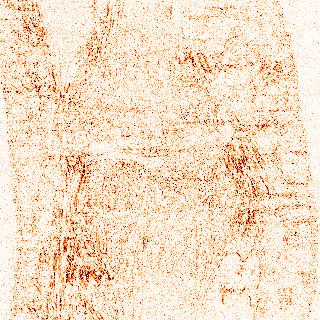}
  \caption*{\footnotesize WCRR-Mask}
\end{subfigure}%
\hfill
\begin{subfigure}[t]{.123\textwidth}
\includegraphics[width=\linewidth]{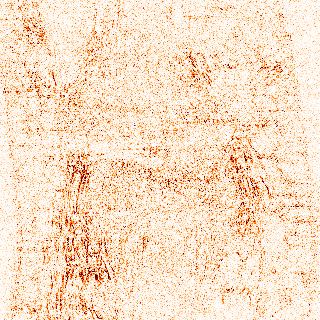}
  \caption*{\footnotesize Prox-DRUNet}
\end{subfigure}%
\hfill
\begin{subfigure}[t]{.123\textwidth}
\includegraphics[width=\linewidth]{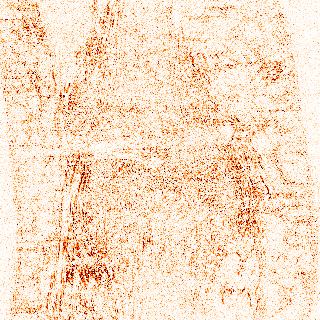}
  \caption*{\footnotesize patchNR}
\end{subfigure}%

\begin{subfigure}[t]{.123\textwidth}  
\begin{tikzpicture}[spy using outlines=
{rectangle,white,magnification=4.5,size=2.115cm, connect spies}]
\node[anchor=south west,inner sep=0]  at (0,0) {\includegraphics[width=\linewidth]{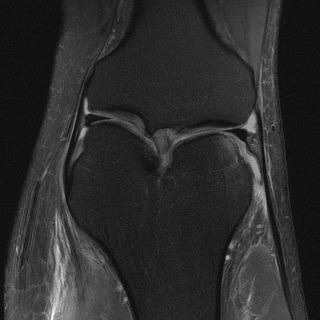}};
\spy on (.5,1.32) in node [right] at (-0.005,-1.1);
\end{tikzpicture}
%\caption*{\footnotesize GT}
\end{subfigure}%
\hfill
\begin{subfigure}[t]{.123\textwidth}
\begin{tikzpicture}[spy using outlines=
{rectangle,white,magnification=4.5,size=2.115cm, connect spies}]
\node[anchor=south west,inner sep=0]  at (0,0) {\includegraphics[width=\linewidth]{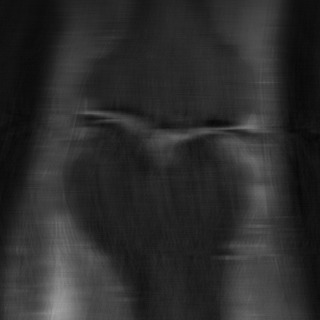}};
\spy on (.5,1.32) in node [right] at (-0.005,-1.1);
\end{tikzpicture}
%  \caption*{\footnotesize FBP}
\end{subfigure}%
\hfill
\begin{subfigure}[t]{.123\textwidth}
\begin{tikzpicture}[spy using outlines=
{rectangle,white,magnification=4.5,size=2.115cm, connect spies}]
\node[anchor=south west,inner sep=0]  at (0,0) {\includegraphics[width=\linewidth]{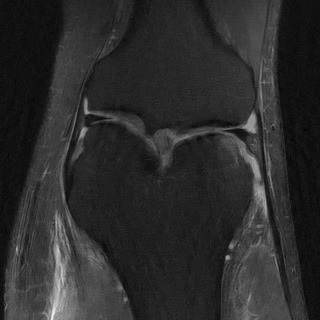}};
\spy on (.5,1.32) in node [right] at (-0.005,-1.1);
\end{tikzpicture}
%  \caption*{\footnotesize CRR}
\end{subfigure}%
\hfill
\begin{subfigure}[t]{.123\textwidth}
\begin{tikzpicture}[spy using outlines=
{rectangle,white,magnification=4.5,size=2.115cm, connect spies}]
\node[anchor=south west,inner sep=0]  at (0,0) {\includegraphics[width=\linewidth]{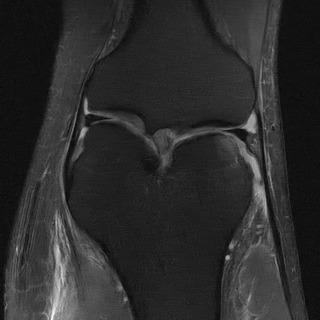}};
\spy on (.5,1.32) in node [right] at (-0.005,-1.1);
\end{tikzpicture}
%  \caption*{\footnotesize WCRR}
\end{subfigure}%
\hfill
\begin{subfigure}[t]{.123\textwidth}
\begin{tikzpicture}[spy using outlines=
{rectangle,white,magnification=4.5,size=2.115cm, connect spies}]
\node[anchor=south west,inner sep=0]  at (0,0) {\includegraphics[width=\linewidth]{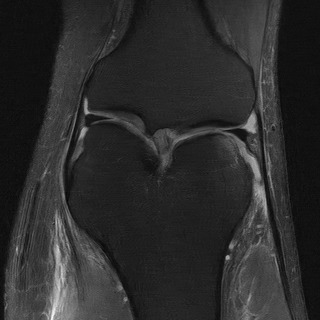}};
\spy on (.5,1.32) in node [right] at (-0.005,-1.1);
\end{tikzpicture}
%  \caption*{\footnotesize Mask-CRR}
\end{subfigure}%
\hfill
\begin{subfigure}[t]{.123\textwidth}
\begin{tikzpicture}[spy using outlines=
{rectangle,white,magnification=4.5,size=2.115cm, connect spies}]
\node[anchor=south west,inner sep=0]  at (0,0) {\includegraphics[width=\linewidth]{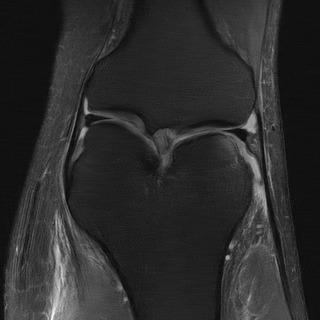}};
\spy on (.5,1.32) in node [right] at (-0.005,-1.1);
\end{tikzpicture}
%  \caption*{\footnotesize Mask-WCRR}
\end{subfigure}%
\hfill
\begin{subfigure}[t]{.123\textwidth}
\begin{tikzpicture}[spy using outlines=
{rectangle,white,magnification=4.5,size=2.115cm, connect spies}]
\node[anchor=south west,inner sep=0]  at (0,0) {\includegraphics[width=\linewidth]{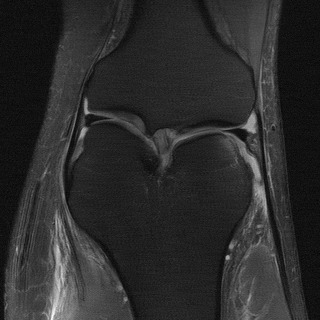}};
\spy on (.5,1.32) in node [right] at (-0.005,-1.1);
\end{tikzpicture}
%  \caption*{\footnotesize Prox-DRUNet}
\end{subfigure}%
\hfill
\begin{subfigure}[t]{.123\textwidth}
\begin{tikzpicture}[spy using outlines=
{rectangle,white,magnification=4.5,size=2.115cm, connect spies}]
\node[anchor=south west,inner sep=0]  at (0,0) {\includegraphics[width=\linewidth]{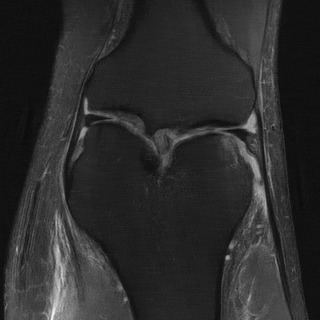}};
\spy on (.5,1.32) in node [right] at (-0.005,-1.1);
\end{tikzpicture}
%  \caption*{\footnotesize patchNR}
\end{subfigure}%

\begin{subfigure}[t]{.123\textwidth}
\includegraphics[width=\linewidth]{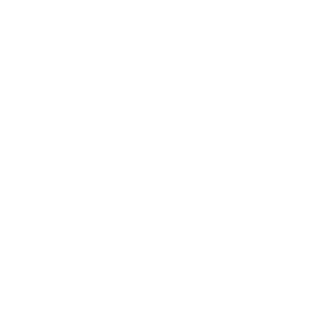}
  \caption*{\footnotesize GT}
\end{subfigure}%
\hfill
\begin{subfigure}[t]{.123\textwidth}
\includegraphics[width=\linewidth]{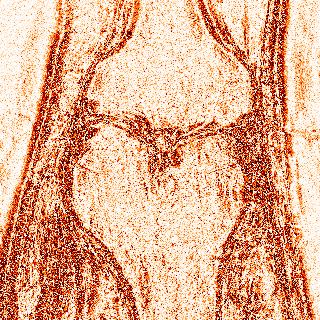}
  \caption*{\footnotesize Zero-filled}
\end{subfigure}%
\hfill
\begin{subfigure}[t]{.123\textwidth}
\includegraphics[width=\linewidth]{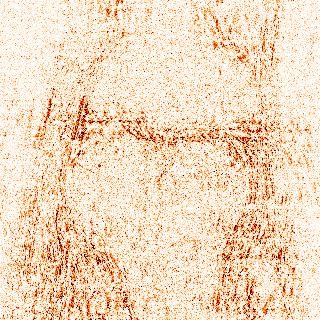}
  \caption*{\footnotesize CRR}
\end{subfigure}%
\hfill
\begin{subfigure}[t]{.123\textwidth}
\includegraphics[width=\linewidth]{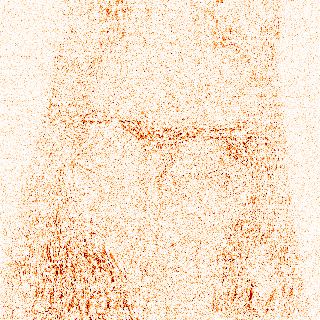}
  \caption*{\footnotesize WCRR}
\end{subfigure}%
\hfill
\begin{subfigure}[t]{.123\textwidth}
\includegraphics[width=\linewidth]{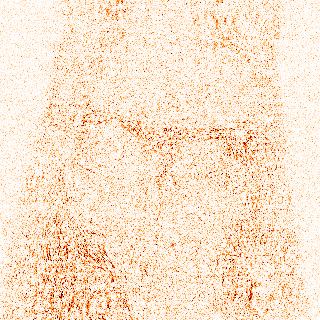}
  \caption*{\footnotesize CRR-Mask}
\end{subfigure}%
\hfill
\begin{subfigure}[t]{.123\textwidth}
\includegraphics[width=\linewidth]{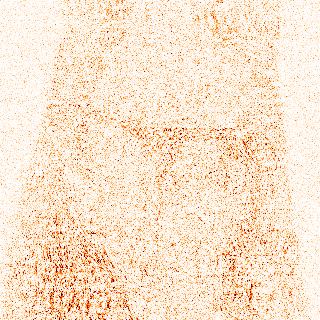}
  \caption*{\footnotesize WCRR-Mask}
\end{subfigure}%
\hfill
\begin{subfigure}[t]{.123\textwidth}
\includegraphics[width=\linewidth]{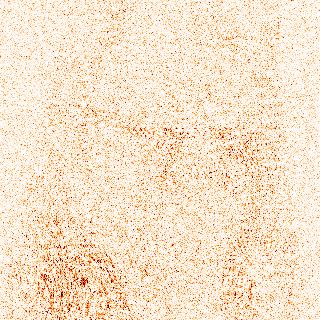}
  \caption*{\footnotesize Prox-DRUNet}
\end{subfigure}%
\hfill
\begin{subfigure}[t]{.123\textwidth}
\includegraphics[width=\linewidth]{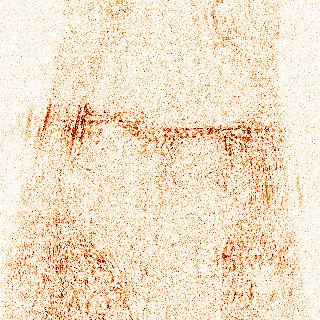}
  \caption*{\footnotesize patchNR}
\end{subfigure}%
\caption{8-fold multi-coil MRI on PDFS (bottom) and PD (top) data set.
The white box marks the zoomed area.
\textit{Top}: full image; \textit{middle}: zoomed-in part; \textit{bottom}: error.
} \label{fig:MRI_comparison_8}
\end{figure}

\subsection{Computed Tomography} \label{sec:ct}

Here, $\M H$ is the discrete Radon transform with a parallel beam geometry using 513 equidistant detector bins and 1000 equidistant angles between $0$ and $\pi$.
The corrupted  measurement $\V y$ is modeled as
\begin{equation}
\V y = - \frac{1}{\mu} \log \Bigl( \frac{\tilde {\V y}}{N_0} \Bigr), \quad \tilde {\V y} \sim \mathrm{Pois}\bigl(N_0 \exp (-\M H \V x \mu)\bigr),
\end{equation}
where $\mathrm{Pois}(\rho)$ is the Poisson distribution with mean $\rho$, $N_0=4096$ is the mean photon count per detector bin without attenuation, and $\mu = 81.35858$ is a normalization constant.
By computing $-\log p_{\V Y|\V X = \V x} (\V y)$ and omitting summands depending only on $\V y$, we arrive at the data fidelity
\begin{align} \label{eq:data_fidelity_CT}
\mathcal D(\M H \V x, \V y) =
\sum_{i=1}^{m}  \exp\left(-(\M H \V x)_i \mu\right) N_0 
+ \exp \left(- \mu \V y_i \right) N_0 \bigl(\mu(\M H \V x)_i - \log(N_0) \bigr).
\end{align}
The ground truth images from the LoDoPaB dataset \cite{LoDoPaB21}\footnote{Data available at \url{https://zenodo.org/record/3384092\#.Ylglz3VBwgM}.} have a resolution of $362 \times 362$.
To get a fair comparison with the patchNR, which is trained on 6 images of the training set, we also finetune the $\boldsymbol \Lambda \colon \R^m \to [0,1]^{nN_c}$ in \eqref{eq:rridge_reg_mask} for 30 epochs on them.
Once finetuned, we use $\mathcal R_{\V y, \sigma}$ for both full-view and limited-angle CT.
All hyperparameters are tuned on a single validation image.

\paragraph{Full view CT}
In Table~\ref{tab:error_CT}, we report quantitative results for the first 100 images of the LoDoPaB test split.
Again, the addition of $\boldsymbol \Lambda$ leads to a significant improvement of the metrics.
In particular, see Figure~\ref{fig:CT_masks}, the mask generation process behaves robust to the task shift.
In Figure~\ref{fig:CT_comparison}, we provide several reconstructions, including the filtered back-projection (FBP) \cite{Radon86} as a baseline.
The squared difference to the ground truth is taken into account as the error. For better visibility, the differences are rescaled with a square root.
While the WCRR  outperforms CRR both visually and in terms of the metrics, the data-dependent counterparts behave very similar.
Despite the similar metric, the patchNR procudes visually more appealing results.
For the data fidelity term \eqref{eq:data_fidelity_CT}, no closed form of its proximal operator exists.
Therefore, the DRS-PnP algorithm is impractical and we use a relaxed version of the PGD-PnP \cite{HurChaLec2023} instead.
Within this setting, we did not achieve competitive results and only report the obtained ones for completeness.
More reconstructions are given in Appendix~\ref{app:more_examples}.

\begin{table}[t]
\begin{center}
\scalebox{.9}{
\begin{tabular}[t]{c|ccc|cccc} 
    & \multicolumn{3}{c}{full view CT}   & \multicolumn{3}{c}{limited-angle CT} &   \\
    &    PSNR    & SSIM & time  & PSNR   & SSIM & time\\
\hline
FBP   &   30.37 $\pm$ 2.95 &  0.739 $\pm$ 0.141 & 0.03s &  21.96 $\pm$ 2.25  &  0.531 $\pm$ 0.097 & 0.03s \\
TV    &   32.87 $\pm$ 3.79     &  0.796 $\pm$ 0.151   &  12s& 29.40  $\pm$  2.73  &  0.756 $\pm$ 0.151 &  16s  \\
CRR   &   34.37 $\pm$ 4.20 & 0.818 $\pm$ 0.154 & 16s & 31.75 $\pm$ 3.24  & 0.792$\pm$ 0.155 & 18s  \\
WCRR   &  34.87 $\pm$ 4.40 & 0.822$\pm$ 0.156 & 19s & 32.54  $\pm$ 3.46  &  0.797 $\pm$ 0.157 & 19s \\
\hline 
%#Mask-TV  &  33.90 & 0.809  & 30.31  & 0.765 \\
Mask-CRR   &  \underline{35.14} $\pm$ 4.53 & \underline{0.826} $\pm$ 0.154 &  34s & 33.02 $\pm$ 3.61 & \underline{0.805} $\pm$ 0.156 & 38s\\
Mask-WCRR   &  35.07 $\pm$ 4.54 & 0.824 $\pm$ 0.157 &  32s & \underline{33.09} $\pm$ 3.62 & \underline{0.805} $\pm$ 0.156& 30s \\
Prox-DRUNet-$\alpha$PGD   & 34.23 $\pm$ 4.32  & 0.811 $\pm$ 0.154 &   593s   &  31.04 $\pm$ 3.23 & 0.771 $\pm$ 0.157 & 593s \\
\hline 
patchNR & \textbf{35.19}  $\pm$ 4.52 & \textbf{0.829} $\pm$ 0.152 &  48s& \textbf{33.20}  $\pm$ 3.55  & \textbf{0.811} $\pm$ 0.151 & 486s
\end{tabular}
}
\caption{CT. Average metrics and standard deviations for the first 100 images of the LoDoPaB test set.
Best is bold and second best is underlined.} 
\label{tab:error_CT}
\end{center}
\end{table}

\begin{figure}[p]
\centering
\begin{subfigure}[t]{.21\textwidth}  
\includegraphics[width=\linewidth]{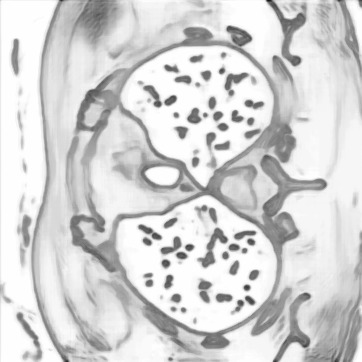}
\caption*{\footnotesize Mask-CRR}
\end{subfigure}%\begin{subfigure}[t]{.123\textwidth}  
\hspace{.04cm}
\begin{subfigure}[t]{.21\textwidth}  
\includegraphics[width=\linewidth]{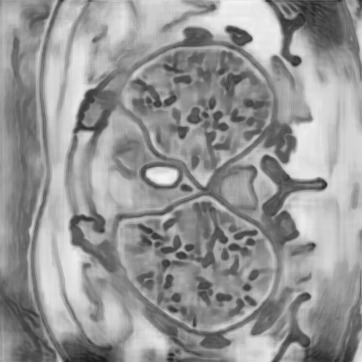}
\caption*{\footnotesize Mask-WCRR}
\end{subfigure}%
\hspace{1.5cm}
\begin{subfigure}[t]{.21\textwidth}  
\includegraphics[width=\linewidth]{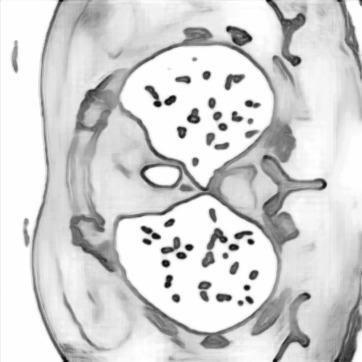}
\caption*{\footnotesize Mask-CRR}
\end{subfigure}%\begin{subfigure}[t]{.123\textwidth}  
\hspace{.04cm}
\begin{subfigure}[t]{.21\textwidth}  
\includegraphics[width=\linewidth]{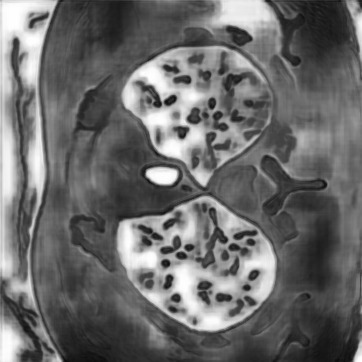}
\caption*{\footnotesize Mask-WCRR}
\end{subfigure}%
\caption{Average masks $\boldsymbol \Lambda (\V y ) = \sum_c \boldsymbol \Lambda_c (\V y )$ for CRR and WCRR in the full view (left) and limited-angle CT (right) setting.
Black corresponds to smaller values.} \label{fig:CT_masks}
\vspace{.5cm}

\centering
\begin{subfigure}[t]{.123\textwidth}  
\begin{tikzpicture}[spy using outlines=
{rectangle,white,magnification=5.5,size=2.115cm, connect spies}]
\node[anchor=south west,inner sep=0]  at (0,0) {\includegraphics[width=\linewidth]{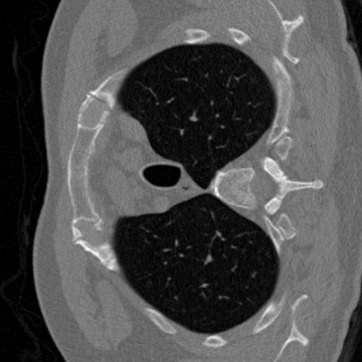}};
\spy on (1.7,1.02) in node [right] at (-0.005,-1.1);
\end{tikzpicture}
%\caption*{\footnotesize GT}
\end{subfigure}%
\hfill
\begin{subfigure}[t]{.123\textwidth}
\begin{tikzpicture}[spy using outlines=
{rectangle,white,magnification=5.5,size=2.115cm, connect spies}]
\node[anchor=south west,inner sep=0]  at (0,0) {\includegraphics[width=\linewidth]{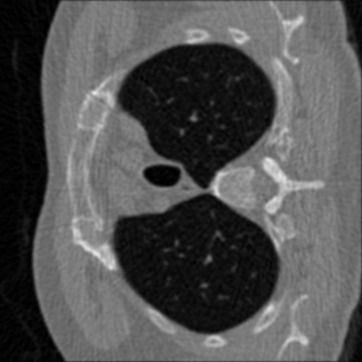}};
\spy on (1.7,1.02) in node [right] at (-0.005,-1.1);
\end{tikzpicture}
% \caption*{\footnotesize FBP}
\end{subfigure}%
\hfill
\begin{subfigure}[t]{.123\textwidth}
\begin{tikzpicture}[spy using outlines=
{rectangle,white,magnification=5.5,size=2.115cm, connect spies}]
\node[anchor=south west,inner sep=0]  at (0,0) {\includegraphics[width=\linewidth]{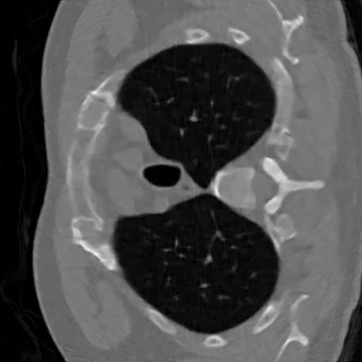}};
\spy on (1.7,1.02) in node [right] at (-0.005,-1.1);
\end{tikzpicture}
%\caption*{\footnotesize CRR}
\end{subfigure}%
\hfill
\begin{subfigure}[t]{.123\textwidth}
\begin{tikzpicture}[spy using outlines=
{rectangle,white,magnification=5.5,size=2.115cm, connect spies}]
\node[anchor=south west,inner sep=0]  at (0,0) {\includegraphics[width=\linewidth]{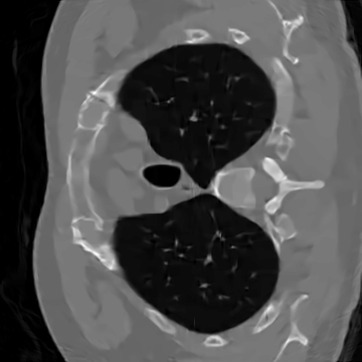}};
\spy on (1.7,1.02) in node [right] at (-0.005,-1.1);
\end{tikzpicture}
%  \caption*{\footnotesize WCRR}
\end{subfigure}%
\hfill
\begin{subfigure}[t]{.123\textwidth}
\begin{tikzpicture}[spy using outlines=
{rectangle,white,magnification=5.5,size=2.115cm, connect spies}]
\node[anchor=south west,inner sep=0]  at (0,0) {\includegraphics[width=\linewidth]{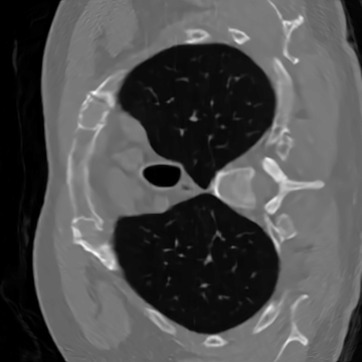}};
\spy on (1.7,1.02) in node [right] at (-0.005,-1.1);
\end{tikzpicture}
%  \caption*{\footnotesize Mask-CRR}
\end{subfigure}%
\hfill
\begin{subfigure}[t]{.123\textwidth}
\begin{tikzpicture}[spy using outlines=
{rectangle,white,magnification=5.5,size=2.115cm, connect spies}]
\node[anchor=south west,inner sep=0]  at (0,0) {\includegraphics[width=\linewidth]{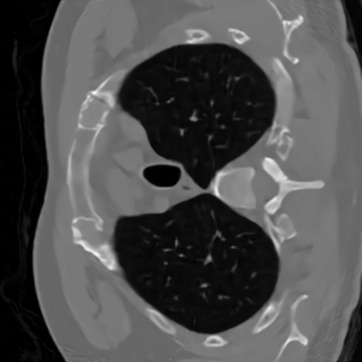}};
\spy on (1.7,1.02) in node [right] at (-0.005,-1.1);
\end{tikzpicture}
%  \caption*{\footnotesize Mask-WCRR}
\end{subfigure}%
\hfill
\begin{subfigure}[t]{.123\textwidth}
\begin{tikzpicture}[spy using outlines=
{rectangle,white,magnification=5.5,size=2.115cm, connect spies}]
\node[anchor=south west,inner sep=0]  at (0,0) {\includegraphics[width=\linewidth]{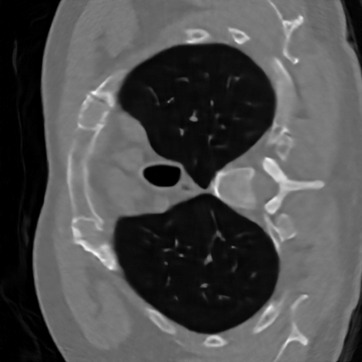}};
\spy on (1.7,1.02) in node [right] at (-0.005,-1.1);
\end{tikzpicture}
%  \caption*{\footnotesize Prox-DRUNet}
\end{subfigure}%
\hfill
\begin{subfigure}[t]{.123\textwidth}
\begin{tikzpicture}[spy using outlines=
{rectangle,white,magnification=5.5,size=2.115cm, connect spies}]
\node[anchor=south west,inner sep=0]  at (0,0) {\includegraphics[width=\linewidth]{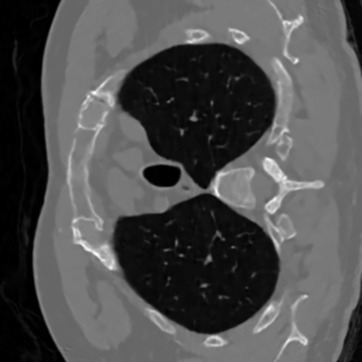}};
\spy on (1.7,1.02) in node [right] at (-0.005,-1.1);
\end{tikzpicture}
%  \caption*{\footnotesize patchNR}
\end{subfigure}%

%\begin{comment}
\begin{subfigure}[t]{.123\textwidth}
\includegraphics[width=\linewidth]{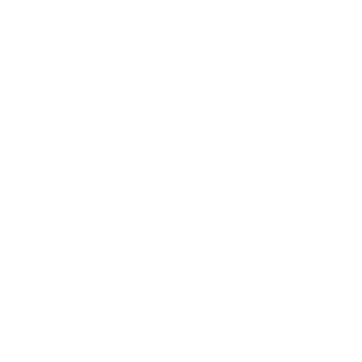}
  \caption*{\footnotesize GT}
\end{subfigure}%
\hfill
\begin{subfigure}[t]{.123\textwidth}
\includegraphics[width=\linewidth]{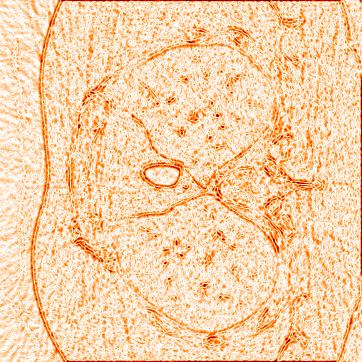}
  \caption*{\footnotesize FBP}
\end{subfigure}%
\hfill
\begin{subfigure}[t]{.123\textwidth}
\includegraphics[width=\linewidth]{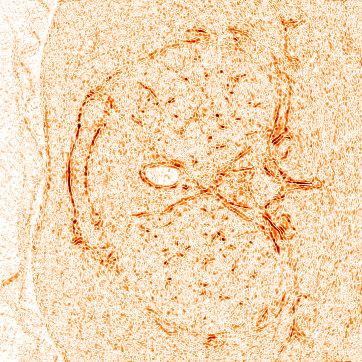}
  \caption*{\footnotesize CRR}
\end{subfigure}%
\hfill
\begin{subfigure}[t]{.123\textwidth}
\includegraphics[width=\linewidth]{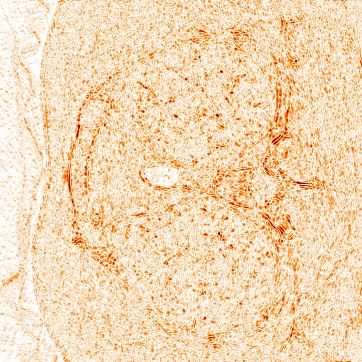}
  \caption*{\footnotesize WCRR}
\end{subfigure}%
\hfill
\begin{subfigure}[t]{.123\textwidth}
\includegraphics[width=\linewidth]{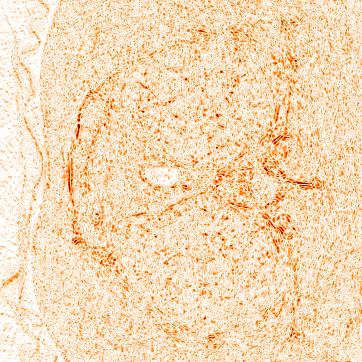}
  \caption*{\footnotesize Mask-CRR}
\end{subfigure}%
\hfill
\begin{subfigure}[t]{.123\textwidth}
\includegraphics[width=\linewidth]{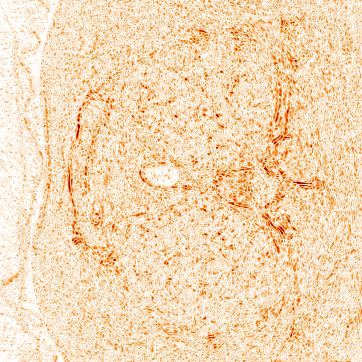}
  \caption*{\footnotesize Mask-WCRR}
\end{subfigure}%
\hfill
\begin{subfigure}[t]{.123\textwidth}
\includegraphics[width=\linewidth]{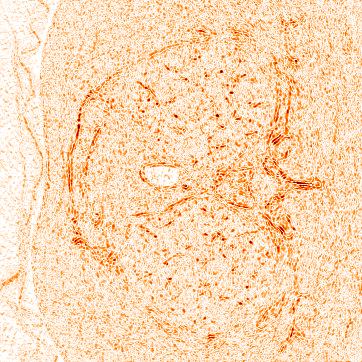}
  \caption*{\footnotesize Prox-DRUNet}
\end{subfigure}%
\hfill
\begin{subfigure}[t]{.123\textwidth}
\includegraphics[width=\linewidth]{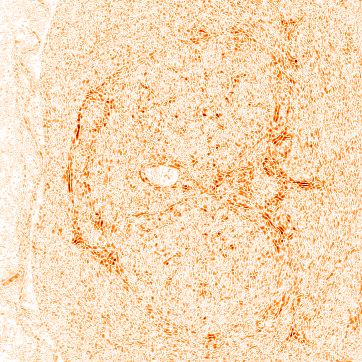}
  \caption*{\footnotesize patchNR}
\end{subfigure}%
%\end{comment}
\caption{Full view CT.
The white box marks the zoomed area.
\textit{Top}: full image; \textit{middle}: zoomed-in part; \textit{bottom}: error.
}\label{fig:CT_comparison}
\vspace{.5cm}

\centering
\begin{subfigure}[t]{.123\textwidth}  
\begin{tikzpicture}[spy using outlines=
{rectangle,white,magnification=6.5,size=2.115cm, connect spies}]
\node[anchor=south west,inner sep=0]  at (0,0) {\includegraphics[width=\linewidth]{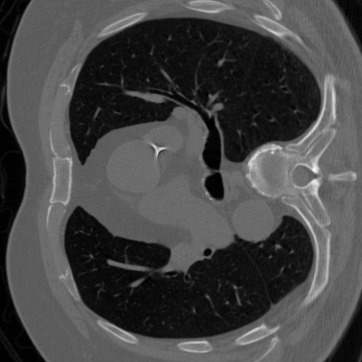}};
\spy on (1.92,1.08) in node [right] at (-0.005,-1.1);
\end{tikzpicture}
%\caption*{\footnotesize GT}
\end{subfigure}%
\hfill
\begin{subfigure}[t]{.123\textwidth}
\begin{tikzpicture}[spy using outlines=
{rectangle,white,magnification=6.5,size=2.115cm, connect spies}]
\node[anchor=south west,inner sep=0]  at (0,0) {\includegraphics[width=\linewidth]{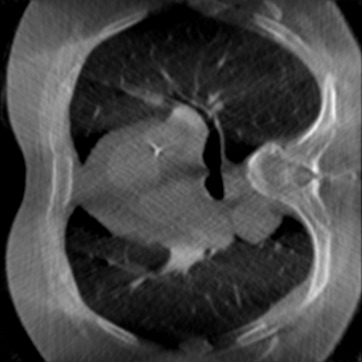}};
\spy on (1.92,1.08) in node [right] at (-0.005,-1.1);
\end{tikzpicture}
% \caption*{\footnotesize FBP}
\end{subfigure}%
\hfill
\begin{subfigure}[t]{.123\textwidth}
\begin{tikzpicture}[spy using outlines=
{rectangle,white,magnification=6.5,size=2.115cm, connect spies}]
\node[anchor=south west,inner sep=0]  at (0,0) {\includegraphics[width=\linewidth]{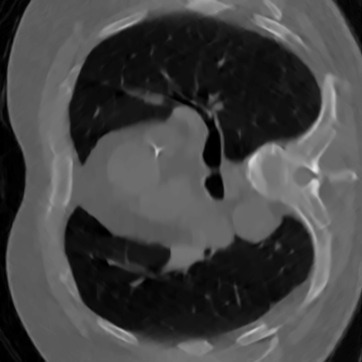}};
\spy on (1.92,1.08) in node [right] at (-0.005,-1.1);
\end{tikzpicture}
%  \caption*{\footnotesize CRR}
\end{subfigure}%
\hfill
\begin{subfigure}[t]{.123\textwidth}
\begin{tikzpicture}[spy using outlines=
{rectangle,white,magnification=6.5,size=2.115cm, connect spies}]
\node[anchor=south west,inner sep=0]  at (0,0) {\includegraphics[width=\linewidth]{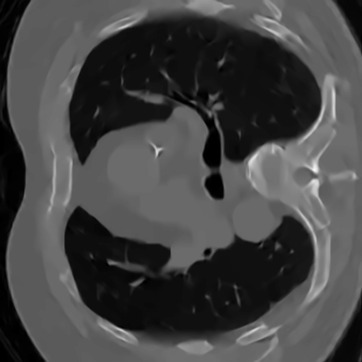}};
\spy on (1.92,1.08) in node [right] at (-0.005,-1.1);
\end{tikzpicture}
% \caption*{\footnotesize WCRR}
\end{subfigure}%
\hfill
\begin{subfigure}[t]{.123\textwidth}
\begin{tikzpicture}[spy using outlines=
{rectangle,white,magnification=6.5,size=2.115cm, connect spies}]
\node[anchor=south west,inner sep=0]  at (0,0) {\includegraphics[width=\linewidth]{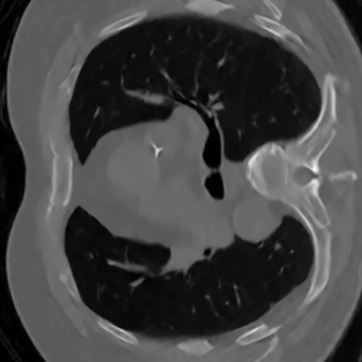}};
\spy on (1.92,1.08) in node [right] at (-0.005,-1.1);
\end{tikzpicture}
%  \caption*{\footnotesize Mask-CRR}
\end{subfigure}%
\hfill
\begin{subfigure}[t]{.123\textwidth}
\begin{tikzpicture}[spy using outlines=
{rectangle,white,magnification=6.5,size=2.115cm, connect spies}]
\node[anchor=south west,inner sep=0]  at (0,0) {\includegraphics[width=\linewidth]{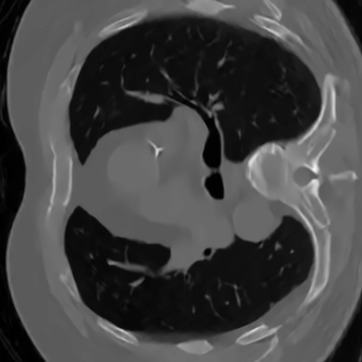}};
\spy on (1.92,1.08) in node [right] at (-0.005,-1.1);
\end{tikzpicture}
% \caption*{\footnotesize Mask-WCRR}
\end{subfigure}%
\hfill
\begin{subfigure}[t]{.123\textwidth}
\begin{tikzpicture}[spy using outlines=
{rectangle,white,magnification=6.5,size=2.115cm, connect spies}]
\node[anchor=south west,inner sep=0]  at (0,0) {\includegraphics[width=\linewidth]{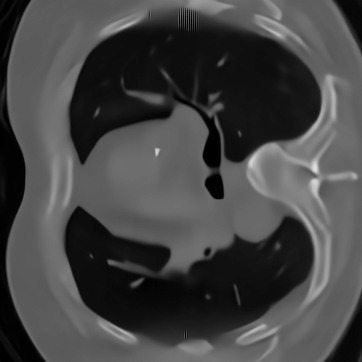}};
\spy on (1.92,1.08) in node [right] at (-0.005,-1.1);
\end{tikzpicture}
% \caption*{\footnotesize Prox-DRUNet}
\end{subfigure}%
\hfill
\begin{subfigure}[t]{.123\textwidth}
\begin{tikzpicture}[spy using outlines=
{rectangle,white,magnification=6.5,size=2.115cm, connect spies}]
\node[anchor=south west,inner sep=0]  at (0,0) {\includegraphics[width=\linewidth]{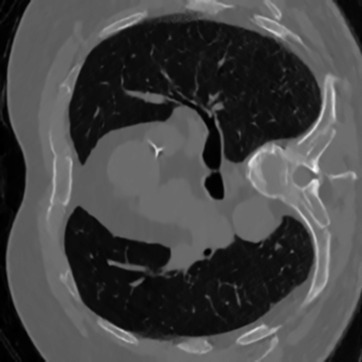}};
\spy on (1.92,1.08) in node [right] at (-0.005,-1.1);
\end{tikzpicture}
% \caption*{\footnotesize patchNR}
\end{subfigure}%

\begin{subfigure}[t]{.123\textwidth}
\includegraphics[width=\linewidth]{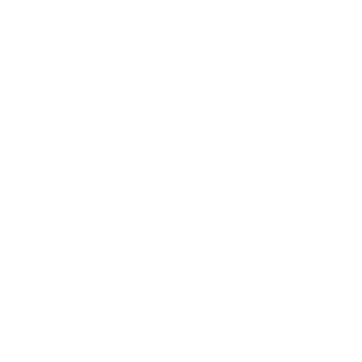}
  \caption*{\footnotesize GT}
\end{subfigure}%
\hfill
\begin{subfigure}[t]{.123\textwidth}
\includegraphics[width=\linewidth]{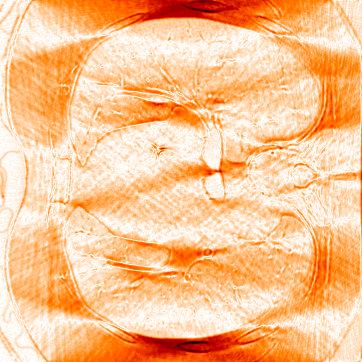}
  \caption*{\footnotesize FBP}
\end{subfigure}%
\hfill
\begin{subfigure}[t]{.123\textwidth}
\includegraphics[width=\linewidth]{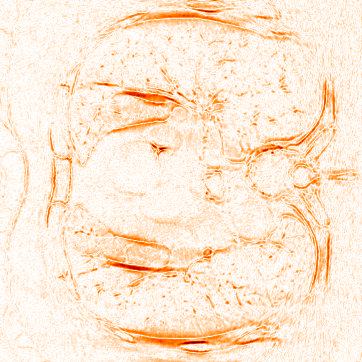}
  \caption*{\footnotesize CRR}
\end{subfigure}%
\hfill
\begin{subfigure}[t]{.123\textwidth}
\includegraphics[width=\linewidth]{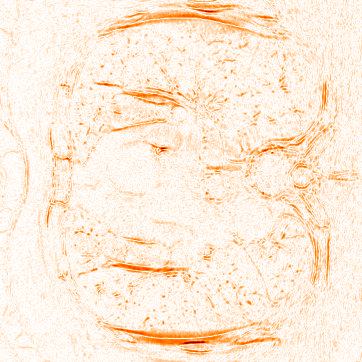}
  \caption*{\footnotesize WCRR}
\end{subfigure}%
\hfill
\begin{subfigure}[t]{.123\textwidth}
\includegraphics[width=\linewidth]{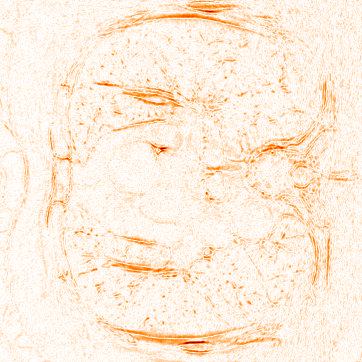}
  \caption*{\footnotesize Mask-CRR}
\end{subfigure}%
\hfill
\begin{subfigure}[t]{.123\textwidth}
\includegraphics[width=\linewidth]{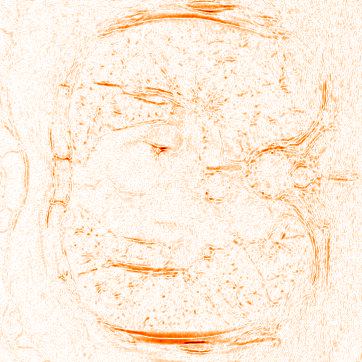}
  \caption*{\footnotesize Mask-WCRR}
\end{subfigure}%
\hfill
\begin{subfigure}[t]{.123\textwidth}
\includegraphics[width=\linewidth]{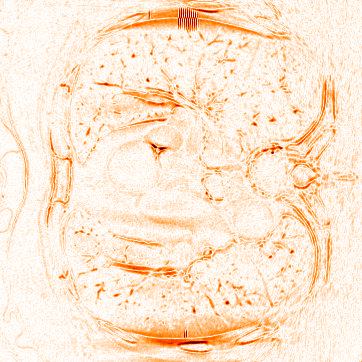}
  \caption*{\footnotesize Prox-DRUNet}
\end{subfigure}%
\hfill
\begin{subfigure}[t]{.123\textwidth}
\includegraphics[width=\linewidth]{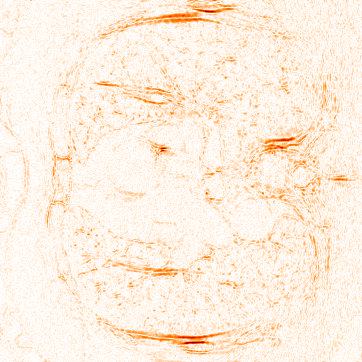}
  \caption*{\footnotesize patchNR}
\end{subfigure}%

\caption{Limited-angle CT.
The white box marks the zoomed area.
\textit{Top}: full image; \textit{middle}: zoomed-in part; \textit{bottom}: error.
} \label{fig:CT_comparison_limited}
\end{figure}

\paragraph{Subsampling: Limited-Angle CT}

Now, we remove the first and last 100 angles from the data, which corresponds to $36^\circ$ out of $180^\circ$.
Several reconstructions and quantitative metrics are given in Figure~\ref{fig:CT_comparison_limited} and  Table~\ref{tab:error_CT}, respectively.
This task is much harder than full view CT, which becomes most evident when inspecting the FBP reconstruction.
Overall, the performance comparison turns out similar as before.
Interestingly, the performance boost attributed to $\boldsymbol \Lambda$ becomes even higher for this setup.
In Figure~\ref{fig:CT_masks}, we can see that $\boldsymbol{\Lambda}$ remains reasonable despite the large amount of missing data.
While the CRR masks look similar for both CT setups, the WCRR ones are considerably different.
This is a bit surprising and calls for further investigations.
More reconstructions are provided in Appendix~\ref{app:more_examples}.

\subsection{Image Superresolution} \label{sec:SiC}
Here, we consider the superresolution of microstructures in a composite of silicon and diamonds (``SiC Diamonds''), see also \cite{Hertrich21,ADHHMS2023}.
The dataset consists of 2D slices of size $600 \times 600$.
These were acquired by microcomputed tomography at the SLS beamline TOMCAT.
The operator $\M H$ is a convolution with a $16 \times 16$ Gaussian kernel with standard deviation 2 and stride 4, namely with 4-fold subsampling.
The data is simulated by applying $\M H$ to the groundtruth and adding Gaussian noise with $\sigma=0.01$, which leads to $\mathcal D(\M H \V x , \V y) = \frac{1}{2\sigma^2} \Vert \M H \V x - \V y \Vert^2$ as in Section~\ref{sec:mri}. 
The patchNR is trained on one image, which is also used to determine the hyperparameters.

In Figure~\ref{fig:SiC_comparison}, we provide visual results and in Figure~\ref{fig:SiC_masks} we show the $\boldsymbol \Lambda \colon \R^m \to [0,1]^{nN_c}$ for Mask-CRR and Mask-WCRR.
Note that CRR and WCRR suffer from blur in their reconstructions, especially in the small regions between structures.
Here, the inclusion of $\boldsymbol \Lambda$ results in noticeable improvement.
The contrast in $\boldsymbol \Lambda$ is much higher for the CRR compared to the WCRR.
For the latter, the penalization of structure is already notably dampened by the $1$-weakly convex potentials and a less pronounced mask seems to suffice.
The results of Mask-CRR, Mask-WCRR and Prox-DRUNet are comparable, but do not match the patchNR.
In particular, this method is the only one that reconstructs the tiny bright structure in the bottom of the zoomed-in part.
These observations are confirmed by the quantitative results in Table~\ref{tab:error_SiC}.
Since the other priors are not tailored to this very particular data, the superiority of the patchNR is not surprising.
More results are included in Appendix~\ref{app:more_examples}.

\begin{table}[t]
\begin{center}
\begin{tabular}[t]{c|cccc} 
    & \multicolumn{3}{c}{SiC}   &   \\
    &    PSNR    & SSIM & time  \\
\hline
Bicubic   &   25.63 $\pm$ 0.56 &  0.699$\pm$ 0.012 & 0.0002s \\
TV    &   26.14  $\pm$  0.61   &  0.710  $\pm$ 0.018   &  11s  \\
CRR   &   27.83$\pm$ 0.52  & 0.772 $\pm$ 0.008 & 38s \\
WCRR   &  28.17$\pm$ 0.54 & 0.788 $\pm$ 0.010 & 38s  \\
\hline 
Mask-CRR   &  28.21 $\pm$ 0.55 & 0.788 $\pm$ 0.010 &  45s \\
Mask-WCRR   &  \underline{28.27} $\pm$ 0.55 & \textbf{0.790} $\pm$ 0.010 &  45s \\
Prox-DRUNet DRS   &  28.21   $\pm$ 0.52 & \underline{0.789} $\pm$ 0.010  &   652s  \\
\hline 
patchNR & \textbf{28.53} $\pm$ 0.49 & 0.780 $\pm$ 0.008 &  151s 
\end{tabular}
\caption{Superresolution of SiC. Averaged metrics and standard deviations evaluated on 100 test images.
Best is bold and second best is underlined.} 
\label{tab:error_SiC}
\end{center}
\end{table}

\begin{figure}[t!]
\centering
\begin{subfigure}[t]{.123\textwidth}  
\begin{tikzpicture}[spy using outlines=
{rectangle,white,magnification=10,size=2.115cm, connect spies}]
\node[anchor=south west,inner sep=0]  at (0,0) {\includegraphics[width=\linewidth]{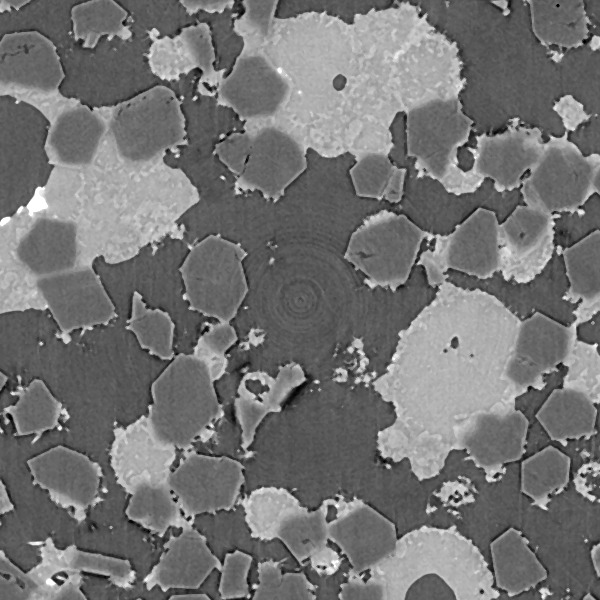}};
\spy on (1.25,.83) in node [right] at (-0.005,-1.1);
\end{tikzpicture}
\caption*{\footnotesize HR}
\end{subfigure}%
\hfill
\begin{subfigure}[t]{.123\textwidth}
\begin{tikzpicture}[spy using outlines=
{rectangle,white,magnification=10,size=2.115cm, connect spies}]
\node[anchor=south west,inner sep=0]  at (0,0) {\includegraphics[width=\linewidth]{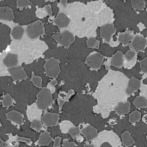}};
\spy on (1.25,.83) in node [right] at (-0.005,-1.1);
\end{tikzpicture}
  \caption*{\footnotesize LR}
\end{subfigure}%
\hfill
\begin{subfigure}[t]{.123\textwidth}
\begin{tikzpicture}[spy using outlines=
{rectangle,white,magnification=10,size=2.115cm, connect spies}]
\node[anchor=south west,inner sep=0]  at (0,0) {\includegraphics[width=\linewidth]{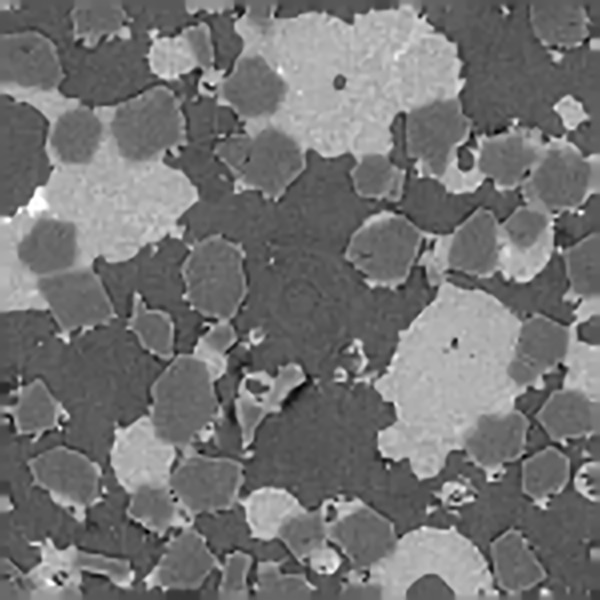}};
\spy on (1.25,.83) in node [right] at (-0.005,-1.1);
\end{tikzpicture}
  \caption*{\footnotesize CRR}
\end{subfigure}%
\hfill
\begin{subfigure}[t]{.123\textwidth}
\begin{tikzpicture}[spy using outlines=
{rectangle,white,magnification=10,size=2.115cm, connect spies}]
\node[anchor=south west,inner sep=0]  at (0,0) {\includegraphics[width=\linewidth]{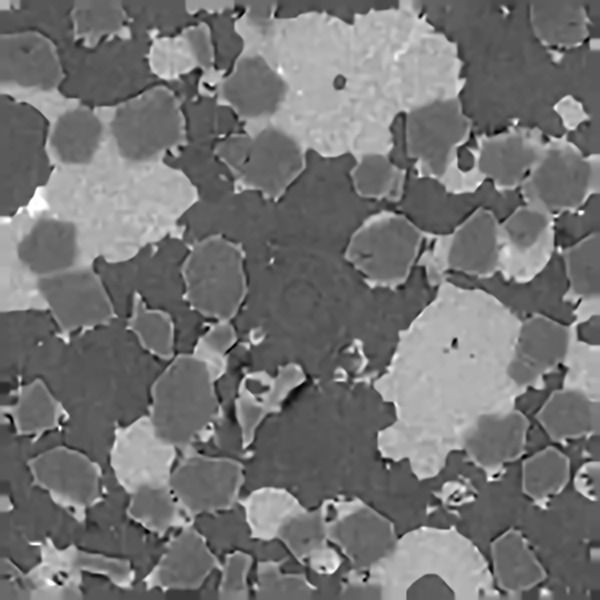}};
\spy on (1.25,.83) in node [right] at (-0.005,-1.1);
\end{tikzpicture}
  \caption*{\footnotesize WCRR}
\end{subfigure}%
\hfill
\begin{subfigure}[t]{.123\textwidth}
\begin{tikzpicture}[spy using outlines=
{rectangle,white,magnification=10,size=2.115cm, connect spies}]
\node[anchor=south west,inner sep=0]  at (0,0) {\includegraphics[width=\linewidth]{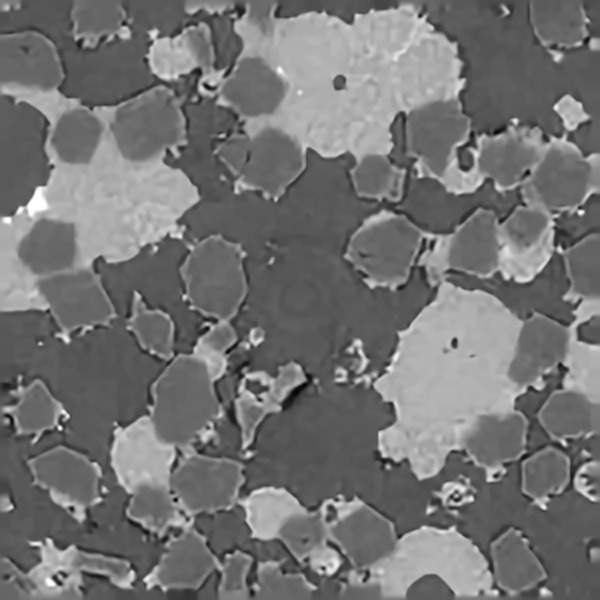}};
\spy on (1.25,.83) in node [right] at (-0.005,-1.1);
\end{tikzpicture}
  \caption*{\footnotesize Mask-CRR}
\end{subfigure}%
\hfill
\begin{subfigure}[t]{.123\textwidth}
\begin{tikzpicture}[spy using outlines=
{rectangle,white,magnification=10,size=2.115cm, connect spies}]
\node[anchor=south west,inner sep=0]  at (0,0) {\includegraphics[width=\linewidth]{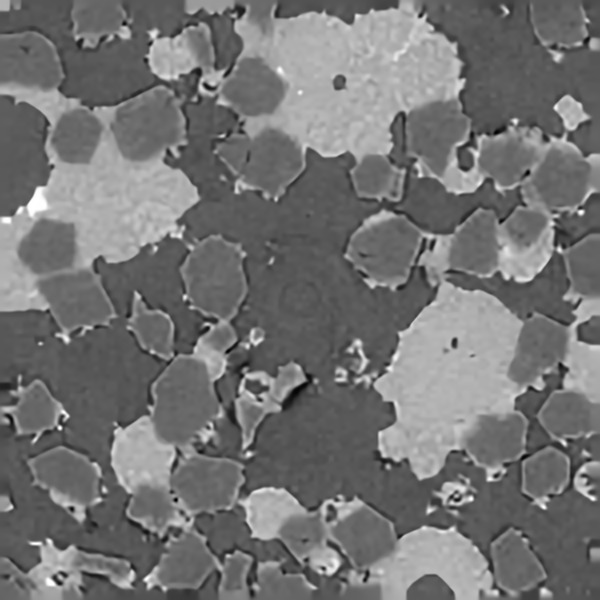}};
\spy on (1.25,.83) in node [right] at (-0.005,-1.1);
\end{tikzpicture}
  \caption*{\footnotesize Mask-WCRR}
\end{subfigure}%
\hfill
\begin{subfigure}[t]{.123\textwidth}
\begin{tikzpicture}[spy using outlines=
{rectangle,white,magnification=10,size=2.115cm, connect spies}]
\node[anchor=south west,inner sep=0]  at (0,0) {\includegraphics[width=\linewidth]{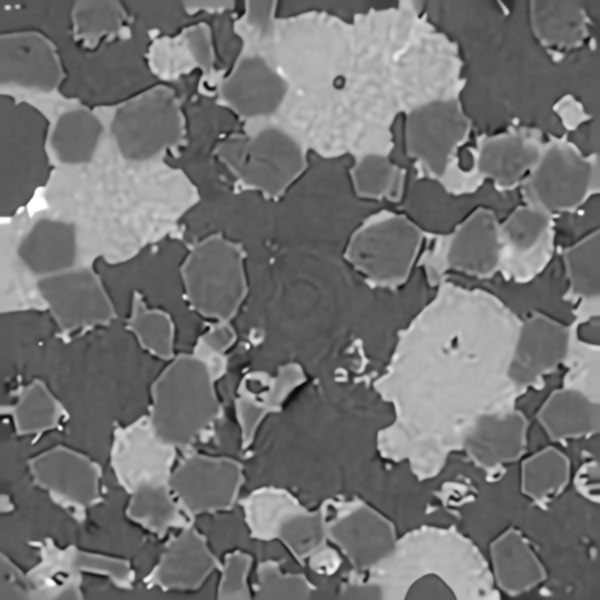}};
\spy on (1.25,.83) in node [right] at (-0.005,-1.1);
\end{tikzpicture}
  \caption*{\footnotesize Prox-DRUNet}
\end{subfigure}%
\hfill
\begin{subfigure}[t]{.123\textwidth}
\begin{tikzpicture}[spy using outlines=
{rectangle,white,magnification=10,size=2.115cm, connect spies}]
\node[anchor=south west,inner sep=0]  at (0,0) {\includegraphics[width=\linewidth]{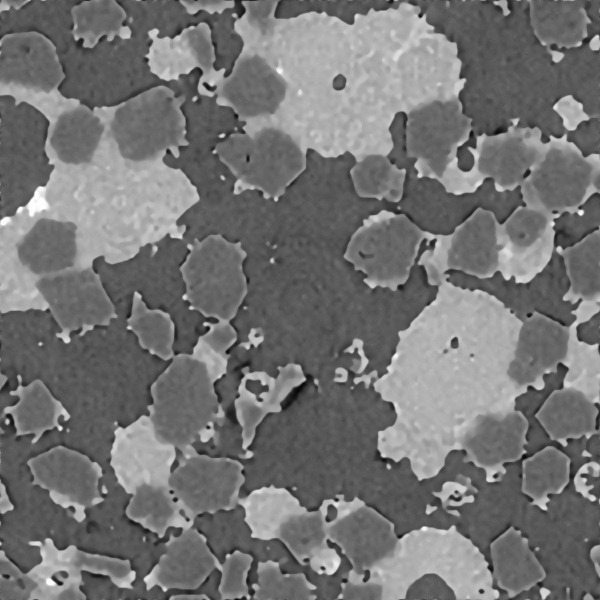}};
\spy on (1.25,.83) in node [right] at (-0.005,-1.1);
\end{tikzpicture}
  \caption*{\footnotesize patchNR}
\end{subfigure}%
\caption{Superresolution of material microstructures.
The white box marks the zoomed area.
\textit{Top}: full image; \textit{bottom}: zoomed-in part.
} \label{fig:SiC_comparison}
\vspace{.5cm}

\centering
\begin{subfigure}[t]{.28\textwidth}  
\includegraphics[width=\linewidth]{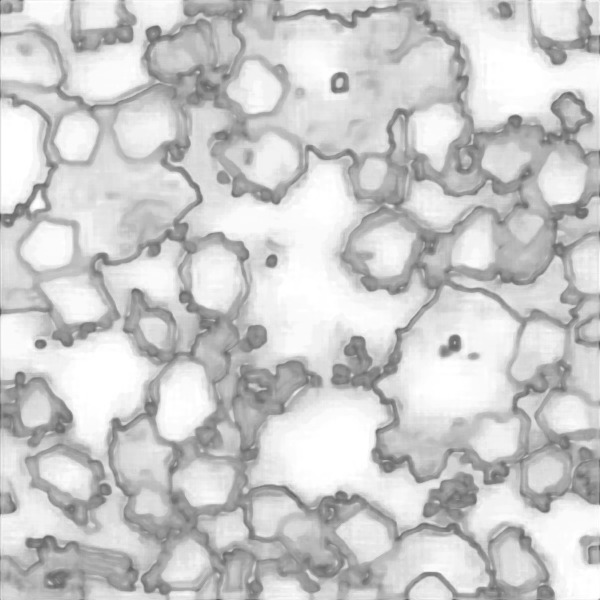}
\caption*{\footnotesize Mask-CRR}
\end{subfigure}%\begin{subfigure}[t]{.123\textwidth}  
\hspace{.1cm}
\begin{subfigure}[t]{.28\textwidth}  
\includegraphics[width=\linewidth]{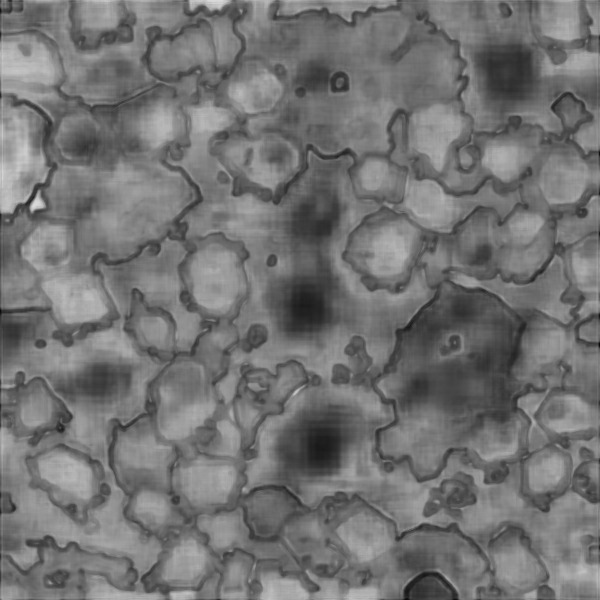}
\caption*{\footnotesize Mask-WCRR}
\end{subfigure}%
\caption{Average mask $\boldsymbol \Lambda (\V y ) = \sum_c \boldsymbol \Lambda_c (\V y )$ for image superresolution of material`s microstructures.
Black corresponds to smaller values.} \label{fig:SiC_masks}
\end{figure}

\section{Conclusion}

We established theoretical guarantees for ridge-based regularization.
Here, the (rather uncommon) data-dependence of the regularizer $\mathcal R_{\V y}$ made a new analysis necessary.
Our results for Mask-CRR are stronger than for Mask-WCRR, which presents an interesting direction for future work.
Given their similar experimental performance, we recommend to use the Mask-CRR for now.   
An important direction for future work is to investigate conditions for the existence of a unique critical point, akin to results in compressed sensing.
Such an analysis would extend our findings to the stronger notion of classical Lipschitz continuity, further strengthening the theoretical foundation.

From a practical perspective, a big advantage of our learned variational approaches (and those in the comparison) is that they do not require large amounts of training data.
This comes at a cost: computing the minimizers is more time-consuming compared to evaluating an end-to-end trained reconstruction network.
However, these methods are highly specialized for a specific task and cannot be easily adapted to different datasets or operators.
Moreover, they only work if sufficient training data is available.
In contrast, we trained both the mask $\boldsymbol \Lambda$ and the baseline regularizer  $\mathcal R_\sigma$ for denoising of natural images.
Hence, our approach is extremely flexible.
We can apply it to any inverse problem by adjusting the regularization strength $\lambda$ and the noise level $\sigma$ in \eqref{eq:var_prob} without additional training.

Our approach can be extended into several directions.
Since $\mathcal R_{\V y,\sigma}$ induces a probability distribution, we can apply it for uncertainty quantification based on invertible architectures \cite{AKRK2019,dinhrnvp,HHS2021} or Langevin methods as done for the patchNR in \cite{CTMLSZ2024}.
Moreover, if we have knowledge about the operator $\M H$, the noise model, or sufficiently many task specific training images, we can adapt $\mathcal{R}_{\V y}$ for the specific setting.
In particular, we can parameterize $\boldsymbol \Lambda$ directly based on $\V y$ instead of using an initial reconstruction $\V x_\mathrm{est}$.
This likely improves the performance and makes our approach, besides its theoretical guarantees, even more relevant to practical applications.

\section*{Acknowledgments}
The authors have no conflict of interest to declare.
S.N.\ acknowledges support from the DFG within the SPP2298 under the project number 543939932.
F.A.\ acknowledges support from the DFG under Germany‘s Excellence Strategy – The Berlin Mathematics Research Center MATH+ (project AA5-6). 
The data from Section \ref{sec:SiC} has been acquired in the frame of the EU Horizon 2020 Marie Sklodowska-Curie Actions Innovative Training Network MUMMERING (MUltiscale, Multimodal and Multidimensional imaging for EngineeRING, Grant Number 765604) at the beamline TOMCAT of the SLS by A.\ Saadaldin, D.\ Bernard, and F.\ Marone Welford. We acknowledge the Paul Scherrer Institut, Villigen, Switzerland for provision of synchrotron radiation beamtime at the TOMCAT beamline X02DA of the SLS.

\bibliography{references,references_cvx,refs}

\appendix

\section{Reconstruction Examples} \label{app:more_examples}

Here, we provide additional qualitative results for the numerical examples from Section~\ref{sec:experiments}, see Figures~\ref{fig:CT_comparison_appendix} and \ref{fig:CT_comparison_limited_appendix} for full view and limited-angle CT, respectively, and Figures~\ref{fig:MRI_comparison_pd4_appendix} to \ref{fig:MRI_comparison_pdfs8_appendix} for the MRI experiments, and Figure~\ref{fig:SiC_comparison_appendix} for the superresolution example.
The error is the absolute difference to the ground truth.
For the CT experiments, it is rescaled with a square root for better visibility.

\begin{figure}[t]
\centering
\begin{subfigure}[t]{.123\textwidth}  
\begin{tikzpicture}[spy using outlines=
{rectangle,white,magnification=3.8,size=2.115cm, connect spies}]
\node[anchor=south west,inner sep=0]  at (0,0) {\includegraphics[width=\linewidth]{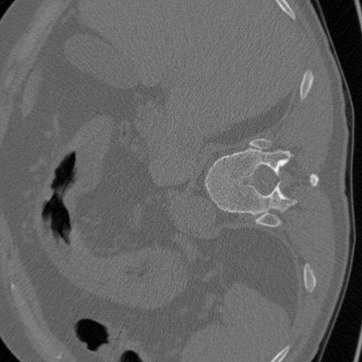}};
\spy on (1.45,1.04) in node [right] at (-0.005,-1.1);
\end{tikzpicture}
%\caption*{\footnotesize GT}
\end{subfigure}%
\hfill
\begin{subfigure}[t]{.123\textwidth}
\begin{tikzpicture}[spy using outlines=
{rectangle,white,magnification=3.8,size=2.115cm, connect spies}]
\node[anchor=south west,inner sep=0]  at (0,0) {\includegraphics[width=\linewidth]{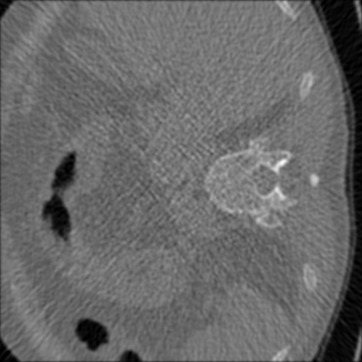}};
\spy on (1.45,1.04) in node [right] at (-0.005,-1.1);
\end{tikzpicture}
% \caption*{\footnotesize FBP}
\end{subfigure}%
\hfill
\begin{subfigure}[t]{.123\textwidth}
\begin{tikzpicture}[spy using outlines=
{rectangle,white,magnification=3.8,size=2.115cm, connect spies}]
\node[anchor=south west,inner sep=0]  at (0,0) {\includegraphics[width=\linewidth]{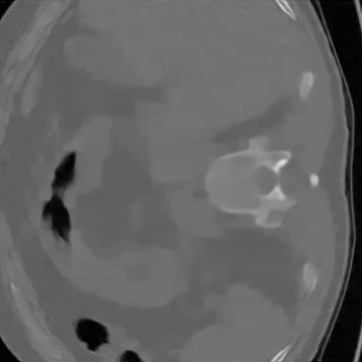}};
\spy on (1.45,1.04) in node [right] at (-0.005,-1.1);
\end{tikzpicture}
%  \caption*{\footnotesize CRR}
\end{subfigure}%
\hfill
\begin{subfigure}[t]{.123\textwidth}
\begin{tikzpicture}[spy using outlines=
{rectangle,white,magnification=3.8,size=2.115cm, connect spies}]
\node[anchor=south west,inner sep=0]  at (0,0) {\includegraphics[width=\linewidth]{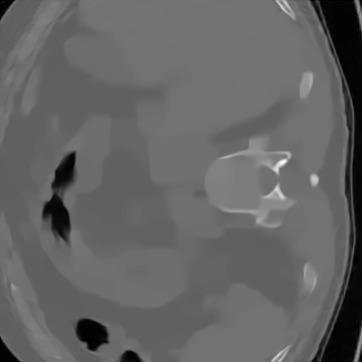}};
\spy on (1.45,1.04) in node [right] at (-0.005,-1.1);
\end{tikzpicture}
% \caption*{\footnotesize WCRR}
\end{subfigure}%
\hfill
\begin{subfigure}[t]{.123\textwidth}
\begin{tikzpicture}[spy using outlines=
{rectangle,white,magnification=3.8,size=2.115cm, connect spies}]
\node[anchor=south west,inner sep=0]  at (0,0) {\includegraphics[width=\linewidth]{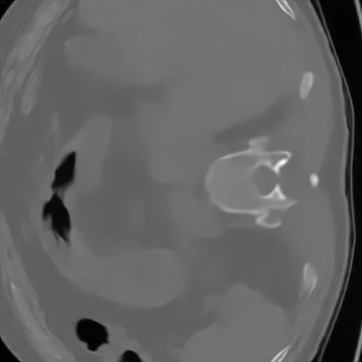}};
\spy on (1.45,1.04) in node [right] at (-0.005,-1.1);
\end{tikzpicture}
%  \caption*{\footnotesize Mask-CRR}
\end{subfigure}%
\hfill
\begin{subfigure}[t]{.123\textwidth}
\begin{tikzpicture}[spy using outlines=
{rectangle,white,magnification=3.8,size=2.115cm, connect spies}]
\node[anchor=south west,inner sep=0]  at (0,0) {\includegraphics[width=\linewidth]{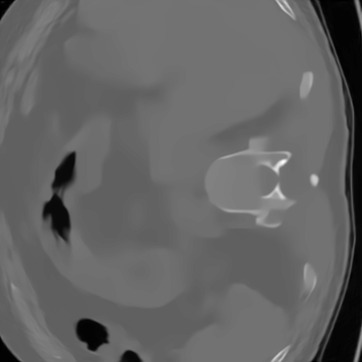}};
\spy on (1.45,1.04) in node [right] at (-0.005,-1.1);
\end{tikzpicture}
% \caption*{\footnotesize Mask-WCRR}
\end{subfigure}%
\hfill
\begin{subfigure}[t]{.123\textwidth}
\begin{tikzpicture}[spy using outlines=
{rectangle,white,magnification=3.8,size=2.115cm, connect spies}]
\node[anchor=south west,inner sep=0]  at (0,0) {\includegraphics[width=\linewidth]{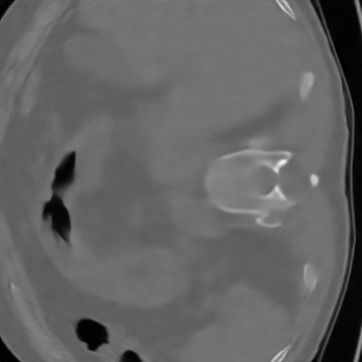}};
\spy on (1.45,1.04) in node [right] at (-0.005,-1.1);
\end{tikzpicture}
% \caption*{\footnotesize Prox-DRUNet}
\end{subfigure}%
\hfill
\begin{subfigure}[t]{.123\textwidth}
\begin{tikzpicture}[spy using outlines=
{rectangle,white,magnification=3.8,size=2.115cm, connect spies}]
\node[anchor=south west,inner sep=0]  at (0,0) {\includegraphics[width=\linewidth]{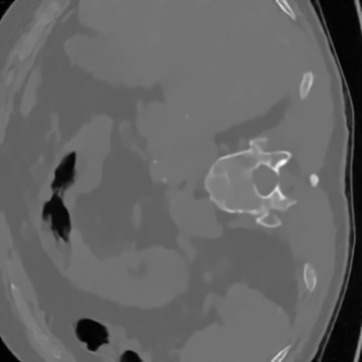}};
\spy on (1.45,1.04) in node [right] at (-0.005,-1.1);
\end{tikzpicture}
% \caption*{\footnotesize patchNR}
\end{subfigure}%

\begin{subfigure}[t]{.123\textwidth}
\includegraphics[width=\linewidth]{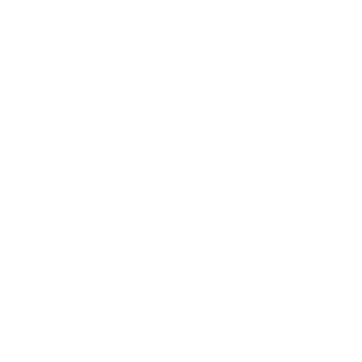}
  \caption*{\footnotesize GT}
\end{subfigure}%
\hfill
\begin{subfigure}[t]{.123\textwidth}
\includegraphics[width=\linewidth]{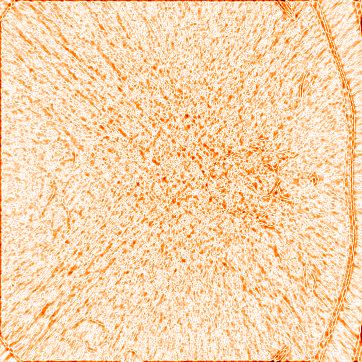}
  \caption*{\footnotesize FBP}
\end{subfigure}%
\hfill
\begin{subfigure}[t]{.123\textwidth}
\includegraphics[width=\linewidth]{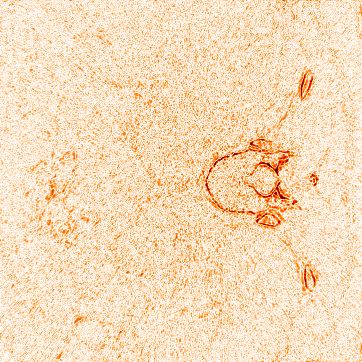}
  \caption*{\footnotesize CRR}
\end{subfigure}%
\hfill
\begin{subfigure}[t]{.123\textwidth}
\includegraphics[width=\linewidth]{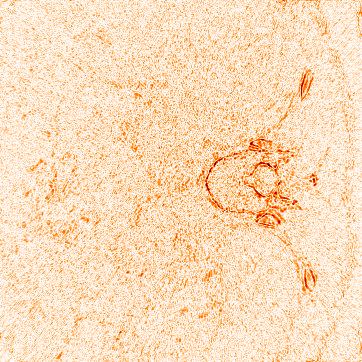}
  \caption*{\footnotesize WCRR}
\end{subfigure}%
\hfill
\begin{subfigure}[t]{.123\textwidth}
\includegraphics[width=\linewidth]{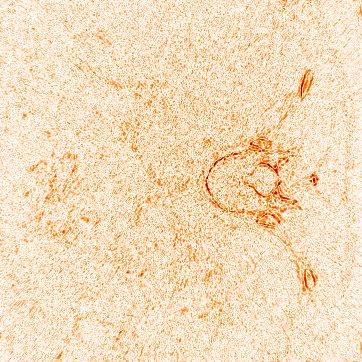}
  \caption*{\footnotesize Mask-CRR}
\end{subfigure}%
\hfill
\begin{subfigure}[t]{.123\textwidth}
\includegraphics[width=\linewidth]{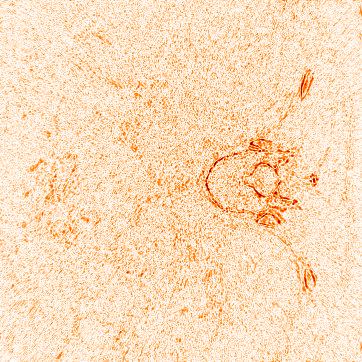}
  \caption*{\footnotesize Mask-WCRR}
\end{subfigure}%
\hfill
\begin{subfigure}[t]{.123\textwidth}
\includegraphics[width=\linewidth]{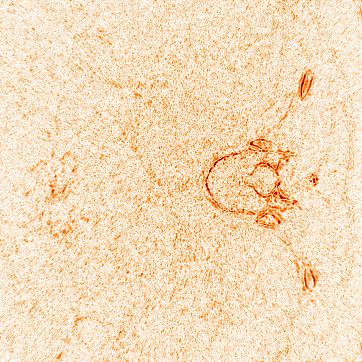}
  \caption*{\footnotesize Prox-DRUNet}
\end{subfigure}%
\hfill
\begin{subfigure}[t]{.123\textwidth}
\includegraphics[width=\linewidth]{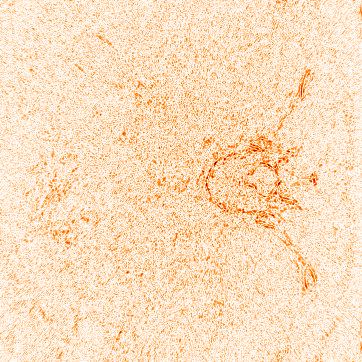}
  \caption*{\footnotesize patchNR}
\end{subfigure}%

\begin{subfigure}[t]{.123\textwidth}  
\begin{tikzpicture}[spy using outlines=
{rectangle,white,magnification=5.,size=2.115cm, connect spies}]
\node[anchor=south west,inner sep=0]  at (0,0) {\includegraphics[width=\linewidth]{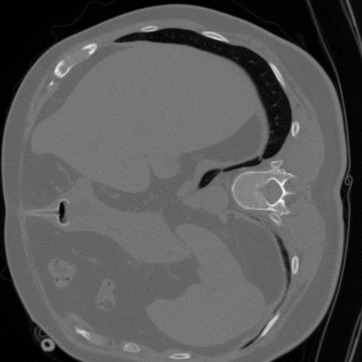}};
\spy on (1.53,.99) in node [right] at (-0.005,-1.1);
\end{tikzpicture}
%\caption*{\footnotesize GT}
\end{subfigure}%
\hfill
\begin{subfigure}[t]{.123\textwidth}
\begin{tikzpicture}[spy using outlines=
{rectangle,white,magnification=5.,size=2.115cm, connect spies}]
\node[anchor=south west,inner sep=0]  at (0,0) {\includegraphics[width=\linewidth]{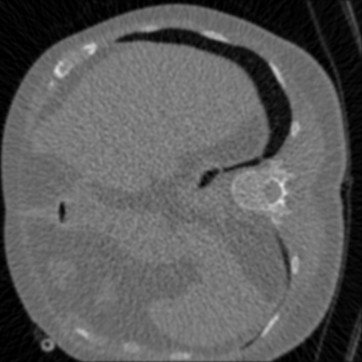}};
\spy on (1.53,.99) in node [right] at (-0.005,-1.1);
\end{tikzpicture}
% \caption*{\footnotesize FBP}
\end{subfigure}%
\hfill
\begin{subfigure}[t]{.123\textwidth}
\begin{tikzpicture}[spy using outlines=
{rectangle,white,magnification=5.,size=2.115cm, connect spies}]
\node[anchor=south west,inner sep=0]  at (0,0) {\includegraphics[width=\linewidth]{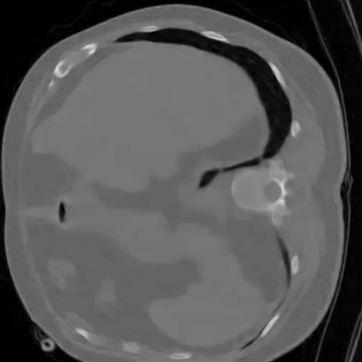}};
\spy on (1.53,.99) in node [right] at (-0.005,-1.1);
\end{tikzpicture}
%  \caption*{\footnotesize CRR}
\end{subfigure}%
\hfill
\begin{subfigure}[t]{.123\textwidth}
\begin{tikzpicture}[spy using outlines=
{rectangle,white,magnification=5.,size=2.115cm, connect spies}]
\node[anchor=south west,inner sep=0]  at (0,0) {\includegraphics[width=\linewidth]{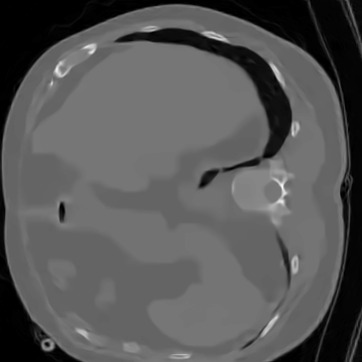}};
\spy on (1.53,.99) in node [right] at (-0.005,-1.1);
\end{tikzpicture}
% \caption*{\footnotesize WCRR}
\end{subfigure}%
\hfill
\begin{subfigure}[t]{.123\textwidth}
\begin{tikzpicture}[spy using outlines=
{rectangle,white,magnification=5.,size=2.115cm, connect spies}]
\node[anchor=south west,inner sep=0]  at (0,0) {\includegraphics[width=\linewidth]{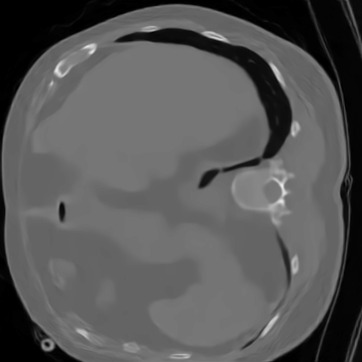}};
\spy on (1.53,.99) in node [right] at (-0.005,-1.1);
\end{tikzpicture}
%  \caption*{\footnotesize Mask-CRR}
\end{subfigure}%
\hfill
\begin{subfigure}[t]{.123\textwidth}
\begin{tikzpicture}[spy using outlines=
{rectangle,white,magnification=5.,size=2.115cm, connect spies}]
\node[anchor=south west,inner sep=0]  at (0,0) {\includegraphics[width=\linewidth]{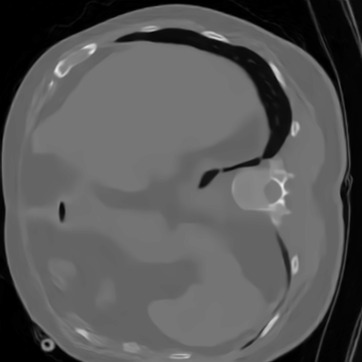}};
\spy on (1.53,.99) in node [right] at (-0.005,-1.1);
\end{tikzpicture}
% \caption*{\footnotesize Mask-WCRR}
\end{subfigure}%
\hfill
\begin{subfigure}[t]{.123\textwidth}
\begin{tikzpicture}[spy using outlines=
{rectangle,white,magnification=5.,size=2.115cm, connect spies}]
\node[anchor=south west,inner sep=0]  at (0,0) {\includegraphics[width=\linewidth]{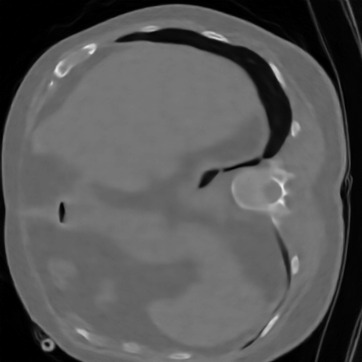}};
\spy on (1.53,.99) in node [right] at (-0.005,-1.1);
\end{tikzpicture}
% \caption*{\footnotesize Prox-DRUNet}
\end{subfigure}%
\hfill
\begin{subfigure}[t]{.123\textwidth}
\begin{tikzpicture}[spy using outlines=
{rectangle,white,magnification=5.,size=2.115cm, connect spies}]
\node[anchor=south west,inner sep=0]  at (0,0) {\includegraphics[width=\linewidth]{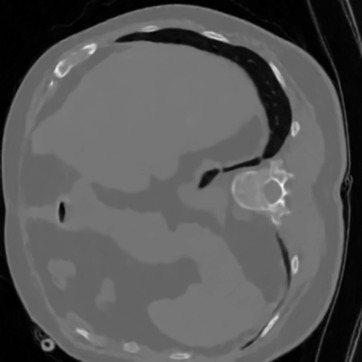}};
\spy on (1.53,.99) in node [right] at (-0.005,-1.1);
\end{tikzpicture}
% \caption*{\footnotesize patchNR}
\end{subfigure}%

\begin{subfigure}[t]{.123\textwidth}
\includegraphics[width=\linewidth]{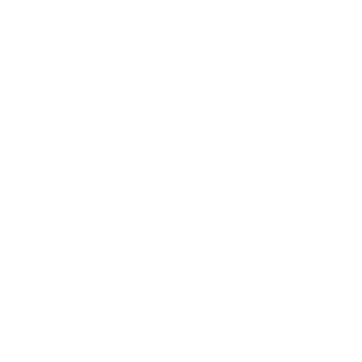}
  \caption*{\footnotesize GT}
\end{subfigure}%
\hfill
\begin{subfigure}[t]{.123\textwidth}
\includegraphics[width=\linewidth]{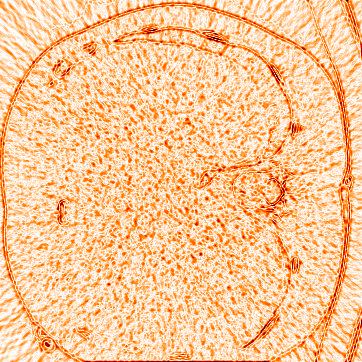}
  \caption*{\footnotesize FBP}
\end{subfigure}%
\hfill
\begin{subfigure}[t]{.123\textwidth}
\includegraphics[width=\linewidth]{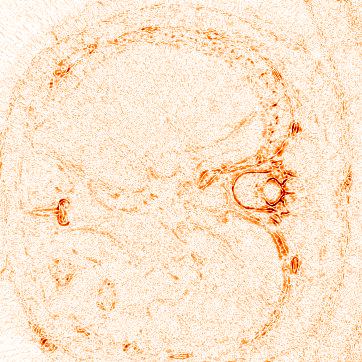}
  \caption*{\footnotesize CRR}
\end{subfigure}%
\hfill
\begin{subfigure}[t]{.123\textwidth}
\includegraphics[width=\linewidth]{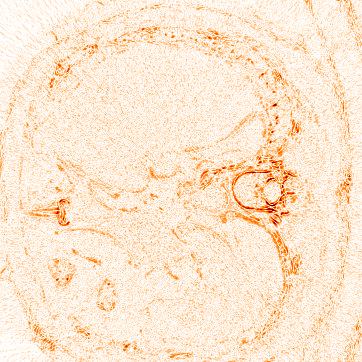}
  \caption*{\footnotesize WCRR}
\end{subfigure}%
\hfill
\begin{subfigure}[t]{.123\textwidth}
\includegraphics[width=\linewidth]{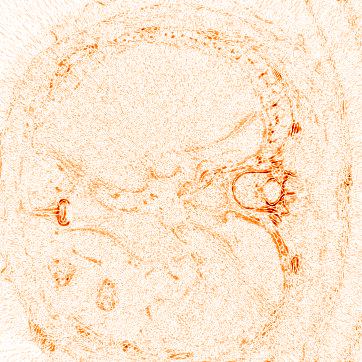}
  \caption*{\footnotesize Mask-CRR}
\end{subfigure}%
\hfill
\begin{subfigure}[t]{.123\textwidth}
\includegraphics[width=\linewidth]{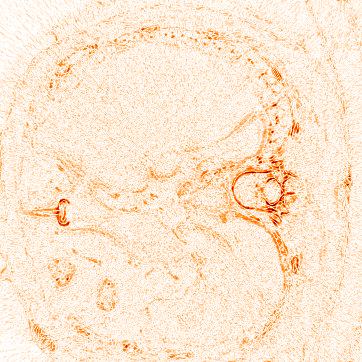}
  \caption*{\footnotesize Mask-WCRR}
\end{subfigure}%
\hfill
\begin{subfigure}[t]{.123\textwidth}
\includegraphics[width=\linewidth]{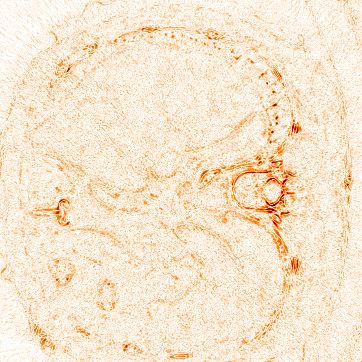}
  \caption*{\footnotesize Prox-DRUNet}
\end{subfigure}%
\hfill
\begin{subfigure}[t]{.123\textwidth}
\includegraphics[width=\linewidth]{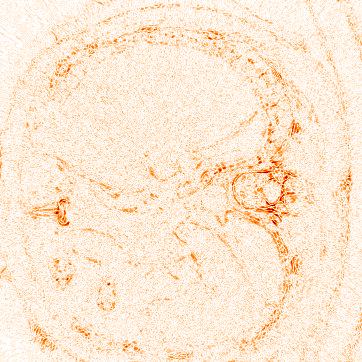}
  \caption*{\footnotesize patchNR}
\end{subfigure}%

\begin{subfigure}[t]{.123\textwidth}  
\begin{tikzpicture}[spy using outlines=
{rectangle,white,magnification=6.,size=2.115cm, connect spies}]
\node[anchor=south west,inner sep=0]  at (0,0) {\includegraphics[width=\linewidth]{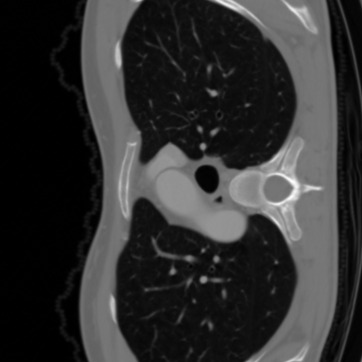}};
\spy on (1.68,1.15) in node [right] at (-0.005,-1.1);
\end{tikzpicture}
%\caption*{\footnotesize GT}
\end{subfigure}%
\hfill
\begin{subfigure}[t]{.123\textwidth}
\begin{tikzpicture}[spy using outlines=
{rectangle,white,magnification=6.,size=2.115cm, connect spies}]
\node[anchor=south west,inner sep=0]  at (0,0) {\includegraphics[width=\linewidth]{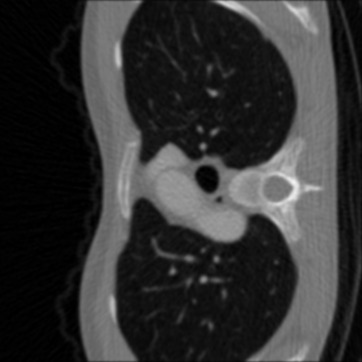}};
\spy on (1.68,1.15) in node [right] at (-0.005,-1.1);
\end{tikzpicture}
% \caption*{\footnotesize FBP}
\end{subfigure}%
\hfill
\begin{subfigure}[t]{.123\textwidth}
\begin{tikzpicture}[spy using outlines=
{rectangle,white,magnification=6.,size=2.115cm, connect spies}]
\node[anchor=south west,inner sep=0]  at (0,0) {\includegraphics[width=\linewidth]{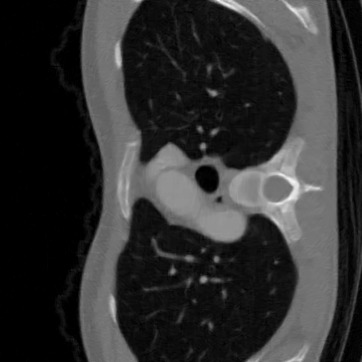}};
\spy on (1.68,1.15) in node [right] at (-0.005,-1.1);
\end{tikzpicture}
%  \caption*{\footnotesize CRR}
\end{subfigure}%
\hfill
\begin{subfigure}[t]{.123\textwidth}
\begin{tikzpicture}[spy using outlines=
{rectangle,white,magnification=6.,size=2.115cm, connect spies}]
\node[anchor=south west,inner sep=0]  at (0,0) {\includegraphics[width=\linewidth]{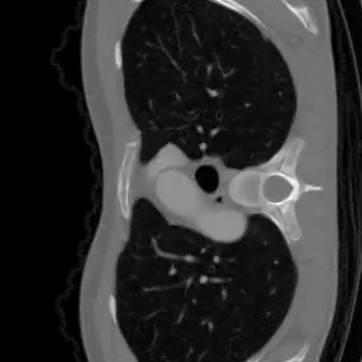}};
\spy on (1.68,1.15) in node [right] at (-0.005,-1.1);
\end{tikzpicture}
 %\caption*{\footnotesize WCRR}
\end{subfigure}%
\hfill
\begin{subfigure}[t]{.123\textwidth}
\begin{tikzpicture}[spy using outlines=
{rectangle,white,magnification=6.,size=2.115cm, connect spies}]
\node[anchor=south west,inner sep=0]  at (0,0) {\includegraphics[width=\linewidth]{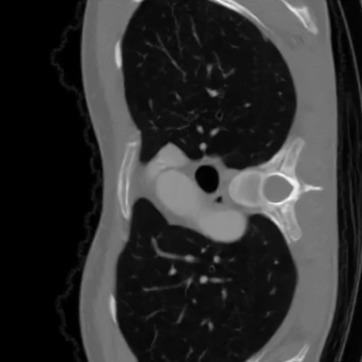}};
\spy on (1.68,1.15) in node [right] at (-0.005,-1.1);
\end{tikzpicture}
%  \caption*{\footnotesize Mask-CRR}
\end{subfigure}%
\hfill
\begin{subfigure}[t]{.123\textwidth}
\begin{tikzpicture}[spy using outlines=
{rectangle,white,magnification=6.,size=2.115cm, connect spies}]
\node[anchor=south west,inner sep=0]  at (0,0) {\includegraphics[width=\linewidth]{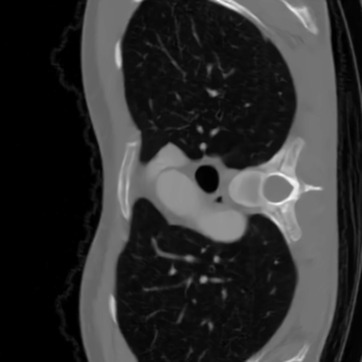}};
\spy on (1.68,1.15) in node [right] at (-0.005,-1.1);
\end{tikzpicture}
% \caption*{\footnotesize Mask-WCRR}
\end{subfigure}%
\hfill
\begin{subfigure}[t]{.123\textwidth}
\begin{tikzpicture}[spy using outlines=
{rectangle,white,magnification=6.,size=2.115cm, connect spies}]
\node[anchor=south west,inner sep=0]  at (0,0) {\includegraphics[width=\linewidth]{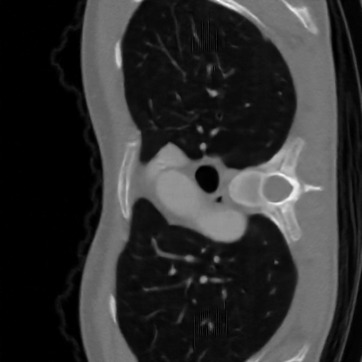}};
\spy on (1.68,1.15) in node [right] at (-0.005,-1.1);
\end{tikzpicture}
% \caption*{\footnotesize Prox-DRUNet}
\end{subfigure}%
\hfill
\begin{subfigure}[t]{.123\textwidth}
\begin{tikzpicture}[spy using outlines=
{rectangle,white,magnification=6.,size=2.115cm, connect spies}]
\node[anchor=south west,inner sep=0]  at (0,0) {\includegraphics[width=\linewidth]{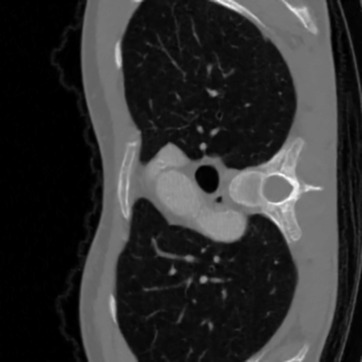}};
\spy on (1.68,1.15) in node [right] at (-0.005,-1.1);
\end{tikzpicture}
% \caption*{\footnotesize patchNR}
\end{subfigure}%

\begin{subfigure}[t]{.123\textwidth}
\includegraphics[width=\linewidth]{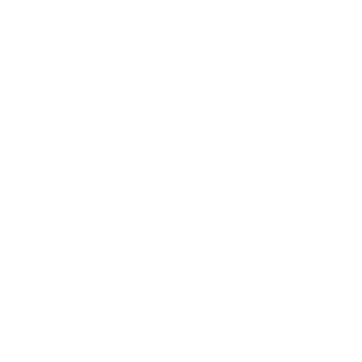}
  \caption*{\footnotesize GT}
\end{subfigure}%
\hfill
\begin{subfigure}[t]{.123\textwidth}
\includegraphics[width=\linewidth]{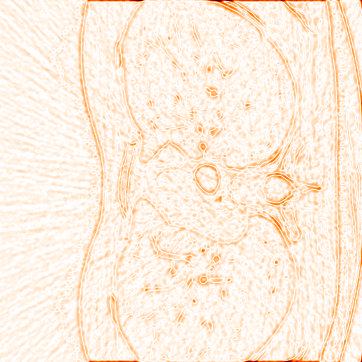}
  \caption*{\footnotesize FBP}
\end{subfigure}%
\hfill
\begin{subfigure}[t]{.123\textwidth}
\includegraphics[width=\linewidth]{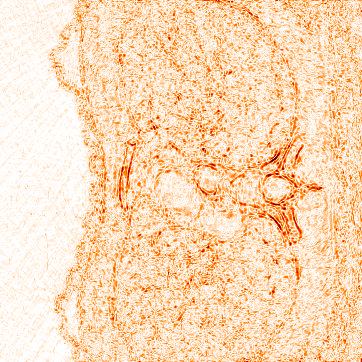}
  \caption*{\footnotesize CRR}
\end{subfigure}%
\hfill
\begin{subfigure}[t]{.123\textwidth}
\includegraphics[width=\linewidth]{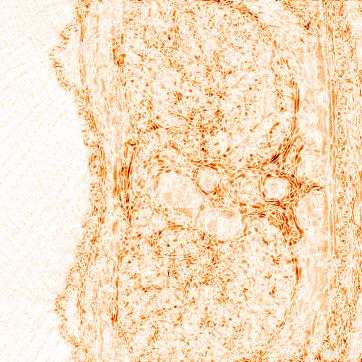}
  \caption*{\footnotesize WCRR}
\end{subfigure}%
\hfill
\begin{subfigure}[t]{.123\textwidth}
\includegraphics[width=\linewidth]{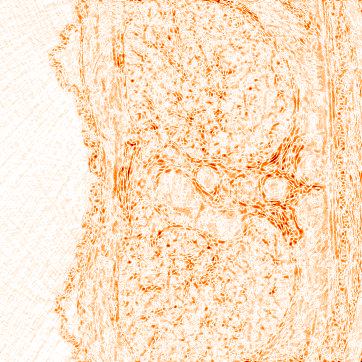}
  \caption*{\footnotesize Mask-CRR}
\end{subfigure}%
\hfill
\begin{subfigure}[t]{.123\textwidth}
\includegraphics[width=\linewidth]{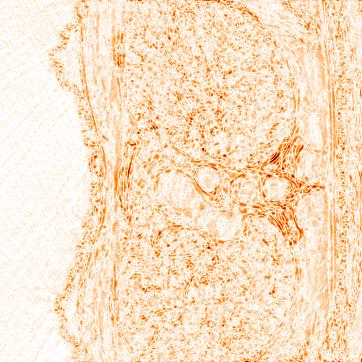}
  \caption*{\footnotesize Mask-WCRR}
\end{subfigure}%
\hfill
\begin{subfigure}[t]{.123\textwidth}
\includegraphics[width=\linewidth]{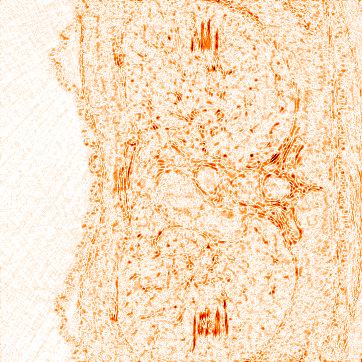}
  \caption*{\footnotesize Prox-DRUNet}
\end{subfigure}%
\hfill
\begin{subfigure}[t]{.123\textwidth}
\includegraphics[width=\linewidth]{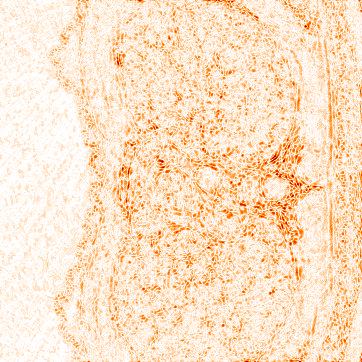}
  \caption*{\footnotesize patchNR}
\end{subfigure}%

\caption{Full view CT.
The white box marks the zoomed area.
\textit{Top}: full image; \textit{middle}: zoomed-in part; \textit{bottom}: error.
} \label{fig:CT_comparison_appendix}
\end{figure}

%---------------------------------------------------------------------------------------

\begin{figure}[t]
\centering

\centering
\begin{subfigure}[t]{.123\textwidth}  
\begin{tikzpicture}[spy using outlines=
{rectangle,white,magnification=5.5,size=2.115cm, connect spies}]
\node[anchor=south west,inner sep=0]  at (0,0) {\includegraphics[width=\linewidth]{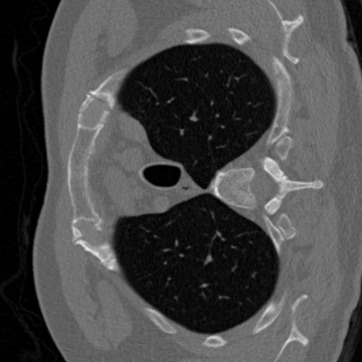}};
\spy on (1.,1.02) in node [right] at (-0.005,-1.1);
\end{tikzpicture}
%\caption*{\footnotesize GT}
\end{subfigure}%
\hfill
\begin{subfigure}[t]{.123\textwidth}
\begin{tikzpicture}[spy using outlines=
{rectangle,white,magnification=5.5,size=2.115cm, connect spies}]
\node[anchor=south west,inner sep=0]  at (0,0) {\includegraphics[width=\linewidth]{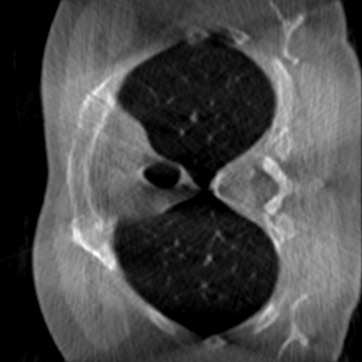}};
\spy on (1.,1.02) in node [right] at (-0.005,-1.1);
\end{tikzpicture}
%  \caption*{\footnotesize FBP}
\end{subfigure}%
\hfill
\begin{subfigure}[t]{.123\textwidth}
\begin{tikzpicture}[spy using outlines=
{rectangle,white,magnification=5.5,size=2.115cm, connect spies}]
\node[anchor=south west,inner sep=0]  at (0,0) {\includegraphics[width=\linewidth]{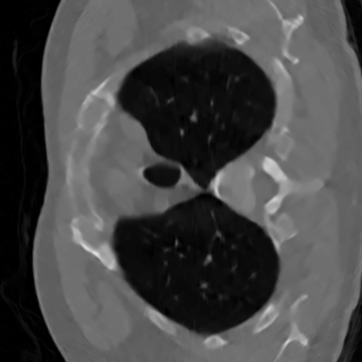}};
\spy on (1.,1.02) in node [right] at (-0.005,-1.1);
\end{tikzpicture}
%  \caption*{\footnotesize CRR}
\end{subfigure}%
\hfill
\begin{subfigure}[t]{.123\textwidth}
\begin{tikzpicture}[spy using outlines=
{rectangle,white,magnification=5.5,size=2.115cm, connect spies}]
\node[anchor=south west,inner sep=0]  at (0,0) {\includegraphics[width=\linewidth]{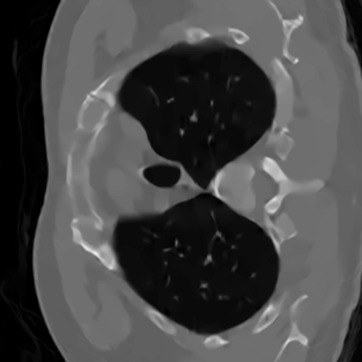}};
\spy on (1.,1.02) in node [right] at (-0.005,-1.1);
\end{tikzpicture}
%  \caption*{\footnotesize WCRR}
\end{subfigure}%
\hfill
\begin{subfigure}[t]{.123\textwidth}
\begin{tikzpicture}[spy using outlines=
{rectangle,white,magnification=5.5,size=2.115cm, connect spies}]
\node[anchor=south west,inner sep=0]  at (0,0) {\includegraphics[width=\linewidth]{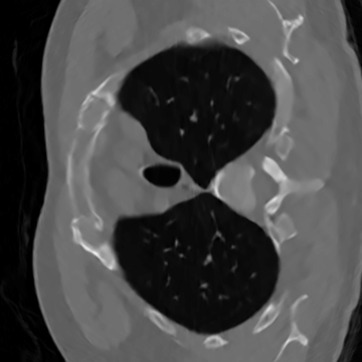}};
\spy on (1.,1.02) in node [right] at (-0.005,-1.1);
\end{tikzpicture}
%  \caption*{\footnotesize Mask-CRR}
\end{subfigure}%
\hfill
\begin{subfigure}[t]{.123\textwidth}
\begin{tikzpicture}[spy using outlines=
{rectangle,white,magnification=5.5,size=2.115cm, connect spies}]
\node[anchor=south west,inner sep=0]  at (0,0) {\includegraphics[width=\linewidth]{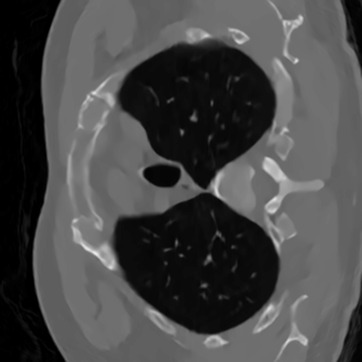}};
\spy on (1.,1.02) in node [right] at (-0.005,-1.1);
\end{tikzpicture}
%  \caption*{\footnotesize Mask-WCRR}
\end{subfigure}%
\hfill
\begin{subfigure}[t]{.123\textwidth}
\begin{tikzpicture}[spy using outlines=
{rectangle,white,magnification=5.5,size=2.115cm, connect spies}]
\node[anchor=south west,inner sep=0]  at (0,0) {\includegraphics[width=\linewidth]{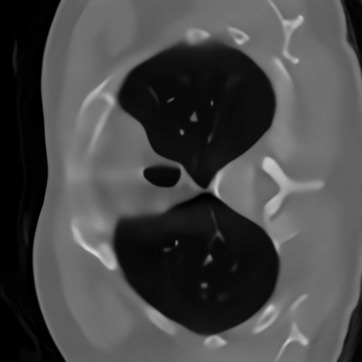}};
\spy on (1.,1.02) in node [right] at (-0.005,-1.1);
\end{tikzpicture}
%  \caption*{\footnotesize Prox-DRUNet}
\end{subfigure}%
\hfill
\begin{subfigure}[t]{.123\textwidth}
\begin{tikzpicture}[spy using outlines=
{rectangle,white,magnification=5.5,size=2.115cm, connect spies}]
\node[anchor=south west,inner sep=0]  at (0,0) {\includegraphics[width=\linewidth]{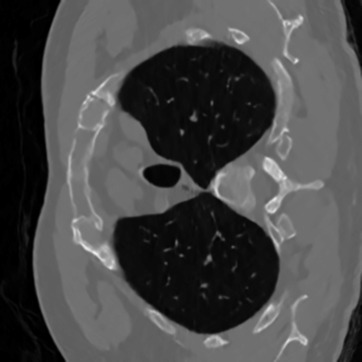}};
\spy on (1.,1.02) in node [right] at (-0.005,-1.1);
\end{tikzpicture}
%\caption*{\footnotesize patchNR}
\end{subfigure}%

%\begin{comment}
\begin{subfigure}[t]{.123\textwidth}
\includegraphics[width=\linewidth]{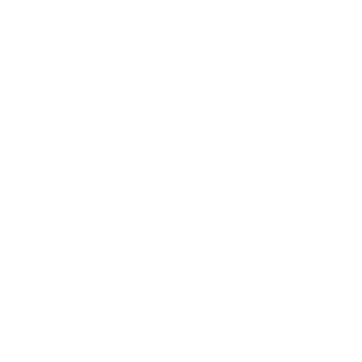}
  \caption*{\footnotesize GT}
\end{subfigure}%
\hfill
\begin{subfigure}[t]{.123\textwidth}
\includegraphics[width=\linewidth]{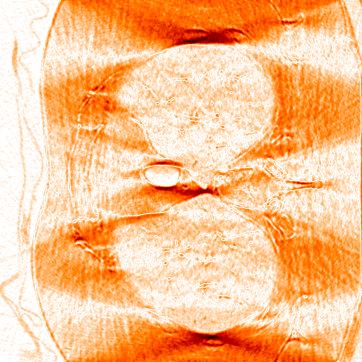}
  \caption*{\footnotesize FBP}
\end{subfigure}%
\hfill
\begin{subfigure}[t]{.123\textwidth}
\includegraphics[width=\linewidth]{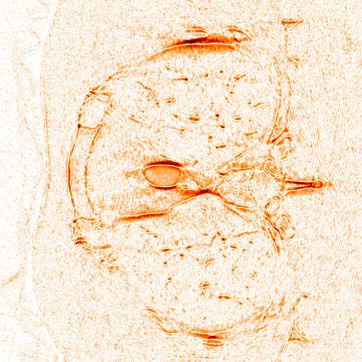}
  \caption*{\footnotesize CRR}
\end{subfigure}%
\hfill
\begin{subfigure}[t]{.123\textwidth}
\includegraphics[width=\linewidth]{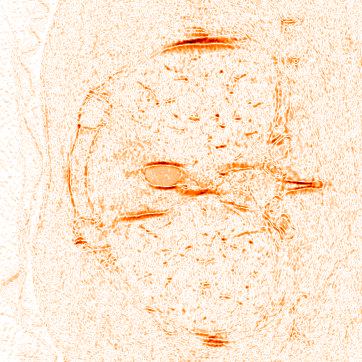}
  \caption*{\footnotesize WCRR}
\end{subfigure}%
\hfill
\begin{subfigure}[t]{.123\textwidth}
\includegraphics[width=\linewidth]{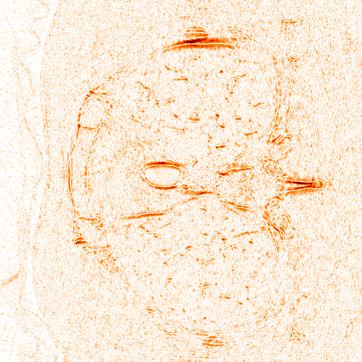}
  \caption*{\footnotesize Mask-CRR}
\end{subfigure}%
\hfill
\begin{subfigure}[t]{.123\textwidth}
\includegraphics[width=\linewidth]{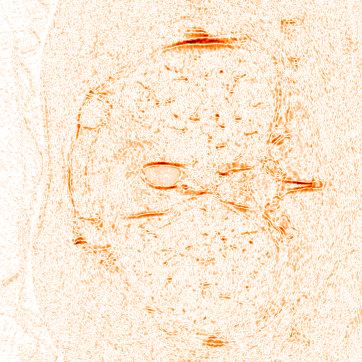}
  \caption*{\footnotesize Mask-WCRR}
\end{subfigure}%
\hfill
\begin{subfigure}[t]{.123\textwidth}
\includegraphics[width=\linewidth]{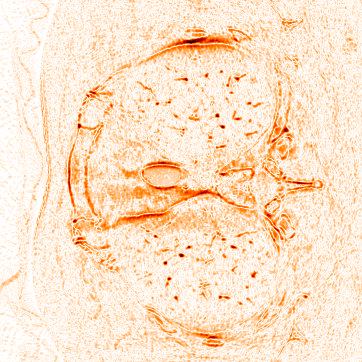}
  \caption*{\footnotesize Prox-DRUNet}
\end{subfigure}%
\hfill
\begin{subfigure}[t]{.123\textwidth}
\includegraphics[width=\linewidth]{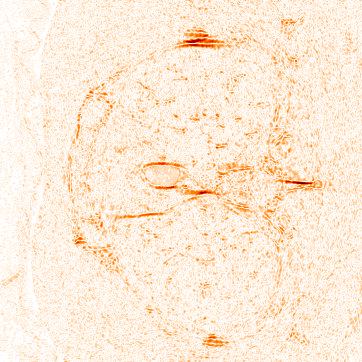}
  \caption*{\footnotesize patchNR}
\end{subfigure}%
%\end{comment}

%----------------------------------------------------------------------------------

\begin{subfigure}[t]{.123\textwidth}  
\begin{tikzpicture}[spy using outlines=
{rectangle,white,magnification=3.8,size=2.115cm, connect spies}]
\node[anchor=south west,inner sep=0]  at (0,0) {\includegraphics[width=\linewidth]{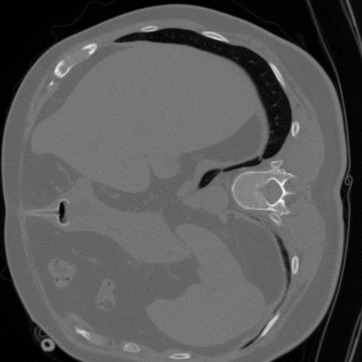}};
\spy on (1.45,1.04) in node [right] at (-0.005,-1.1);
\end{tikzpicture}
%\caption*{\footnotesize GT}
\end{subfigure}%
\hfill
\begin{subfigure}[t]{.123\textwidth}
\begin{tikzpicture}[spy using outlines=
{rectangle,white,magnification=3.8,size=2.115cm, connect spies}]
\node[anchor=south west,inner sep=0]  at (0,0) {\includegraphics[width=\linewidth]{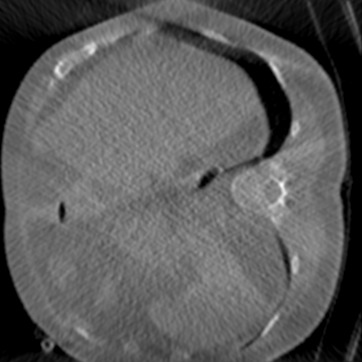}};
\spy on (1.45,1.04) in node [right] at (-0.005,-1.1);
\end{tikzpicture}
% \caption*{\footnotesize FBP}
\end{subfigure}%
\hfill
\begin{subfigure}[t]{.123\textwidth}
\begin{tikzpicture}[spy using outlines=
{rectangle,white,magnification=3.8,size=2.115cm, connect spies}]
\node[anchor=south west,inner sep=0]  at (0,0) {\includegraphics[width=\linewidth]{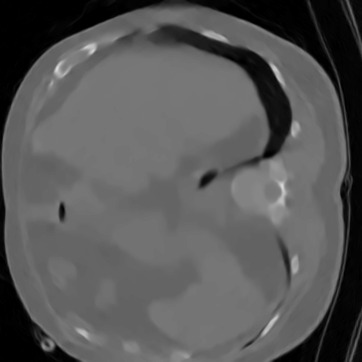}};
\spy on (1.45,1.04) in node [right] at (-0.005,-1.1);
\end{tikzpicture}
%  \caption*{\footnotesize CRR}
\end{subfigure}%
\hfill
\begin{subfigure}[t]{.123\textwidth}
\begin{tikzpicture}[spy using outlines=
{rectangle,white,magnification=3.8,size=2.115cm, connect spies}]
\node[anchor=south west,inner sep=0]  at (0,0) {\includegraphics[width=\linewidth]{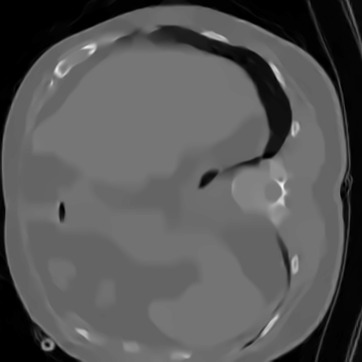}};
\spy on (1.45,1.04) in node [right] at (-0.005,-1.1);
\end{tikzpicture}
% \caption*{\footnotesize WCRR}
\end{subfigure}%
\hfill
\begin{subfigure}[t]{.123\textwidth}
\begin{tikzpicture}[spy using outlines=
{rectangle,white,magnification=3.8,size=2.115cm, connect spies}]
\node[anchor=south west,inner sep=0]  at (0,0) {\includegraphics[width=\linewidth]{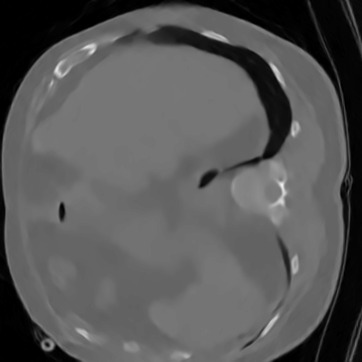}};
\spy on (1.45,1.04) in node [right] at (-0.005,-1.1);
\end{tikzpicture}
%  \caption*{\footnotesize Mask-CRR}
\end{subfigure}%
\hfill
\begin{subfigure}[t]{.123\textwidth}
\begin{tikzpicture}[spy using outlines=
{rectangle,white,magnification=3.8,size=2.115cm, connect spies}]
\node[anchor=south west,inner sep=0]  at (0,0) {\includegraphics[width=\linewidth]{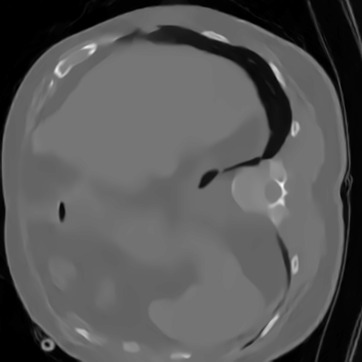}};
\spy on (1.45,1.04) in node [right] at (-0.005,-1.1);
\end{tikzpicture}
% \caption*{\footnotesize Mask-WCRR}
\end{subfigure}%
\hfill
\begin{subfigure}[t]{.123\textwidth}
\begin{tikzpicture}[spy using outlines=
{rectangle,white,magnification=3.8,size=2.115cm, connect spies}]
\node[anchor=south west,inner sep=0]  at (0,0) {\includegraphics[width=\linewidth]{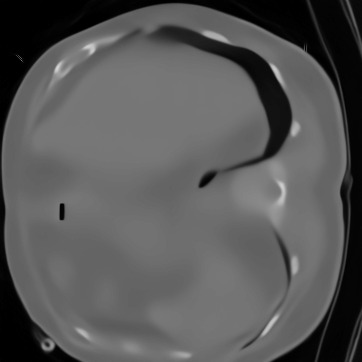}};
\spy on (1.45,1.04) in node [right] at (-0.005,-1.1);
\end{tikzpicture}
% \caption*{\footnotesize Prox-DRUNet}
\end{subfigure}%
\hfill
\begin{subfigure}[t]{.123\textwidth}
\begin{tikzpicture}[spy using outlines=
{rectangle,white,magnification=3.8,size=2.115cm, connect spies}]
\node[anchor=south west,inner sep=0]  at (0,0) {\includegraphics[width=\linewidth]{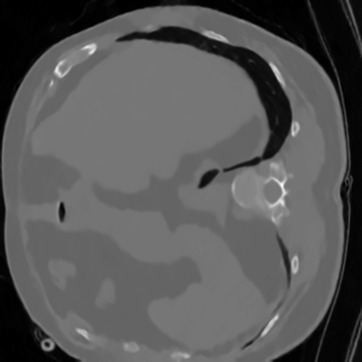}};
\spy on (1.45,1.04) in node [right] at (-0.005,-1.1);
\end{tikzpicture}
% \caption*{\footnotesize patchNR}
\end{subfigure}%

\begin{subfigure}[t]{.123\textwidth}
\includegraphics[width=\linewidth]{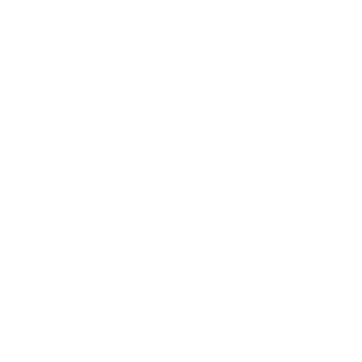}
  \caption*{\footnotesize GT}
\end{subfigure}%
\hfill
\begin{subfigure}[t]{.123\textwidth}
\includegraphics[width=\linewidth]{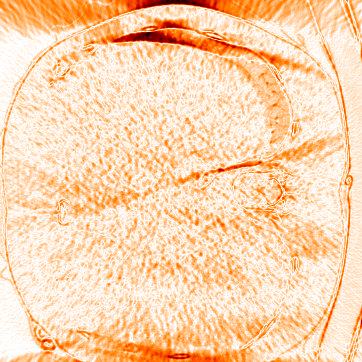}
  \caption*{\footnotesize FBP}
\end{subfigure}%
\hfill
\begin{subfigure}[t]{.123\textwidth}
\includegraphics[width=\linewidth]{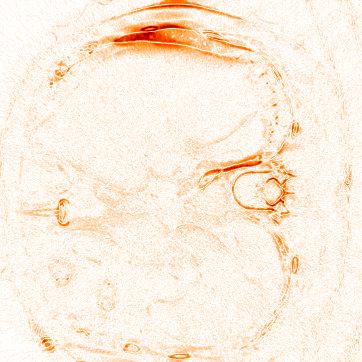}
  \caption*{\footnotesize CRR}
\end{subfigure}%
\hfill
\begin{subfigure}[t]{.123\textwidth}
\includegraphics[width=\linewidth]{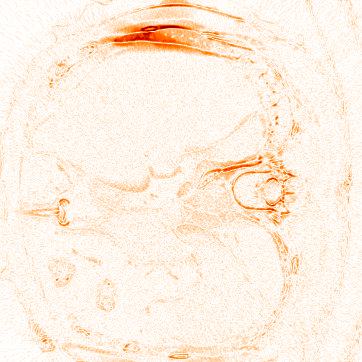}
  \caption*{\footnotesize WCRR}
\end{subfigure}%
\hfill
\begin{subfigure}[t]{.123\textwidth}
\includegraphics[width=\linewidth]{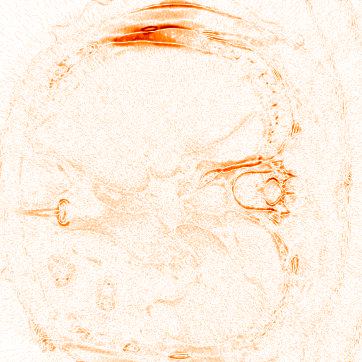}
  \caption*{\footnotesize Mask-CRR}
\end{subfigure}%
\hfill
\begin{subfigure}[t]{.123\textwidth}
\includegraphics[width=\linewidth]{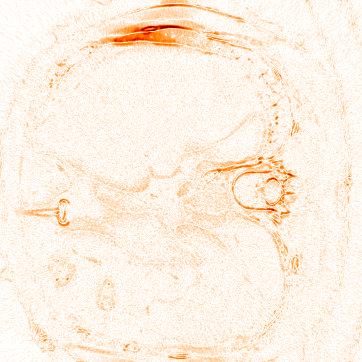}
  \caption*{\footnotesize Mask-WCRR}
\end{subfigure}%
\hfill
\begin{subfigure}[t]{.123\textwidth}
\includegraphics[width=\linewidth]{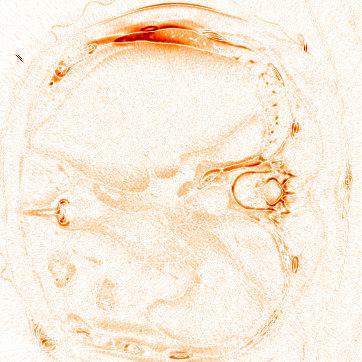}
  \caption*{\footnotesize Prox-DRUNet}
\end{subfigure}%
\hfill
\begin{subfigure}[t]{.123\textwidth}
\includegraphics[width=\linewidth]{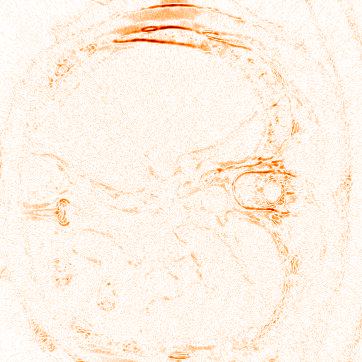}
  \caption*{\footnotesize patchNR}
\end{subfigure}%

%----------------------------------------------------------------------------------

\begin{subfigure}[t]{.123\textwidth}  
\begin{tikzpicture}[spy using outlines=
{rectangle,white,magnification=5.5,size=2.115cm, connect spies}]
\node[anchor=south west,inner sep=0]  at (0,0) {\includegraphics[width=\linewidth]{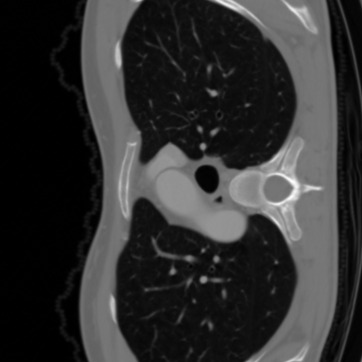}};
\spy on (1.7,1.0) in node [right] at (-0.005,-1.1);
\end{tikzpicture}
%\caption*{\footnotesize GT}
\end{subfigure}%
\hfill
\begin{subfigure}[t]{.123\textwidth}
\begin{tikzpicture}[spy using outlines=
{rectangle,white,magnification=5.5,size=2.115cm, connect spies}]
\node[anchor=south west,inner sep=0]  at (0,0) {\includegraphics[width=\linewidth]{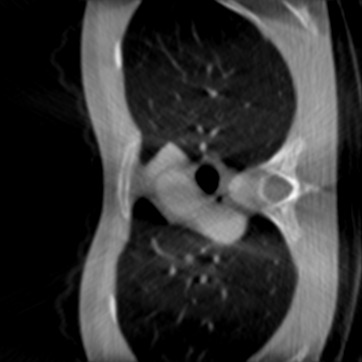}};
\spy on (1.7,1.0) in node [right] at (-0.005,-1.1);
\end{tikzpicture}
% \caption*{\footnotesize FBP}
\end{subfigure}%
\hfill
\begin{subfigure}[t]{.123\textwidth}
\begin{tikzpicture}[spy using outlines=
{rectangle,white,magnification=5.5,size=2.115cm, connect spies}]
\node[anchor=south west,inner sep=0]  at (0,0) {\includegraphics[width=\linewidth]{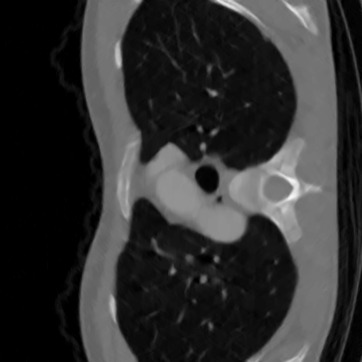}};
\spy on (1.7,1.0) in node [right] at (-0.005,-1.1);
\end{tikzpicture}
%  \caption*{\footnotesize CRR}
\end{subfigure}%
\hfill
\begin{subfigure}[t]{.123\textwidth}
\begin{tikzpicture}[spy using outlines=
{rectangle,white,magnification=5.5,size=2.115cm, connect spies}]
\node[anchor=south west,inner sep=0]  at (0,0) {\includegraphics[width=\linewidth]{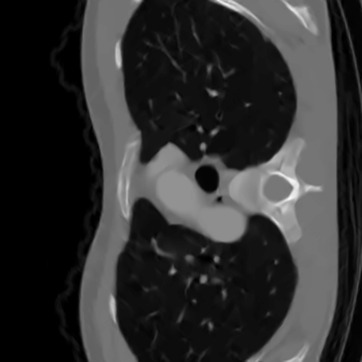}};
\spy on (1.7,1.0) in node [right] at (-0.005,-1.1);
\end{tikzpicture}
% \caption*{\footnotesize WCRR}
\end{subfigure}%
\hfill
\begin{subfigure}[t]{.123\textwidth}
\begin{tikzpicture}[spy using outlines=
{rectangle,white,magnification=5.5,size=2.115cm, connect spies}]
\node[anchor=south west,inner sep=0]  at (0,0) {\includegraphics[width=\linewidth]{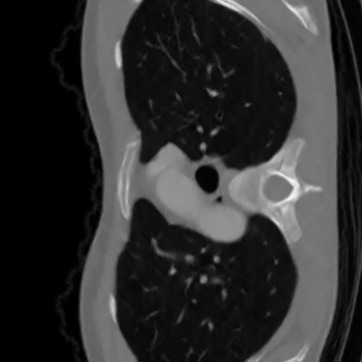}};
\spy on (1.7,1.0) in node [right] at (-0.005,-1.1);
\end{tikzpicture}
%  \caption*{\footnotesize Mask-CRR}
\end{subfigure}%
\hfill
\begin{subfigure}[t]{.123\textwidth}
\begin{tikzpicture}[spy using outlines=
{rectangle,white,magnification=5.5,size=2.115cm, connect spies}]
\node[anchor=south west,inner sep=0]  at (0,0) {\includegraphics[width=\linewidth]{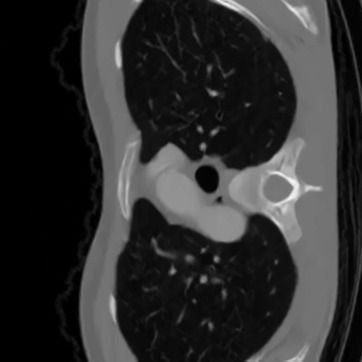}};
\spy on (1.7,1.0) in node [right] at (-0.005,-1.1);
\end{tikzpicture}
% \caption*{\footnotesize Mask-WCRR}
\end{subfigure}%
\hfill
\begin{subfigure}[t]{.123\textwidth}
\begin{tikzpicture}[spy using outlines=
{rectangle,white,magnification=5.5,size=2.115cm, connect spies}]
\node[anchor=south west,inner sep=0]  at (0,0) {\includegraphics[width=\linewidth]{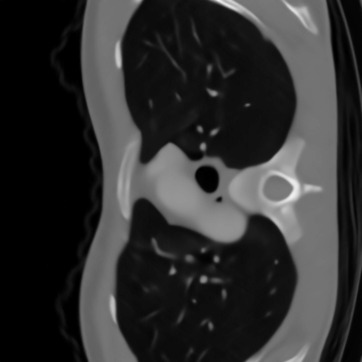}};
\spy on (1.7,1.0) in node [right] at (-0.005,-1.1);
\end{tikzpicture}
% \caption*{\footnotesize Prox-DRUNet}
\end{subfigure}%
\hfill
\begin{subfigure}[t]{.123\textwidth}
\begin{tikzpicture}[spy using outlines=
{rectangle,white,magnification=5.5,size=2.115cm, connect spies}]
\node[anchor=south west,inner sep=0]  at (0,0) {\includegraphics[width=\linewidth]{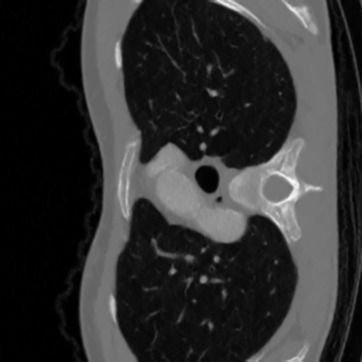}};
\spy on (1.7,1.0) in node [right] at (-0.005,-1.1);
\end{tikzpicture}
% \caption*{\footnotesize patchNR}
\end{subfigure}%

\begin{subfigure}[t]{.123\textwidth}
\includegraphics[width=\linewidth]{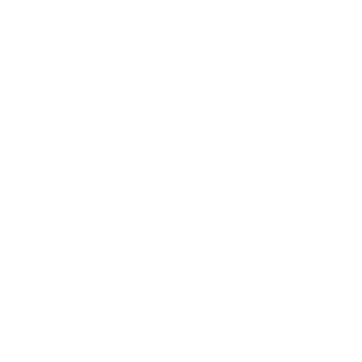}
  \caption*{\footnotesize GT}
\end{subfigure}%
\hfill
\begin{subfigure}[t]{.123\textwidth}
\includegraphics[width=\linewidth]{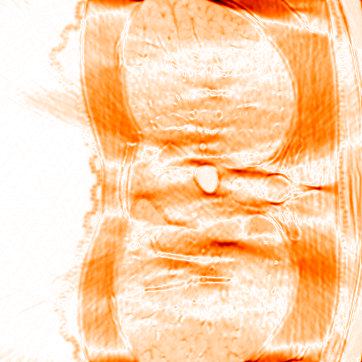}
  \caption*{\footnotesize FBP}
\end{subfigure}%
\hfill
\begin{subfigure}[t]{.123\textwidth}
\includegraphics[width=\linewidth]{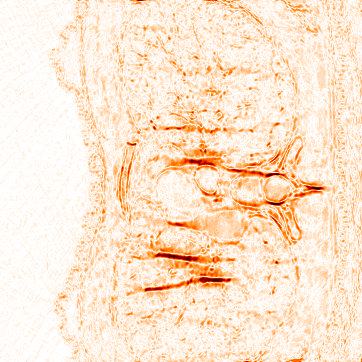}
  \caption*{\footnotesize CRR}
\end{subfigure}%
\hfill
\begin{subfigure}[t]{.123\textwidth}
\includegraphics[width=\linewidth]{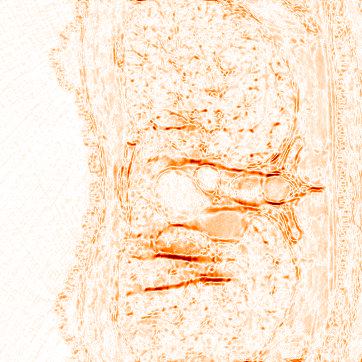}
  \caption*{\footnotesize WCRR}
\end{subfigure}%
\hfill
\begin{subfigure}[t]{.123\textwidth}
\includegraphics[width=\linewidth]{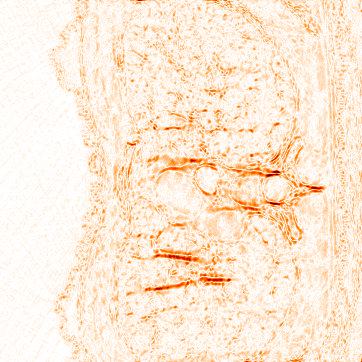}
  \caption*{\footnotesize Mask-CRR}
\end{subfigure}%
\hfill
\begin{subfigure}[t]{.123\textwidth}
\includegraphics[width=\linewidth]{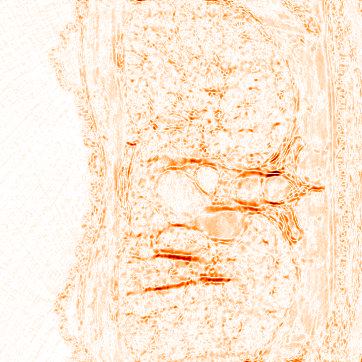}
  \caption*{\footnotesize Mask-WCRR}
\end{subfigure}%
\hfill
\begin{subfigure}[t]{.123\textwidth}
\includegraphics[width=\linewidth]{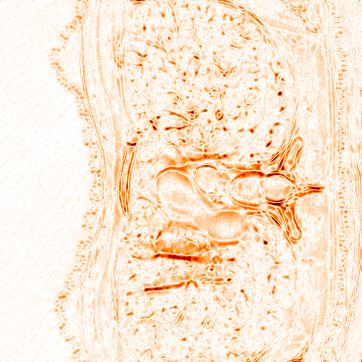}
  \caption*{\footnotesize Prox-DRUNet}
\end{subfigure}%
\hfill
\begin{subfigure}[t]{.123\textwidth}
\includegraphics[width=\linewidth]{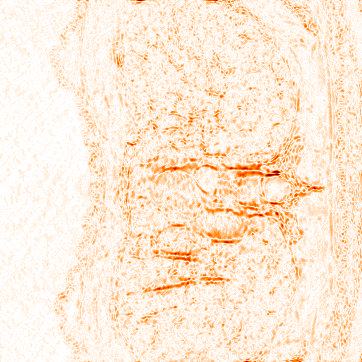}
  \caption*{\footnotesize patchNR}
\end{subfigure}%

\caption{Limited-angle CT.
The white box marks the zoomed area.
\textit{Top}: full image; \textit{middle}: zoomed-in part; \textit{bottom}: error.
} \label{fig:CT_comparison_limited_appendix}
\end{figure}

%----------------------------------------------------------------------------------------

\begin{figure}[t]
\centering
\begin{subfigure}[t]{.123\textwidth}  
\begin{tikzpicture}[spy using outlines=
{rectangle,white,magnification=4.5,size=2.115cm, connect spies}]
\node[anchor=south west,inner sep=0]  at (0,0) {\includegraphics[width=\linewidth]{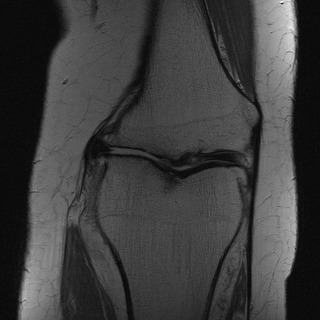}};
\spy on (1.6,1.08) in node [right] at (-0.005,-1.1);
\end{tikzpicture}
%\caption*{\footnotesize GT}
\end{subfigure}%
\hfill
\begin{subfigure}[t]{.123\textwidth}
\begin{tikzpicture}[spy using outlines=
{rectangle,white,magnification=4.5,size=2.115cm, connect spies}]
\node[anchor=south west,inner sep=0]  at (0,0) {\includegraphics[width=\linewidth]{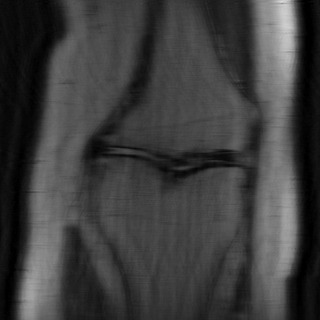}};
\spy on (1.6,1.08) in node [right] at (-0.005,-1.1);
\end{tikzpicture}
%  \caption*{\footnotesize FBP}
\end{subfigure}%
\hfill
\begin{subfigure}[t]{.123\textwidth}
\begin{tikzpicture}[spy using outlines=
{rectangle,white,magnification=4.5,size=2.115cm, connect spies}]
\node[anchor=south west,inner sep=0]  at (0,0) {\includegraphics[width=\linewidth]{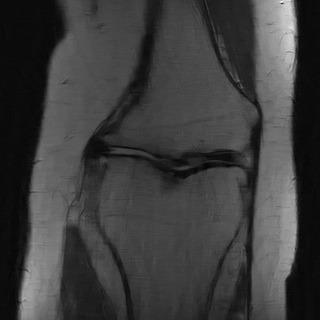}};
\spy on (1.6,1.08) in node [right] at (-0.005,-1.1);
\end{tikzpicture}
%  \caption*{\footnotesize CRR}
\end{subfigure}%
\hfill
\begin{subfigure}[t]{.123\textwidth}
\begin{tikzpicture}[spy using outlines=
{rectangle,white,magnification=4.5,size=2.115cm, connect spies}]
\node[anchor=south west,inner sep=0]  at (0,0) {\includegraphics[width=\linewidth]{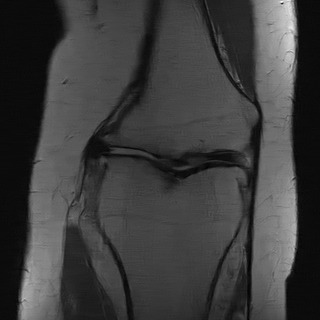}};
\spy on (1.6,1.08) in node [right] at (-0.005,-1.1);
\end{tikzpicture}
%  \caption*{\footnotesize WCRR}
\end{subfigure}%
\hfill
\begin{subfigure}[t]{.123\textwidth}
\begin{tikzpicture}[spy using outlines=
{rectangle,white,magnification=4.5,size=2.115cm, connect spies}]
\node[anchor=south west,inner sep=0]  at (0,0) {\includegraphics[width=\linewidth]{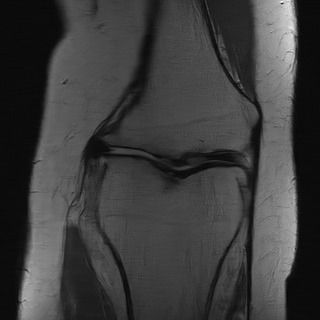}};
\spy on (1.6,1.08) in node [right] at (-0.005,-1.1);
\end{tikzpicture}
%  \caption*{\footnotesize Mask-CRR}
\end{subfigure}%
\hfill
\begin{subfigure}[t]{.123\textwidth}
\begin{tikzpicture}[spy using outlines=
{rectangle,white,magnification=4.5,size=2.115cm, connect spies}]
\node[anchor=south west,inner sep=0]  at (0,0) {\includegraphics[width=\linewidth]{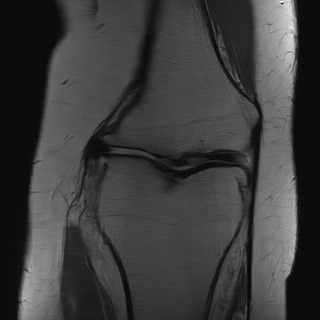}};
\spy on (1.6,1.08) in node [right] at (-0.005,-1.1);
\end{tikzpicture}
%  \caption*{\footnotesize Mask-WCRR}
\end{subfigure}%
\hfill
\begin{subfigure}[t]{.123\textwidth}
\begin{tikzpicture}[spy using outlines=
{rectangle,white,magnification=4.5,size=2.115cm, connect spies}]
\node[anchor=south west,inner sep=0]  at (0,0) {\includegraphics[width=\linewidth]{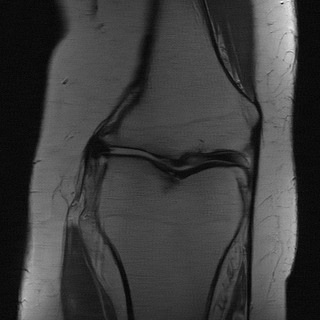}};
\spy on (1.6,1.08) in node [right] at (-0.005,-1.1);
\end{tikzpicture}
%  \caption*{\footnotesize Prox-DRUNet}
\end{subfigure}%
\hfill
\begin{subfigure}[t]{.123\textwidth}
\begin{tikzpicture}[spy using outlines=
{rectangle,white,magnification=4.5,size=2.115cm, connect spies}]
\node[anchor=south west,inner sep=0]  at (0,0) {\includegraphics[width=\linewidth]{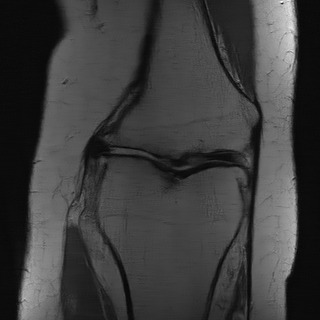}};
\spy on (1.6,1.08) in node [right] at (-0.005,-1.1);
\end{tikzpicture}
%  \caption*{\footnotesize patchNR}
\end{subfigure}%

\begin{subfigure}[t]{.123\textwidth}
\includegraphics[width=\linewidth]{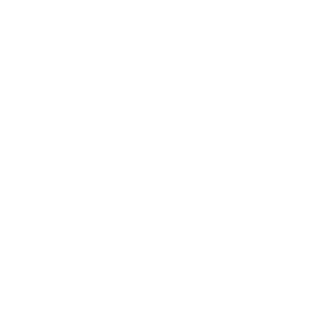}
  \caption*{\footnotesize GT}
\end{subfigure}%
\hfill
\begin{subfigure}[t]{.123\textwidth}
\includegraphics[width=\linewidth]{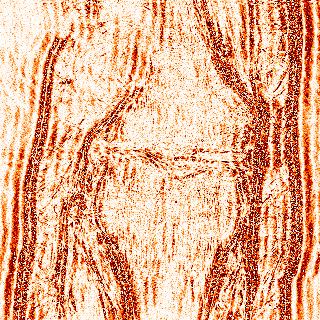}
  \caption*{\footnotesize Zero-filled}
\end{subfigure}%
\hfill
\begin{subfigure}[t]{.123\textwidth}
\includegraphics[width=\linewidth]{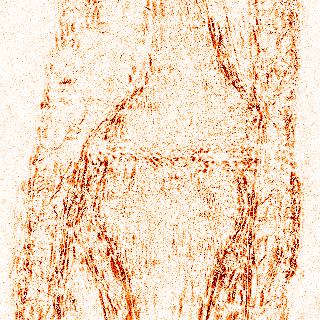}
  \caption*{\footnotesize CRR}
\end{subfigure}%
\hfill
\begin{subfigure}[t]{.123\textwidth}
\includegraphics[width=\linewidth]{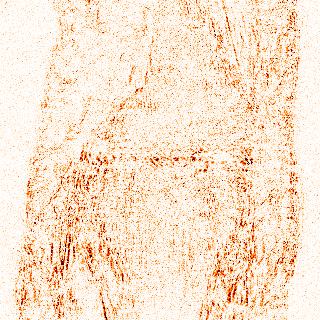}
  \caption*{\footnotesize WCRR}
\end{subfigure}%
\hfill
\begin{subfigure}[t]{.123\textwidth}
\includegraphics[width=\linewidth]{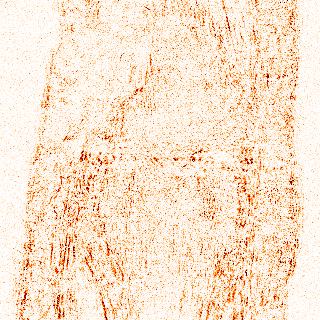}
  \caption*{\footnotesize CRR-Mask}
\end{subfigure}%
\hfill
\begin{subfigure}[t]{.123\textwidth}
\includegraphics[width=\linewidth]{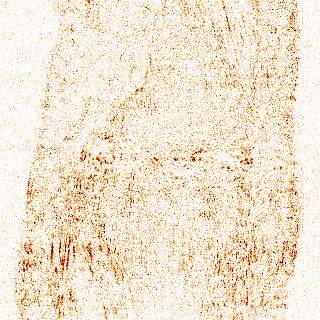}
  \caption*{\footnotesize WCRR-Mask}
\end{subfigure}%
\hfill
\begin{subfigure}[t]{.123\textwidth}
\includegraphics[width=\linewidth]{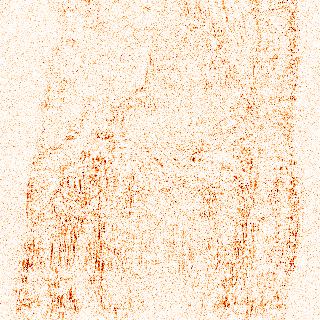}
  \caption*{\footnotesize Prox-DRUNet}
\end{subfigure}%
\hfill
\begin{subfigure}[t]{.123\textwidth}
\includegraphics[width=\linewidth]{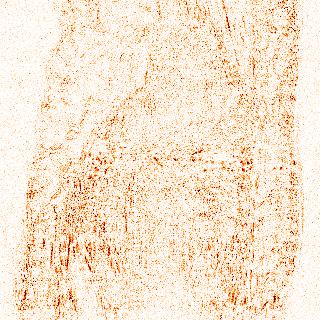}
  \caption*{\footnotesize patchNR}
\end{subfigure}%

%-----------------------------------------------------------------------------

\begin{subfigure}[t]{.123\textwidth}  
\begin{tikzpicture}[spy using outlines=
{rectangle,white,magnification=4.5,size=2.115cm, connect spies}]
\node[anchor=south west,inner sep=0]  at (0,0) {\includegraphics[width=\linewidth]{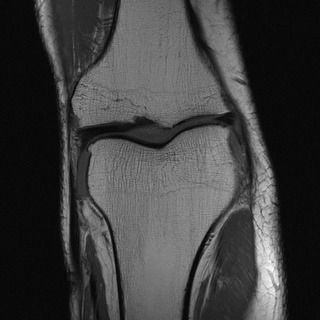}};
\spy on (.65,.7) in node [right] at (-0.005,-1.1);
\end{tikzpicture}
%\caption*{\footnotesize GT}
\end{subfigure}%
\hfill
\begin{subfigure}[t]{.123\textwidth}
\begin{tikzpicture}[spy using outlines=
{rectangle,white,magnification=4.5,size=2.115cm, connect spies}]
\node[anchor=south west,inner sep=0]  at (0,0) {\includegraphics[width=\linewidth]{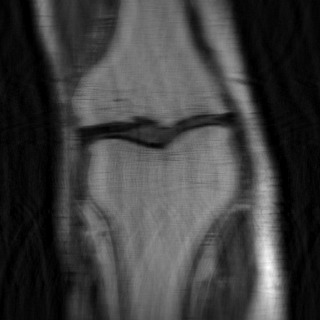}};
\spy on (.65,.7) in node [right] at (-0.005,-1.1);
\end{tikzpicture}
%  \caption*{\footnotesize FBP}
\end{subfigure}%
\hfill
\begin{subfigure}[t]{.123\textwidth}
\begin{tikzpicture}[spy using outlines=
{rectangle,white,magnification=4.5,size=2.115cm, connect spies}]
\node[anchor=south west,inner sep=0]  at (0,0) {\includegraphics[width=\linewidth]{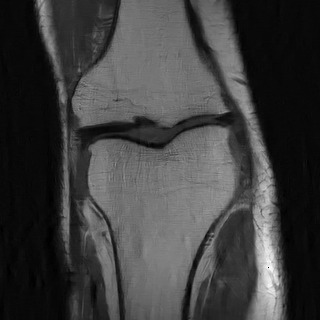}};
\spy on (.65,.7) in node [right] at (-0.005,-1.1);
\end{tikzpicture}
%  \caption*{\footnotesize CRR}
\end{subfigure}%
\hfill
\begin{subfigure}[t]{.123\textwidth}
\begin{tikzpicture}[spy using outlines=
{rectangle,white,magnification=4.5,size=2.115cm, connect spies}]
\node[anchor=south west,inner sep=0]  at (0,0) {\includegraphics[width=\linewidth]{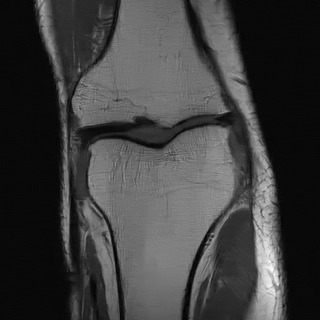}};
\spy on (.65,.7) in node [right] at (-0.005,-1.1);
\end{tikzpicture}
%  \caption*{\footnotesize WCRR}
\end{subfigure}%
\hfill
\begin{subfigure}[t]{.123\textwidth}
\begin{tikzpicture}[spy using outlines=
{rectangle,white,magnification=4.5,size=2.115cm, connect spies}]
\node[anchor=south west,inner sep=0]  at (0,0) {\includegraphics[width=\linewidth]{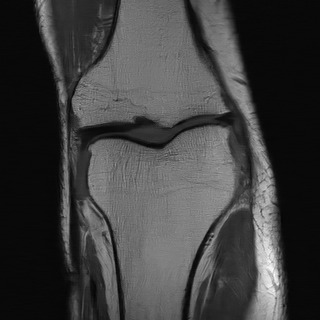}};
\spy on (.65,.7) in node [right] at (-0.005,-1.1);
\end{tikzpicture}
%  \caption*{\footnotesize Mask-CRR}
\end{subfigure}%
\hfill
\begin{subfigure}[t]{.123\textwidth}
\begin{tikzpicture}[spy using outlines=
{rectangle,white,magnification=4.5,size=2.115cm, connect spies}]
\node[anchor=south west,inner sep=0]  at (0,0) {\includegraphics[width=\linewidth]{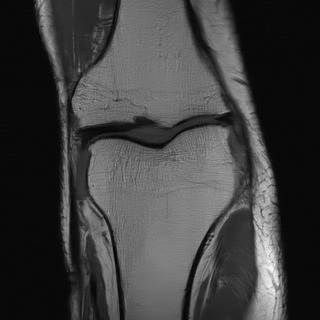}};
\spy on (.65,.7) in node [right] at (-0.005,-1.1);
\end{tikzpicture}
%  \caption*{\footnotesize Mask-WCRR}
\end{subfigure}%
\hfill
\begin{subfigure}[t]{.123\textwidth}
\begin{tikzpicture}[spy using outlines=
{rectangle,white,magnification=4.5,size=2.115cm, connect spies}]
\node[anchor=south west,inner sep=0]  at (0,0) {\includegraphics[width=\linewidth]{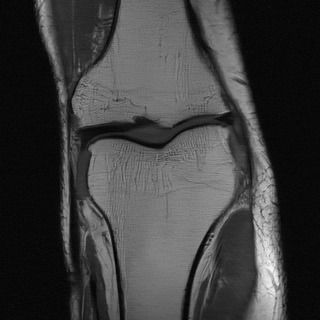}};
\spy on (.65,.7) in node [right] at (-0.005,-1.1);
\end{tikzpicture}
%  \caption*{\footnotesize Prox-DRUNet}
\end{subfigure}%
\hfill
\begin{subfigure}[t]{.123\textwidth}
\begin{tikzpicture}[spy using outlines=
{rectangle,white,magnification=4.5,size=2.115cm, connect spies}]
\node[anchor=south west,inner sep=0]  at (0,0) {\includegraphics[width=\linewidth]{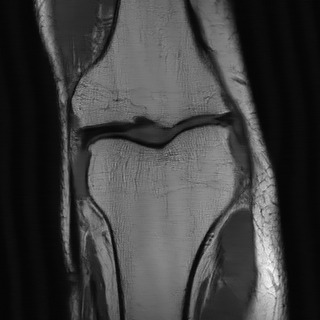}};
\spy on (.65,.7) in node [right] at (-0.005,-1.1);
\end{tikzpicture}
%  \caption*{\footnotesize patchNR}
\end{subfigure}%

\begin{subfigure}[t]{.123\textwidth}
\includegraphics[width=\linewidth]{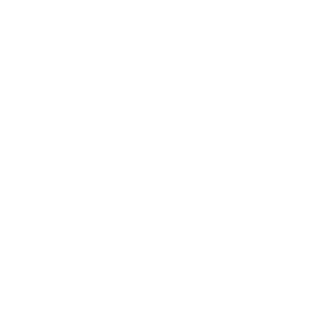}
  \caption*{\footnotesize GT}
\end{subfigure}%
\hfill
\begin{subfigure}[t]{.123\textwidth}
\includegraphics[width=\linewidth]{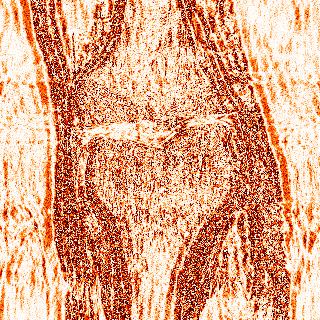}
  \caption*{\footnotesize Zero-filled}
\end{subfigure}%
\hfill
\begin{subfigure}[t]{.123\textwidth}
\includegraphics[width=\linewidth]{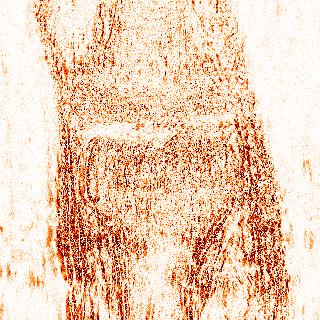}
  \caption*{\footnotesize CRR}
\end{subfigure}%
\hfill
\begin{subfigure}[t]{.123\textwidth}
\includegraphics[width=\linewidth]{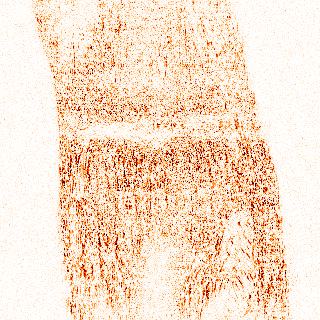}
  \caption*{\footnotesize WCRR}
\end{subfigure}%
\hfill
\begin{subfigure}[t]{.123\textwidth}
\includegraphics[width=\linewidth]{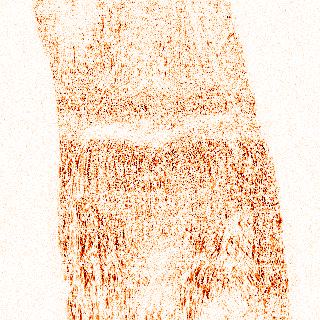}
  \caption*{\footnotesize CRR-Mask}
\end{subfigure}%
\hfill
\begin{subfigure}[t]{.123\textwidth}
\includegraphics[width=\linewidth]{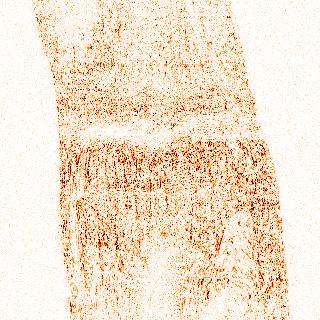}
  \caption*{\footnotesize WCRR-Mask}
\end{subfigure}%
\hfill
\begin{subfigure}[t]{.123\textwidth}
\includegraphics[width=\linewidth]{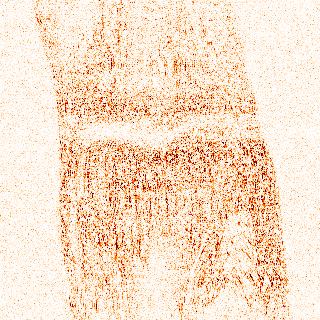}
  \caption*{\footnotesize Prox-DRUNet}
\end{subfigure}%
\hfill
\begin{subfigure}[t]{.123\textwidth}
\includegraphics[width=\linewidth]{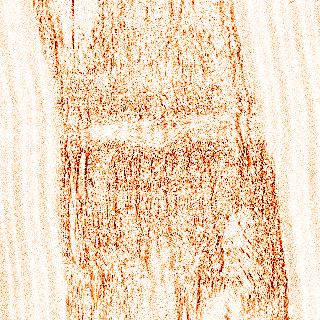}
  \caption*{\footnotesize patchNR}
\end{subfigure}%

%-----------------------------------------------------------------------------

\begin{subfigure}[t]{.123\textwidth}  
\begin{tikzpicture}[spy using outlines=
{rectangle,white,magnification=4.5,size=2.115cm, connect spies}]
\node[anchor=south west,inner sep=0]  at (0,0) {\includegraphics[width=\linewidth]{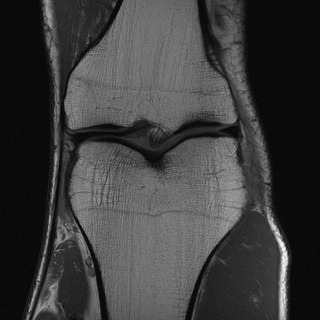}};
\spy on (1.55,1.08) in node [right] at (-0.005,-1.1);
\end{tikzpicture}
%\caption*{\footnotesize GT}
\end{subfigure}%
\hfill
\begin{subfigure}[t]{.123\textwidth}
\begin{tikzpicture}[spy using outlines=
{rectangle,white,magnification=4.5,size=2.115cm, connect spies}]
\node[anchor=south west,inner sep=0]  at (0,0) {\includegraphics[width=\linewidth]{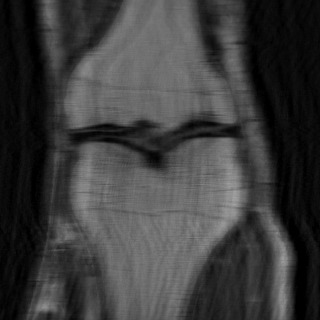}};
\spy on (1.55,1.08) in node [right] at (-0.005,-1.1);
\end{tikzpicture}
%  \caption*{\footnotesize FBP}
\end{subfigure}%
\hfill
\begin{subfigure}[t]{.123\textwidth}
\begin{tikzpicture}[spy using outlines=
{rectangle,white,magnification=4.5,size=2.115cm, connect spies}]
\node[anchor=south west,inner sep=0]  at (0,0) {\includegraphics[width=\linewidth]{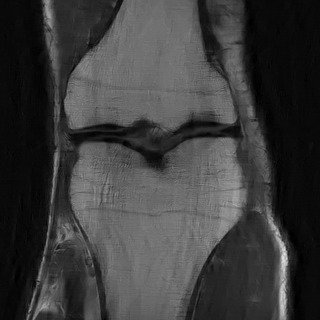}};
\spy on (1.55,1.08) in node [right] at (-0.005,-1.1);
\end{tikzpicture}
%  \caption*{\footnotesize CRR}
\end{subfigure}%
\hfill
\begin{subfigure}[t]{.123\textwidth}
\begin{tikzpicture}[spy using outlines=
{rectangle,white,magnification=4.5,size=2.115cm, connect spies}]
\node[anchor=south west,inner sep=0]  at (0,0) {\includegraphics[width=\linewidth]{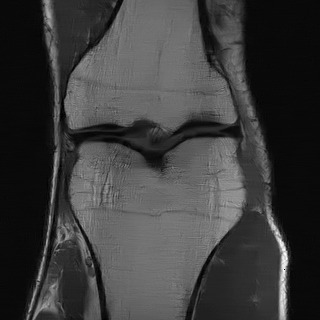}};
\spy on (1.55,1.08) in node [right] at (-0.005,-1.1);
\end{tikzpicture}
%  \caption*{\footnotesize WCRR}
\end{subfigure}%
\hfill
\begin{subfigure}[t]{.123\textwidth}
\begin{tikzpicture}[spy using outlines=
{rectangle,white,magnification=4.5,size=2.115cm, connect spies}]
\node[anchor=south west,inner sep=0]  at (0,0) {\includegraphics[width=\linewidth]{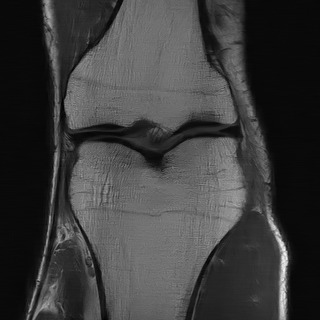}};
\spy on (1.55,1.08) in node [right] at (-0.005,-1.1);
\end{tikzpicture}
%  \caption*{\footnotesize Mask-CRR}
\end{subfigure}%
\hfill
\begin{subfigure}[t]{.123\textwidth}
\begin{tikzpicture}[spy using outlines=
{rectangle,white,magnification=4.5,size=2.115cm, connect spies}]
\node[anchor=south west,inner sep=0]  at (0,0) {\includegraphics[width=\linewidth]{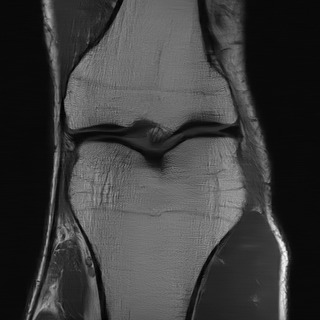}};
\spy on (1.55,1.08) in node [right] at (-0.005,-1.1);
\end{tikzpicture}
%  \caption*{\footnotesize Mask-WCRR}
\end{subfigure}%
\hfill
\begin{subfigure}[t]{.123\textwidth}
\begin{tikzpicture}[spy using outlines=
{rectangle,white,magnification=4.5,size=2.115cm, connect spies}]
\node[anchor=south west,inner sep=0]  at (0,0) {\includegraphics[width=\linewidth]{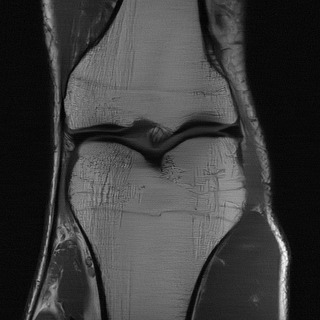}};
\spy on (1.55,1.08) in node [right] at (-0.005,-1.1);
\end{tikzpicture}
%  \caption*{\footnotesize Prox-DRUNet}
\end{subfigure}%
\hfill
\begin{subfigure}[t]{.123\textwidth}
\begin{tikzpicture}[spy using outlines=
{rectangle,white,magnification=4.5,size=2.115cm, connect spies}]
\node[anchor=south west,inner sep=0]  at (0,0) {\includegraphics[width=\linewidth]{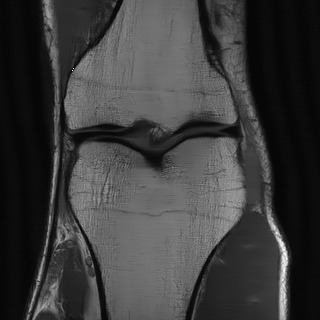}};
\spy on (1.55,1.08) in node [right] at (-0.005,-1.1);
\end{tikzpicture}
%  \caption*{\footnotesize patchNR}
\end{subfigure}%

\begin{subfigure}[t]{.123\textwidth}
\includegraphics[width=\linewidth]{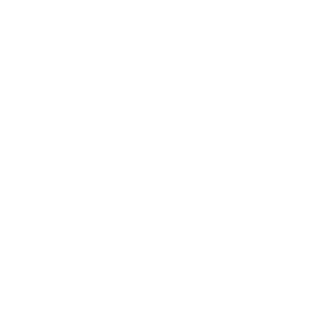}
  \caption*{\footnotesize GT}
\end{subfigure}%
\hfill
\begin{subfigure}[t]{.123\textwidth}
\includegraphics[width=\linewidth]{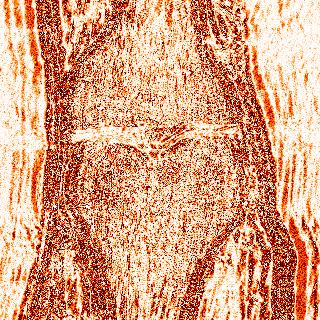}
  \caption*{\footnotesize Zero-filled}
\end{subfigure}%
\hfill
\begin{subfigure}[t]{.123\textwidth}
\includegraphics[width=\linewidth]{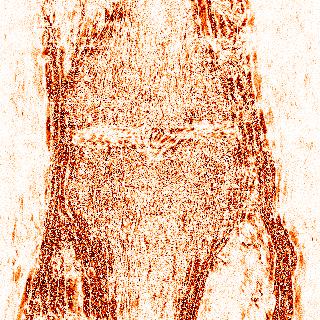}
  \caption*{\footnotesize CRR}
\end{subfigure}%
\hfill
\begin{subfigure}[t]{.123\textwidth}
\includegraphics[width=\linewidth]{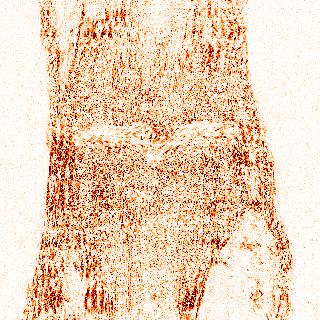}
  \caption*{\footnotesize WCRR}
\end{subfigure}%
\hfill
\begin{subfigure}[t]{.123\textwidth}
\includegraphics[width=\linewidth]{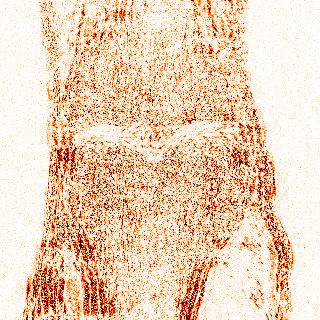}
  \caption*{\footnotesize CRR-Mask}
\end{subfigure}%
\hfill
\begin{subfigure}[t]{.123\textwidth}
\includegraphics[width=\linewidth]{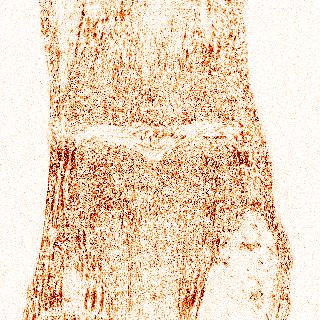}
  \caption*{\footnotesize WCRR-Mask}
\end{subfigure}%
\hfill
\begin{subfigure}[t]{.123\textwidth}
\includegraphics[width=\linewidth]{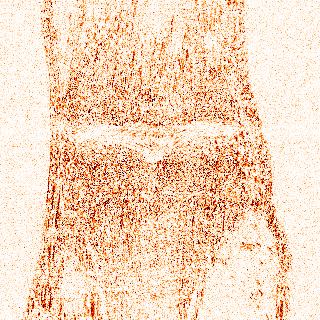}
  \caption*{\footnotesize Prox-DRUNet}
\end{subfigure}%
\hfill
\begin{subfigure}[t]{.123\textwidth}
\includegraphics[width=\linewidth]{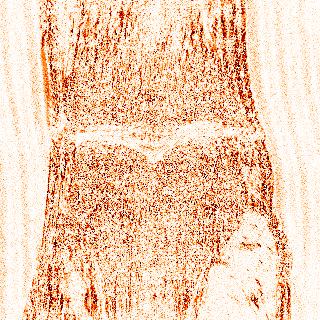}
  \caption*{\footnotesize patchNR}
\end{subfigure}%

\caption{4-fold single-coil MRI on PD data set.
The white box marks the zoomed area.
\textit{Top}: full image; \textit{middle}: zoomed-in part; \textit{bottom}: error.
} \label{fig:MRI_comparison_pd4_appendix}
\end{figure}

%-----------------------------------------------------------------------------------

\begin{figure}[t]
\centering
\begin{subfigure}[t]{.123\textwidth}  
\begin{tikzpicture}[spy using outlines=
{rectangle,white,magnification=4.5,size=2.115cm, connect spies}]
\node[anchor=south west,inner sep=0]  at (0,0) {\includegraphics[width=\linewidth]{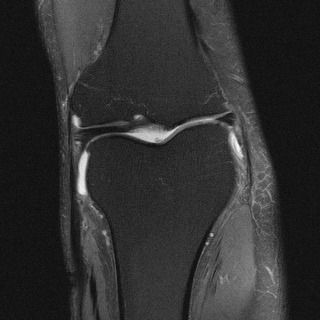}};
\spy on (.6,1.08) in node [right] at (-0.005,-1.1);
\end{tikzpicture}
%\caption*{\footnotesize GT}
\end{subfigure}%
\hfill
\begin{subfigure}[t]{.123\textwidth}
\begin{tikzpicture}[spy using outlines=
{rectangle,white,magnification=4.5,size=2.115cm, connect spies}]
\node[anchor=south west,inner sep=0]  at (0,0) {\includegraphics[width=\linewidth]{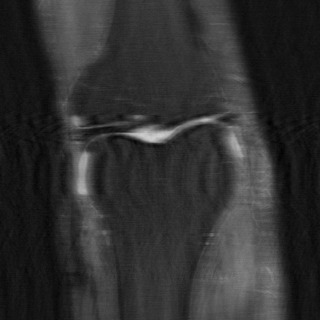}};
\spy on (.6,1.08) in node [right] at (-0.005,-1.1);
\end{tikzpicture}
%  \caption*{\footnotesize FBP}
\end{subfigure}%
\hfill
\begin{subfigure}[t]{.123\textwidth}
\begin{tikzpicture}[spy using outlines=
{rectangle,white,magnification=4.5,size=2.115cm, connect spies}]
\node[anchor=south west,inner sep=0]  at (0,0) {\includegraphics[width=\linewidth]{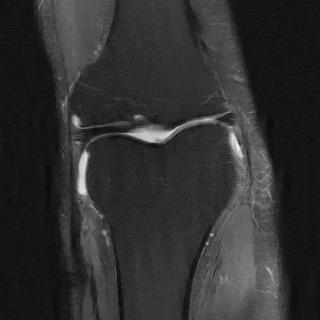}};
\spy on (.6,1.08) in node [right] at (-0.005,-1.1);
\end{tikzpicture}
%  \caption*{\footnotesize CRR}
\end{subfigure}%
\hfill
\begin{subfigure}[t]{.123\textwidth}
\begin{tikzpicture}[spy using outlines=
{rectangle,white,magnification=4.5,size=2.115cm, connect spies}]
\node[anchor=south west,inner sep=0]  at (0,0) {\includegraphics[width=\linewidth]{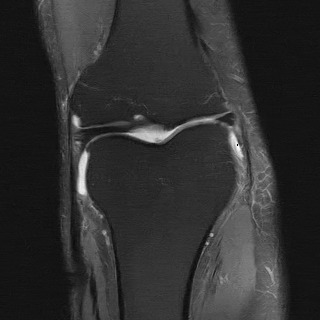}};
\spy on (.6,1.08) in node [right] at (-0.005,-1.1);
\end{tikzpicture}
%  \caption*{\footnotesize WCRR}
\end{subfigure}%
\hfill
\begin{subfigure}[t]{.123\textwidth}
\begin{tikzpicture}[spy using outlines=
{rectangle,white,magnification=4.5,size=2.115cm, connect spies}]
\node[anchor=south west,inner sep=0]  at (0,0) {\includegraphics[width=\linewidth]{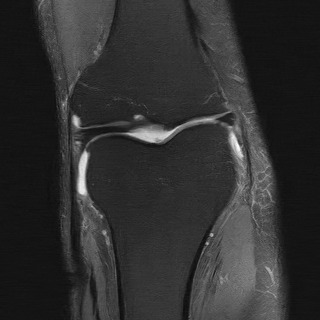}};
\spy on (.6,1.08) in node [right] at (-0.005,-1.1);
\end{tikzpicture}
%  \caption*{\footnotesize Mask-CRR}
\end{subfigure}%
\hfill
\begin{subfigure}[t]{.123\textwidth}
\begin{tikzpicture}[spy using outlines=
{rectangle,white,magnification=4.5,size=2.115cm, connect spies}]
\node[anchor=south west,inner sep=0]  at (0,0) {\includegraphics[width=\linewidth]{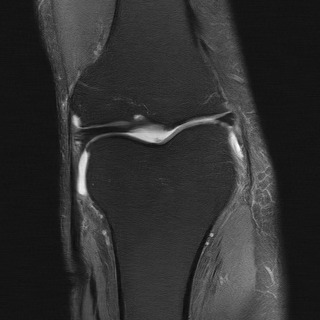}};
\spy on (.6,1.08) in node [right] at (-0.005,-1.1);
\end{tikzpicture}
%  \caption*{\footnotesize Mask-WCRR}
\end{subfigure}%
\hfill
\begin{subfigure}[t]{.123\textwidth}
\begin{tikzpicture}[spy using outlines=
{rectangle,white,magnification=4.5,size=2.115cm, connect spies}]
\node[anchor=south west,inner sep=0]  at (0,0) {\includegraphics[width=\linewidth]{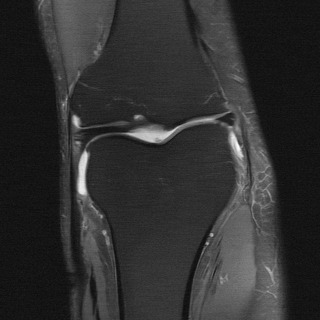}};
\spy on (.6,1.08) in node [right] at (-0.005,-1.1);
\end{tikzpicture}
%  \caption*{\footnotesize Prox-DRUNet}
\end{subfigure}%
\hfill
\begin{subfigure}[t]{.123\textwidth}
\begin{tikzpicture}[spy using outlines=
{rectangle,white,magnification=4.5,size=2.115cm, connect spies}]
\node[anchor=south west,inner sep=0]  at (0,0) {\includegraphics[width=\linewidth]{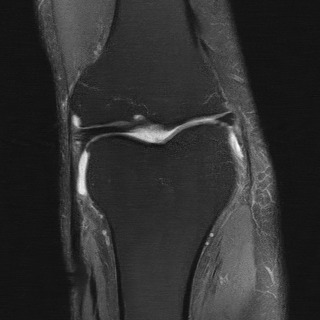}};
\spy on (.6,1.08) in node [right] at (-0.005,-1.1);
\end{tikzpicture}
%  \caption*{\footnotesize patchNR}
\end{subfigure}%

\begin{subfigure}[t]{.123\textwidth}
\includegraphics[width=\linewidth]{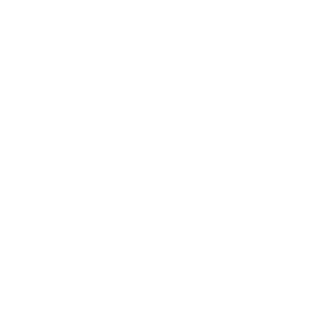}
  \caption*{\footnotesize GT}
\end{subfigure}%
\hfill
\begin{subfigure}[t]{.123\textwidth}
\includegraphics[width=\linewidth]{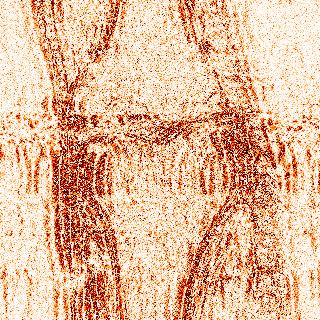}
  \caption*{\footnotesize Zero-filled}
\end{subfigure}%
\hfill
\begin{subfigure}[t]{.123\textwidth}
\includegraphics[width=\linewidth]{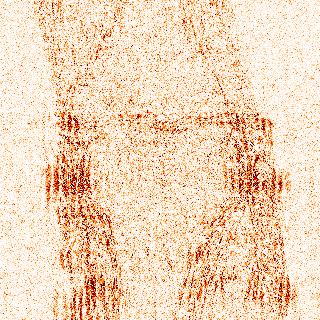}
  \caption*{\footnotesize CRR}
\end{subfigure}%
\hfill
\begin{subfigure}[t]{.123\textwidth}
\includegraphics[width=\linewidth]{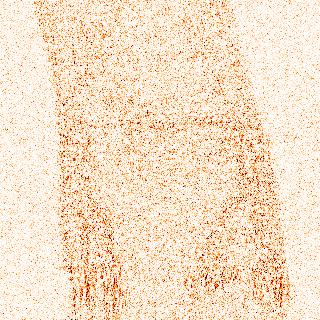}
  \caption*{\footnotesize WCRR}
\end{subfigure}%
\hfill
\begin{subfigure}[t]{.123\textwidth}
\includegraphics[width=\linewidth]{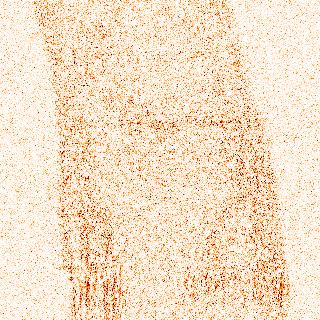}
  \caption*{\footnotesize CRR-Mask}
\end{subfigure}%
\hfill
\begin{subfigure}[t]{.123\textwidth}
\includegraphics[width=\linewidth]{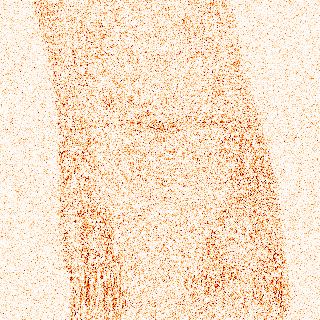}
  \caption*{\footnotesize WCRR-Mask}
\end{subfigure}%
\hfill
\begin{subfigure}[t]{.123\textwidth}
\includegraphics[width=\linewidth]{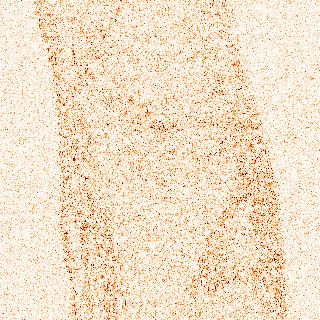}
  \caption*{\footnotesize Prox-DRUNet}
\end{subfigure}%
\hfill
\begin{subfigure}[t]{.123\textwidth}
\includegraphics[width=\linewidth]{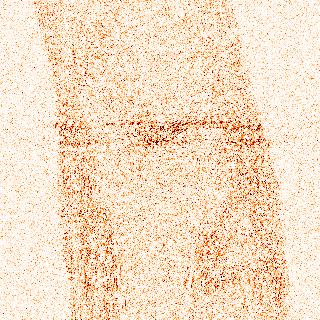}
  \caption*{\footnotesize patchNR}
\end{subfigure}%

%-----------------------------------------------------------------------------

\begin{subfigure}[t]{.123\textwidth}  
\begin{tikzpicture}[spy using outlines=
{rectangle,white,magnification=4.5,size=2.115cm, connect spies}]
\node[anchor=south west,inner sep=0]  at (0,0) {\includegraphics[width=\linewidth]{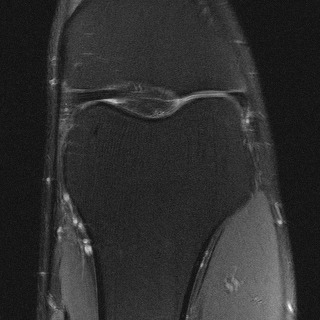}};
\spy on (.5,.6) in node [right] at (-0.005,-1.1);
\end{tikzpicture}
%\caption*{\footnotesize GT}
\end{subfigure}%
\hfill
\begin{subfigure}[t]{.123\textwidth}
\begin{tikzpicture}[spy using outlines=
{rectangle,white,magnification=4.5,size=2.115cm, connect spies}]
\node[anchor=south west,inner sep=0]  at (0,0) {\includegraphics[width=\linewidth]{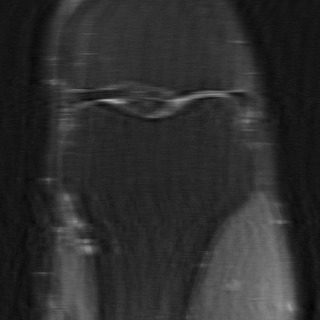}};
\spy on (.5,.6) in node [right] at (-0.005,-1.1);
\end{tikzpicture}
%  \caption*{\footnotesize FBP}
\end{subfigure}%
\hfill
\begin{subfigure}[t]{.123\textwidth}
\begin{tikzpicture}[spy using outlines=
{rectangle,white,magnification=4.5,size=2.115cm, connect spies}]
\node[anchor=south west,inner sep=0]  at (0,0) {\includegraphics[width=\linewidth]{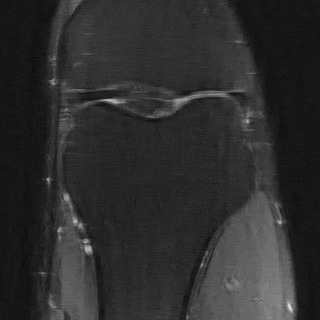}};
\spy on (.5,.6) in node [right] at (-0.005,-1.1);
\end{tikzpicture}
%  \caption*{\footnotesize CRR}
\end{subfigure}%
\hfill
\begin{subfigure}[t]{.123\textwidth}
\begin{tikzpicture}[spy using outlines=
{rectangle,white,magnification=4.5,size=2.115cm, connect spies}]
\node[anchor=south west,inner sep=0]  at (0,0) {\includegraphics[width=\linewidth]{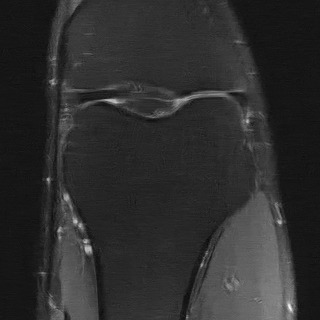}};
\spy on (.5,.6) in node [right] at (-0.005,-1.1);
\end{tikzpicture}
%  \caption*{\footnotesize WCRR}
\end{subfigure}%
\hfill
\begin{subfigure}[t]{.123\textwidth}
\begin{tikzpicture}[spy using outlines=
{rectangle,white,magnification=4.5,size=2.115cm, connect spies}]
\node[anchor=south west,inner sep=0]  at (0,0) {\includegraphics[width=\linewidth]{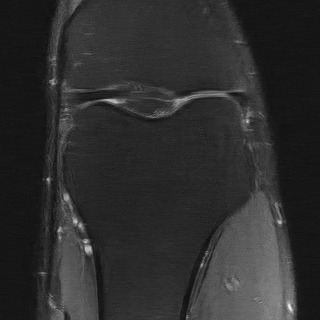}};
\spy on (.5,.6) in node [right] at (-0.005,-1.1);
\end{tikzpicture}
%  \caption*{\footnotesize Mask-CRR}
\end{subfigure}%
\hfill
\begin{subfigure}[t]{.123\textwidth}
\begin{tikzpicture}[spy using outlines=
{rectangle,white,magnification=4.5,size=2.115cm, connect spies}]
\node[anchor=south west,inner sep=0]  at (0,0) {\includegraphics[width=\linewidth]{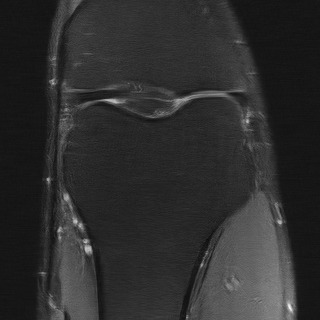}};
\spy on (.5,.6) in node [right] at (-0.005,-1.1);
\end{tikzpicture}
%  \caption*{\footnotesize Mask-WCRR}
\end{subfigure}%
\hfill
\begin{subfigure}[t]{.123\textwidth}
\begin{tikzpicture}[spy using outlines=
{rectangle,white,magnification=4.5,size=2.115cm, connect spies}]
\node[anchor=south west,inner sep=0]  at (0,0) {\includegraphics[width=\linewidth]{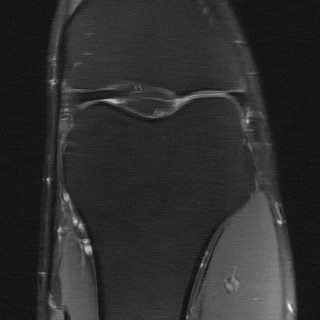}};
\spy on (.5,.6) in node [right] at (-0.005,-1.1);
\end{tikzpicture}
%  \caption*{\footnotesize Prox-DRUNet}
\end{subfigure}%
\hfill
\begin{subfigure}[t]{.123\textwidth}
\begin{tikzpicture}[spy using outlines=
{rectangle,white,magnification=4.5,size=2.115cm, connect spies}]
\node[anchor=south west,inner sep=0]  at (0,0) {\includegraphics[width=\linewidth]{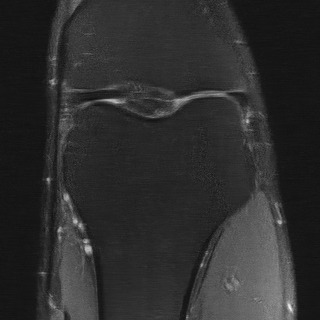}};
\spy on (.5,.6) in node [right] at (-0.005,-1.1);
\end{tikzpicture}
%  \caption*{\footnotesize patchNR}
\end{subfigure}%

\begin{subfigure}[t]{.123\textwidth}
\includegraphics[width=\linewidth]{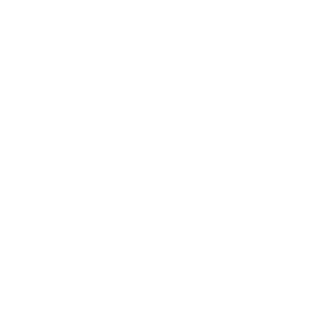}
  \caption*{\footnotesize GT}
\end{subfigure}%
\hfill
\begin{subfigure}[t]{.123\textwidth}
\includegraphics[width=\linewidth]{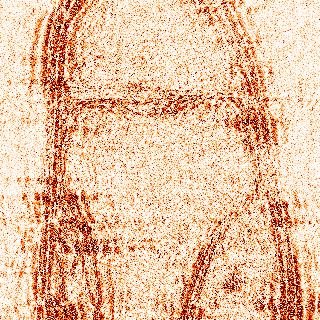}
  \caption*{\footnotesize Zero-filled}
\end{subfigure}%
\hfill
\begin{subfigure}[t]{.123\textwidth}
\includegraphics[width=\linewidth]{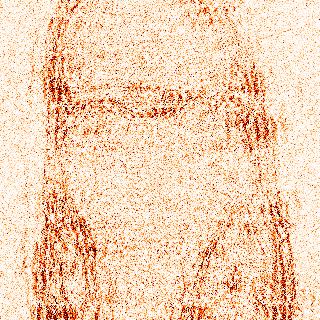}
  \caption*{\footnotesize CRR}
\end{subfigure}%
\hfill
\begin{subfigure}[t]{.123\textwidth}
\includegraphics[width=\linewidth]{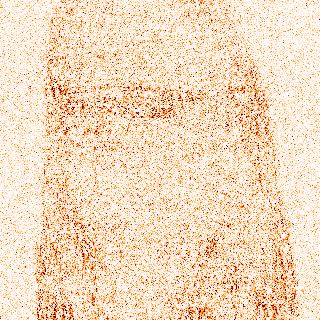}
  \caption*{\footnotesize WCRR}
\end{subfigure}%
\hfill
\begin{subfigure}[t]{.123\textwidth}
\includegraphics[width=\linewidth]{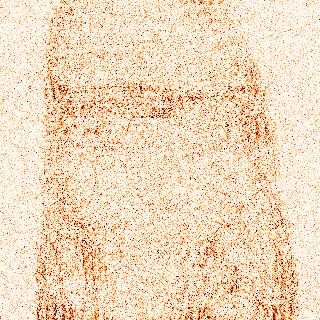}
  \caption*{\footnotesize CRR-Mask}
\end{subfigure}%
\hfill
\begin{subfigure}[t]{.123\textwidth}
\includegraphics[width=\linewidth]{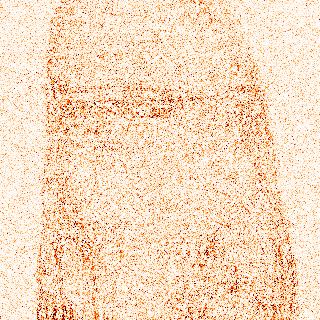}
  \caption*{\footnotesize WCRR-Mask}
\end{subfigure}%
\hfill
\begin{subfigure}[t]{.123\textwidth}
\includegraphics[width=\linewidth]{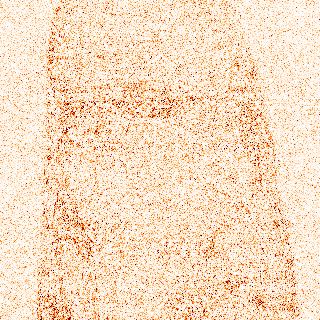}
  \caption*{\footnotesize Prox-DRUNet}
\end{subfigure}%
\hfill
\begin{subfigure}[t]{.123\textwidth}
\includegraphics[width=\linewidth]{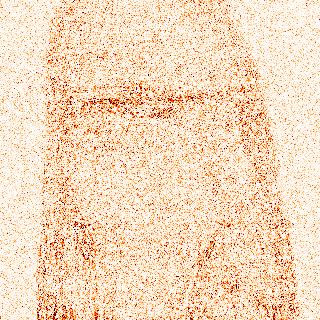}
  \caption*{\footnotesize patchNR}
\end{subfigure}%

%-----------------------------------------------------------------------------

\begin{subfigure}[t]{.123\textwidth}  
\begin{tikzpicture}[spy using outlines=
{rectangle,white,magnification=4.5,size=2.115cm, connect spies}]
\node[anchor=south west,inner sep=0]  at (0,0) {\includegraphics[width=\linewidth]{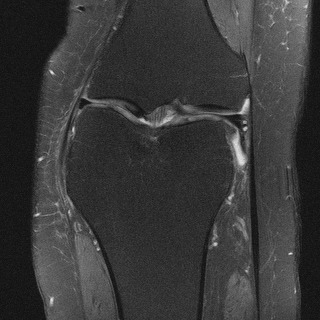}};
\spy on (1.1,1.32) in node [right] at (-0.005,-1.1);
\end{tikzpicture}
%\caption*{\footnotesize GT}
\end{subfigure}%
\hfill
\begin{subfigure}[t]{.123\textwidth}
\begin{tikzpicture}[spy using outlines=
{rectangle,white,magnification=4.5,size=2.115cm, connect spies}]
\node[anchor=south west,inner sep=0]  at (0,0) {\includegraphics[width=\linewidth]{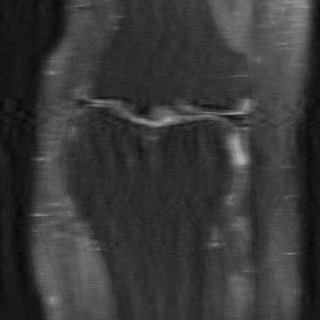}};
\spy on (1.1,1.32) in node [right] at (-0.005,-1.1);
\end{tikzpicture}
%  \caption*{\footnotesize FBP}
\end{subfigure}%
\hfill
\begin{subfigure}[t]{.123\textwidth}
\begin{tikzpicture}[spy using outlines=
{rectangle,white,magnification=4.5,size=2.115cm, connect spies}]
\node[anchor=south west,inner sep=0]  at (0,0) {\includegraphics[width=\linewidth]{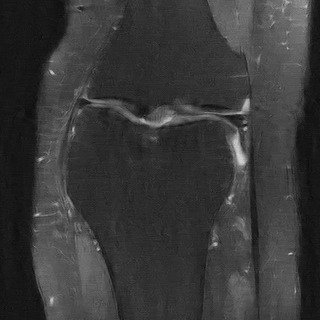}};
\spy on (1.1,1.32) in node [right] at (-0.005,-1.1);
\end{tikzpicture}
%  \caption*{\footnotesize CRR}
\end{subfigure}%
\hfill
\begin{subfigure}[t]{.123\textwidth}
\begin{tikzpicture}[spy using outlines=
{rectangle,white,magnification=4.5,size=2.115cm, connect spies}]
\node[anchor=south west,inner sep=0]  at (0,0) {\includegraphics[width=\linewidth]{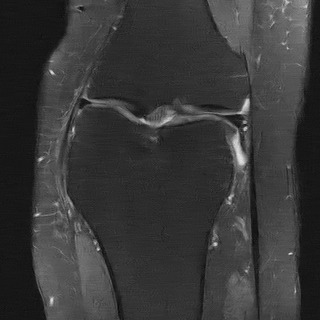}};
\spy on (1.1,1.32) in node [right] at (-0.005,-1.1);
\end{tikzpicture}
%  \caption*{\footnotesize WCRR}
\end{subfigure}%
\hfill
\begin{subfigure}[t]{.123\textwidth}
\begin{tikzpicture}[spy using outlines=
{rectangle,white,magnification=4.5,size=2.115cm, connect spies}]
\node[anchor=south west,inner sep=0]  at (0,0) {\includegraphics[width=\linewidth]{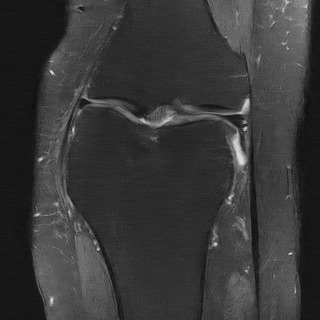}};
\spy on (1.1,1.32) in node [right] at (-0.005,-1.1);
\end{tikzpicture}
%  \caption*{\footnotesize Mask-CRR}
\end{subfigure}%
\hfill
\begin{subfigure}[t]{.123\textwidth}
\begin{tikzpicture}[spy using outlines=
{rectangle,white,magnification=4.5,size=2.115cm, connect spies}]
\node[anchor=south west,inner sep=0]  at (0,0) {\includegraphics[width=\linewidth]{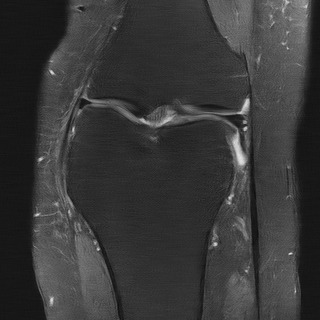}};
\spy on (1.1,1.32) in node [right] at (-0.005,-1.1);
\end{tikzpicture}
%  \caption*{\footnotesize Mask-WCRR}
\end{subfigure}%
\hfill
\begin{subfigure}[t]{.123\textwidth}
\begin{tikzpicture}[spy using outlines=
{rectangle,white,magnification=4.5,size=2.115cm, connect spies}]
\node[anchor=south west,inner sep=0]  at (0,0) {\includegraphics[width=\linewidth]{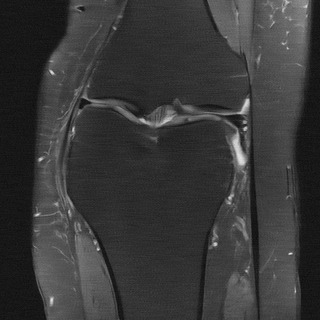}};
\spy on (1.1,1.32) in node [right] at (-0.005,-1.1);
\end{tikzpicture}
%  \caption*{\footnotesize Prox-DRUNet}
\end{subfigure}%
\hfill
\begin{subfigure}[t]{.123\textwidth}
\begin{tikzpicture}[spy using outlines=
{rectangle,white,magnification=4.5,size=2.115cm, connect spies}]
\node[anchor=south west,inner sep=0]  at (0,0) {\includegraphics[width=\linewidth]{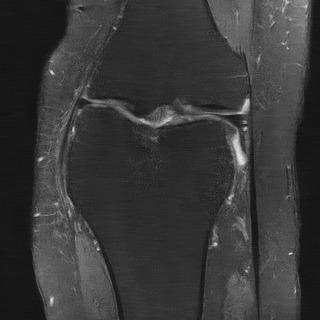}};
\spy on (1.1,1.32) in node [right] at (-0.005,-1.1);
\end{tikzpicture}
%  \caption*{\footnotesize patchNR}
\end{subfigure}%

\begin{subfigure}[t]{.123\textwidth}
\includegraphics[width=\linewidth]{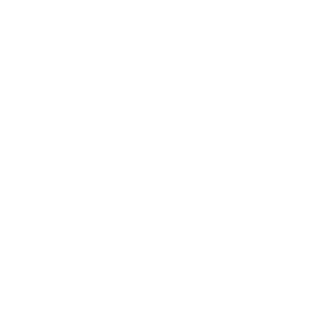}
  \caption*{\footnotesize GT}
\end{subfigure}%
\hfill
\begin{subfigure}[t]{.123\textwidth}
\includegraphics[width=\linewidth]{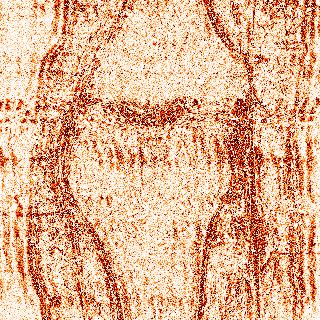}
  \caption*{\footnotesize Zero-filled}
\end{subfigure}%
\hfill
\begin{subfigure}[t]{.123\textwidth}
\includegraphics[width=\linewidth]{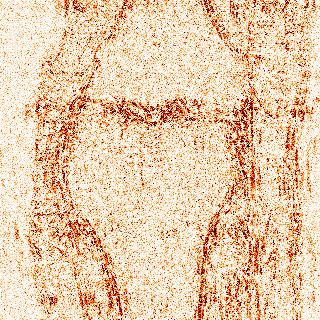}
  \caption*{\footnotesize CRR}
\end{subfigure}%
\hfill
\begin{subfigure}[t]{.123\textwidth}
\includegraphics[width=\linewidth]{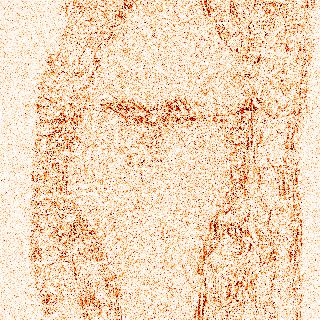}
  \caption*{\footnotesize WCRR}
\end{subfigure}%
\hfill
\begin{subfigure}[t]{.123\textwidth}
\includegraphics[width=\linewidth]{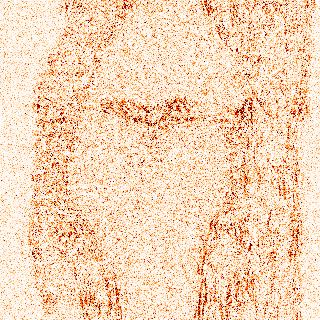}
  \caption*{\footnotesize CRR-Mask}
\end{subfigure}%
\hfill
\begin{subfigure}[t]{.123\textwidth}
\includegraphics[width=\linewidth]{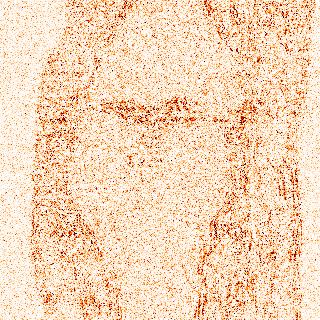}
  \caption*{\footnotesize WCRR-Mask}
\end{subfigure}%
\hfill
\begin{subfigure}[t]{.123\textwidth}
\includegraphics[width=\linewidth]{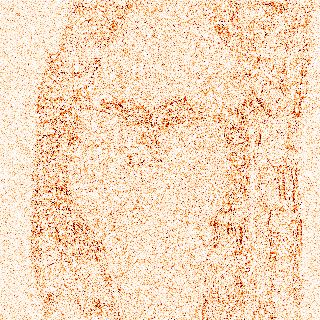}
  \caption*{\footnotesize Prox-DRUNet}
\end{subfigure}%
\hfill
\begin{subfigure}[t]{.123\textwidth}
\includegraphics[width=\linewidth]{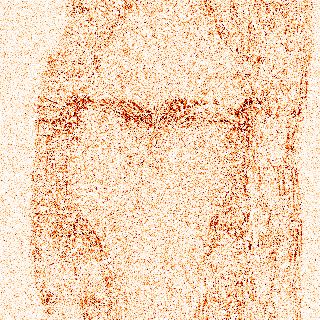}
  \caption*{\footnotesize patchNR}
\end{subfigure}%

\caption{4-fold single-coil MRI on PDFS data set.
The white box marks the zoomed area.
\textit{Top}: full image; \textit{middle}: zoomed-in part; \textit{bottom}: error.
} \label{fig:MRI_comparison_pdfs4_appendix}
\end{figure}

%-----------------------------------------------------------------------------------

\begin{figure}[t]
\centering
\begin{subfigure}[t]{.123\textwidth}  
\begin{tikzpicture}[spy using outlines=
{rectangle,white,magnification=4.5,size=2.115cm, connect spies}]
\node[anchor=south west,inner sep=0]  at (0,0) {\includegraphics[width=\linewidth]{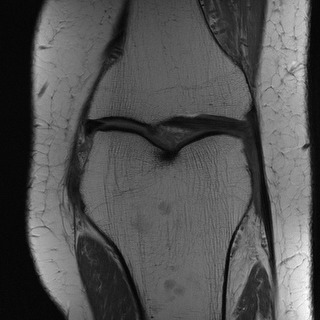}};
\spy on (1.65,.58) in node [right] at (-0.005,-1.1);
\end{tikzpicture}
%\caption*{\footnotesize GT}
\end{subfigure}%
\hfill
\begin{subfigure}[t]{.123\textwidth}
\begin{tikzpicture}[spy using outlines=
{rectangle,white,magnification=4.5,size=2.115cm, connect spies}]
\node[anchor=south west,inner sep=0]  at (0,0) {\includegraphics[width=\linewidth]{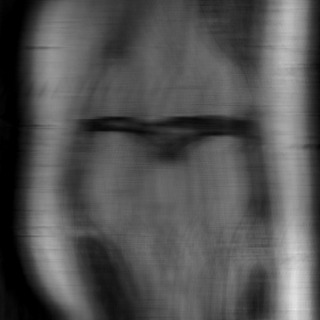}};
\spy on (1.65,.58) in node [right] at (-0.005,-1.1);
\end{tikzpicture}
%  \caption*{\footnotesize FBP}
\end{subfigure}%
\hfill
\begin{subfigure}[t]{.123\textwidth}
\begin{tikzpicture}[spy using outlines=
{rectangle,white,magnification=4.5,size=2.115cm, connect spies}]
\node[anchor=south west,inner sep=0]  at (0,0) {\includegraphics[width=\linewidth]{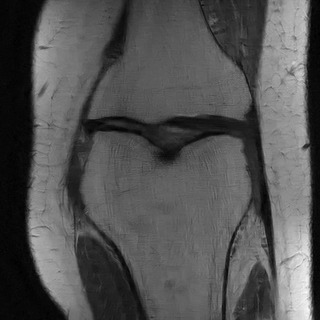}};
\spy on (1.65,.58) in node [right] at (-0.005,-1.1);
\end{tikzpicture}
%  \caption*{\footnotesize CRR}
\end{subfigure}%
\hfill
\begin{subfigure}[t]{.123\textwidth}
\begin{tikzpicture}[spy using outlines=
{rectangle,white,magnification=4.5,size=2.115cm, connect spies}]
\node[anchor=south west,inner sep=0]  at (0,0) {\includegraphics[width=\linewidth]{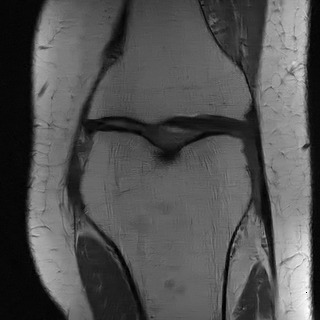}};
\spy on (1.65,.58) in node [right] at (-0.005,-1.1);
\end{tikzpicture}
%  \caption*{\footnotesize WCRR}
\end{subfigure}%
\hfill
\begin{subfigure}[t]{.123\textwidth}
\begin{tikzpicture}[spy using outlines=
{rectangle,white,magnification=4.5,size=2.115cm, connect spies}]
\node[anchor=south west,inner sep=0]  at (0,0) {\includegraphics[width=\linewidth]{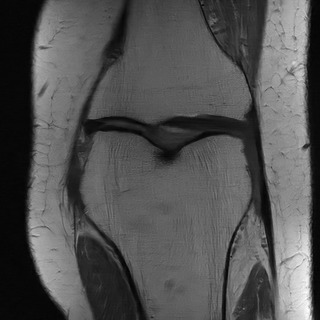}};
\spy on (1.65,.58) in node [right] at (-0.005,-1.1);
\end{tikzpicture}
%  \caption*{\footnotesize Mask-CRR}
\end{subfigure}%
\hfill
\begin{subfigure}[t]{.123\textwidth}
\begin{tikzpicture}[spy using outlines=
{rectangle,white,magnification=4.5,size=2.115cm, connect spies}]
\node[anchor=south west,inner sep=0]  at (0,0) {\includegraphics[width=\linewidth]{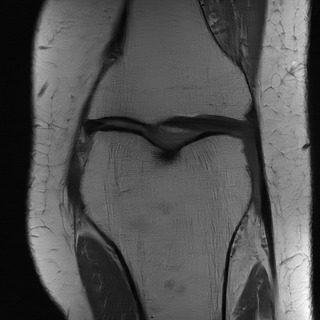}};
\spy on (1.65,.58) in node [right] at (-0.005,-1.1);
\end{tikzpicture}
%  \caption*{\footnotesize Mask-WCRR}
\end{subfigure}%
\hfill
\begin{subfigure}[t]{.123\textwidth}
\begin{tikzpicture}[spy using outlines=
{rectangle,white,magnification=4.5,size=2.115cm, connect spies}]
\node[anchor=south west,inner sep=0]  at (0,0) {\includegraphics[width=\linewidth]{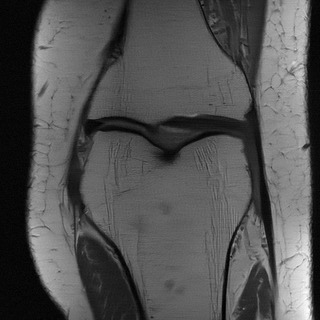}};
\spy on (1.65,.58) in node [right] at (-0.005,-1.1);
\end{tikzpicture}
%  \caption*{\footnotesize Prox-DRUNet}
\end{subfigure}%
\hfill
\begin{subfigure}[t]{.123\textwidth}
\begin{tikzpicture}[spy using outlines=
{rectangle,white,magnification=4.5,size=2.115cm, connect spies}]
\node[anchor=south west,inner sep=0]  at (0,0) {\includegraphics[width=\linewidth]{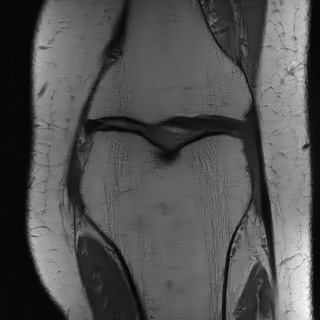}};
\spy on (1.65,.58) in node [right] at (-0.005,-1.1);
\end{tikzpicture}
%  \caption*{\footnotesize patchNR}
\end{subfigure}%

\begin{subfigure}[t]{.123\textwidth}
\includegraphics[width=\linewidth]{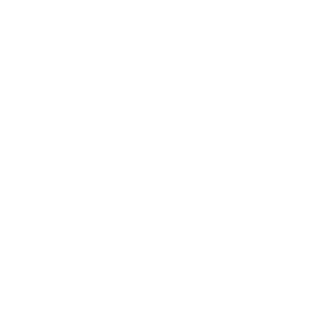}
  \caption*{\footnotesize GT}
\end{subfigure}%
\hfill
\begin{subfigure}[t]{.123\textwidth}
\includegraphics[width=\linewidth]{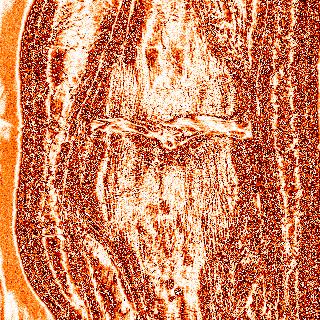}
  \caption*{\footnotesize Zero-filled}
\end{subfigure}%
\hfill
\begin{subfigure}[t]{.123\textwidth}
\includegraphics[width=\linewidth]{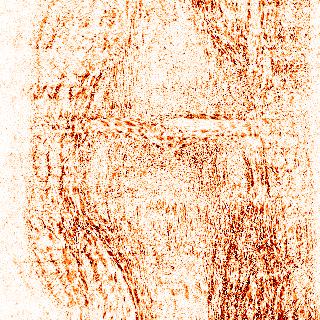}
  \caption*{\footnotesize CRR}
\end{subfigure}%
\hfill
\begin{subfigure}[t]{.123\textwidth}
\includegraphics[width=\linewidth]{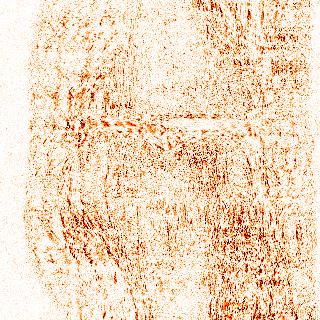}
  \caption*{\footnotesize WCRR}
\end{subfigure}%
\hfill
\begin{subfigure}[t]{.123\textwidth}
\includegraphics[width=\linewidth]{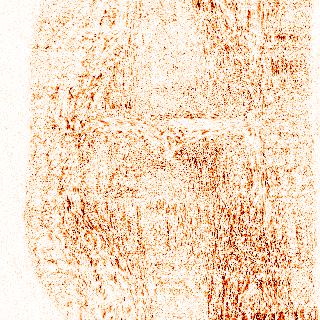}
  \caption*{\footnotesize CRR-Mask}
\end{subfigure}%
\hfill
\begin{subfigure}[t]{.123\textwidth}
\includegraphics[width=\linewidth]{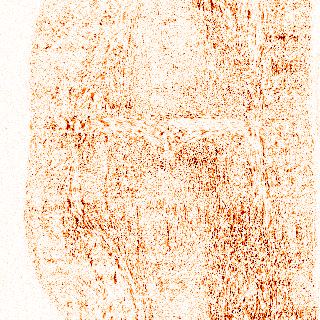}
  \caption*{\footnotesize WCRR-Mask}
\end{subfigure}%
\hfill
\begin{subfigure}[t]{.123\textwidth}
\includegraphics[width=\linewidth]{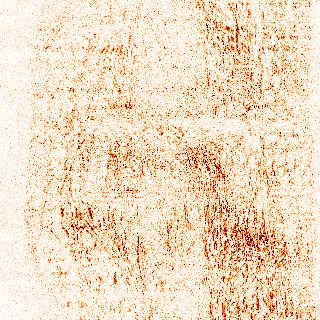}
  \caption*{\footnotesize Prox-DRUNet}
\end{subfigure}%
\hfill
\begin{subfigure}[t]{.123\textwidth}
\includegraphics[width=\linewidth]{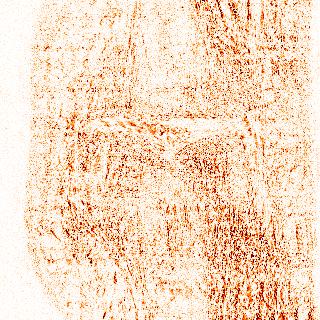}
  \caption*{\footnotesize patchNR}
\end{subfigure}%

%-----------------------------------------------------------------------------

\begin{subfigure}[t]{.123\textwidth}  
\begin{tikzpicture}[spy using outlines=
{rectangle,white,magnification=4.5,size=2.115cm, connect spies}]
\node[anchor=south west,inner sep=0]  at (0,0) {\includegraphics[width=\linewidth]{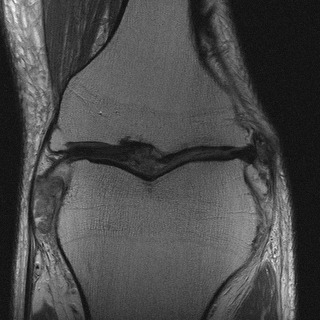}};
\spy on (1.75,.4) in node [right] at (-0.005,-1.1);
\end{tikzpicture}
%\caption*{\footnotesize GT}
\end{subfigure}%
\hfill
\begin{subfigure}[t]{.123\textwidth}
\begin{tikzpicture}[spy using outlines=
{rectangle,white,magnification=4.5,size=2.115cm, connect spies}]
\node[anchor=south west,inner sep=0]  at (0,0) {\includegraphics[width=\linewidth]{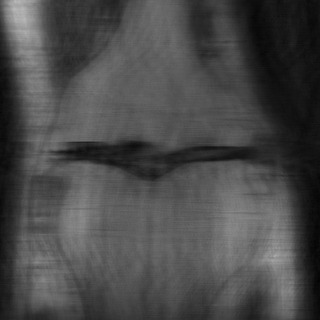}};
\spy on (1.75,.4) in node [right] at (-0.005,-1.1);
\end{tikzpicture}
%  \caption*{\footnotesize FBP}
\end{subfigure}%
\hfill
\begin{subfigure}[t]{.123\textwidth}
\begin{tikzpicture}[spy using outlines=
{rectangle,white,magnification=4.5,size=2.115cm, connect spies}]
\node[anchor=south west,inner sep=0]  at (0,0) {\includegraphics[width=\linewidth]{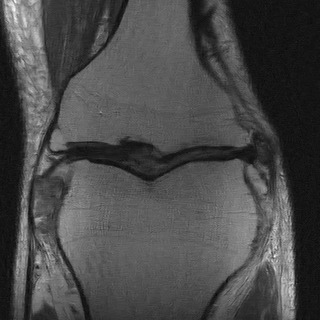}};
\spy on (1.75,.4) in node [right] at (-0.005,-1.1);
\end{tikzpicture}
%  \caption*{\footnotesize CRR}
\end{subfigure}%
\hfill
\begin{subfigure}[t]{.123\textwidth}
\begin{tikzpicture}[spy using outlines=
{rectangle,white,magnification=4.5,size=2.115cm, connect spies}]
\node[anchor=south west,inner sep=0]  at (0,0) {\includegraphics[width=\linewidth]{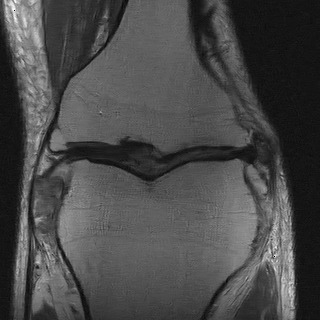}};
\spy on (1.75,.4) in node [right] at (-0.005,-1.1);
\end{tikzpicture}
%  \caption*{\footnotesize WCRR}
\end{subfigure}%
\hfill
\begin{subfigure}[t]{.123\textwidth}
\begin{tikzpicture}[spy using outlines=
{rectangle,white,magnification=4.5,size=2.115cm, connect spies}]
\node[anchor=south west,inner sep=0]  at (0,0) {\includegraphics[width=\linewidth]{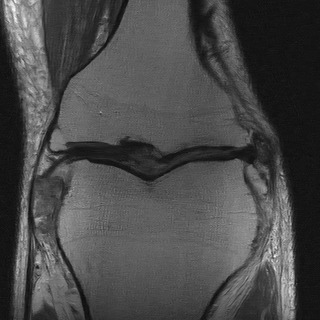}};
\spy on (1.75,.4) in node [right] at (-0.005,-1.1);
\end{tikzpicture}
%  \caption*{\footnotesize Mask-CRR}
\end{subfigure}%
\hfill
\begin{subfigure}[t]{.123\textwidth}
\begin{tikzpicture}[spy using outlines=
{rectangle,white,magnification=4.5,size=2.115cm, connect spies}]
\node[anchor=south west,inner sep=0]  at (0,0) {\includegraphics[width=\linewidth]{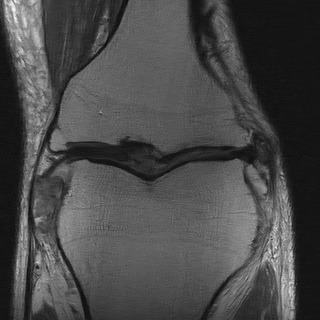}};
\spy on (1.75,.4) in node [right] at (-0.005,-1.1);
\end{tikzpicture}
%  \caption*{\footnotesize Mask-WCRR}
\end{subfigure}%
\hfill
\begin{subfigure}[t]{.123\textwidth}
\begin{tikzpicture}[spy using outlines=
{rectangle,white,magnification=4.5,size=2.115cm, connect spies}]
\node[anchor=south west,inner sep=0]  at (0,0) {\includegraphics[width=\linewidth]{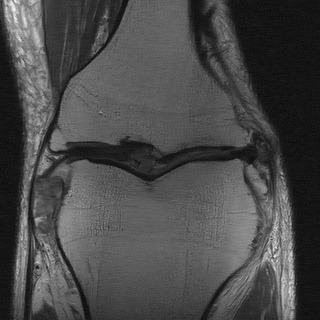}};
\spy on (1.75,.4) in node [right] at (-0.005,-1.1);
\end{tikzpicture}
%  \caption*{\footnotesize Prox-DRUNet}
\end{subfigure}%
\hfill
\begin{subfigure}[t]{.123\textwidth}
\begin{tikzpicture}[spy using outlines=
{rectangle,white,magnification=4.5,size=2.115cm, connect spies}]
\node[anchor=south west,inner sep=0]  at (0,0) {\includegraphics[width=\linewidth]{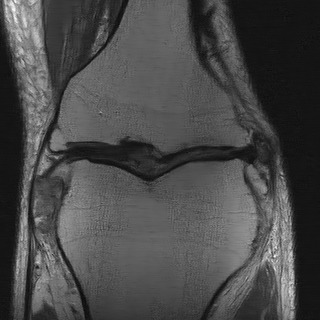}};
\spy on (1.75,.4) in node [right] at (-0.005,-1.1);
\end{tikzpicture}
%  \caption*{\footnotesize patchNR}
\end{subfigure}%

\begin{subfigure}[t]{.123\textwidth}
\includegraphics[width=\linewidth]{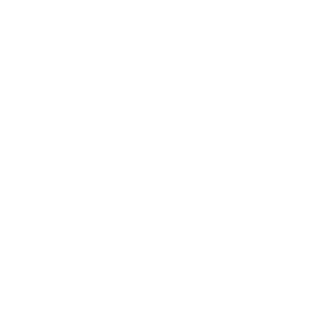}
  \caption*{\footnotesize GT}
\end{subfigure}%
\hfill
\begin{subfigure}[t]{.123\textwidth}
\includegraphics[width=\linewidth]{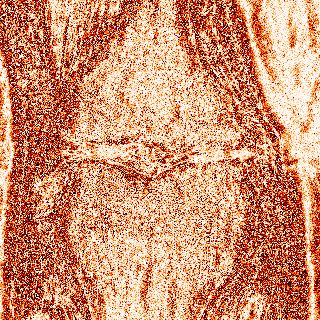}
  \caption*{\footnotesize Zero-filled}
\end{subfigure}%
\hfill
\begin{subfigure}[t]{.123\textwidth}
\includegraphics[width=\linewidth]{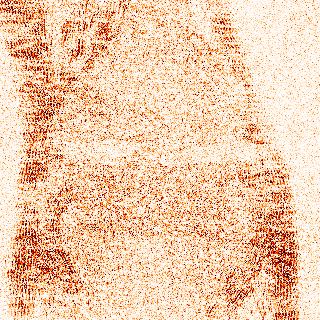}
  \caption*{\footnotesize CRR}
\end{subfigure}%
\hfill
\begin{subfigure}[t]{.123\textwidth}
\includegraphics[width=\linewidth]{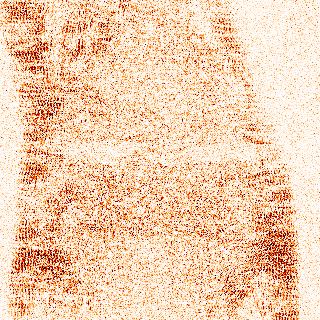}
  \caption*{\footnotesize WCRR}
\end{subfigure}%
\hfill
\begin{subfigure}[t]{.123\textwidth}
\includegraphics[width=\linewidth]{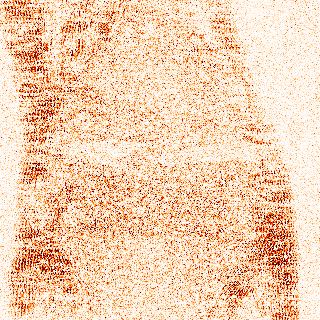}
  \caption*{\footnotesize CRR-Mask}
\end{subfigure}%
\hfill
\begin{subfigure}[t]{.123\textwidth}
\includegraphics[width=\linewidth]{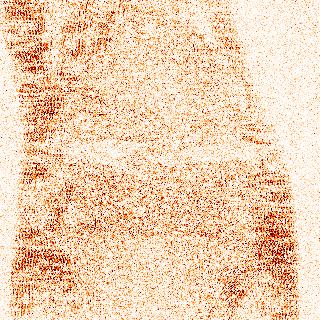}
  \caption*{\footnotesize WCRR-Mask}
\end{subfigure}%
\hfill
\begin{subfigure}[t]{.123\textwidth}
\includegraphics[width=\linewidth]{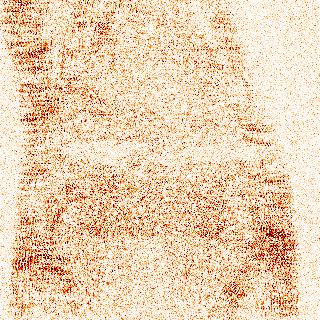}
  \caption*{\footnotesize Prox-DRUNet}
\end{subfigure}%
\hfill
\begin{subfigure}[t]{.123\textwidth}
\includegraphics[width=\linewidth]{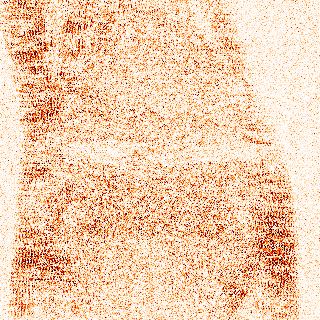}
  \caption*{\footnotesize patchNR}
\end{subfigure}%

%-----------------------------------------------------------------------------

\begin{subfigure}[t]{.123\textwidth}  
\begin{tikzpicture}[spy using outlines=
{rectangle,white,magnification=4.5,size=2.115cm, connect spies}]
\node[anchor=south west,inner sep=0]  at (0,0) {\includegraphics[width=\linewidth]{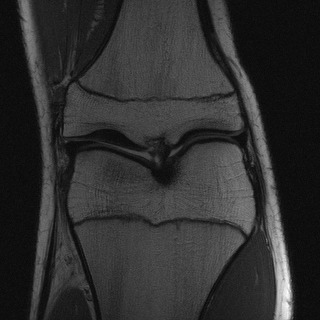}};
\spy on (1.6,1.15) in node [right] at (-0.005,-1.1);
\end{tikzpicture}
%\caption*{\footnotesize GT}
\end{subfigure}%
\hfill
\begin{subfigure}[t]{.123\textwidth}
\begin{tikzpicture}[spy using outlines=
{rectangle,white,magnification=4.5,size=2.115cm, connect spies}]
\node[anchor=south west,inner sep=0]  at (0,0) {\includegraphics[width=\linewidth]{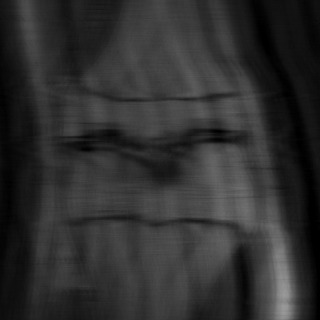}};
\spy on (1.6,1.15) in node [right] at (-0.005,-1.1);
\end{tikzpicture}
%  \caption*{\footnotesize FBP}
\end{subfigure}%
\hfill
\begin{subfigure}[t]{.123\textwidth}
\begin{tikzpicture}[spy using outlines=
{rectangle,white,magnification=4.5,size=2.115cm, connect spies}]
\node[anchor=south west,inner sep=0]  at (0,0) {\includegraphics[width=\linewidth]{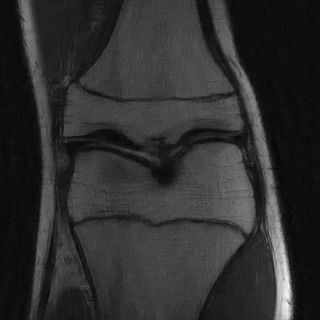}};
\spy on (1.6,1.15) in node [right] at (-0.005,-1.1);
\end{tikzpicture}
%  \caption*{\footnotesize CRR}
\end{subfigure}%
\hfill
\begin{subfigure}[t]{.123\textwidth}
\begin{tikzpicture}[spy using outlines=
{rectangle,white,magnification=4.5,size=2.115cm, connect spies}]
\node[anchor=south west,inner sep=0]  at (0,0) {\includegraphics[width=\linewidth]{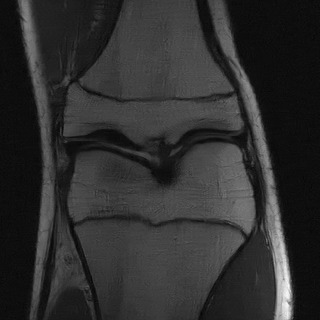}};
\spy on (1.6,1.15) in node [right] at (-0.005,-1.1);
\end{tikzpicture}
%  \caption*{\footnotesize WCRR}
\end{subfigure}%
\hfill
\begin{subfigure}[t]{.123\textwidth}
\begin{tikzpicture}[spy using outlines=
{rectangle,white,magnification=4.5,size=2.115cm, connect spies}]
\node[anchor=south west,inner sep=0]  at (0,0) {\includegraphics[width=\linewidth]{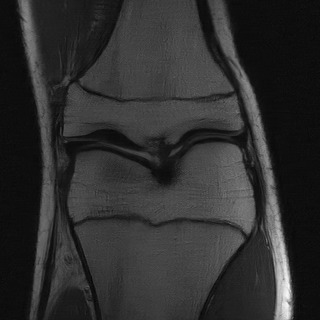}};
\spy on (1.6,1.15) in node [right] at (-0.005,-1.1);
\end{tikzpicture}
%  \caption*{\footnotesize Mask-CRR}
\end{subfigure}%
\hfill
\begin{subfigure}[t]{.123\textwidth}
\begin{tikzpicture}[spy using outlines=
{rectangle,white,magnification=4.5,size=2.115cm, connect spies}]
\node[anchor=south west,inner sep=0]  at (0,0) {\includegraphics[width=\linewidth]{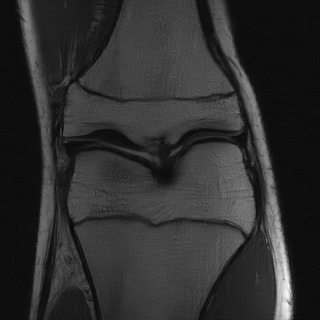}};
\spy on (1.6,1.15) in node [right] at (-0.005,-1.1);
\end{tikzpicture}
%  \caption*{\footnotesize Mask-WCRR}
\end{subfigure}%
\hfill
\begin{subfigure}[t]{.123\textwidth}
\begin{tikzpicture}[spy using outlines=
{rectangle,white,magnification=4.5,size=2.115cm, connect spies}]
\node[anchor=south west,inner sep=0]  at (0,0) {\includegraphics[width=\linewidth]{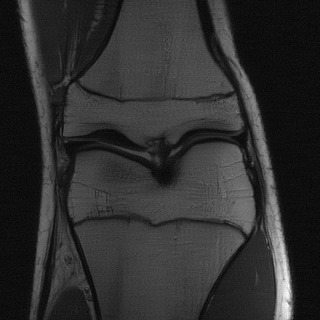}};
\spy on (1.6,1.15) in node [right] at (-0.005,-1.1);
\end{tikzpicture}
%  \caption*{\footnotesize Prox-DRUNet}
\end{subfigure}%
\hfill
\begin{subfigure}[t]{.123\textwidth}
\begin{tikzpicture}[spy using outlines=
{rectangle,white,magnification=4.5,size=2.115cm, connect spies}]
\node[anchor=south west,inner sep=0]  at (0,0) {\includegraphics[width=\linewidth]{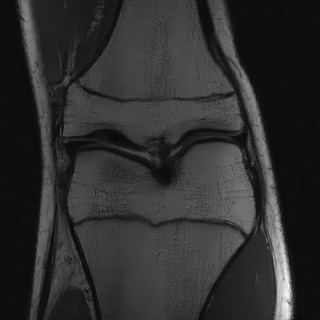}};
\spy on (1.6,1.15) in node [right] at (-0.005,-1.1);
\end{tikzpicture}
%  \caption*{\footnotesize patchNR}
\end{subfigure}%

\begin{subfigure}[t]{.123\textwidth}
\includegraphics[width=\linewidth]{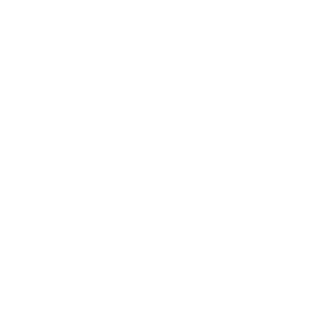}
  \caption*{\footnotesize GT}
\end{subfigure}%
\hfill
\begin{subfigure}[t]{.123\textwidth}
\includegraphics[width=\linewidth]{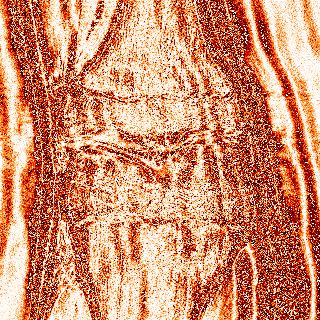}
  \caption*{\footnotesize Zero-filled}
\end{subfigure}%
\hfill
\begin{subfigure}[t]{.123\textwidth}
\includegraphics[width=\linewidth]{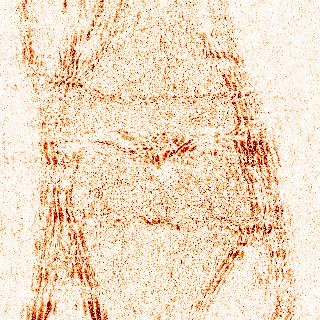}
  \caption*{\footnotesize CRR}
\end{subfigure}%
\hfill
\begin{subfigure}[t]{.123\textwidth}
\includegraphics[width=\linewidth]{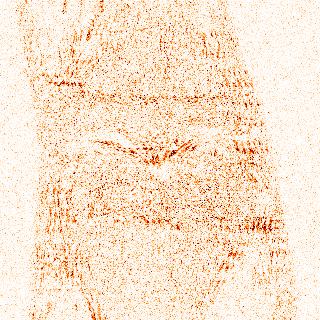}
  \caption*{\footnotesize WCRR}
\end{subfigure}%
\hfill
\begin{subfigure}[t]{.123\textwidth}
\includegraphics[width=\linewidth]{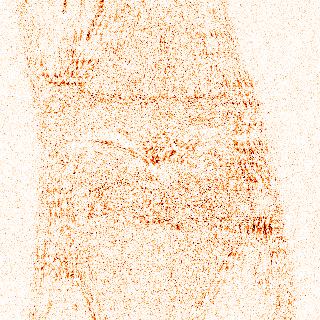}
  \caption*{\footnotesize CRR-Mask}
\end{subfigure}%
\hfill
\begin{subfigure}[t]{.123\textwidth}
\includegraphics[width=\linewidth]{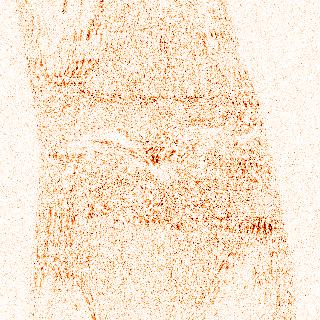}
  \caption*{\footnotesize WCRR-Mask}
\end{subfigure}%
\hfill
\begin{subfigure}[t]{.123\textwidth}
\includegraphics[width=\linewidth]{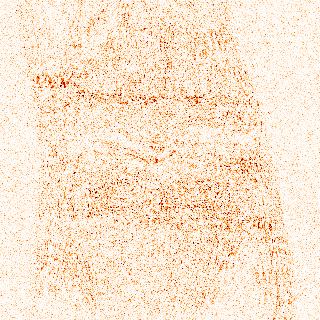}
  \caption*{\footnotesize Prox-DRUNet}
\end{subfigure}%
\hfill
\begin{subfigure}[t]{.123\textwidth}
\includegraphics[width=\linewidth]{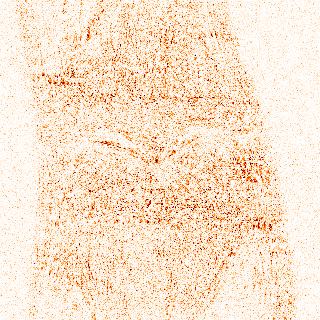}
  \caption*{\footnotesize patchNR}
\end{subfigure}%

\caption{8-fold multi-coil MRI on PD data set.
The white box marks the zoomed area.
\textit{Top}: full image; \textit{middle}: zoomed-in part; \textit{bottom}: error.
} \label{fig:MRI_comparison_pd8_appendix}
\end{figure}

%-----------------------------------------------------------------------------------

\begin{figure}[t]
\centering
\begin{subfigure}[t]{.123\textwidth}  
\begin{tikzpicture}[spy using outlines=
{rectangle,white,magnification=4.5,size=2.115cm, connect spies}]
\node[anchor=south west,inner sep=0]  at (0,0) {\includegraphics[width=\linewidth]{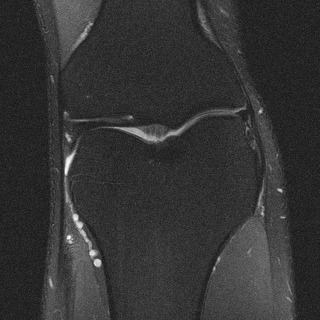}};
\spy on (1.65,1.28) in node [right] at (-0.005,-1.1);
\end{tikzpicture}
%\caption*{\footnotesize GT}
\end{subfigure}%
\hfill
\begin{subfigure}[t]{.123\textwidth}
\begin{tikzpicture}[spy using outlines=
{rectangle,white,magnification=4.5,size=2.115cm, connect spies}]
\node[anchor=south west,inner sep=0]  at (0,0) {\includegraphics[width=\linewidth]{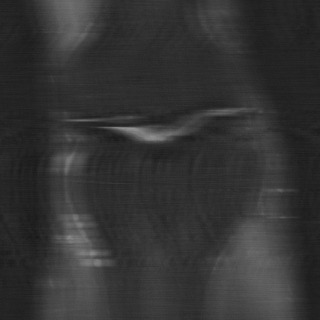}};
\spy on (1.65,1.28) in node [right] at (-0.005,-1.1);
\end{tikzpicture}
%  \caption*{\footnotesize FBP}
\end{subfigure}%
\hfill
\begin{subfigure}[t]{.123\textwidth}
\begin{tikzpicture}[spy using outlines=
{rectangle,white,magnification=4.5,size=2.115cm, connect spies}]
\node[anchor=south west,inner sep=0]  at (0,0) {\includegraphics[width=\linewidth]{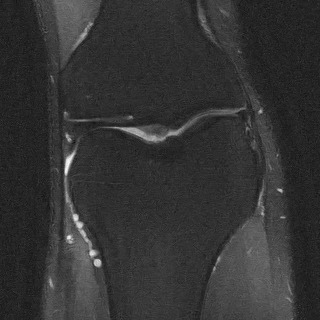}};
\spy on (1.65,1.28) in node [right] at (-0.005,-1.1);
\end{tikzpicture}
%  \caption*{\footnotesize CRR}
\end{subfigure}%
\hfill
\begin{subfigure}[t]{.123\textwidth}
\begin{tikzpicture}[spy using outlines=
{rectangle,white,magnification=4.5,size=2.115cm, connect spies}]
\node[anchor=south west,inner sep=0]  at (0,0) {\includegraphics[width=\linewidth]{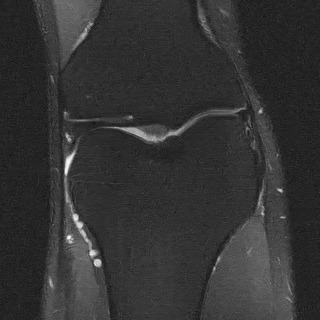}};
\spy on (1.65,1.28) in node [right] at (-0.005,-1.1);
\end{tikzpicture}
%  \caption*{\footnotesize WCRR}
\end{subfigure}%
\hfill
\begin{subfigure}[t]{.123\textwidth}
\begin{tikzpicture}[spy using outlines=
{rectangle,white,magnification=4.5,size=2.115cm, connect spies}]
\node[anchor=south west,inner sep=0]  at (0,0) {\includegraphics[width=\linewidth]{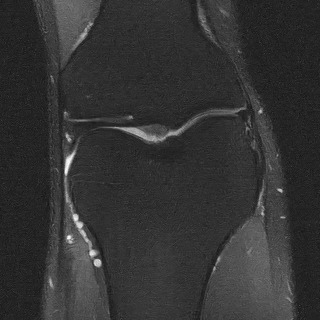}};
\spy on (1.65,1.28) in node [right] at (-0.005,-1.1);
\end{tikzpicture}
%  \caption*{\footnotesize Mask-CRR}
\end{subfigure}%
\hfill
\begin{subfigure}[t]{.123\textwidth}
\begin{tikzpicture}[spy using outlines=
{rectangle,white,magnification=4.5,size=2.115cm, connect spies}]
\node[anchor=south west,inner sep=0]  at (0,0) {\includegraphics[width=\linewidth]{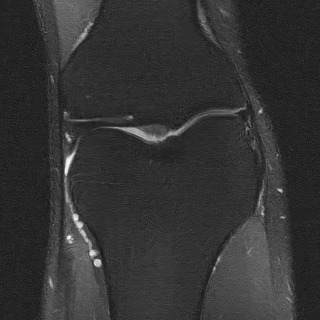}};
\spy on (1.65,1.28) in node [right] at (-0.005,-1.1);
\end{tikzpicture}
%  \caption*{\footnotesize Mask-WCRR}
\end{subfigure}%
\hfill
\begin{subfigure}[t]{.123\textwidth}
\begin{tikzpicture}[spy using outlines=
{rectangle,white,magnification=4.5,size=2.115cm, connect spies}]
\node[anchor=south west,inner sep=0]  at (0,0) {\includegraphics[width=\linewidth]{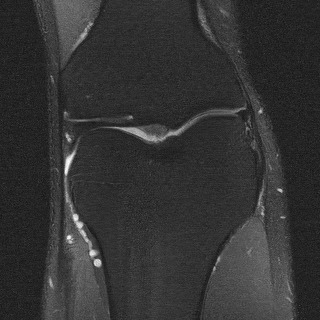}};
\spy on (1.65,1.28) in node [right] at (-0.005,-1.1);
\end{tikzpicture}
%  \caption*{\footnotesize Prox-DRUNet}
\end{subfigure}%
\hfill
\begin{subfigure}[t]{.123\textwidth}
\begin{tikzpicture}[spy using outlines=
{rectangle,white,magnification=4.5,size=2.115cm, connect spies}]
\node[anchor=south west,inner sep=0]  at (0,0) {\includegraphics[width=\linewidth]{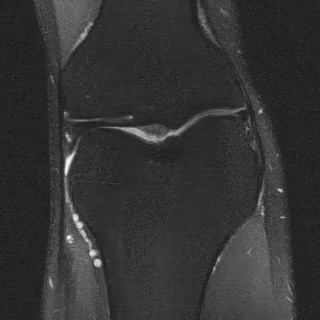}};
\spy on (1.65,1.28) in node [right] at (-0.005,-1.1);
\end{tikzpicture}
%  \caption*{\footnotesize patchNR}
\end{subfigure}%

\begin{subfigure}[t]{.123\textwidth}
\includegraphics[width=\linewidth]{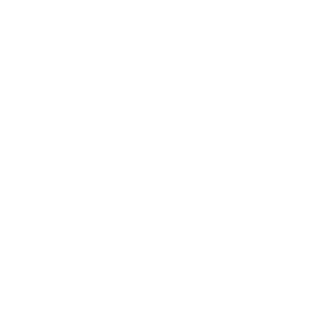}
  \caption*{\footnotesize GT}
\end{subfigure}%
\hfill
\begin{subfigure}[t]{.123\textwidth}
\includegraphics[width=\linewidth]{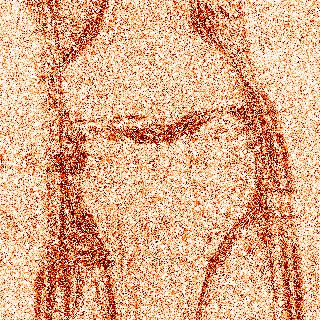}
  \caption*{\footnotesize Zero-filled}
\end{subfigure}%
\hfill
\begin{subfigure}[t]{.123\textwidth}
\includegraphics[width=\linewidth]{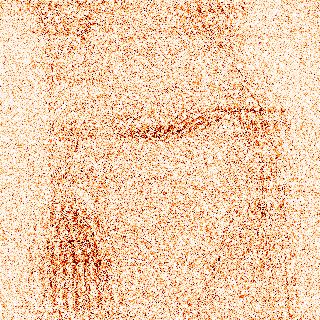}
  \caption*{\footnotesize CRR}
\end{subfigure}%
\hfill
\begin{subfigure}[t]{.123\textwidth}
\includegraphics[width=\linewidth]{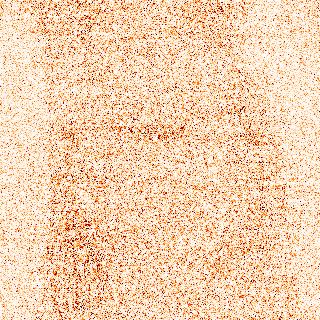}
  \caption*{\footnotesize WCRR}
\end{subfigure}%
\hfill
\begin{subfigure}[t]{.123\textwidth}
\includegraphics[width=\linewidth]{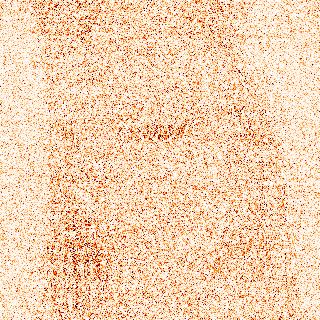}
  \caption*{\footnotesize CRR-Mask}
\end{subfigure}%
\hfill
\begin{subfigure}[t]{.123\textwidth}
\includegraphics[width=\linewidth]{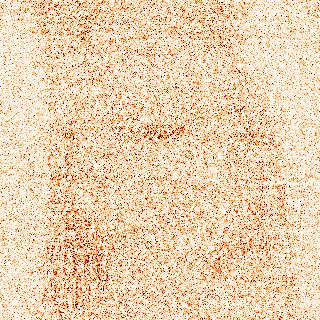}
  \caption*{\footnotesize WCRR-Mask}
\end{subfigure}%
\hfill
\begin{subfigure}[t]{.123\textwidth}
\includegraphics[width=\linewidth]{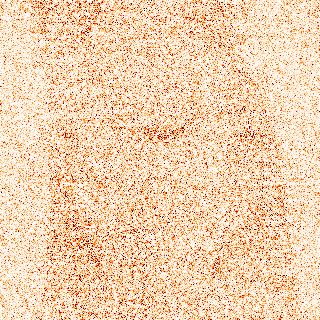}
  \caption*{\footnotesize Prox-DRUNet}
\end{subfigure}%
\hfill
\begin{subfigure}[t]{.123\textwidth}
\includegraphics[width=\linewidth]{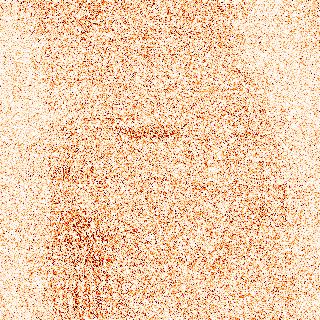}
  \caption*{\footnotesize patchNR}
\end{subfigure}%

%-----------------------------------------------------------------------------

\begin{subfigure}[t]{.123\textwidth}  
\begin{tikzpicture}[spy using outlines=
{rectangle,white,magnification=4.5,size=2.115cm, connect spies}]
\node[anchor=south west,inner sep=0]  at (0,0) {\includegraphics[width=\linewidth]{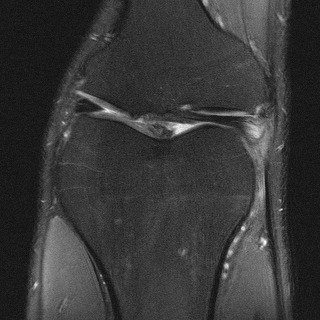}};
\spy on (1.15,1.3) in node [right] at (-0.005,-1.1);
\end{tikzpicture}
%\caption*{\footnotesize GT}
\end{subfigure}%
\hfill
\begin{subfigure}[t]{.123\textwidth}
\begin{tikzpicture}[spy using outlines=
{rectangle,white,magnification=4.5,size=2.115cm, connect spies}]
\node[anchor=south west,inner sep=0]  at (0,0) {\includegraphics[width=\linewidth]{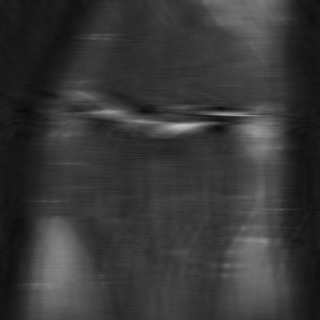}};
\spy on (1.15,1.3) in node [right] at (-0.005,-1.1);
\end{tikzpicture}
%  \caption*{\footnotesize FBP}
\end{subfigure}%
\hfill
\begin{subfigure}[t]{.123\textwidth}
\begin{tikzpicture}[spy using outlines=
{rectangle,white,magnification=4.5,size=2.115cm, connect spies}]
\node[anchor=south west,inner sep=0]  at (0,0) {\includegraphics[width=\linewidth]{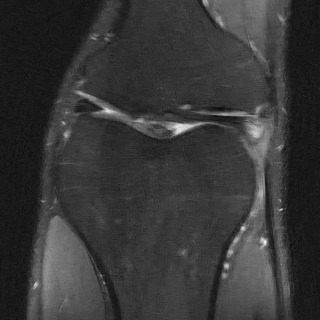}};
\spy on (1.15,1.3) in node [right] at (-0.005,-1.1);
\end{tikzpicture}
%  \caption*{\footnotesize CRR}
\end{subfigure}%
\hfill
\begin{subfigure}[t]{.123\textwidth}
\begin{tikzpicture}[spy using outlines=
{rectangle,white,magnification=4.5,size=2.115cm, connect spies}]
\node[anchor=south west,inner sep=0]  at (0,0) {\includegraphics[width=\linewidth]{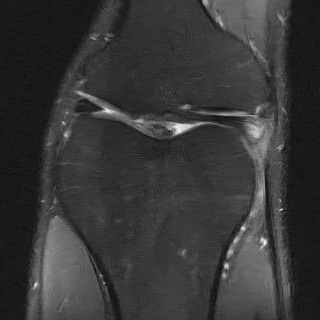}};
\spy on (1.15,1.3) in node [right] at (-0.005,-1.1);
\end{tikzpicture}
%  \caption*{\footnotesize WCRR}
\end{subfigure}%
\hfill
\begin{subfigure}[t]{.123\textwidth}
\begin{tikzpicture}[spy using outlines=
{rectangle,white,magnification=4.5,size=2.115cm, connect spies}]
\node[anchor=south west,inner sep=0]  at (0,0) {\includegraphics[width=\linewidth]{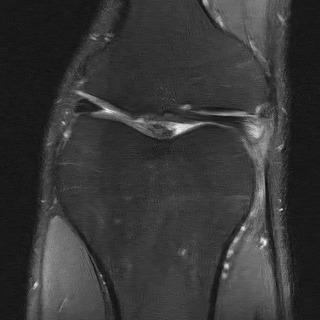}};
\spy on (1.15,1.3) in node [right] at (-0.005,-1.1);
\end{tikzpicture}
%  \caption*{\footnotesize Mask-CRR}
\end{subfigure}%
\hfill
\begin{subfigure}[t]{.123\textwidth}
\begin{tikzpicture}[spy using outlines=
{rectangle,white,magnification=4.5,size=2.115cm, connect spies}]
\node[anchor=south west,inner sep=0]  at (0,0) {\includegraphics[width=\linewidth]{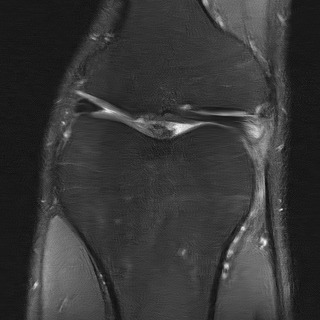}};
\spy on (1.15,1.3) in node [right] at (-0.005,-1.1);
\end{tikzpicture}
%  \caption*{\footnotesize Mask-WCRR}
\end{subfigure}%
\hfill
\begin{subfigure}[t]{.123\textwidth}
\begin{tikzpicture}[spy using outlines=
{rectangle,white,magnification=4.5,size=2.115cm, connect spies}]
\node[anchor=south west,inner sep=0]  at (0,0) {\includegraphics[width=\linewidth]{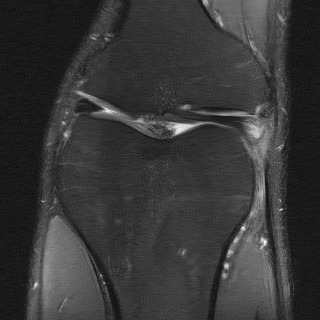}};
\spy on (1.15,1.3) in node [right] at (-0.005,-1.1);
\end{tikzpicture}
%  \caption*{\footnotesize Prox-DRUNet}
\end{subfigure}%
\hfill
\begin{subfigure}[t]{.123\textwidth}
\begin{tikzpicture}[spy using outlines=
{rectangle,white,magnification=4.5,size=2.115cm, connect spies}]
\node[anchor=south west,inner sep=0]  at (0,0) {\includegraphics[width=\linewidth]{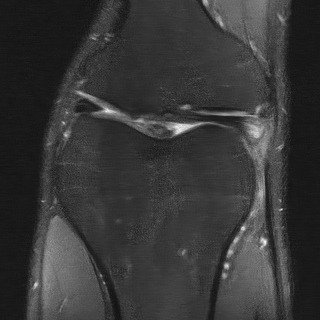}};
\spy on (1.15,1.3) in node [right] at (-0.005,-1.1);
\end{tikzpicture}
%  \caption*{\footnotesize patchNR}
\end{subfigure}%

\begin{subfigure}[t]{.123\textwidth}
\includegraphics[width=\linewidth]{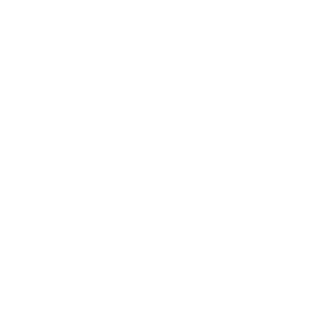}
  \caption*{\footnotesize GT}
\end{subfigure}%
\hfill
\begin{subfigure}[t]{.123\textwidth}
\includegraphics[width=\linewidth]{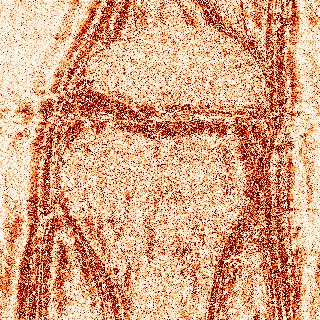}
  \caption*{\footnotesize Zero-filled}
\end{subfigure}%
\hfill
\begin{subfigure}[t]{.123\textwidth}
\includegraphics[width=\linewidth]{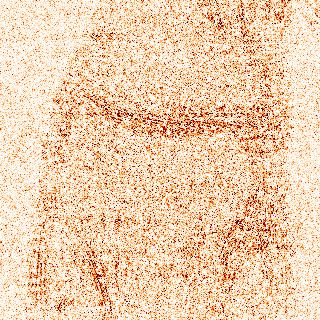}
  \caption*{\footnotesize CRR}
\end{subfigure}%
\hfill
\begin{subfigure}[t]{.123\textwidth}
\includegraphics[width=\linewidth]{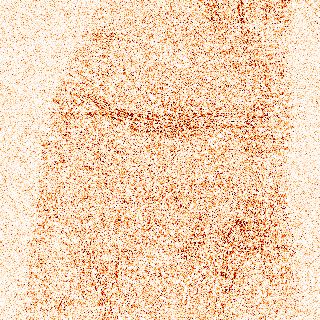}
  \caption*{\footnotesize WCRR}
\end{subfigure}%
\hfill
\begin{subfigure}[t]{.123\textwidth}
\includegraphics[width=\linewidth]{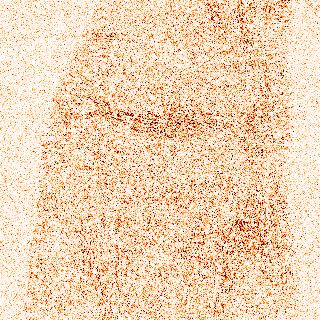}
  \caption*{\footnotesize CRR-Mask}
\end{subfigure}%
\hfill
\begin{subfigure}[t]{.123\textwidth}
\includegraphics[width=\linewidth]{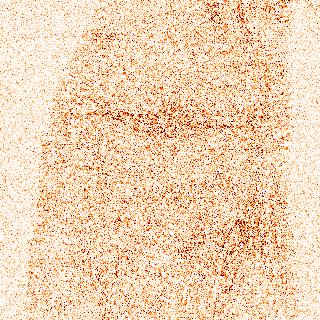}
  \caption*{\footnotesize WCRR-Mask}
\end{subfigure}%
\hfill
\begin{subfigure}[t]{.123\textwidth}
\includegraphics[width=\linewidth]{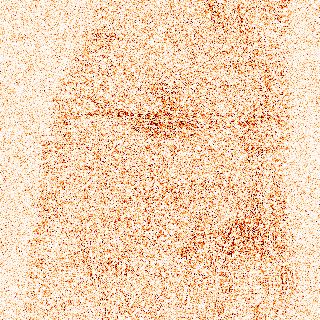}
  \caption*{\footnotesize Prox-DRUNet}
\end{subfigure}%
\hfill
\begin{subfigure}[t]{.123\textwidth}
\includegraphics[width=\linewidth]{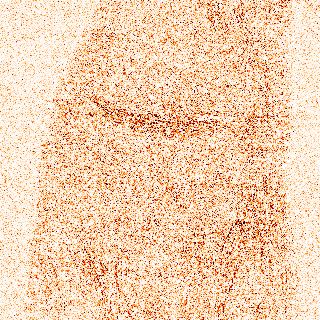}
  \caption*{\footnotesize patchNR}
\end{subfigure}%

%-----------------------------------------------------------------------------

\begin{subfigure}[t]{.123\textwidth}  
\begin{tikzpicture}[spy using outlines=
{rectangle,white,magnification=4.5,size=2.115cm, connect spies}]
\node[anchor=south west,inner sep=0]  at (0,0) {\includegraphics[width=\linewidth]{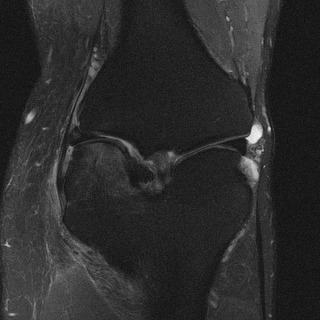}};
\spy on (1.6,1.15) in node [right] at (-0.005,-1.1);
\end{tikzpicture}
%\caption*{\footnotesize GT}
\end{subfigure}%
\hfill
\begin{subfigure}[t]{.123\textwidth}
\begin{tikzpicture}[spy using outlines=
{rectangle,white,magnification=4.5,size=2.115cm, connect spies}]
\node[anchor=south west,inner sep=0]  at (0,0) {\includegraphics[width=\linewidth]{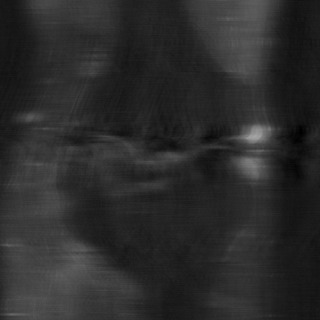}};
\spy on (1.6,1.15) in node [right] at (-0.005,-1.1);
\end{tikzpicture}
%  \caption*{\footnotesize FBP}
\end{subfigure}%
\hfill
\begin{subfigure}[t]{.123\textwidth}
\begin{tikzpicture}[spy using outlines=
{rectangle,white,magnification=4.5,size=2.115cm, connect spies}]
\node[anchor=south west,inner sep=0]  at (0,0) {\includegraphics[width=\linewidth]{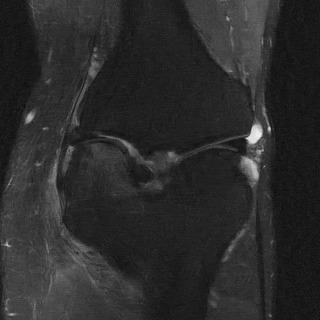}};
\spy on (1.6,1.15) in node [right] at (-0.005,-1.1);
\end{tikzpicture}
%  \caption*{\footnotesize CRR}
\end{subfigure}%
\hfill
\begin{subfigure}[t]{.123\textwidth}
\begin{tikzpicture}[spy using outlines=
{rectangle,white,magnification=4.5,size=2.115cm, connect spies}]
\node[anchor=south west,inner sep=0]  at (0,0) {\includegraphics[width=\linewidth]{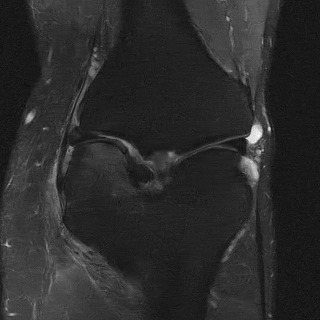}};
\spy on (1.6,1.15) in node [right] at (-0.005,-1.1);
\end{tikzpicture}
%  \caption*{\footnotesize WCRR}
\end{subfigure}%
\hfill
\begin{subfigure}[t]{.123\textwidth}
\begin{tikzpicture}[spy using outlines=
{rectangle,white,magnification=4.5,size=2.115cm, connect spies}]
\node[anchor=south west,inner sep=0]  at (0,0) {\includegraphics[width=\linewidth]{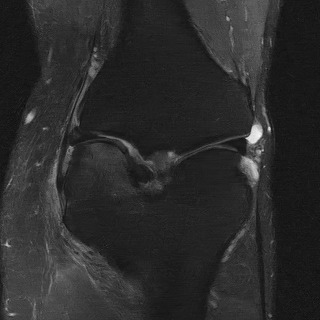}};
\spy on (1.6,1.15) in node [right] at (-0.005,-1.1);
\end{tikzpicture}
%  \caption*{\footnotesize Mask-CRR}
\end{subfigure}%
\hfill
\begin{subfigure}[t]{.123\textwidth}
\begin{tikzpicture}[spy using outlines=
{rectangle,white,magnification=4.5,size=2.115cm, connect spies}]
\node[anchor=south west,inner sep=0]  at (0,0) {\includegraphics[width=\linewidth]{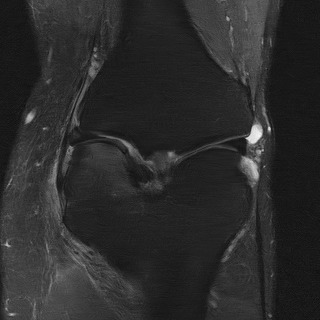}};
\spy on (1.6,1.15) in node [right] at (-0.005,-1.1);
\end{tikzpicture}
%  \caption*{\footnotesize Mask-WCRR}
\end{subfigure}%
\hfill
\begin{subfigure}[t]{.123\textwidth}
\begin{tikzpicture}[spy using outlines=
{rectangle,white,magnification=4.5,size=2.115cm, connect spies}]
\node[anchor=south west,inner sep=0]  at (0,0) {\includegraphics[width=\linewidth]{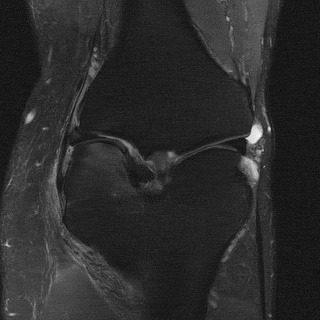}};
\spy on (1.6,1.15) in node [right] at (-0.005,-1.1);
\end{tikzpicture}
%  \caption*{\footnotesize Prox-DRUNet}
\end{subfigure}%
\hfill
\begin{subfigure}[t]{.123\textwidth}
\begin{tikzpicture}[spy using outlines=
{rectangle,white,magnification=4.5,size=2.115cm, connect spies}]
\node[anchor=south west,inner sep=0]  at (0,0) {\includegraphics[width=\linewidth]{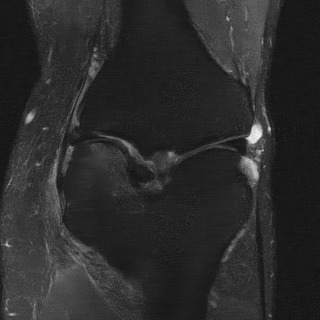}};
\spy on (1.6,1.15) in node [right] at (-0.005,-1.1);
\end{tikzpicture}
%  \caption*{\footnotesize patchNR}
\end{subfigure}%

\begin{subfigure}[t]{.123\textwidth}
\includegraphics[width=\linewidth]{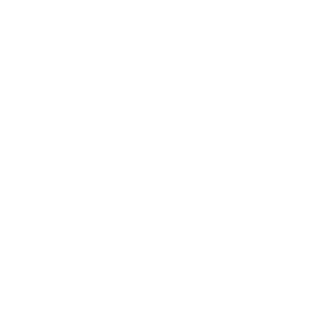}
  \caption*{\footnotesize GT}
\end{subfigure}%
\hfill
\begin{subfigure}[t]{.123\textwidth}
\includegraphics[width=\linewidth]{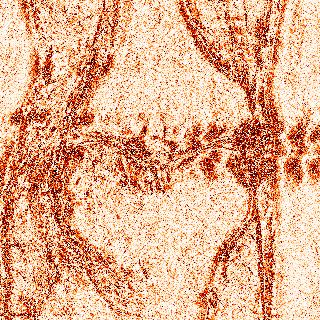}
  \caption*{\footnotesize Zero-filled}
\end{subfigure}%
\hfill
\begin{subfigure}[t]{.123\textwidth}
\includegraphics[width=\linewidth]{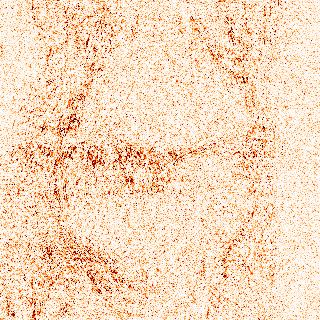}
  \caption*{\footnotesize CRR}
\end{subfigure}%
\hfill
\begin{subfigure}[t]{.123\textwidth}
\includegraphics[width=\linewidth]{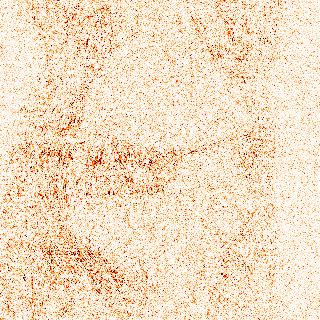}
  \caption*{\footnotesize WCRR}
\end{subfigure}%
\hfill
\begin{subfigure}[t]{.123\textwidth}
\includegraphics[width=\linewidth]{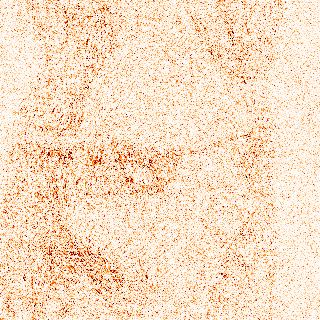}
  \caption*{\footnotesize CRR-Mask}
\end{subfigure}%
\hfill
\begin{subfigure}[t]{.123\textwidth}
\includegraphics[width=\linewidth]{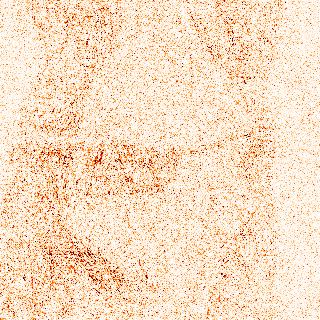}
  \caption*{\footnotesize WCRR-Mask}
\end{subfigure}%
\hfill
\begin{subfigure}[t]{.123\textwidth}
\includegraphics[width=\linewidth]{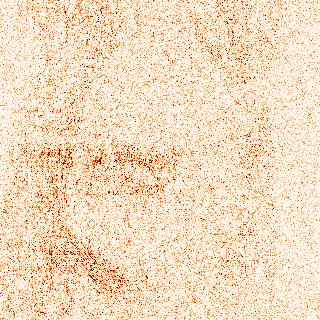}
  \caption*{\footnotesize Prox-DRUNet}
\end{subfigure}%
\hfill
\begin{subfigure}[t]{.123\textwidth}
\includegraphics[width=\linewidth]{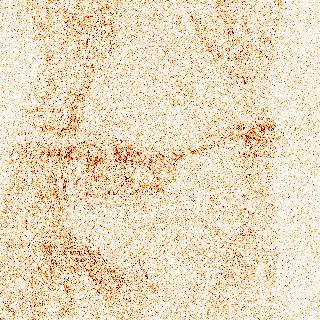}
  \caption*{\footnotesize patchNR}
\end{subfigure}%

\caption{8-fold multi-coil MRI on PDFS data set.
The white box marks the zoomed area.
\textit{Top}: full image; \textit{middle}: zoomed-in part; \textit{bottom}: error.
} \label{fig:MRI_comparison_pdfs8_appendix}
\end{figure}

%------------------------------------------------------------------------------------

\begin{figure}[t]
\centering
\begin{subfigure}[t]{.123\textwidth}  
\begin{tikzpicture}[spy using outlines=
{rectangle,white,magnification=10,size=2.115cm, connect spies}]
\node[anchor=south west,inner sep=0]  at (0,0) {\includegraphics[width=\linewidth]{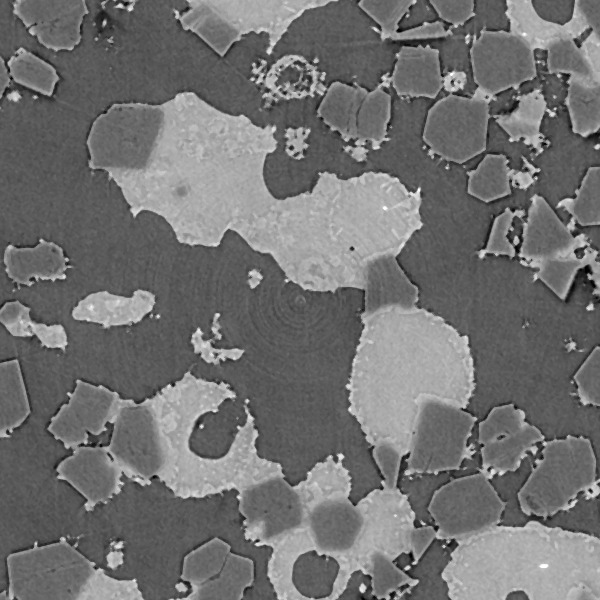}};
\spy on (.33,.55) in node [right] at (-0.005,-1.1);
\end{tikzpicture}
\caption*{\footnotesize HR}
\end{subfigure}%
\hfill
\begin{subfigure}[t]{.123\textwidth}
\begin{tikzpicture}[spy using outlines=
{rectangle,white,magnification=10,size=2.115cm, connect spies}]
\node[anchor=south west,inner sep=0]  at (0,0) {\includegraphics[width=\linewidth]{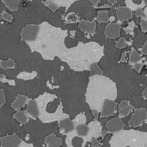}};
\spy on (.33,.55) in node [right] at (-0.005,-1.1);
\end{tikzpicture}
  \caption*{\footnotesize LR}
\end{subfigure}%
\hfill
\begin{subfigure}[t]{.123\textwidth}
\begin{tikzpicture}[spy using outlines=
{rectangle,white,magnification=10,size=2.115cm, connect spies}]
\node[anchor=south west,inner sep=0]  at (0,0) {\includegraphics[width=\linewidth]{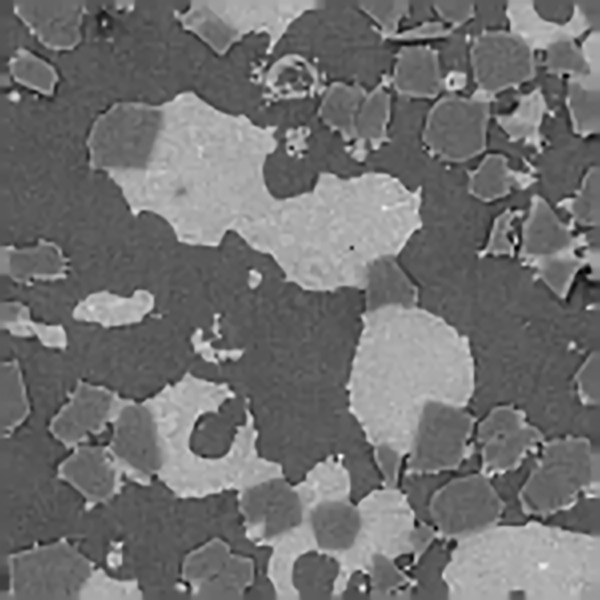}};
\spy on (.33,.55) in node [right] at (-0.005,-1.1);
\end{tikzpicture}
  \caption*{\footnotesize CRR}
\end{subfigure}%
\hfill
\begin{subfigure}[t]{.123\textwidth}
\begin{tikzpicture}[spy using outlines=
{rectangle,white,magnification=10,size=2.115cm, connect spies}]
\node[anchor=south west,inner sep=0]  at (0,0) {\includegraphics[width=\linewidth]{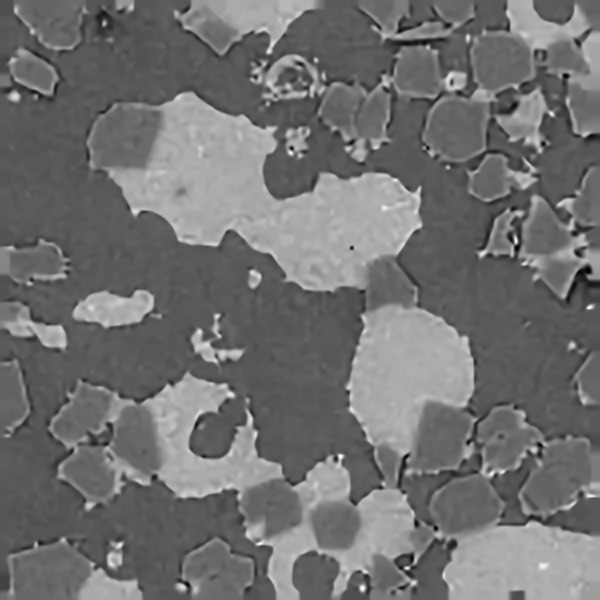}};
\spy on (.33,.55) in node [right] at (-0.005,-1.1);
\end{tikzpicture}
  \caption*{\footnotesize WCRR}
\end{subfigure}%
\hfill
\begin{subfigure}[t]{.123\textwidth}
\begin{tikzpicture}[spy using outlines=
{rectangle,white,magnification=10,size=2.115cm, connect spies}]
\node[anchor=south west,inner sep=0]  at (0,0) {\includegraphics[width=\linewidth]{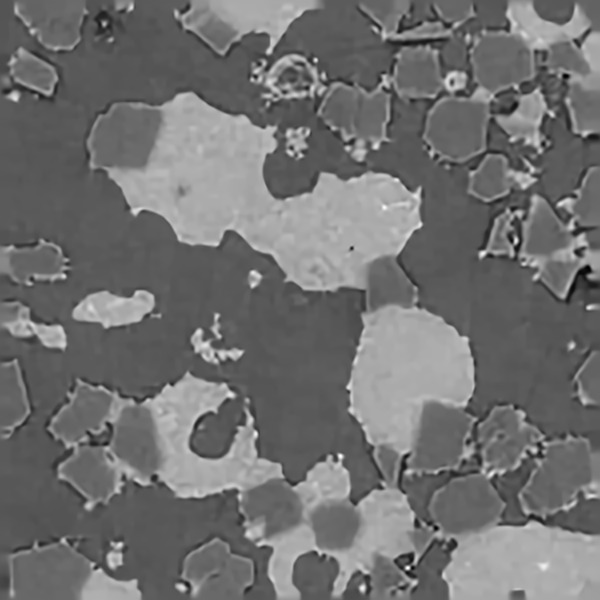}};
\spy on (.33,.55) in node [right] at (-0.005,-1.1);
\end{tikzpicture}
  \caption*{\footnotesize Mask-CRR}
\end{subfigure}%
\hfill
\begin{subfigure}[t]{.123\textwidth}
\begin{tikzpicture}[spy using outlines=
{rectangle,white,magnification=10,size=2.115cm, connect spies}]
\node[anchor=south west,inner sep=0]  at (0,0) {\includegraphics[width=\linewidth]{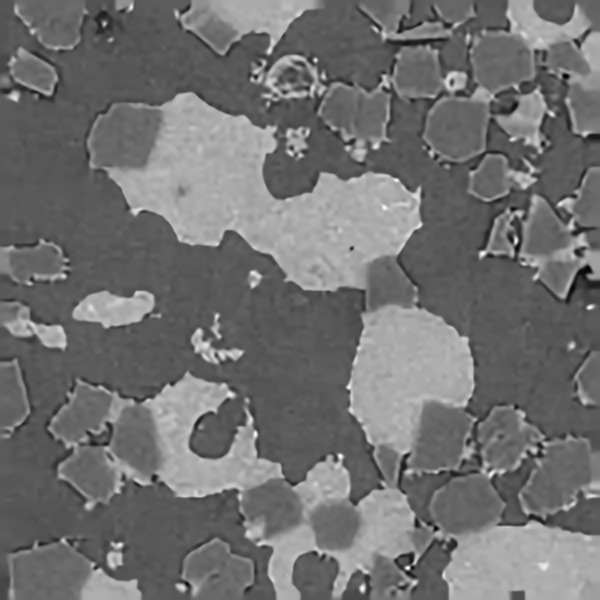}};
\spy on (.33,.55) in node [right] at (-0.005,-1.1);
\end{tikzpicture}
  \caption*{\footnotesize Mask-WCRR}
\end{subfigure}%
\hfill
\begin{subfigure}[t]{.123\textwidth}
\begin{tikzpicture}[spy using outlines=
{rectangle,white,magnification=10,size=2.115cm, connect spies}]
\node[anchor=south west,inner sep=0]  at (0,0) {\includegraphics[width=\linewidth]{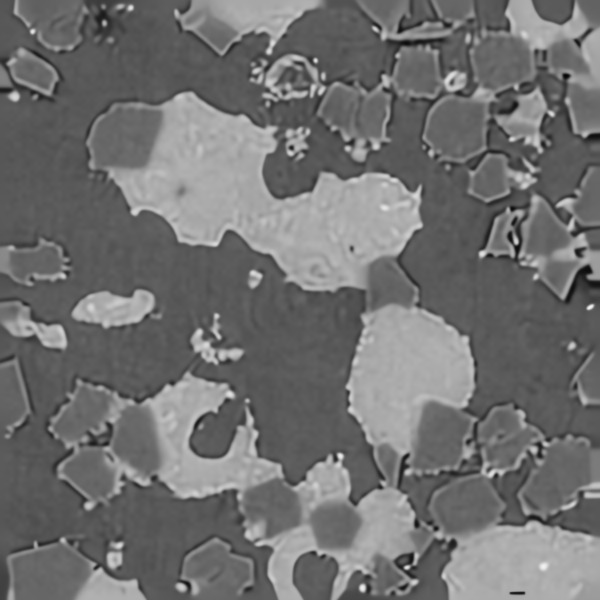}};
\spy on (.33,.55) in node [right] at (-0.005,-1.1);
\end{tikzpicture}
  \caption*{\footnotesize Prox-DRUNet}
\end{subfigure}%
\hfill
\begin{subfigure}[t]{.123\textwidth}
\begin{tikzpicture}[spy using outlines=
{rectangle,white,magnification=10,size=2.115cm, connect spies}]
\node[anchor=south west,inner sep=0]  at (0,0) {\includegraphics[width=\linewidth]{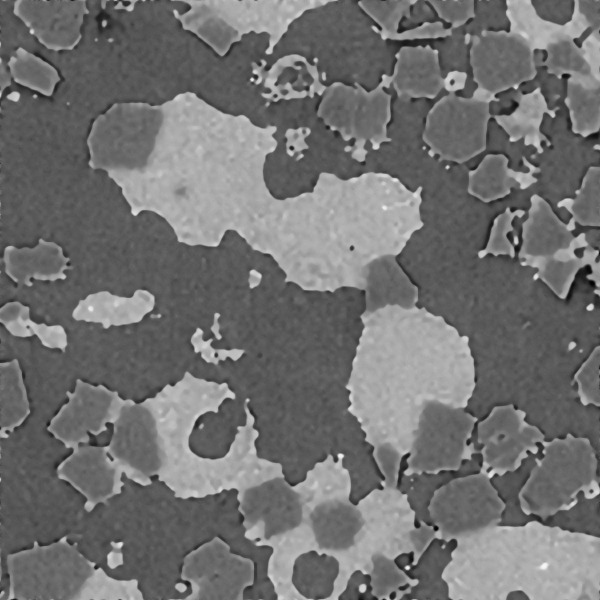}};
\spy on (.33,.55) in node [right] at (-0.005,-1.1);
\end{tikzpicture}
  \caption*{\footnotesize patchNR}
\end{subfigure}%

\begin{subfigure}[t]{.123\textwidth}  
\begin{tikzpicture}[spy using outlines=
{rectangle,white,magnification=10,size=2.115cm, connect spies}]
\node[anchor=south west,inner sep=0]  at (0,0) {\includegraphics[width=\linewidth]{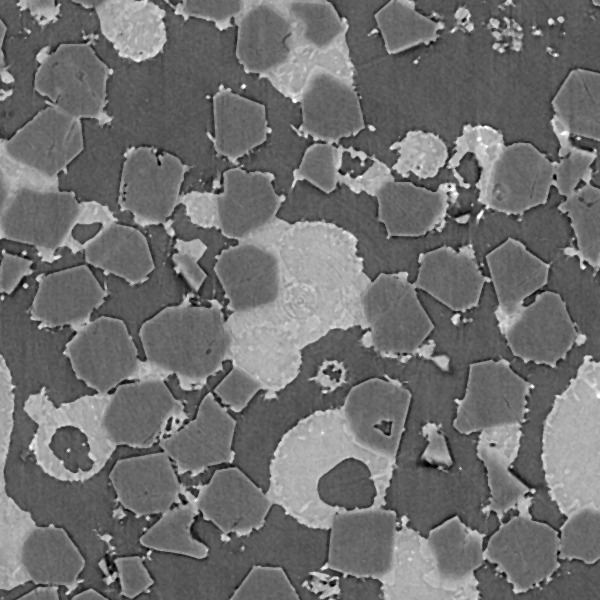}};
\spy on (.6,.59) in node [right] at (-0.005,-1.1);
\end{tikzpicture}
\caption*{\footnotesize HR}
\end{subfigure}%
\hfill
\begin{subfigure}[t]{.123\textwidth}
\begin{tikzpicture}[spy using outlines=
{rectangle,white,magnification=10,size=2.115cm, connect spies}]
\node[anchor=south west,inner sep=0]  at (0,0) {\includegraphics[width=\linewidth]{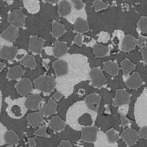}};
\spy on (.6,.59) in node [right] at (-0.005,-1.1);
\end{tikzpicture}
  \caption*{\footnotesize LR}
\end{subfigure}%
\hfill
\begin{subfigure}[t]{.123\textwidth}
\begin{tikzpicture}[spy using outlines=
{rectangle,white,magnification=10,size=2.115cm, connect spies}]
\node[anchor=south west,inner sep=0]  at (0,0) {\includegraphics[width=\linewidth]{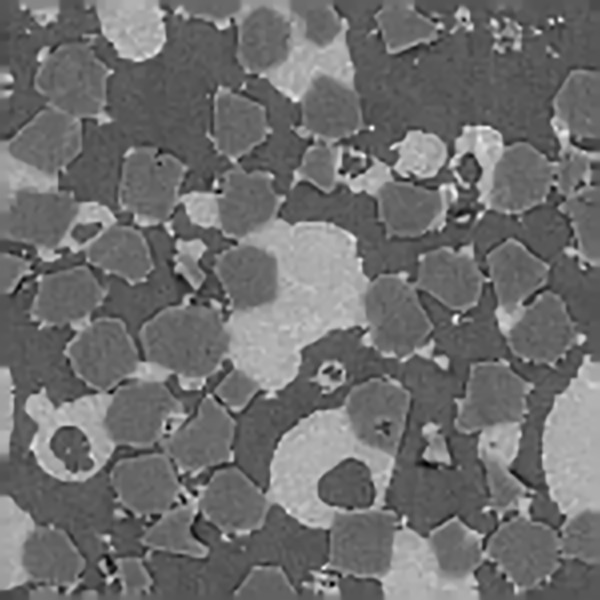}};
\spy on (.6,.59) in node [right] at (-0.005,-1.1);
\end{tikzpicture}
  \caption*{\footnotesize CRR}
\end{subfigure}%
\hfill
\begin{subfigure}[t]{.123\textwidth}
\begin{tikzpicture}[spy using outlines=
{rectangle,white,magnification=10,size=2.115cm, connect spies}]
\node[anchor=south west,inner sep=0]  at (0,0) {\includegraphics[width=\linewidth]{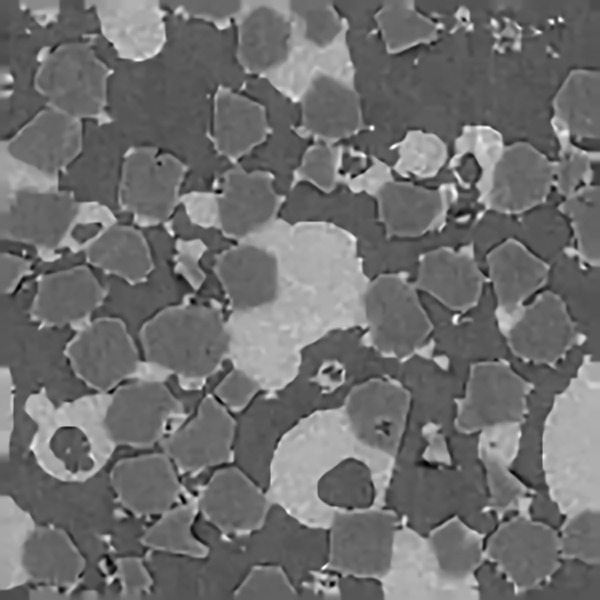}};
\spy on (.6,.59) in node [right] at (-0.005,-1.1);
\end{tikzpicture}
  \caption*{\footnotesize WCRR}
\end{subfigure}%
\hfill
\begin{subfigure}[t]{.123\textwidth}
\begin{tikzpicture}[spy using outlines=
{rectangle,white,magnification=10,size=2.115cm, connect spies}]
\node[anchor=south west,inner sep=0]  at (0,0) {\includegraphics[width=\linewidth]{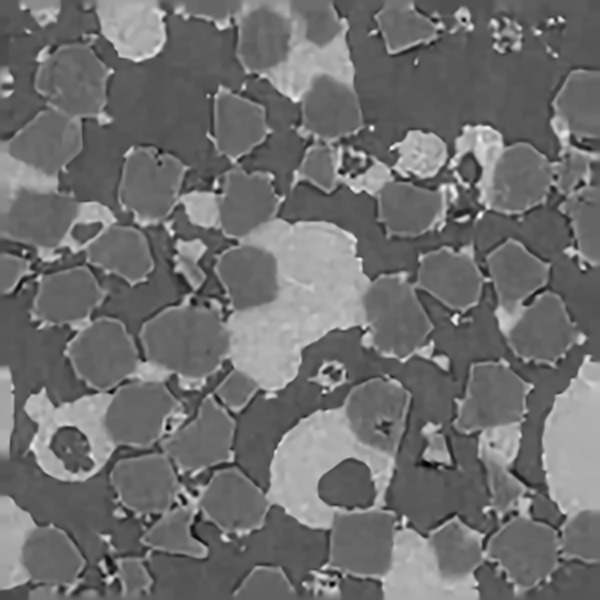}};
\spy on (.6,.59) in node [right] at (-0.005,-1.1);
\end{tikzpicture}
  \caption*{\footnotesize Mask-CRR}
\end{subfigure}%
\hfill
\begin{subfigure}[t]{.123\textwidth}
\begin{tikzpicture}[spy using outlines=
{rectangle,white,magnification=10,size=2.115cm, connect spies}]
\node[anchor=south west,inner sep=0]  at (0,0) {\includegraphics[width=\linewidth]{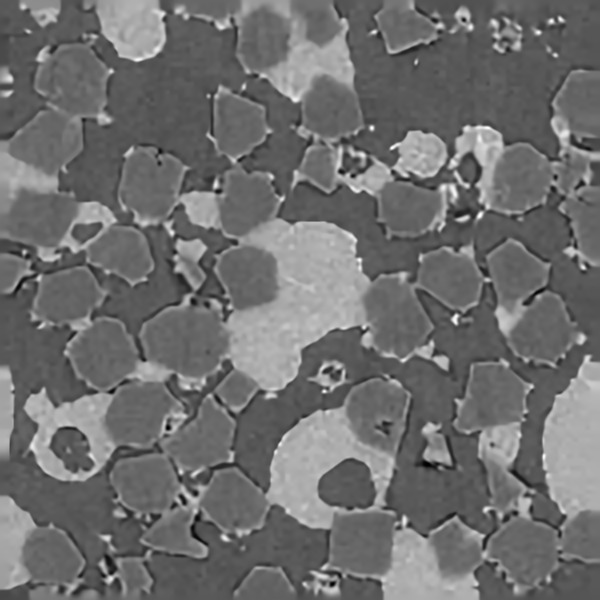}};
\spy on (.6,.59) in node [right] at (-0.005,-1.1);
\end{tikzpicture}
  \caption*{\footnotesize Mask-WCRR}
\end{subfigure}%
\hfill
\begin{subfigure}[t]{.123\textwidth}
\begin{tikzpicture}[spy using outlines=
{rectangle,white,magnification=10,size=2.115cm, connect spies}]
\node[anchor=south west,inner sep=0]  at (0,0) {\includegraphics[width=\linewidth]{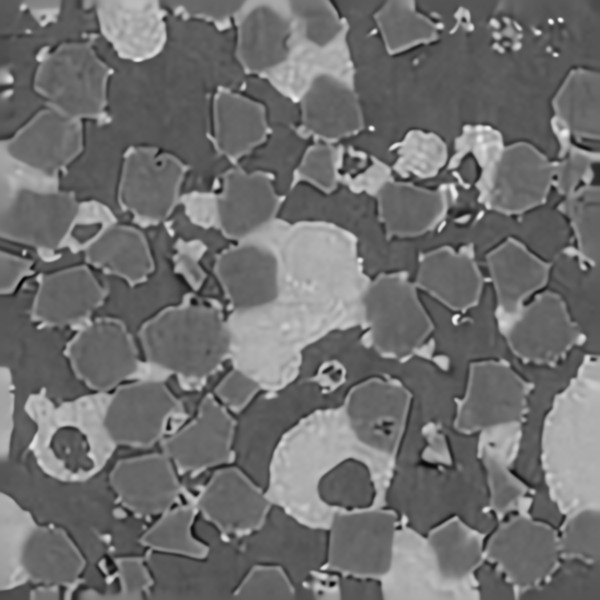}};
\spy on (.6,.59) in node [right] at (-0.005,-1.1);
\end{tikzpicture}
  \caption*{\footnotesize Prox-DRUNet}
\end{subfigure}%
\hfill
\begin{subfigure}[t]{.123\textwidth}
\begin{tikzpicture}[spy using outlines=
{rectangle,white,magnification=10,size=2.115cm, connect spies}]
\node[anchor=south west,inner sep=0]  at (0,0) {\includegraphics[width=\linewidth]{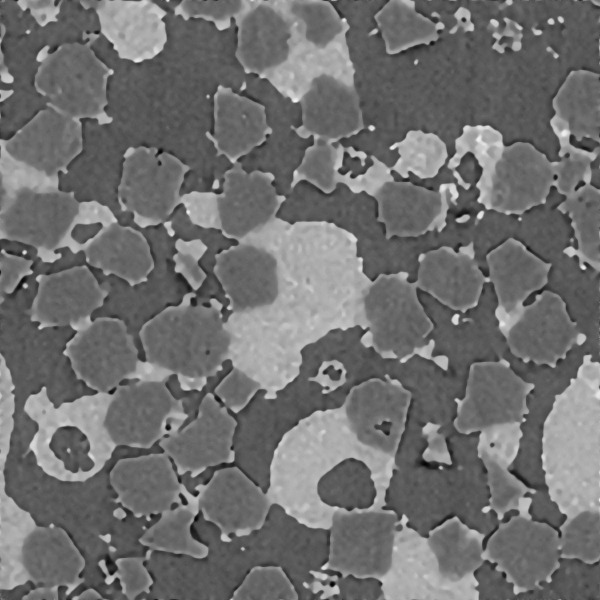}};
\spy on (.6,.59) in node [right] at (-0.005,-1.1);
\end{tikzpicture}
  \caption*{\footnotesize patchNR}
\end{subfigure}%

\begin{subfigure}[t]{.123\textwidth}  
\begin{tikzpicture}[spy using outlines=
{rectangle,white,magnification=10,size=2.115cm, connect spies}]
\node[anchor=south west,inner sep=0]  at (0,0) {\includegraphics[width=\linewidth]{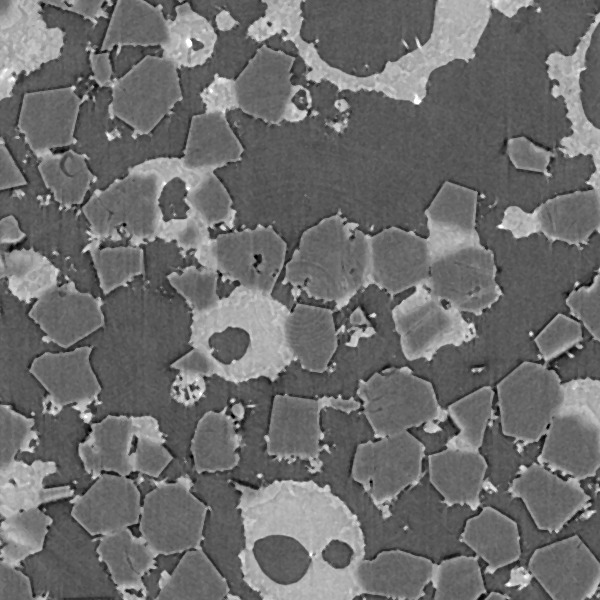}};
\spy on (.15,.53) in node [right] at (-0.005,-1.1);
\end{tikzpicture}
\caption*{\footnotesize HR}
\end{subfigure}%
\hfill
\begin{subfigure}[t]{.123\textwidth}
\begin{tikzpicture}[spy using outlines=
{rectangle,white,magnification=10,size=2.115cm, connect spies}]
\node[anchor=south west,inner sep=0]  at (0,0) {\includegraphics[width=\linewidth]{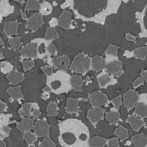}};
\spy on (.15,.53) in node [right] at (-0.005,-1.1);
\end{tikzpicture}
  \caption*{\footnotesize LR}
\end{subfigure}%
\hfill
\begin{subfigure}[t]{.123\textwidth}
\begin{tikzpicture}[spy using outlines=
{rectangle,white,magnification=10,size=2.115cm, connect spies}]
\node[anchor=south west,inner sep=0]  at (0,0) {\includegraphics[width=\linewidth]{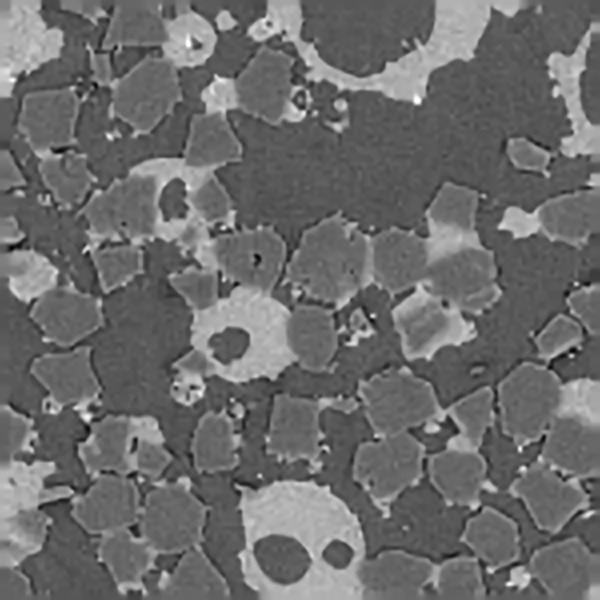}};
\spy on (.15,.53) in node [right] at (-0.005,-1.1);
\end{tikzpicture}
  \caption*{\footnotesize CRR}
\end{subfigure}%
\hfill
\begin{subfigure}[t]{.123\textwidth}
\begin{tikzpicture}[spy using outlines=
{rectangle,white,magnification=10,size=2.115cm, connect spies}]
\node[anchor=south west,inner sep=0]  at (0,0) {\includegraphics[width=\linewidth]{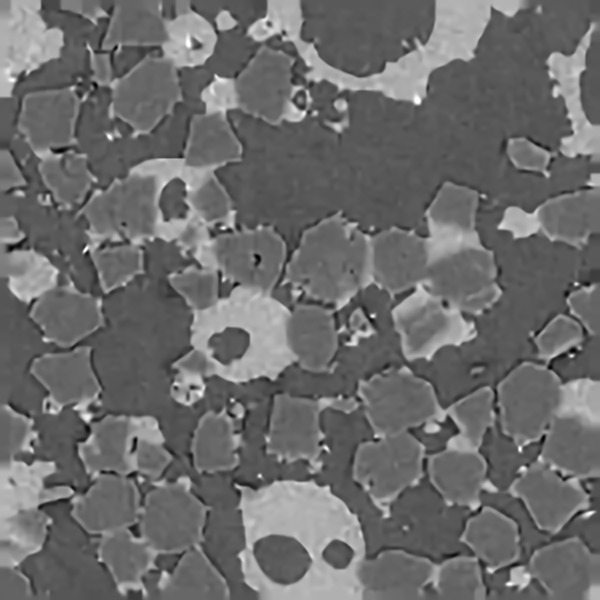}};
\spy on (.15,.53) in node [right] at (-0.005,-1.1);
\end{tikzpicture}
  \caption*{\footnotesize WCRR}
\end{subfigure}%
\hfill
\begin{subfigure}[t]{.123\textwidth}
\begin{tikzpicture}[spy using outlines=
{rectangle,white,magnification=10,size=2.115cm, connect spies}]
\node[anchor=south west,inner sep=0]  at (0,0) {\includegraphics[width=\linewidth]{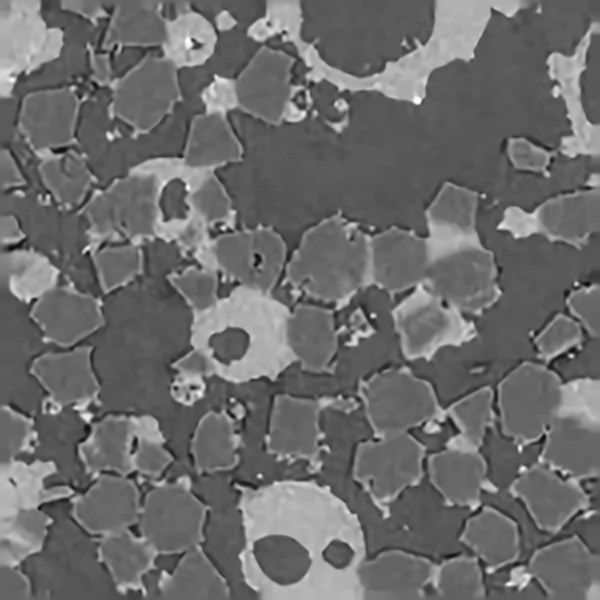}};
\spy on (.15,.53) in node [right] at (-0.005,-1.1);
\end{tikzpicture}
  \caption*{\footnotesize Mask-CRR}
\end{subfigure}%
\hfill
\begin{subfigure}[t]{.123\textwidth}
\begin{tikzpicture}[spy using outlines=
{rectangle,white,magnification=10,size=2.115cm, connect spies}]
\node[anchor=south west,inner sep=0]  at (0,0) {\includegraphics[width=\linewidth]{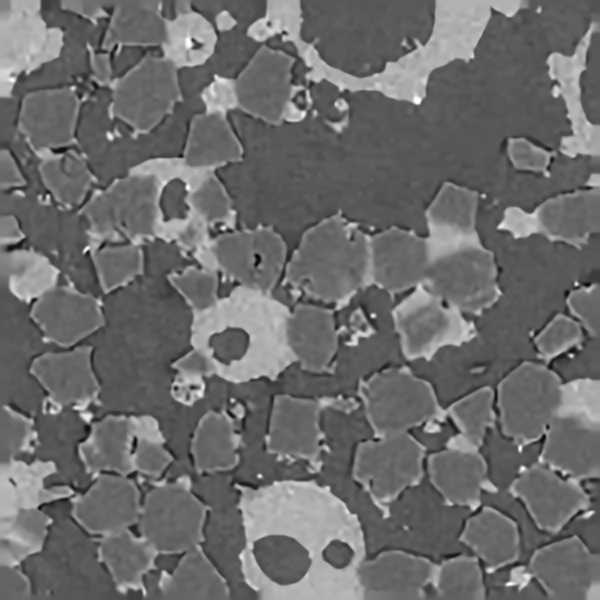}};
\spy on (.15,.53) in node [right] at (-0.005,-1.1);
\end{tikzpicture}
  \caption*{\footnotesize Mask-WCRR}
\end{subfigure}%
\hfill
\begin{subfigure}[t]{.123\textwidth}
\begin{tikzpicture}[spy using outlines=
{rectangle,white,magnification=10,size=2.115cm, connect spies}]
\node[anchor=south west,inner sep=0]  at (0,0) {\includegraphics[width=\linewidth]{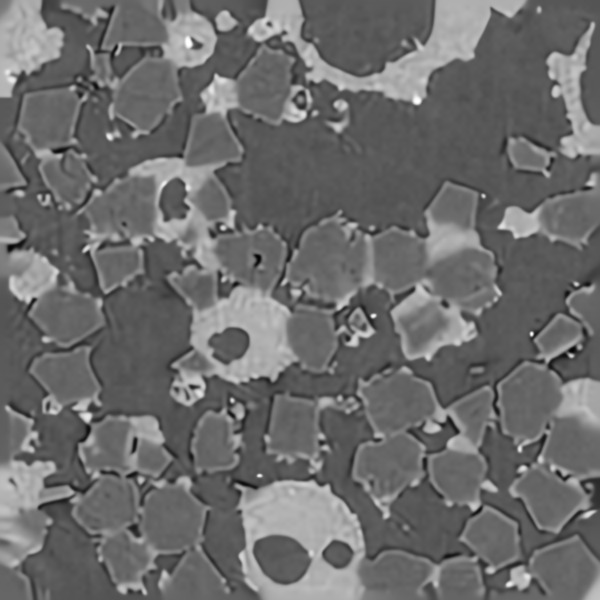}};
\spy on (.15,.53) in node [right] at (-0.005,-1.1);
\end{tikzpicture}
  \caption*{\footnotesize Prox-DRUNet}
\end{subfigure}%
\hfill
\begin{subfigure}[t]{.123\textwidth}
\begin{tikzpicture}[spy using outlines=
{rectangle,white,magnification=10,size=2.115cm, connect spies}]
\node[anchor=south west,inner sep=0]  at (0,0) {\includegraphics[width=\linewidth]{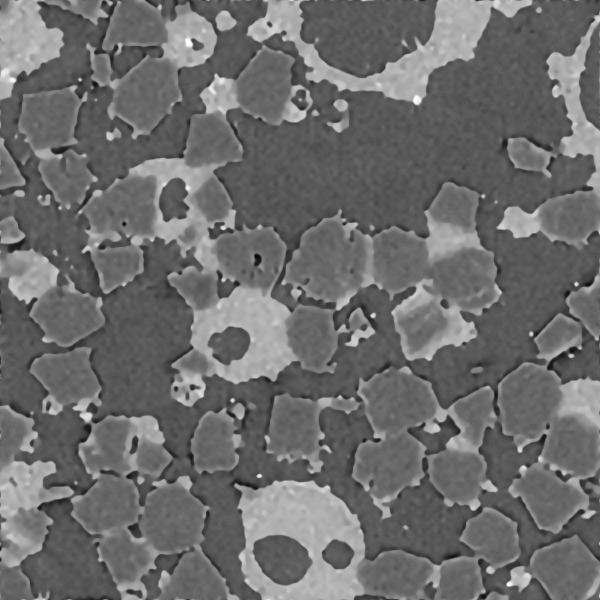}};
\spy on (.15,.53) in node [right] at (-0.005,-1.1);
\end{tikzpicture}
  \caption*{\footnotesize patchNR}
\end{subfigure}%

\caption{Superresolution of material microstructures.
The white box marks the zoomed area.
\textit{Top}: full image; \textit{bottom}: zoomed-in part.
} \label{fig:SiC_comparison_appendix}
\end{figure}
\end{document}